\newtheorem{theorem}{Theorem}
\newtheorem{proposition}[theorem]{Proposition}
\newtheorem{lemma}[theorem]{Lemma}
\newtheorem{definition}[theorem]{Definition}
\newtheorem{remark}[theorem]{Remark}
\newtheorem*{theorem*}{Theorem}
\theoremstyle{definition}
\newtheorem{construction}[theorem]{Construction}
\def\XXint#1#2#3{{\setbox0=\hbox{$#1{#2#3}{\int}$ }
\vcenter{\hbox{$#2#3$ }}\kern-.6\wd0}}
\definecolor{Yellow}{rgb}{0.95,0.9,0.0} 
\definecolor{Red}{rgb}{0.8,0.1,0.1}
\definecolor{Green}{rgb}{0.1,0.65,0.2}
\definecolor{Blue}{rgb}{0.1,0.1,0.8}
\definecolor{Purple}{rgb}{0.7,0.1,0.7}
\definecolor{Grey}{rgb}{0.6,0.6,0.6}
\definecolor{YELLOW}{rgb}{0.95,0.9,0.0} 
\definecolor{RED}{rgb}{0.8,0.1,0.1}
\definecolor{GREEN}{rgb}{0.25,0.65,0.1}
\definecolor{BLUE}{rgb}{0.1,0.1,0.8}
\definecolor{PURPLE}{rgb}{0.7,0.1,0.7}
\newcommand{\supp}{\operatorname{supp}}
\newcommand{\dist}{\operatorname{dist}}
\DeclareMathOperator*{\argmin}{arg\,min}
\newcommand{\Rd}[1][d]{{\mathbb{R}^{#1}}}
\newcommand{\barI}{\bar{I}}
\newcommand{\no}{\bar{\vec{n}}}
\newcommand{\ta}{\bar{\vec{\tau}}}
\newcommand{\taTrJ}{\bar{\vec{t}}}
\newcommand{\trLine}{\bar\Gamma}
\renewcommand{\vec}[1]{{\operatorname{#1}}}
\def\onedot{$\mathsurround0pt\ldotp$}
\def\cdddot#1{
  \mathbin{\vcenter{\baselineskip.67ex
    \hbox{\onedot}\hbox{\onedot}\hbox{\onedot}%
  }}%
}
\begin{document}

\title[Weak-strong uniqueness for MCF of double bubbles]
{Weak-strong uniqueness for the mean curvature flow of double bubbles}

\author{Sebastian Hensel}
\address{Institute of Science and Technology Austria (IST Austria), Am~Campus~1, 
3400 Klosterneuburg, Austria}
\email{sebastian.hensel@ist.ac.at}
\curraddr{Hausdorff Center for Mathematics, Universit{\"a}t Bonn, Endenicher Allee 62, 53115 Bonn, Germany
(\texttt{sebastian.hensel@hcm.uni-bonn.de})}
\author{Tim Laux}
\address{Hausdorff Center for Mathematics, Universit{\"a}t Bonn, Endenicher Allee 62, 53115 Bonn, Germany}
\email{tim.laux@hcm.uni-bonn.de}

\begin{abstract}
We derive a weak-strong uniqueness principle for BV solutions to multiphase
mean curvature flow of triple line clusters in three dimensions. Our proof is based on the explicit construction of a gradient-flow calibration in the sense of the recent work of Fischer et al.~[arXiv:2003.05478v2] for any such cluster. This extends the two-dimensional construction to the three-dimensional case of surfaces meeting along triple junctions.
\end{abstract}


\maketitle


\tableofcontents

\section{Introduction}

Multiphase mean curvature flow (MCF) arises as the $L^2$-gradient flow of the area functional and has been studied intensively over the last decades. 
Its earliest motivation comes from materials science where MCF models the slow relaxation of grain boundaries in polycrystals.
 
 \medskip
 
While a lot of progress has been achieved in the two-dimensional case, often referred to as network flow, in the physically most relevant case of three spatial dimensions, results concerning strong solutions are just beginning to emerge.
The short-time existence for the MCF of three surfaces coming together at a triple junction has been established by Freire \cite{Freire2010} when all three surfaces can be parametrized as graphs over a single plane--a condition which then was relaxed by Depner, Garcke, and Kohsaka  \cite{DepnerGarckeKohsaka} who derived the local well-posedness without relying on this graphical geometry. 
In this work, they parametrize the surface cluster over a fixed reference surface cluster and phrase MCF as a nonlocal, quasilinear parabolic system of free boundary problem.
Independently and as the result of an improved compactness property of Brakke flows, Schulze and White \cite{SchulzeWhite} established short-time existence in a similar geometric setting. 
Recently, Baldi, Haus, and Mantegazza \cite{BaldiHausMantegazza} derived the existence of a self-similarly shrinking ``lens-shaped" surface cluster describing a solution to MCF just before the disappearance of the smaller bubble in the cluster. 
We refer the interested reader to \cite{FischerHenselLauxSimon2D} for a more detailed discussion and further relevant references. 
The construction of regular solutions starting from non-regular initial surface clusters has not yet been accomplished, but the recent microlocal approach \cite{stenf} might give new insights. However, also this approach relies on an explicit construction of gluing in self-similar expanders, which does not immediately carry over to the three-dimensional case.

\medskip

Most results on weak solutions of MCF, however, are often quite general and in particular apply in our present three-dimensional case. While the theory of viscosity solutions is not available for surface clusters (even in two dimensions), Brakke's solution concept, and in particular the more refined version of Kim and Tonegawa \cite{TonegawaKim} apply, and so do the conditional convergence results in \cite{LauxOtto} and \cite{LauxOttoBrakke}.

\medskip

In the present work we prove the stability and weak-strong uniqueness of regular surface clusters with triple junctions moving by MCF in three dimensions. 
The key step is the explicit construction of a gradient-flow calibration in the sense of our recent work  \cite{FischerHenselLauxSimon2D} with Fischer and Simon. 
Therein, we constructed such a gradient-flow calibration in the planar case of networks moving by curve shortening flow and proved that, in arbitrary dimension, any calibrated MCF is stable. The main contribution of the present work is the extension of the first part to the three dimensional case, which then immediately implies the weak-strong uniqueness. 
The concept of gradient-flow calibrations is the time-dependent counterpart of calibrations and paired calibrations for minimal surfaces and minimal surface clusters, respectively, see in particular~\cite{Lawlor-Morgan}.

\medskip

There are two model cases in which our new construction (essentially) reduces to the two-dimensional case: if the three-dimensional configuration is (i) rotationally symmetric or (ii) translation invariant in one direction. However, in the general case, a direct slicing argument across normal planes to the triple junction of course does not yield torsion-free tangent frames of the interfaces. 
By introducing suitable gauge rotations, we correct this ad hoc construction. We prove that these gauged tangent frames then give a natural extension of the respective normal vector fields to a vicinity of each interface and even across the triple line using Herring's condition. Furthermore, we show that these constructions are regular and compatible to first order along the triple line. 
Although the present method would immediately carry over to more general surface clusters only containing smooth surfaces coming together along triple lines, we restrict ourselves to the case of a ``double bubble'', i.e., a cluster of three surfaces as displayed in Figure~\ref{fig:global picture}. 
For general surface clusters in $\Rd[3]$, triple lines could meet in quadruple points some of which will persist over time; cf.~\cite{Taylor1976} for the static case. For these systems, even the short-time existence of regular solutions has not been established. It would be interesting to generalize our present work to construct a gradient-flow calibration in this more general setting.


\begin{figure}
  \centering      
  \includegraphics{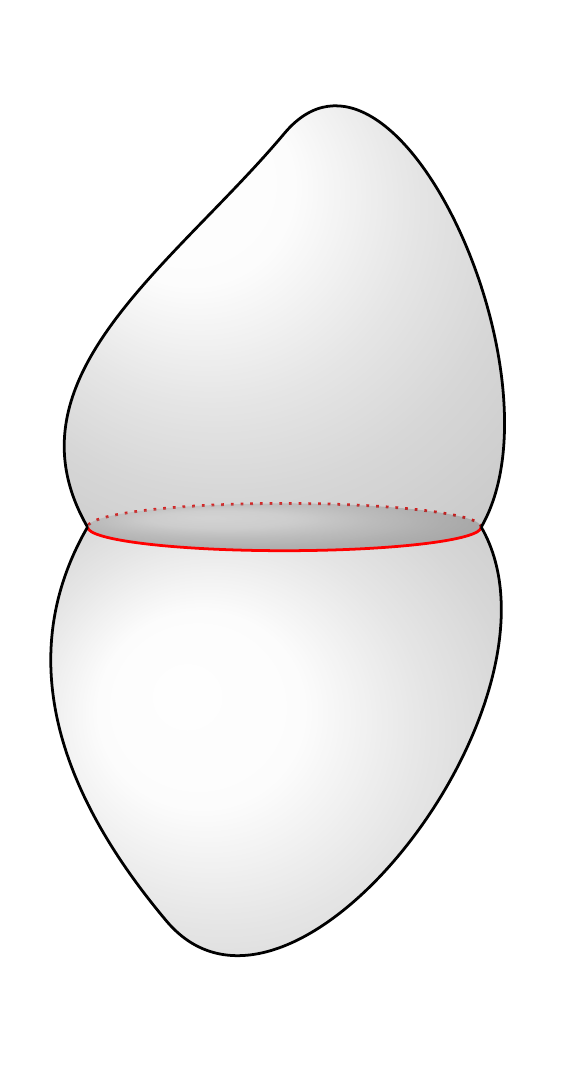}
	\caption{A simplified illustration of a double bubble in three dimensions. The triple line $\trLine$ along which all three interfaces meet is marked in red. We emphasize that neither flatness of an interface nor symmetry
	of the triple line is required for our results.}
	\label{fig:global picture}
\end{figure}
\section{Main results}
In our recent contribution~\cite{FischerHenselLauxSimon2D}
with Fischer and Simon, we developed a general
approach to the question of weak-strong uniqueness of BV~solutions to multiphase
mean curvature flow in arbitrary ambient dimension~$d\geq 2$. This approach splits
into a two-step procedure. 

In a first step, we introduced a novel concept of
calibrated flows with respect to the gradient flow of the interface energy functional
given by the (weighted) sum of the surface areas of the interfaces,
cf.\ \eqref{def:energyFunctional} below. This concept can be interpreted
as the evolutionary analogue of the well-known notion of paired calibrations
due to Lawlor and Morgan~\cite{Lawlor-Morgan} from their study of the
minimization problem of interfacial surface area of networks.
Indeed, the main merit of a calibrated flow is that its existence
(essentially) implies qualitative uniqueness and quantitative stability 
of BV~solutions to multiphase mean curvature flow in arbitrary ambient dimension~$d\geq~2$.

In a second step, we then put this theory to use by showing that any sufficiently regular network of
interfaces in the plane~$\Rd[2]$, which in addition is subject to the correct angle condition
at triple junctions, is in fact calibrated in the precise sense of~\cite{FischerHenselLauxSimon2D}. 
The purpose of the present work is to extend this second step of the approach
to the three-dimensional setting of mean curvature flow of sufficiently regular
double bubbles (again with the correct angle condition along the triple line).
The main contributions are summarized in the following result.

\begin{theorem}
\label{theo:mainResultsCombined}
Let~$T\in (0,\infty)$ be a time horizon, and let
$(\bar\Omega_1,\bar\Omega_2,\bar\Omega_3)$ be a regular 
double bubble smoothly evolving by~MCF on~$[0,T]$ in the sense of 
Definition~\ref{def:smoothSolution}.
The evolution of $(\bar\Omega_1,\bar\Omega_2,\bar\Omega_3)$
on $[0,T]$ is then calibrated in the sense that
there exists an associated gradient-flow calibration
$((\xi_i)_{i\in\{1,2,3\}},B)$ on~$[0,T]$, cf.\ Definition~\ref{def:gradientFlowCalibration}.
Moreover, the smoothly evolving regular double bubble $(\bar\Omega_1,\bar\Omega_2,\bar\Omega_3)$
admits a family of transported weights~$(\vartheta_i)_{i\in\{1,2,3\}}$
on~$[0,T]$ in the sense of Definition~\ref{prop:existenceWeights}.

As a corollary, we obtain a weak-strong uniqueness and stability of evolutions
principle for \emph{BV}~solutions $(\Omega_1,\Omega_2,\Omega_3)$
to multiphase~MCF on $[0,T]$ 
(cf.\ \cite[Definition~13]{FischerHenselLauxSimon2D})
with respect to the class of regular double bubbles smoothly evolving by~MCF on~$[0,T]$
in the sense of Definition~\ref{def:smoothSolution}.
We refer to Theorem~\ref{theorem:mainResult}
for a more detailed statement of this corollary,
and to the discussion right below it
for an account on the general regime of~$P\geq 3$ phases on the level of the BV~solution.
\end{theorem}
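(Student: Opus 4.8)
The plan is to establish the existence of the gradient-flow calibration $((\xi_i)_{i\in\{1,2,3\}},B)$ and of the transported weights $(\vartheta_i)_{i\in\{1,2,3\}}$ by an explicit construction in a space-time neighborhood of the evolving cluster; the weak-strong uniqueness and stability corollary then follows verbatim from the general stability theorem of~\cite{FischerHenselLauxSimon2D} (applied with $d=3$ and three phases), so no further argument is needed for it. Away from the triple line $\trLine$ the construction is the classical one: in a one- or two-sided tubular neighborhood of each interface $\barI_{ij}$ one extends the unit normal $\no_{ij}$ by a cut-off of its value at the nearest-point projection onto $\barI_{ij}$, and extends the scalar normal velocity $-\bar H_{ij}$ in the same way. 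Since $\barI_{ij}$ moves by mean curvature, the fields $\xi_{ij}=\xi_i-\xi_j$ and $B$ obtained this way satisfy the defining (in)equalities of Definition~\ref{def:gradientFlowCalibration} up to errors of order $\dist(\cdot,\barI_{ij})$, and the quadratic coercivity $1-|\xi_{ij}|\gtrsim\dist^2(\cdot,\barI_{ij})$ holds by smoothness of $\barI_{ij}$. The substance of the proof is the construction in a neighborhood of $\trLine$, where the three fields have to be reconciled.

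Near $\trLine(t)$ I would introduce Fermi-type coordinates $(s,y)\in\mathbb{R}\times\mathbb{R}^2$, with $s$ an arclength parameter along $\trLine(t)$ and $y$ coordinates in a smoothly chosen complement of the tangent line $\mathbb{R}\,\taTrJ(s)$. In these coordinates each interface $\barI_{ij}$ is, to leading order in $|y|$, the half-plane spanned by $\taTrJ(s)$ and a conormal $\ta_{ij}(s)$, with the three conormals making the prescribed Herring angles. The naive slicewise calibration — running the two-dimensional construction of~\cite{FischerHenselLauxSimon2D} inside each normal plane $\{s=\mathrm{const}\}$ — is inadmissible because the normal planes twist as $s$ varies, so that the induced extension of the tangent frame $\{\ta_{ij},\no_{ij}\}$ of $\barI_{ij}$ is not torsion-free; this produces $O(1)$ defects in the evolution equation for $\xi_i$. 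To cure this I would introduce a \emph{gauge rotation} $R(s,t)\in SO(3)$, equal to $\Id$ on $\trLine$ and obtained by integrating along $s$ an ODE whose right-hand side is the antisymmetric part of the connection coefficients of the twisted frame; conjugating the slicewise construction by $R$ yields \emph{gauged} tangent frames that are torsion-free to the order required by Definition~\ref{def:gradientFlowCalibration}, whose normal components provide a natural extension of $\no_{ij}$ to a vicinity of $\barI_{ij}$, and which, away from $\trLine$, agree with the nearest-point construction above up to errors of the admissible order, so the two can be merged.

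To obtain a single triple $(\xi_1,\xi_2,\xi_3)$ defined in a full neighborhood of $\trLine$ and satisfying $\xi_i-\xi_j=\no_{ij}$ on all three $\barI_{ij}$ simultaneously, observe that on $\trLine$ itself the system for $(\xi_1^0,\xi_2^0,\xi_3^0)$ is solvable precisely because $\no_{12}+\no_{23}+\no_{31}=0$ there — i.e.\ because of Herring's condition — and fixing the remaining constant (e.g.\ by $\sum_i\xi_i^0=0$) makes the choice canonical and smooth in $(s,t)$. One then extends $(\xi_1^0,\xi_2^0,\xi_3^0)$ off $\trLine$ through the gauged normal-plane geometry and glues this triple-line construction to the three interface constructions with a partition of unity subordinate to the regions ``close to $\barI_{ij}$, away from $\trLine$'' and ``close to $\trLine$''; the interpolation keeps the interface identities exact and keeps the remaining (in)equalities within the allowed first-order defects. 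Finally $B$ is extended to a global smooth field equal to the triple-line velocity on $\trLine$ and to $-\bar H_{ij}\no_{ij}$ on $\barI_{ij}$ — consistent because a regular double bubble evolving by MCF satisfies exactly the compatibility of these boundary values along $\trLine$ — and everything is cut off to a fixed neighborhood of the cluster and extended trivially outside. The transported weights $\vartheta_i$ are obtained by prescribing at time $0$ a cut-off of the signed distance to $\partial\bar\Omega_i$ with the sign conventions of Definition~\ref{prop:existenceWeights} and transporting it by the flow of $B$; smoothness and compact support of $B$ make this flow globally defined on $[0,T]$ and preserve the sign and coercivity conditions.

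The main obstacle is the analysis near $\trLine$: one must show that the gauged construction is genuinely $C^1$ up to and across the triple line, that the two local models — one organised around each interface, one around $\trLine$ — are compatible to \emph{first order} in both small parameters $\dist(\cdot,\barI_{ij})$ and $\dist(\cdot,\trLine)$ \emph{simultaneously}, and that the quadratic coercivity $1-|\xi_i-\xi_j|\gtrsim\dist^2(\cdot,\barI_{ij})$ survives the interpolation in the region where all three interfaces come together. None of this is automatic because of the torsion of $\trLine$, which is exactly what the gauge rotation $R$ is designed to absorb; carefully tracking the resulting error terms in the transport and divergence identities of Definition~\ref{def:gradientFlowCalibration} throughout the gluing is the central technical burden.
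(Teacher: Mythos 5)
Your high-level plan is structurally faithful to the paper: reduce everything to constructing $((\xi_i),B)$ and $(\vartheta_i)$ and invoke \cite[Proposition~5]{FischerHenselLauxSimon2D}; build the calibration near the smooth interfaces by the nearest-point projection; identify the torsion in the slice\-wise Fermi frame near $\trLine$ as the central obstruction and cure it by a gauge rotation; glue by interpolation; observe that Herring's condition makes the triple of vector fields $(\xi_1,\xi_2,\xi_3)$ solvable along $\trLine$. All of this matches the paper's Sections~\ref{sec:localCalibrationInterface}--\ref{sec:globalCalibrations}.

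Where the proposal diverges, however, is in the mechanism of the gauge rotation, and this is not a cosmetic difference. You posit a rotation $R(s,t)\in SO(3)$ depending only on the arclength $s$ along $\trLine$ (and time), ``equal to $\Id$ on $\trLine$'' and ``obtained by integrating along $s$''. This is inconsistent as written (if $R$ depends only on $(s,t)$, being $\Id$ on $\trLine$ forces $R\equiv\Id$), and more importantly it is too coarse an object: the twist that must be absorbed, namely the off-diagonal curvature coefficient $\kappa_{\ta\taTrJ}=-\ta\otimes\taTrJ:\nabla\no$ and the divergence $\nabla\cdot\taTrJ$ of the sliced tangent frame, depends on the position \emph{within} each interface, not just on the triple-line footpoint. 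The paper's gauge rotation $R_{\no}(x,t)$ is a genuinely pointwise rotation about the \emph{interface normal axis} $\no_{i,j}$, with two angle contributions: $\delta=s\,\kappa_{\ta\taTrJ}$ proportional to the signed distance $s$ to the interface, and $\omega$ obtained by solving the ODE~\eqref{ODE_omega} along the integral curves of the \emph{conormal} field $\ta$ (pointing away from $\trLine$ into the interface), with source $\nabla\cdot\taTrJ$ and zero initial data on $\trLine$. A rotation constant on each normal plane $\{s=\mathrm{const}\}$ cannot produce the required cancellation in~\eqref{eq:firstOrderCompWithoutRotationNormalAxis}, which is precisely where the product $(R'_{\taTrJ}\no\cdot\ta)(\nabla\cdot\taTrJ)$ has to be killed pointwise on the interface, not merely along~$\trLine$.

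You also omit the second, independent family of rotations that the paper needs: the \emph{Herring rotation fields} $R'_{\taTrJ},R''_{\taTrJ}$ about the $\taTrJ$-axis with fixed angles determined by the surface tensions, which transport each normal extension $\widetilde\xi$ across the wedge boundaries so that the three local constructions agree to first order along $\trLine$ (Lemma~\ref{lem:firstOrderCompXi}). Solving for $(\xi_1^0,\xi_2^0,\xi_3^0)$ on $\trLine$ and extending by a single frame, as you propose, does not give you a construction in which, in each interface wedge, the \emph{same} building block $\widetilde\xi_{i,j}$ simultaneously yields all three $\xi_k$'s up to the correct first-order error---yet this is exactly what the gauged Herring rotations provide, and it is what makes the interpolation errors in~\eqref{eq:compEstimateInterpolWedgeXi} come out quadratic in $\dist(\cdot,\trLine)$. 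On a smaller note, your version of Herring's condition $\no_{12}+\no_{23}+\no_{31}=0$ and of the calibration identity $\xi_{ij}=\xi_i-\xi_j$ silently sets all surface tensions equal; the paper keeps the general $\sigma_{i,j}$-weights throughout, and the proof of the coercivity~\eqref{eq:quadraticLengthControl} (via~\eqref{eq:coercivityAbsentPhase}) genuinely uses the strict triangle inequality for~$\sigma$.

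Your transported-weights construction (prescribe a truncated signed distance at $t=0$ and push forward by the flow of $B$) is a legitimate alternative to the paper's direct spatial construction in Section~\ref{sec:existenceWeights}. It trades the paper's piecewise definition and verification of~\eqref{eq:advectionWeights} for an exact advection identity, at the cost of having to check that the Lagrangian flow preserves the $W^{1,1}\cap W^{1,\infty}$ regularity, the sign and zero-set conditions, and the global integrability; you would still need the spatial cutoff of the paper's Step~3 of Proposition~\ref{prop:existenceWeights} to get $L^1$. This part of your plan is sound modulo these bookkeeping points. The substantive gap remains the structure of the gauge rotation and the missing Herring rotation fields.
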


\begin{proof}
The existence of a gradient-flow calibration $((\xi_i)_{i\in\{1,2,3\}},B)$ on~$[0,T]$
is the content of Theorem~\ref{prop:existenceGradientFlowCalibration}. Its proof
occupies almost the whole paper and is carried out from Section~\ref{sec:localCalibrationInterface} 
to Section~\ref{sec:globalCalibrations}. We emphasize in this context that 
the local construction at a triple line performed in Section~\ref{sec:existenceCalibrationTripleLine}
represents the core contribution of the present work. The existence of 
transported weights~$(\vartheta_i)_{i\in\{1,2,3\}}$ on~$[0,T]$ is
proven in Section~\ref{sec:existenceWeights}
in form of Proposition~\ref{prop:existenceWeights}.

These two existence results in turn realize the assumptions of the 
general conditional weak-strong uniqueness and stability of evolutions
principle~\cite[Proposition~5]{FischerHenselLauxSimon2D}
for BV~solutions to multiphase mean curvature flow
(with respect to the setting of~$P=3$ phases and~$d=3$ dimensions),
which therefore establishes the claim of the corollary. 
\end{proof}

The results of~\cite{FischerHenselLauxSimon2D} together with Theorem~\ref{theo:mainResultsCombined}
admittedly only cover two thirds of the story concerning weak-strong uniqueness
for general clusters in~$\mathbb{R}^3$ evolving by multiphase mean curvature flow.
Indeed, one also has to allow for quadruple junctions at which four distinct phases meet 
(cf.\ the structure result on minimizer of interfacial surface energy by Taylor~\cite{Taylor1976}). 
We expect that a suitable generalization of our ideas
for the construction at a triple line
should also lead to the correct construction in the case of a quadruple junction,
and thus to a full-fledged weak-strong uniqueness result in~$\mathbb{R}^3$.

\subsection{Existence of gradient-flow calibrations}
For the sake of completeness, let us first restate
the precise definition of the concept of a gradient-flow calibration.

\begin{definition}[Gradient-flow calibration]
\label{def:gradientFlowCalibration}
Let~$T\in (0,\infty)$ be a time horizon, and let
$\sigma\in\Rd[P{\times}P]$ be an admissible matrix of surface
tensions, cf.\ Remark \ref{remark:surface tensions},
for~$P\geq 2$ phases.
Moreover, let $(\bar\Omega_1,\ldots,\bar\Omega_P)$ be an evolving partition of finite
interface energy on~$\Rd{\times}[0,T]$ in the sense of Definition~\ref{def:partition}
in dimension~$d\geq 2$, and denote by~$\bigcup_{i\neq j}\barI_{i,j}$
the associated network of evolving interfaces.

A tuple of vector fields
\begin{align*}
(\xi_i)_{i\in\{1,\ldots,P\}} &\colon \Rd{\times}[0,T] \to (\Rd)^P,
\\
B &\colon \Rd{\times}[0,T] \to \Rd
\end{align*}
is called a \emph{calibration for the $L^2$-gradient flow 
of the interface energy~\eqref{def:energyFunctional} on~$[0,T]$} with 
respect to the evolving partition $(\bar\Omega_1,\ldots,\bar\Omega_P)$---or 
in short a \emph{gradient-flow calibration}---if 
it is subject to the following requirements:

\begin{subequations}
\begin{itemize}[leftmargin=0.7cm]
\item[i)] It holds $\xi_i,B\in C^0([0,T];C^0_{\mathrm{cpt}}(\Rd[d];\Rd[d]))$
for all $i\in\{1,\ldots,P\}$. Moreover, for each time $t\in [0,T]$, there exists
an $\mathcal{H}^{d-1}$ null set $\Gamma_t\subset\Rd[d]$ such that
for $\Gamma:=\bigcup_{t\in[0,T]}\Gamma_t{\times}\{t\}$
it holds $\xi_i \in (C^0_tC^1_x \cap C^1_tC^0_x)(\Rd[d]{\times}[0,T] \setminus \Gamma)$
for all $i\in\{1,\ldots,P\}$ and~$B\in C^0_tC^1_x(\Rd[d]{\times}[0,T] \setminus \Gamma)$. 
Finally, there exists $C>0$ such that $$\sup_{t\in[0,T]}\sup_{x\in\Rd[d]\setminus\Gamma_t}
|\nabla B(x,t)| + |\nabla\xi_i(x,t)| + |\partial_t\xi_i(x,t)| \leq C.$$

\item[ii)] For~$i,j\in\{1,\ldots,P\}$ with~$i\neq j$, define the vector field
\begin{align}
\label{Calibrations}
\xi_{i,j}:=\frac{1}{\sigma_{i,j}}(\xi_i-\xi_j) \quad\text{in } \Rd{\times}[0,T].
\end{align}
Denoting by~$\bar{\vec{n}}_{i,j}$ the unit normal vector field along the interface~$\barI_{i,j}$
(pointing from the $i$\emph{th} into the $j$\emph{th} phase), it is then required that
\begin{align}
\label{ExtensionProperty}
\xi_{i,j} = \bar{\vec{n}}_{i,j} \quad\text{along } \barI_{i,j}.
\end{align}
Moreover, there exists~$c\in (0,1)$ such that
a coercivity estimate in terms of the length of the vector field~$\xi_{i,j}$
holds true:
\begin{align}
\label{LengthControlXi}
|\xi_{i,j}(x,t)|\leq 1- c \min\{\dist^2(x,{\bar{I}}_{i,j}(t)),1\},
\quad (x,t) \in \Rd{\times}[0,T].
\end{align}

\item[iii)] The vector field~$B$ represents a velocity field for the partition
$(\bar\Omega_1,\ldots,\bar\Omega_P)$  in the sense that the following
two approximate evolution equations hold true for the vector fields~$\xi_{i,j}$,
$i,j\in\{1,\ldots,P\}$ with~$i\neq j$,
\begin{align}
\label{TransportEquationXi}
\big|\partial_t \xi_{i,j} {+} (B\cdot \nabla) \xi_{i,j} {+} (\nabla B)^{\mathsf{T}} \xi_{i,j} \big|(x,t)
&\leq C \min\{\dist(x,{\bar{I}}_{i,j}(t)),1\},
\\
\label{LengthConservation}
\big|\partial_t |\xi_{i,j}|^2 + (B\cdot \nabla) |\xi_{i,j}|^2 \big|(x,t)
&\leq C \min\{\dist^2(x,{\bar{I}}_{i,j}(t)),1\},
\end{align}
for some $C>0$ and all $(x,t)\in\Rd{\times}[0,T]$.

\item[iv)] The velocity~$B$ represents motion by multiphase mean curvature (i.e.,
the $L^2$-gradient flow with respect to the interface energy~\eqref{def:energyFunctional}) in the sense that
there exists a constant $C>0$ such that
\begin{align}
\label{Dissip}
\big|\xi_{i,j}\cdot B+\nabla \cdot \xi_{i,j}\big|
\leq
C \min\{\dist(x,{\bar{I}}_{i,j}(t)),1\},
\quad (x,t) \in \Rd{\times}[0,T].
\end{align}
\end{itemize}
\end{subequations}

If a gradient-flow calibration exists, we say that the evolving
partition $(\bar\Omega_1,\ldots,\bar\Omega_P)$ is \emph{calibrated on~$[0,T]$}.
\end{definition}

Note that the required regularity from the first item of the above 
definition is slightly less than what is actually stated 
in~\cite[Definition~2]{FischerHenselLauxSimon2D}. However,
it is easy to see that this regularity is still sufficient to ensure
the validity of~\cite[Theorem~3]{FischerHenselLauxSimon2D}.

The main result of the present work is now that
any sufficiently regular and smoothly evolving
double bubble admits an associated gradient-flow calibration.

\begin{theorem}[Existence of gradient-flow calibrations]
\label{prop:existenceGradientFlowCalibration}
Let~$T\in (0,\infty)$, let $\sigma\in \Rd[3\times 3]$ be an admissible matrix of surface tensions, and let
$(\bar\Omega_1,\bar\Omega_2,\bar\Omega_3)$ be a regular 
double bubble smoothly evolving by~MCF on~$[0,T]$ in the sense of 
Definition~\ref{def:smoothSolution}. 
Then $(\bar\Omega_1,\bar\Omega_2,\bar\Omega_3)$
is calibrated on~$[0,T]$ in the sense of Definition~\ref{def:gradientFlowCalibration}.
\end{theorem}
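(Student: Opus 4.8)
The plan is to construct the calibration $((\xi_i)_{i},B)$ by a localize-then-glue strategy: build it separately (i) in a tubular neighborhood of each smooth interface $\barI_{i,j}$ away from the triple line $\trLine$, (ii) in a neighborhood of the triple line itself, and (iii) in the bulk of each phase, and then patch these via a partition of unity subordinate to this cover. For step (i), near a single interface the standard recipe works: extend the normal $\no_{i,j}$ to a vector field $\xi_{i,j}$ by $\xi_{i,j}(x,t):=\eta(\sdist(x,\barI_{i,j}(t)))\,\no_{i,j}(P_{i,j}(x,t),t)$ where $P_{i,j}$ is the nearest-point projection and $\eta$ is a fixed smooth even cutoff with $\eta(0)=1$, $\eta'(0)=0$, $|\eta(s)|\le 1-c\min\{s^2,1\}$; the velocity $B$ is the extension of the normal velocity $-(\sum_k \kappa_k)\no_{i,j}$ (mean-curvature vector) plus a tangential component, chosen so that the transport and length equations hold to the stated orders --- this is exactly the 2D computation of \cite{FischerHenselLauxSimon2D} carried out fiberwise over the interface.

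The core of the argument is step (ii), the construction in a neighborhood of the triple line $\trLine(t)$, and this is where I expect the real difficulty to lie. As the introduction flags, a naive slicing by normal planes to $\trLine$ does not produce torsion-free tangent frames on the three sheets, so one cannot simply import the planar network construction plane by plane. The plan is: parametrize a tube around $\trLine(t)$ by $(s,z)$ with $s$ arclength along $\trLine$ and $z$ the position in the normal plane; in each normal plane use the 2D "three half-lines at $120^\circ$ with the correct angles dictated by $\sigma$" model configuration and its associated planar calibration vectors $\xi_i^{\mathrm{2D}}(z)$; then introduce an $s$-dependent gauge rotation $\rotation(s,t)\in SO(3)$ (or $SO(2)$ in the normal bundle) and set $\xi_i := \rotation(s,t)\,\xi_i^{\mathrm{2D}}(z)$, choosing $\rotation$ so as to cancel the torsion of the adapted frame along $\trLine$ --- concretely $\rotation$ solves an ODE in $s$ whose right-hand side is the connection/torsion coefficient of the Frenet-type frame of $\trLine$. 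One must then verify: that the resulting $\xi_{i,j}=\frac1{\sigma_{i,j}}(\xi_i-\xi_j)$ restricts to $\no_{i,j}$ on each sheet, that the length bound \eqref{LengthControlXi} survives the gauging (rotations are isometries, so this reduces to the planar bound plus control of the distance distortion), and that Herring's condition (the angle condition built into $\sigma$) is exactly what makes the three planar vectors glue into $C^1$ fields across $\trLine$ --- i.e. the first-order Taylor expansions in $z$ of the three sheet-normals agree where they must. Finally $B$ near $\trLine$ is taken as the extension of the velocity of $\trLine$ itself together with the normal velocities of the sheets, and one checks \eqref{TransportEquationXi}--\eqref{Dissip} hold to the required orders; the $z$-linear error terms are acceptable because the right-hand sides are $O(\dist)$ or $O(\dist^2)$.

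For step (iii), in the interior of each $\bar\Omega_i$ (away from all interfaces) there is nothing to match, so one simply lets the cutoffs bring all vector fields smoothly to zero; since \eqref{LengthControlXi} and the right-hand sides of \eqref{TransportEquationXi}--\eqref{Dissip} all degenerate like powers of $\dist$, the estimates there are trivial. The gluing in step then requires that the local constructions from (i) and (ii) agree to sufficiently high order on the overlaps --- this is the "compatible to first order along the triple line" claim advertised in the introduction, and it is really a consequence of having used the \emph{same} nearest-point-projection-based extension of the \emph{same} normals, now both expressed in the gauged frame. The regularity claims in item i) of Definition~\ref{def:gradientFlowCalibration} follow because everything is built from smooth data of the evolving double bubble (the interfaces, $\trLine$, their velocities and curvatures) composed with smooth cutoffs and the nearest-point projections, which are smooth on the stated complement of an $\mathcal{H}^{d-1}$-null set $\Gamma$ (the cut locus together with $\trLine$ itself); the uniform bounds in item i) come from compactness of $[0,T]$ and of the relevant tubular neighborhoods. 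The existence of transported weights $(\vartheta_i)$ is a separate, comparatively soft construction handled by solving a transport equation along $B$, and is deferred to its own section. Thus the whole statement reduces to the triple-line construction, and that is the step I would budget essentially all of the effort for.
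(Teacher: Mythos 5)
Your high-level strategy (localize at interfaces and at the triple line, glue by a partition of unity, and in a tube around $\trLine$ build a gauged version of the planar construction sliced over the normal planes) matches the paper's. You also correctly identify that Herring's condition is what makes the three sheet-normals gluable across $\trLine$, and that the serious work is all in the triple-line step.

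However, the gauge rotation you propose is the wrong one, and this is not a cosmetic point. You take $\rotation(s,t)$ depending only on arclength $s$ along $\trLine$ and time $t$, solving an ODE in $s$ to ``cancel the torsion of the Frenet-type frame of $\trLine$,'' and then apply it uniformly in each normal slice via $\xi_i := \rotation(s,t)\,\xi_i^{\mathrm{2D}}(z)$. But the obstruction that forces the gauge in this paper is not the torsion of the \emph{curve} $\trLine$ as a space curve --- that can always be absorbed by a parallel frame and is harmless. The obstruction is that, as one moves along a sheet $\barI_{i,j}$ away from $\trLine$, the normal $\no_{i,j}$ twists out of the original normal plane at a rate controlled by the off-diagonal part $\kappa_{\ta\taTrJ}$ of that sheet's shape operator, and this rate is \emph{different for each of the three sheets}. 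Correcting this requires, for each interface $\barI_{i,j}$ separately, a rotation $R_{\no_{i,j}}(x,t)$ about the \emph{interface normal axis} $\no_{i,j}$ whose angle depends on the full spatial position: in the paper, the angle is the sum of $\delta(x,t)=s_{i,j}(x,t)\,\kappa_{\ta\taTrJ}(x,t)$ (linear in the signed distance to the interface) and $\omega(x,t)$, which solves an ODE along the integral curves of the in-sheet tangent $\ta_{i,j}$ (i.e., in the direction \emph{perpendicular} to $\trLine$ within the sheet), not along $\trLine$. An $\rotation$ depending only on $s$ is constant in both the $\ta$- and $\no$-directions, so it cannot affect $(\ta\cdot\nabla)$- or $(\no\cdot\nabla)$-derivatives at all, and the required first-order matching $\nabla(\widetilde R'_{\barI}\widetilde\xi)=\nabla\widetilde\xi'$ along $\trLine$ (Lemma~\ref{lem:firstOrderCompXi}) would fail in exactly those directions whenever $\kappa_{\ta\taTrJ}\neq 0$ or $\nabla\cdot\taTrJ\neq 0$ (i.e., in any configuration that is neither rotationally symmetric nor translation-invariant). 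So your construction would only succeed in the two model cases flagged in the introduction; for a general double bubble, the gauged Herring rotations must be position-dependent in the tubular neighborhood of each sheet, not just along $\trLine$.

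A smaller, fixable discrepancy: you bake the length decay $\eta(\sdist)$ into the local interface construction itself, whereas the paper keeps the local $\xi_{i,j}$ unit-length (Construction~\ref{gradientFlowCalibrationInterface}) and enforces the coercivity estimate~\eqref{LengthControlXi} only after gluing, via the carefully designed partition of unity with quadratic profile near $\mathcal{I}$ (Lemma~\ref{lemma:partitionOfUnity}). Keeping the local fields unit-length is important because the transport and dissipation identities at the interface then hold \emph{exactly} rather than with extra error terms from differentiating $\eta$; your version would need to re-verify that those errors are $O(\dist)$, which is not automatic near $\trLine$.
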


It turns out that the existence of a gradient-flow calibration 
already implies a quantitative inclusion principle for the 
surface cluster of general BV~solutions to multiphase mean curvature flow,
see~\cite[Theorem~3]{FischerHenselLauxSimon2D}. More precisely,
if at the initial time each interface of a BV~solution is contained
in the corresponding interface of a calibrated flow, then this
inclusion property remains to be satisfied as long as the calibrated flow exists.
Furthermore, this qualitative property is in fact a consequence of
a quantitative stability estimate for the interface error between
a general BV~solution and a calibrated flow (formulated in terms
of an error functional of the form~\eqref{eq:defInterfaceError} below).

The inclusion principle, however, is of course consistent with
the vanishing of a phase in the BV~solution, so that
weak-strong uniqueness cannot be derived by means of a
gradient-flow calibration alone. In order to get a control on the
bulk deviations of the phases, one relies on an additional input
which can be formalized as follows.

\begin{definition}[Family of transported weights]
\label{def:weights}
Let~$T\in (0,\infty)$ be a time horizon, and let
$\sigma\in\Rd[P{\times}P]$ be an admissible matrix of surface tensions
satisfying the strict triangle inequality
for~$P\geq 2$ phases. Let~$d\geq 2$, and let $(\bar\Omega_1,\ldots,\bar\Omega_P)$ be an evolving partition of finite
interface energy on~$\Rd{\times}[0,T]$ in the sense of Definition~\ref{def:partition}, 
and denote by~$(\bar\chi_1,\ldots,\bar\chi_P)$ the associated
family of indicator functions. We then in addition assume that
the measure~$\partial_t\bar\chi_i$ is absolutely continuous with 
respect to the measure~$|\nabla\bar\chi_i|$, and that~$\partial\bar\Omega_i(\cdot,t)$
is Lipschitz regular for all~$t\in [0,T]$. Consider finally a
velocity vector field~$B\in C^0([0,T];C^1_{\mathrm{cpt}}(\Rd[d];\Rd[d]))$.

A map~$\vartheta=(\vartheta_i)_{i\in\{1,\ldots,P\}}\colon\Rd\times[0,T]\to[-1,1]^P$
is called a \emph{family of transported weights for~$((\bar\Omega_1,\ldots,\bar\Omega_P),B)$} if it
satisfies the following list of properties:
\begin{itemize}[leftmargin=0.7cm]
\item[i)] In terms of regularity, we require $\vartheta_i\in (W^{1,1} \cap W^{1,\infty})(\Rd{\times}[0,T];[-1,1])$
					for all~$i\in\{1,\ldots,P\}$.
\item[ii)] We require that~$\vartheta_i(\cdot,t)=0$ on~$\partial\bar\Omega_i(t)$,
					 and $\vartheta_i(\cdot,t)>0$ in the essential exterior resp.\ $\vartheta_i(\cdot,t)<0$
					 in the essential interior of~$\bar\Omega_i(\cdot,t)$ for all~$i\in\{1,\ldots,P\}$
					 and all~$t\in [0,T]$.
\item[iii)] Each weight is approximately advected by the velocity~$B$ in form of
						\begin{align}
						\label{eq:advectionWeights}
						|\partial_t\vartheta_i + (B\cdot\nabla)\vartheta_i| \leq C|\vartheta_i|
						\quad\text{on } \Rd{\times} [0,T],\,i\in\{1,\ldots,P\}.
						\end{align}
\end{itemize}
\end{definition}

The existence of a family of transported weights is precisely what is
needed to derive a quantitative stability estimate for the bulk
error between a general BV~solution and a calibrated flow
(formulated in terms of an error functional 
of the form~\eqref{eq:defBulkError} below), which
together with the already mentioned quantitative inclusion principle
then implies a weak-strong uniqueness principle
for BV~solutions of multiphase mean curvature flow,
see~\cite[Proposition~5]{FischerHenselLauxSimon2D}.

It is therefore of interest to extend the 2D~existence result
from~\cite{FischerHenselLauxSimon2D} to the  3D~setting of 
any sufficiently regular and smoothly evolving double bubble.

\begin{proposition}[Existence of a family of transported weights]
\label{prop:existenceWeights}
Let~$T\in (0,\infty)$ be a time horizon, and let
$(\bar\Omega_1,\bar\Omega_2,\bar\Omega_3)$ be a regular 
double bubble smoothly evolving by~MCF on~$[0,T]$ in the sense of 
Definition~\ref{def:smoothSolution}. Let~$B$ denote the
velocity field from the gradient-flow calibration on~$[0,T]$
associated with $(\bar\Omega_1,\bar\Omega_2,\bar\Omega_3)$,
whose existence in turn is guaranteed by
Theorem~\ref{prop:existenceGradientFlowCalibration}.
Then there exists an associated family of transported weights~$(\vartheta_i)_{i\in\{1,2,3\}}$
on~$[0,T]$ with respect to the data $((\bar\Omega_1,\bar\Omega_2,\bar\Omega_3),B)$
in the precise sense of Definition~\ref{def:weights}.
\end{proposition}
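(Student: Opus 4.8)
The plan is to construct each weight $\vartheta_i$ locally near the corresponding boundary $\partial\bar\Omega_i$ and then glue these local pieces together using a partition of unity. Since $\bar\Omega_i$ is one of the three chambers of a smoothly evolving regular double bubble, its boundary $\partial\bar\Omega_i(t)$ consists of the union of two smooth interfaces meeting along the triple line $\bar\Gamma(t)$ at the prescribed Herring angle. First, in a tubular neighborhood of each interface $\bar I_{i,j}$ away from the triple line I would set $\vartheta_i$ to be (a truncation of) the signed distance function $\mathrm{sdist}(\cdot,\partial\bar\Omega_i)$, say $\vartheta_i := \beta(\mathrm{sdist}(x,\partial\bar\Omega_i))$ for a smooth odd cutoff $\beta$ with $\beta'(0)=1$, $\beta$ linear near $0$ and constant (equal to $\pm 1$ in sign) outside a small interval; note the sign convention matches item~ii) since $\vartheta_i<0$ in the interior and $>0$ in the exterior of $\bar\Omega_i$. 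The regularity of the smooth solution guarantees $\mathrm{sdist}(\cdot,\partial\bar\Omega_i)$ is smooth in such a one-sided-free neighborhood, so $\vartheta_i\in W^{1,\infty}$ there, and by the finite time horizon and boundedness of the geometry also $W^{1,1}$.

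Next comes the genuinely three-dimensional point: near the triple line $\bar\Gamma$ the signed distance to $\partial\bar\Omega_i$ is only Lipschitz (it has a corner where the two interfaces meet), so one must check that the advection estimate~\eqref{eq:advectionWeights} still holds there. The key observation is that the Herring angle condition makes the opening angle of each chamber strictly less than $\pi$, so in a normal plane to $\bar\Gamma$ the chamber $\bar\Omega_i$ looks like a wedge, and $\mathrm{sdist}(\cdot,\partial\bar\Omega_i)$ equals the minimum of the two signed distances to the two bounding interfaces — this is a Lipschitz function whose gradient, where it exists, has unit length and is the inward or outward normal of whichever interface is closer. I would exploit the calibration constructed in Theorem~\ref{prop:existenceGradientFlowCalibration}: the velocity field $B$ is built so that it transports the geometry to first order along each interface and, crucially, along the triple line as well (this is exactly what item~iii) of Definition~\ref{def:gradientFlowCalibration} encodes via the transport equation for $\xi_{i,j}$). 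Concretely, since $\partial_t\,\mathrm{sdist} + (B\cdot\nabla)\,\mathrm{sdist}$ vanishes to first order on each interface — because $B$ is a normal velocity matching mean curvature and $\xi_{i,j}=\bar{\vec n}_{i,j}$ on $\bar I_{i,j}$ — the function $s_{i,j}(x,t):=\mathrm{sdist}(x,\bar I_{i,j})$ satisfies $|\partial_t s_{i,j} + (B\cdot\nabla)s_{i,j}|\le C\,|s_{i,j}|$, hence $\le C\,\mathrm{dist}(x,\partial\bar\Omega_i)$ on the side where $s_{i,j}$ realizes the signed distance to $\partial\bar\Omega_i$; taking the minimum of the two and composing with the odd cutoff $\beta$ (which is $1$-Lipschitz and satisfies $|\beta(r)|\ge c\min\{|r|,1\}$, $|\beta(r)|\le|r|$) preserves the bound $|\partial_t\vartheta_i + (B\cdot\nabla)\vartheta_i|\le C|\vartheta_i|$. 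The truncation away from $\partial\bar\Omega_i$ is harmless because there $\vartheta_i$ is locally constant, so both sides of the estimate vanish.

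Then I would assemble the global weight: pick a small $\delta>0$ so that the $\delta$-neighborhoods behave well, let $\vartheta_i$ be defined as above in the $\delta$-neighborhood of $\partial\bar\Omega_i$ and extended by $+1$ on the rest of the essential exterior and by $-1$ on the rest of the essential interior of $\bar\Omega_i$ — this extension is continuous and Lipschitz because $\beta$ is already constant near the edge of the tube, and it is globally defined on $\mathbb R^3\times[0,T]$. One checks item~i) ($W^{1,1}\cap W^{1,\infty}$, using compactness of the relevant neighborhoods in space and finiteness of $T$), item~ii) (sign and vanishing on $\partial\bar\Omega_i$, immediate from the construction and $\beta(0)=0$, $\beta'(0)>0$), and item~iii) (the advection estimate, from the two-interface analysis above together with the fact that $B$ transports the triple line to first order). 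I expect the main obstacle to be exactly the estimate near the triple line: one has to verify carefully that taking the pointwise minimum of the two signed distances does not destroy the first-order transport cancellation — i.e., that on the region where the minimum switches from one interface to the other (the bisector half-plane of the wedge), both advective derivatives still match to the required order. This follows because on that bisector the two signed distances are equal and, by the compatibility of the calibration along $\bar\Gamma$ (the frames $\xi_{i,j}$ agree to first order there as part of the construction in Section~\ref{sec:existenceCalibrationTripleLine}), their advective derivatives also agree to first order, so the minimum is advected consistently. The remaining verifications are routine consequences of the smoothness of the double bubble and the properties of the calibration velocity $B$ already established.
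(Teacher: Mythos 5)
Your overall strategy---truncated (signed) distance to $\partial\bar\Omega_i$, glued to $\pm 1$ outside a tube, and the advection estimate propagated from the identity $\partial_t s_{i,j}+(B^{\barI_{i,j}}\cdot\nabla)s_{i,j}=0$ on each interface---is the right shape of argument, and your observation that the bisector kink of the min is harmless a.e.\ is essentially sound. But there are two genuine gaps relative to what Definition~\ref{def:weights} actually asks for.

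First, and most concretely, your construction is \emph{not} in $W^{1,1}(\Rd[3]{\times}[0,T])$ as you claim. After extending by $\pm 1$ outside the tube, $\vartheta_i\equiv\pm 1$ on the (unbounded) complement of a compact set, so $\int_{\Rd[3]}|\vartheta_i|\,dx=\infty$ for every $t$; compactness of the support of $\nabla\vartheta_i$ and finiteness of $T$ do not help because the obstruction is integrability of the \emph{function}, not of its derivative. The paper fixes this with a separate final step (Step~3 of the proof of Proposition~\ref{prop:existenceWeights}): one multiplies the auxiliary weight $\hat\vartheta_i$ by an integrable radial cut-off $\eta_R(x)=\kappa(\exp(R-|x|))$ chosen so that $|\nabla\eta_R|\leq C|\eta_R|$ and $\eta_R\equiv 1$ on the relevant compact set. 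This factor preserves properties ii) and iii) (the advection estimate~\eqref{eq:advectionWeights} survives multiplication by $\eta_R$ precisely thanks to $|\nabla\eta_R|\leq C|\eta_R|$) while restoring $W^{1,1}$. You have to add an analogous step.

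Second, your single formula $\vartheta_i=\beta(\mathrm{sdist}(\cdot,\partial\bar\Omega_i))$ is genuinely different from the paper's case-by-case definition~\eqref{eq:defWeight1}--\eqref{eq:defWeight7}, and this difference hides work that you have not addressed. In the ``opposite'' interface wedge $W_{\barI_{j,k}}$ (and part of the adjacent interpolation wedges $W_{\bar\Omega_j}$, $W_{\bar\Omega_k}$), a point is typically \emph{outside} the tubular neighborhoods $\mathrm{im}(\Psi_{i,j})$ and $\mathrm{im}(\Psi_{k,i})$, so neither signed distance $s_{i,j}$ nor $s_{k,i}$ is defined there; $\mathrm{sdist}(\cdot,\partial\bar\Omega_i)$ is then realized by nearest points on (or near) the triple line, and the identity $\partial_t s+B\cdot\nabla s=0$ you invoke simply does not apply. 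The paper avoids this by replacing the weight in this region by $\vartheta_{\mathrm{ext}}=\vartheta(\dist(\cdot,\trLine)/\hat r)$ and smoothly interpolating between $\vartheta_{i,j}$, $\vartheta_{k,i}$ and $\vartheta_{\mathrm{ext}}$ across the interpolation wedges, and it verifies the advection estimate for $\vartheta_{\mathrm{ext}}$ separately via the transport identity~\eqref{eq:transportByGlobalVelDistanceTripleLine} for $\dist(\cdot,\trLine)$. If you insist on the uniform formula, you must still argue that $\partial_t\,\mathrm{sdist}(\cdot,\partial\bar\Omega_i)+(B\cdot\nabla)\,\mathrm{sdist}(\cdot,\partial\bar\Omega_i)$ is controlled in the region where the nearest point lies on (or near) $\trLine$; this is an extra Lipschitz/viscosity-solution argument that your sketch does not contain, and it is essentially equivalent to what the interpolation with $\vartheta_{\mathrm{ext}}$ makes explicit. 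Finally, a smaller point: the first-order cancellation you use, $|\partial_t s_{i,j}+(B\cdot\nabla)s_{i,j}|\leq C\dist$, does not follow from the abstract calibration axioms alone but from the \emph{specific} assembled structure of $B$ (cf.\ \eqref{eq:bulkCutoffAwayTripleLine}, \eqref{eq:bulkCutoffInterfaceWedge}, \eqref{eq:bulkCutoffInterpolWedge} combined with \eqref{eq:evolSignedDistanceAux1000}, \eqref{eq:compEstimate3} and \eqref{eq:compDistances}); you should either cite this structure or reprove it.
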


\subsection{Weak-strong uniqueness and stability of evolutions}
Combining Theorem~\ref{prop:existenceGradientFlowCalibration} 
and Proposition~\ref{prop:existenceWeights} with the conditional stability 
of any calibrated MCF in arbitrary dimensions~\cite[Proposition~5]{FischerHenselLauxSimon2D}, 
we obtain the following weak-strong uniqueness principle for
distributional (i.e., BV) solutions to multiphase~MCF in three dimensions.

\begin{theorem}[Weak-strong uniqueness and quantitative stability]
	\label{theorem:mainResult}
Let~$T\in (0,\infty)$ be a time horizon, $d=3$, $P=3$,
and~$\sigma\in\Rd[3{\times}3]$ be a surface tension matrix
satisfying the strict triangle inequality. Let
$\bar\Omega=(\bar\Omega_1,\bar\Omega_2,\bar\Omega_3)$ be a regular 
double bubble smoothly evolving by~MCF on~$[0,T]$ in the sense of 
Definition~\ref{def:smoothSolution} (with respect to~$\sigma$), 
and let $\Omega=(\Omega_1,\Omega_2,\Omega_3)$ be a $BV$ solution
to multiphase~MCF in the sense of~\cite[Definition~13]{FischerHenselLauxSimon2D}
(again with respect to~$\sigma$).

If the initial conditions of the regular double bubble and the BV~solution coincide,
then the solutions also coincide for later times on~$[0,T]$. More precisely, 
\begin{align*}
&\mathcal{L}^3\big(\big(\Omega_i(0) \setminus \bar\Omega_i(0)\big)
\cup \big(\bar\Omega_i(0) \setminus \Omega_i(0)\big)\big) 
= 0 \text{ for all } i\in\{1,2,3\}
\\
&\Rightarrow \,\,
\mathcal{L}^3\big(\big(\Omega_i(t) \setminus \bar\Omega_i(t)\big)
\cup \big(\bar\Omega_i(t) \setminus \Omega_i(t)\big)\big)  = 0
\text{ for a.e.\ } t\in[0,T] \text{ and all } i\in\{1,2,3\}.
\end{align*}

Moreover, we have quantitative stability estimates in the following sense.
Denote by~$(\xi:=(\xi_i)_{i\in\{1,2,3\}},B)$ the gradient-flow calibration
on~$[0,T]$ from Theorem~\ref{prop:existenceGradientFlowCalibration}
with respect to $(\bar\Omega_1,\bar\Omega_2,\bar\Omega_3)$,
and denote by~$(\vartheta_i)_{i\in\{1,2,3\}}$ the corresponding family of transported
weights on~$[0,T]$ from Proposition~\ref{prop:existenceWeights}.
Let~$\vec{n}_{i,j}(\cdot,t)$ be the measure theoretic unit normal
along the interface $\partial^*\Omega_i(t)\cap\partial^*\Omega_j(t)$
pointing from~$\Omega_i(t)$ into~$\Omega_j(t)$, $t\in [0,T]$.
Then, the error functionals defined for all~$t\in [0,T]$ by
\begin{align}
\label{eq:defInterfaceError}
E[\Omega|\xi](t)
&:= \sum_{i,j\in\{1, 2, 3\},\,i\neq j} \sigma_{i,j}
\int_{\partial^*\Omega_i(t)\cap\partial^*\Omega_j(t)}
1{-}\vec{n}_{i,j}(\cdot,t)\cdot\xi_{i,j}(\cdot,t)
\,\mathrm{d}\mathcal{H}^2,
\\
\label{eq:defBulkError}
E[\Omega|\bar\Omega](t)
&:= \sum_{i=1}^{3} \int_{(\Omega_i(t) \setminus \bar\Omega_i(t))
\cup (\bar\Omega_i(t) \setminus \Omega_i(t))}
|\vartheta_i(\cdot,t)| \,\mathrm{d}x
\end{align}
satisfy the stability estimates
\begin{align}
\label{eq:stabilityEstimateInterfaceError}
E[\Omega|\xi](t) &\leq E[\Omega|\xi](0)e^{Ct},
\\
\label{eq:stabilityEstimateBulkError}
E[\Omega|\bar\Omega](t) &\leq \big(E[\Omega|\xi](0)
{+}E[\Omega|\bar\Omega](0)\big) e^{Ct}
\end{align}
for almost every~$t\in [0,T]$.
The constant~$C>0$ in these estimates depends only on the data of the 
smoothly evolving regular double bubble~$(\bar\Omega_1,\bar\Omega_2,\bar\Omega_3)$ on~$[0,T]$
through the explicit constructions~$((\xi_i)_{i\in\{1,2,3\}},B)$ and~$(\vartheta_i)_{i\in\{1,2,3\}}$.
\end{theorem}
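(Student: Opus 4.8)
The plan is to obtain Theorem~\ref{theorem:mainResult} as a direct corollary of the two new existence results of the present work together with the conditional, dimension-agnostic stability principle~\cite[Proposition~5]{FischerHenselLauxSimon2D}. Concretely, I would first invoke Theorem~\ref{prop:existenceGradientFlowCalibration} to produce the gradient-flow calibration $((\xi_i)_{i\in\{1,2,3\}},B)$ on~$[0,T]$ associated with the smoothly evolving regular double bubble~$(\bar\Omega_1,\bar\Omega_2,\bar\Omega_3)$, and then invoke Proposition~\ref{prop:existenceWeights} to produce a family of transported weights~$(\vartheta_i)_{i\in\{1,2,3\}}$ for the pair~$((\bar\Omega_1,\bar\Omega_2,\bar\Omega_3),B)$ with respect to the \emph{same} velocity field~$B$. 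The next step is a verification, rather than a computation: one checks that in the case $P=3$, $d=3$ the data so constructed meet exactly the hypotheses required by~\cite[Proposition~5]{FischerHenselLauxSimon2D}, namely the regularity and the relations~\eqref{ExtensionProperty}--\eqref{Dissip} from Definition~\ref{def:gradientFlowCalibration} for the calibration, the properties (i)--(iii) of Definition~\ref{def:weights} for the weights, and the structural requirements on the competitor~$\Omega=(\Omega_1,\Omega_2,\Omega_3)$ built into the notion of a BV~solution~\cite[Definition~13]{FischerHenselLauxSimon2D}. Here I would also spell out that the mildly weakened item~i) of Definition~\ref{def:gradientFlowCalibration} (flagged in the remark after that definition) still suffices for the stability machinery, not only for~\cite[Theorem~3]{FischerHenselLauxSimon2D} but also for the bulk part of~\cite[Proposition~5]{FischerHenselLauxSimon2D}.

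Once the hypotheses are in place, the quantitative statements are immediate from the cited principle. The interface error functional~$E[\Omega|\xi]$ defined in~\eqref{eq:defInterfaceError} is controlled by testing the distributional formulation of multiphase MCF satisfied by~$\Omega$ against the calibration $(\xi_{i,j},B)$: the defining inequalities~\eqref{TransportEquationXi}--\eqref{Dissip}, combined with the coercivity~\eqref{LengthControlXi}, turn the time derivative of~$E[\Omega|\xi]$ into a right-hand side bounded by $C\,E[\Omega|\xi]$ plus controlled error terms, whence the Gronwall estimate~\eqref{eq:stabilityEstimateInterfaceError}. For the bulk error~$E[\Omega|\bar\Omega]$ from~\eqref{eq:defBulkError}, one differentiates in time, uses that~$\partial_t\bar\chi_i$ is absolutely continuous with respect to~$|\nabla\bar\chi_i|$ together with the transport inequality~\eqref{eq:advectionWeights} for~$\vartheta_i$, and closes the resulting differential inequality against~$E[\Omega|\bar\Omega](t)+E[\Omega|\xi](t)$; feeding in~\eqref{eq:stabilityEstimateInterfaceError} yields~\eqref{eq:stabilityEstimateBulkError}. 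The constant~$C$ is the maximum of the constants appearing in items~i)--iv) of Definition~\ref{def:gradientFlowCalibration} and item~iii) of Definition~\ref{def:weights}, which by Theorem~\ref{prop:existenceGradientFlowCalibration} and Proposition~\ref{prop:existenceWeights} depend only on the geometry of the smoothly evolving regular double bubble on~$[0,T]$.

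Finally, for the weak-strong uniqueness statement itself, suppose the initial data agree, i.e.\ $\mathcal{L}^3\big((\Omega_i(0)\setminus\bar\Omega_i(0))\cup(\bar\Omega_i(0)\setminus\Omega_i(0))\big)=0$ for all~$i$. Then on the one hand $E[\Omega|\bar\Omega](0)=0$ trivially, and on the other hand the coincidence of the bulk phases forces the reduced boundaries, hence the measure-theoretic unit normals~$\vec{n}_{i,j}(\cdot,0)$, to agree $\mathcal H^2$-a.e.\ with those of the strong solution; since $\xi_{i,j}=\bar{\vec n}_{i,j}$ along~$\barI_{i,j}$ by~\eqref{ExtensionProperty}, the integrand $1-\vec n_{i,j}\cdot\xi_{i,j}$ vanishes and $E[\Omega|\xi](0)=0$. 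The estimates~\eqref{eq:stabilityEstimateInterfaceError}--\eqref{eq:stabilityEstimateBulkError} then give $E[\Omega|\xi](t)=E[\Omega|\bar\Omega](t)=0$ for a.e.\ $t\in[0,T]$, and by property~(ii) of the transported weights ($|\vartheta_i(\cdot,t)|>0$ off $\partial\bar\Omega_i(t)$) the vanishing of $E[\Omega|\bar\Omega](t)$ is equivalent to $\mathcal{L}^3\big((\Omega_i(t)\setminus\bar\Omega_i(t))\cup(\bar\Omega_i(t)\setminus\Omega_i(t))\big)=0$ for all~$i$, which is the claimed conclusion. I do not expect a genuine obstacle here: the substance of the argument lies in Theorem~\ref{prop:existenceGradientFlowCalibration} and Proposition~\ref{prop:existenceWeights}, and the only point demanding care is the bookkeeping that matches the constructed objects to the precise input format of~\cite[Proposition~5]{FischerHenselLauxSimon2D} in the $P=3$, $d=3$ case, including the weakened regularity of item~i).
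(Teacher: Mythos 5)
Your proof is correct and follows exactly the paper's approach: Theorem~\ref{theorem:mainResult} is obtained by invoking Theorem~\ref{prop:existenceGradientFlowCalibration} and Proposition~\ref{prop:existenceWeights} and then applying the conditional weak-strong uniqueness and stability principle~\cite[Proposition~5]{FischerHenselLauxSimon2D}. The paper's own proof is just the one-line citation; your unpacking of how the Gronwall argument and the vanishing of the initial error functionals yield uniqueness is accurate but is already encapsulated in the cited result.
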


\begin{proof}
As mentioned above, this is a straightforward application of
Theorem~\ref{prop:existenceGradientFlowCalibration}, 
Proposition~\ref{prop:existenceWeights} and~\cite[Proposition~5]{FischerHenselLauxSimon2D}.
\end{proof}

\begin{remark}[Admissible surface tensions]\label{remark:surface tensions}
	Let us briefly comment on the matrix of surface tensions $\sigma\in \Rd[P\times P]$. 
	We say $\sigma$ is admissible if it satisfies precisely the assumption in~\cite[Definition~9]{FischerHenselLauxSimon2D}. 
	More concretely, we require that there exists a non-degenerate $(P{-}1)$-simplex $(q_1,\ldots,q_P)$
	in~$\Rd[P{-}1]$ which represents
	the surface tensions in form of~$\sigma_{i,j}=|q_i{-}q_j|$. 
	
	In the framework of the present paper, i.e., the case $P=3$, this is equivalent to the strict triangle inequality
	\begin{align}\label{eq:triangle inequality surface tensions}
		\sigma_{i,j} < \sigma_{i,k} + \sigma_{k,j} \quad \text{for all choices } \{i,j,k\} = \{1,2,3\}.
	\end{align}
	
	In the general case $P\geq 3$, the $\ell^2$-embeddability is in 
	fact stronger than \eqref{eq:triangle inequality surface tensions}, and it
	constitutes the key ingredient to construct the missing calibration vector 
	fields $(\xi_i)_{i\in\{4,\ldots,P\}}$, for which one may in fact argue
	along the same lines as in the proof of~\cite[Lemma~35]{FischerHenselLauxSimon2D}
	without requiring any additional ingredients from the constructions performed in this work.
\end{remark}

We emphasize that only for simplicity, we considered in Theorem~\ref{theorem:mainResult} 
the case of~$P=3$ phases on the level of the \emph{BV}~solution.
Let us briefly outline the additional ingredients which are needed
to establish the stability estimates~\eqref{eq:stabilityEstimateInterfaceError}
and~\eqref{eq:stabilityEstimateBulkError} in terms of general \emph{BV} solutions
$(\Omega_1,\ldots,\Omega_P)$, $P > 3$, defined on~$\Rd[3]{\times}[0,T]$
with respect to a given $\ell^2$-embeddable matrix of surface tensions~$\sigma=(\sigma_{i,j})_{i,j\in\{1,\ldots,P\}}\in\Rd[P{\times}P]$,
and a fixed regular double bubble~$(\bar\Omega_1,\bar\Omega_2,\bar\Omega_3)$
smoothly evolving by~MCF with respect to the restriction~$(\sigma_{i,j})_{i,j\in\{1,2,3\}}$
of the surface tension matrix~$\sigma\in\Rd[P{\times}P]$.

Recalling the definitions~\eqref{eq:defInterfaceError}
and~\eqref{eq:defBulkError}
of the error functionals (in which one only needs to replace $3$ by $P$ in the case $P>3$), it is clear that we have to augment
the gradient-flow calibration provided by Theorem~\ref{prop:existenceGradientFlowCalibration}
with additional calibrating vector fields~$(\xi_i)_{i\in\{4,\ldots,P\}}$,
and the family of transported weights by Proposition~\ref{prop:existenceWeights}
with additional weights~$(\vartheta_i)_{i\in\{4,\ldots,P\}}$,
such that the resulting augmented families adhere to
the requirements of Definition~\ref{def:gradientFlowCalibration} and Definition~\ref{def:weights}, respectively,
in order to allow for the desired application of~\cite[Proposition~5]{FischerHenselLauxSimon2D}.
For consistency with our definitions, let us interpret to this end the smoothly evolving regular double bubble
as a partition~$(\bar\Omega_1,\ldots,\bar\Omega_P)$ with the convention that $\bar\Omega_i:=\emptyset$
for all additional phases $i\in\{4,\ldots,P\}$ in the BV~solution.

Extending the family of transported weights is trivial since
we may define~$\vartheta_i:=1$ for all $i\in\{4,\ldots,P\}$, thus
representing consistently the fact that the additional phases
on the level of the smoothly evolving regular double bubble are empty.

Furthermore, the missing calibration vector fields
can be constructed along the lines of the proof 
of~\cite[Lemma~35]{FischerHenselLauxSimon2D}.
It is then straightforward 
that the associated additional vector fields
\begin{align*}
\sigma_{i,j}\xi_{i,j} := \xi_i - \xi_j,
\quad i\in \{4,\ldots,P\} \text{ or } j \in \{4,\ldots,P\},
\end{align*}
satisfy~\eqref{ExtensionProperty}--\eqref{Dissip} (together with the desired regularity).
Indeed, except for the coercivity condition~\eqref{LengthControlXi},
all these properties are trivially satisfied in terms of the
relevant additional pairs of indices since the associated interfaces on 
the level of the smoothly evolving regular double bubble are empty.
With respect to~\eqref{LengthControlXi}, the proof 
of~\cite[Lemma~37]{FischerHenselLauxSimon2D} applies verbatim
without requiring any additional ingredients from the constructions of this work.

We decided to restrict ourselves to the case~$P=3$ in the formulation of Theorem~\ref{theorem:mainResult}
because we view the main contribution of this paper to be the first part
of Theorem~\ref{theo:mainResultsCombined} (i.e., the combination of
Theorem~\ref{prop:existenceGradientFlowCalibration} and Proposition~\ref{prop:existenceWeights}),
and thus aim to shift the focus on the required $3D$~generalization of those results of~\cite{FischerHenselLauxSimon2D}
which are concerned with the given strong solution only (i.e., in the present work a
regular double bubble smoothly evolving by~MCF).

\subsection{Definition of a regular double bubble smoothly evolving by MCF}
This part is concerned with the formulation of a ``strong solution concept''
for a (topologically standard) double bubble moving by mean curvature flow,
for which we are then able to show that its flow is calibrated
in the precise sense of Definition~\ref{def:gradientFlowCalibration}.
We start with the associated energy functional. 

\begin{definition}[Partition with finite interface energy, see {\cite[Definition~12]{FischerHenselLauxSimon2D}}]
\label{def:partition}
Consider~$d\geq 2$, $P\geq 2$, and an admissible matrix of surface tensions  $\sigma\in\Rd[P{\times}P]$. 
Let $(\bar\Omega_1,\ldots,\bar\Omega_P)$ be
a family of measurable subsets of~$\Rd$ such that $\mathcal{L}^d(\Rd\setminus\bigcup_{i=1}^P\bar\Omega_i)=0$
and $\mathcal{L}^d(\bar\Omega_i\cap\bar\Omega_j)=0$ for all~$i,j\in\{1,\ldots,P\}$ with~$i\neq j$.
We then call $(\bar\Omega_1,\ldots,\bar\Omega_P)$ a \emph{partition of~$\Rd$ with finite interface
energy} if
\begin{align}
\label{def:energyFunctional}
E[(\bar\Omega_1,\ldots,\bar\Omega_P)] := \sum_{i,j\in\{1,\ldots,P\},\,i\neq j}
\sigma_{i,j} \mathcal{H}^{d-1}(\partial^*\bar\Omega_i\cap\partial^*\bar\Omega_j) < \infty.
\end{align}

Let next~$T\in (0,\infty)$ be a time horizon, and consider 
a family $(\bar\Omega_1,\ldots,\bar\Omega_P)$ of open subsets of~$\Rd{\times}[0,T]$
in the form of~$\bar\Omega_i=\bigcup_{t\in [0,T]}\bar\Omega_i(t){\times}\{t\}$
for all~$i\in\{1,\ldots,P\}$. In this evolutionary setting,
we call $(\bar\Omega_1,\ldots,\bar\Omega_P)$ an \emph{evolving partition
on~$\Rd{\times}[0,T]$ with finite interface energy}, if for all~$t\in [0,T]$
the family~$(\bar\Omega_1(t),\ldots,\bar\Omega_P(t))$ is a partition of~$\Rd$ with
finite interface energy in the above sense and it holds
\begin{align}
\sup_{t\in[0,T]} E[(\bar\Omega_1(t),\ldots,\bar\Omega_P(t))] < \infty.
\end{align}

For such an evolving partition, we denote the associated evolving
interfaces by $\barI_{i,j}:=\bigcup_{t\in [0,T]}\barI_{i,j}(t){\times}(t)$,
where~$\barI_{i,j}(t):=\partial^*\bar\Omega_i(t)\cap\partial^*\bar\Omega_j(t)$
for all~$t\in [0,T]$ and all pairs~$i,j\in\{1,\ldots,P\}$, $i\neq j$.
\end{definition}

In a next step, we formalize the topological setup as well as the main regularity
assumptions. We also state the main compatibility condition in form of the Herring angle condition.

\begin{definition}[Regular double bubble]
\label{def:tripleLineCluster}
Let $\sigma\in\Rd[3{\times}3]$ be an admissible matrix of surface tensions,
and consider a partition $(\bar{\Omega}_1,\bar\Omega_2,\bar{\Omega}_3)$ of~$\Rd[3]$ with finite interface 
energy in the sense of Definition~\ref{def:partition}. Assume in addition that~$\bar\Omega_i$
is an open, non-empty and simply connected subset of~$\Rd[3]$ such that $ \partial \bar \Omega_i$ 
is the closure of $\partial^* \bar \Omega_i$
for all $i\in\{1,2,3\}$. Define then for each~$i,j\in\{1,2,3\}$ with~$i\neq j$
the associated interface ${\bar{I}}_{i,j}:=\partial \bar{\Omega}_i\cap\partial \bar{\Omega}_j$,
which is assumed to be non-empty.

We call $(\bar\Omega_1,\bar\Omega_2,\bar\Omega_3)$ a \emph{regular double bubble} 
if the following additional regularity conditions are satisfied:
\begin{itemize}[leftmargin=0.7cm]
	\item[i)] Each interface~${\bar{I}}_{i,j}$ is a two-dimensional, compact and simply connected manifold 
	with boundary of class~$C^5$. The interior of each interface is embedded.
	\item[ii)] The three interfaces ${\bar{I}}_{1,2}$, ${\bar{I}}_{2,3}$, and ${\bar{I}}_{3,1}$ 
	intersect precisely along their respective boundary, which in turn is a non-empty one-dimensional, compact and 
	connected manifold~$\trLine$ without boundary of class~$C^5$. 
	\item[iii)] Along the triple line~$\trLine$, the Herring angle condition has to be satisfied:
	\begin{align}\label{HerringAngleCondition}
	\sigma_{1,2}\bar{\vec{n}}_{1,2} + \sigma_{2,3}\bar{\vec{n}}_{2,3} + \sigma_{3,1}\bar{\vec{n}}_{3,1} = 0,
	\end{align}
	where we denote by~$\bar{\vec{n}}_{i,j}$ the associated unit normal vector field along~$\bar I_{i,j}$
	pointing from~$\bar\Omega_i$ into~$\bar\Omega_j$.
	%
\end{itemize}
\end{definition}

With the notion of a regular double bubble in place, we finally clarify
what we mean by a (sufficiently) smooth evolution of a regular double bubble
with respect to mean curvature flow. It turns out that the construction
of an associated gradient-flow calibration in the vicinity of the
evolving triple line requires two additional higher-order
compatibility conditions. For a sufficiently smooth evolution
of a regular double bubble, these two compatibility conditions
are consequences of differentiating in time the assumed zeroth
order compatibility condition (i.e., the triple line being the 
common boundary of the three interfaces) 
or first order compatibility condition
(i.e., the Herring angle condition), respectively.
Since we will require regularity down to time $t=0$,
we have to include the resulting compatibility conditions
for the initial double bubble.

\begin{definition}[Regular double bubble smoothly evolving by~MCF]
\label{def:smoothSolution}
Let $\sigma\in\Rd[3{\times}3]$
be an admissible matrix of surface tensions.
Consider an associated initial partition~$(\bar\Omega_1^0,\bar\Omega_2^0,\bar\Omega_3^0)$
of~$\Rd[3]$ representing a regular double bubble in the sense of Definition~\ref{def:tripleLineCluster}.
Assume in addition that~$(\bar\Omega_1^0,\bar\Omega_2^0,\bar\Omega_3^0)$ satisfies
the following two higher-order compatibility conditions:

First, we require for the scalar mean curvatures in form of~$H^0_{i,j}:=-\nabla^\mathrm{tan}\cdot\no_{i,j}^0$ that
along the initial triple line~$\trLine^0$ it holds
\begin{align}
\label{InitialSecondOrderCompatibility}
\sigma_{1,2}H^0_{1,2} + \sigma_{2,3}H^0_{2,3} + \sigma_{3,1}H^0_{3,1} &= 0,
\end{align}
which by~\eqref{HerringAngleCondition} is equivalent to the existence
of a unique vector field~$\vec{V}_{\trLine^0}$ along~$\trLine^0$, which takes
values in the normal bundle~$\mathrm{Tan}^\perp\trLine^0$ such that
\begin{align*}
\bar{\vec{n}}^0_{i,j}\cdot\vec{V}_{\trLine^0} = H^0_{i,j}
\quad\text{along } \trLine^0 \text{ for all } i,j\in\{1,2,3\} \text{ with } i\neq j.
\end{align*}

Second, denoting by~$\taTrJ^{0}$ a unit length tangent vector field
along the initial triple line~$\trLine^0$ and defining~$\ta^0_{i,j}:=\no^0_{i,j}{\times}\taTrJ^0$
along~$\trLine^0$ for all~$i,j\in\{1,2,3\}$ with~$i\neq j$,
we require that along~$\trLine^0$ the quantity
\begin{align}
\label{InitialThirdOrderCompatibility}
-\big(\ta^0_{i,j}\otimes\ta^0_{i,j}:\nabla^\mathrm{tan} \no_{i,j}^0\big)
\big(\ta^0_{i,j}\cdot\vec{V}_{\trLine^0}\big)
+ (\ta^0_{i,j}\cdot\nabla^\mathrm{tan}) H^0_{i,j}
\end{align}
is independent of the choice of distinct~$i,j\in\{1,2,3\}$ at each point on~$\trLine^0$.

Let now~$T\in (0,\infty)$ be a time horizon, and consider
an evolving partition $(\bar\Omega_1,\bar\Omega_2,\bar\Omega_3)$
on~$\Rd[3]{\times}[0,T]$ with finite interface energy in the sense of Definition~\ref{def:partition}.
We call $(\bar\Omega_1,\bar\Omega_2,\bar\Omega_3)$ a \emph{regular double bubble
smoothly evolving by~MCF on~$[0,T]$} with initial data~$(\bar\Omega_1^0,\bar\Omega_2^0,\bar\Omega_3^0)$
if it satisfies:
\begin{itemize}[leftmargin=0.7cm]
\item[i)] For each~$t\in [0,T]$, the family $(\bar\Omega_1(t),\bar\Omega_2(t),\bar\Omega_3(t))$
					is a regular double bubble in the sense of Definition~\ref{def:tripleLineCluster}.
					Furthermore, the initial condition is attained in the sense that
					$(\bar\Omega_1(0),\bar\Omega_2(0),\bar\Omega_3(0))=(\bar\Omega_1^0,\bar\Omega_2^0,\bar\Omega_3^0)$.
\item[ii)] There exists a family of diffeomorphisms $\psi^t\colon\Rd[3]\to\Rd[3]$, $t\in [0,T]$,
					 such that it holds~$\psi^0(x)=x$ for all~$x\in\Rd[3]$, and $\bar\Omega_i(t)=\psi^t(\bar\Omega_i^0)$
					 as well as $\barI_{i,j}(t)=\psi^t(\barI_{i,j}^0)$ for all~$t\in [0,T]$
					 and all~$i,j\in\{1,2,3\}$, $i\neq j$. In addition, the map
					 \begin{align*}
					 \psi_{i,j}\colon\barI_{i,j}^0{\times}[0,T] \to \barI_{i,j},
					 \quad (x,t) \mapsto (\psi^t(x),t)
					 \end{align*}
					 is a diffeomorphism of class $(C^0_tC^5_x\cap C^1_tC^3_x)(\barI_{i,j}^0{\times}[0,T])$.
\item[iii)] For each $i,j\in\{1,2,3\}$ with $i\neq j$	and each~$(x,t)\in\barI_{i,j}$
						denote by~$\vec{V}_{\barI_{i,j}}(x,t)$ the normal velocity vector of~$\barI_{i,j}(t)$
						at~$x\in\barI_{i,j}(t)$. We then require motion by~MCF for each interface, i.e., 
						\begin{align}
						\label{NormalVelocityAndMeanCurvatureOnEachInterface}
						\big(\no_{i,j}\cdot\vec{V}_{\barI_{i,j}}\big)(x,t) = H_{i,j}(x,t),
						\quad (x,t) \in \barI_{i,j},\, i,j\in\{1,2,3\} \text{ with } i\neq j.
						\end{align}
\end{itemize}
\end{definition}

\begin{remark}
	The existence of regular double bubbles smoothly evolving by MCF according to Definition \ref{def:smoothSolution} is basically contained in the work of Depner, Garcke, and Kohsaka \cite{DepnerGarckeKohsaka}. 
	However, here we ask for higher regularity of the interfaces $\barI_{i,j}$ and higher compatibility along the triple line $\trLine$.
	The estimate away from the triple line corresponds to standard interior Schauder estimates while the  behavior close to the triple line can be achieved by an adaptation of the argument in \cite{DepnerGarckeKohsaka} similar to the one by Garcke and G\"{o}\ss wein \cite{GarckeGoesswein}, who derived these higher-order compatibility conditions in the context of the surface diffusion flow, a fourth-order geometric evolution equation.
	Of course one needs to assume that the initial conditions satisfy the corresponding regularity and the higher compatibility along the triple line $\trLine^0$. 
	To then generalize the construction, the main challenge is the adaptation of the linearized problem, where one now adds these higher-order compatibility conditions to the operator ${B}^{i,j}$ and therefore to the vector $(b^1,b^2,b^3)$ in \cite[Eq.~(27)]{DepnerGarckeKohsaka}, say, as two components $b^{4},\,b^{5}$.
	Then one can check the Lopatinskii-Shapiro conditions \cite[Lemma 3]{DepnerGarckeKohsaka} as we only added more rows to the output. 
\end{remark}

\subsection{Notation}
We briefly review the standard notation employed throughout the present work.
The notation of geometric quantities will be introduced in the course of the paper.

We write~$\mathcal{L}^d$ for the $d$-dimensional Lebesgue measure,
$\mathcal{H}^{s}$ for the $s$-dimensional Hausdorff measure,
as well as~$\partial^*D$ for the reduced boundary of a set of finite perimeter.
The standard Lebesgue spaces with respect to the Lebesgue measure
are denoted as always by~$L^p(D)$ for any~$p\in [0,\infty]$
and any measurable~$D\subset\Rd$, whereas in addition for any~$k\in\mathbb{N}$
we denote by~$W^{k,p}(D)$ the standard Sobolev space.
We further write~$C^k(D)$, $k\geq 0$, for the space of functions
with bounded and continuous derivatives up to order~$k$ on~$D\subset\Rd$. The intersection
with the space~$C^0_{\mathrm{cpt}}(D)$ of continuous and compactly supported functions on~$D$
is denoted by~$C^k_{\mathrm{cpt}}(D)$. Vector-valued versions of these function spaces
are denoted by~$L^p(D;\Rd)$ and so on. For a differentiable function~$f\colon D\to\Rd[m]$
we write~$\nabla f\in\Rd[m{\times}d]$ for its Jacobian matrix, i.e., it holds $(\nabla f)_{i,j}=\partial_j f_i$.
If $f\colon M\to\Rd[m]$ is a differentiable function along a given $C^1$ manifold $M$,
we denote by~$\nabla^\mathrm{tan}$ its tangential gradient.

For a space-time domain $D\subset\Rd{\times}[0,T]$
of the form $D=\bigcup_{t\in [0,T]} D(t){\times}\{t\}$ we write $C^l_tC^k_x(D)$, $l,k\geq 0$,
for the space of continuous functions~$f$ on~$D$ with continuous and bounded
partial derivatives~$\partial_t^{l'}\partial_x^{k'}f$ on~$D$ for
any~$0\leq l'\leq l$ and any multi-index~$k'$ such that~$0\leq |k'|\leq k$.
With a slight abuse of notation, the distance function~$\dist(\cdot,D)$ with respect to such a space-time domain~$D$
is understood as he distance to the corresponding time slice, i.e.,~$(x,t)\mapsto\dist(x,D(t))$ 
for all~$(x,t)\in \Rd{\times}[0,T]$.

In terms of vector and tensor notation, we denote by~$v{\times} w$ the cross product
between two vectors~$v,w\in\Rd[3]$, by~$v \wedge w := v\otimes w - w\otimes v$
the exterior product of~$v,w\in\Rd[3]$, and by~$A:B:= \sum_{i,j}A_{i,j}B_{i,j}$
the complete contraction of two matrices~$A,B\in\Rd[m{\times}n]$.
Abusing notation we will also write~$a\wedge b:=\min\{a,b\}$
for the minimum of two numbers~$a,b\in\Rd[]$; however, it will always be
perfectly clear from the context what the symbol~$\wedge$ represents.
We also occasionally use~$a\vee b:=\max\{a,b\}$ for the maximum
of two numbers~$a,b\in\Rd[]$.

\section{Local gradient-flow calibration at a smooth interface}
\label{sec:localCalibrationInterface}
The aim of this section is to provide the local building block
of a gradient-flow calibration in the vicinity of an interface
present in a smoothly evolving double bubble. To this end, we introduce the following geometric setup.

\begin{definition}[Localization radius for interface]
\label{def:locRadiusInterface}
Let $(\bar\Omega_1,\bar\Omega_2,\bar\Omega_3)$ be a 
regular double bubble smoothly evolving by~MCF in the
sense of Definition~\ref{def:smoothSolution} on a time interval~$[0,T]$.
Fix $i,j\in\{1,2,3\}$ with $i\neq j$. We call a scale~$r_{i,j}\in (0,1]$ an \emph{admissible
localization radius for the interface~$\barI_{i,j}$} if
\begin{align}
\label{eq:diffeoInterface}
\Psi_{i,j}\colon\barI_{i,j}\times (-r_{i,j},r_{i,j})\to\Rd[3]\times [0,T],
\quad
(x,t,s) \mapsto (x {+} s\no_{i,j}(x,t),t)
\end{align}
is bijective onto its image~$\mathrm{im}(\Psi_{i,j}):=\Psi_{i,j}(\barI_{i,j}{\times}(-r_{i,j},r_{i,j}))$.
Moreover, it is required that the inverse~$\Psi^{-1}$ is a diffeomorphism of
class $(C^0_tC^4_x\cap C^1_tC^2_x)(\mathrm{im}(\Psi_{i,j}))$,
and that it splits in form of
\begin{align*}
\Psi^{-1}_{i,j}\colon \mathrm{im}(\Psi_{i,j}) \to \bar I_{i,j} \times (-r_{i,j},r_{i,j}),
\quad (x,t) \mapsto (P_{i,j}(x,t),t,s_{i,j}(x,t)),
\end{align*}
where the map~$s_{i,j}$ represents a signed distance function
(oriented by means of~$\no_{i,j}$,
i.e., $\nabla s_{i,j} = \no_{i,j}$ along the interface~$\bar I_{i,j}$)
\begin{align}
\label{eq:defSignedDistance}
s_{i,j}(x,t) = \begin{cases}
							 \phantom{+}\dist(x,\bar I_{i,j}(t)) & \text{if } (x,t) \in \Psi_{i,j}(\bar I_{i,j}{\times} [0,r_{i,j})), 
							 \\
							 -\dist(x,\bar I_{i,j}(t)) & \text{if } (x,t) \in \Psi_{i,j}(\bar I_{i,j}{\times} (-r_{i,j},0)),
							 \end{cases}
\end{align}
and the map~$P_{i,j}$ being (in each time slice) the nearest-point projection onto~$\bar I_{i,j}$
\begin{align}
\label{eq:defProjection}
P_{i,j}(x,t) = \argmin_{y\in\bar I_{i,j}(t)} \,|y{-}x|,
\quad (x,t) \in \mathrm{im}(\Psi_{i,j}).
\end{align}
\end{definition}

In view of Definition~\ref{def:smoothSolution} of a regular double bubble smoothly evolving by MCF,
it follows from the tubular neighborhood theorem
that all interfaces admit an admissible localization radius in
the sense of Definition~\ref{def:locRadiusInterface}.

We introduce some further notation and consequences with respect to Definition~\ref{def:locRadiusInterface}.
First, the nearest-point projection onto the interface admits the representation
\begin{align}
\label{eq:representationProjection}
P_{i,j}(x,t) = x - s_{i,j}(x,t)\nabla s_{i,j}(x,t),
\quad (x,t) \in \mathrm{im}(\Psi_{i,j}).
\end{align}
Second, it holds in terms of regularity
\begin{align}
\label{eq:regProjectionSignedDistance}
s_{i,j} \in (C^0_tC^5_x\cap C^1_tC^3_x)(\mathrm{im}(\Psi_{i,j})),
\quad
P_{i,j} \in (C^0_tC^4_x\cap C^1_tC^2_x)(\mathrm{im}(\Psi_{i,j})).
\end{align} 
The scalar mean curvature of the interface~$\barI_{i,j}$
with respect to the orientation induced by~$\no_{i,j}$ is denoted by~$H_{i,j}$. 
We extend these geometric quantities away from the interface,
performing a slight abuse of notation, by means of
\begin{align}
\label{eq:extensionNormal}
\no_{i,j}\colon \mathrm{im}(\Psi_{i,j}) \to \mathbb{S}^2,
\quad &(x,t) \mapsto \nabla s_{i,j}(x,t),
\\
\label{eq:extensionCurvature}
H_{i,j}\colon \mathrm{im}(\Psi_{i,j}) \to  \mathbb{R},
\quad &(x,t) \mapsto -\Delta s_{i,j}(P_{i,j}(x,t),t).
\end{align}
Observe that these definitions immediately imply that
\begin{align}
\label{eq:regularityNormalCurvature}
\no_{i,j} \in (C^0_tC^4_x\cap C^1_tC^2_x)(\mathrm{im}(\Psi_{i,j})),
\quad
H_{i,j} \in (C^0_tC^3_x\cap C^1_tC^1_x)(\mathrm{im}(\Psi_{i,j})).
\end{align}

\begin{construction}[gradient-flow calibration along smooth interfaces]
\label{gradientFlowCalibrationInterface}
Let the assumptions and notation of Definition~\ref{def:locRadiusInterface}
be in place, and let~$\mathcal{Y}_{i,j}\colon\mathrm{im}(\Psi_{i,j})\to\mathbb{R}^3$ 
be an arbitrary vector field of class~$C^0_tC^1_x(\mathrm{im}(\Psi_{i,j}))$.
We then define a pair of vector fields
$(\xi_{i,j},B)\colon \mathrm{im}(\Psi_{i,j})\to\mathbb{S}^2\times \mathbb{R}^3$
as follows:
\begin{align}
\label{eq:localGradFlowCalibrationInterface}
\xi_{i,j} := \no_{i,j}, \quad 
B := H_{i,j}\no_{i,j} + (\mathrm{Id}{-}\no_{i,j}\otimes\no_{i,j})\mathcal{Y}_{i,j}.
\end{align}
We call~$(\xi_{i,j},B)$ a \emph{local gradient-flow calibration 
for the interface~$\barI_{i,j}$}. 
\hfill$\diamondsuit$
\end{construction}

We now register the properties of the pair of vector fields~$(\xi_{i,j},B)$,
i.e., that it satisfies locally the requirements of Definition~\ref{def:gradientFlowCalibration}
with the exception of~\eqref{LengthControlXi}. The latter will only be satisfied
once we glued together the local constructions in Section~\ref{sec:globalCalibrations} by means of a
suitable family of cutoff functions.

\begin{lemma}
\label{lemma:gradientFlowCalibrationInterface}
Let the assumptions and notation of Construction~\ref{gradientFlowCalibrationInterface}
be in place. Then it holds 
\begin{align}
\label{eq:regularityXiVelInterface}
\xi_{i,j} \in (C^0_tC^4_x\cap C^1_tC^2_x)(\mathrm{im}(\Psi_{i,j})),
\quad
B \in C^0_tC^1_x(\mathrm{im}(\Psi_{i,j})).
\end{align}
Moreover, there exists a constant~$C>0$ which depends only on
the data of the smoothly evolving regular double bubble~$(\bar\Omega_1,\bar\Omega_2,\bar\Omega_3)$ on~$[0,T]$, such that
we have throughout the space-time domain~$\mathrm{im}(\Psi_{i,j})$
\begin{align}
\label{eq:regXiInterface}
|\nabla\xi_{i,j}| + |\partial_t\xi_{i,j}| &\leq C,
\\
\label{eq:regVelocityInterface}
|B| + |\nabla B| &\leq C,
\\
\label{eq:evolEquXiInterface}
\partial_t\xi_{i,j}
+ (B\cdot\nabla)\xi_{i,j}
+ (\nabla B)^\mathsf{T}\xi_{i,j} 
&= 0,
\\
\label{eq:divConstraintXiInterface}
|\nabla\cdot \xi_{i,j} 
+ B\cdot \xi_{i,j}|
&\leq C\dist(\cdot\bar I_{i,j}),
\\
\label{eq:evolEquLengthXiInterface}
\partial_t|\xi_{i,j}|^2
+ (B\cdot\nabla)|\xi_{i,j}|^2
&= 0.	
\end{align}
\end{lemma}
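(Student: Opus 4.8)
The plan is to verify the five claims in Lemma~\ref{lemma:gradientFlowCalibrationInterface} by direct computation, exploiting heavily that $\xi_{i,j}=\no_{i,j}=\nabla s_{i,j}$ is a \emph{gradient} of the signed distance function, so that several cancellations are structural rather than accidental. First I would record the regularity statement~\eqref{eq:regularityXiVelInterface}: by~\eqref{eq:regProjectionSignedDistance} and the definition~\eqref{eq:extensionNormal}, $\xi_{i,j}=\nabla s_{i,j}$ inherits one derivative less than $s_{i,j}$, giving $\xi_{i,j}\in(C^0_tC^4_x\cap C^1_tC^2_x)$; then $H_{i,j}\no_{i,j}\in C^0_tC^1_x$ by~\eqref{eq:regularityNormalCurvature}, and the correction term $(\Id-\no_{i,j}\otimes\no_{i,j})\mathcal Y_{i,j}$ is $C^0_tC^1_x$ since $\mathcal Y_{i,j}$ is assumed so and $\no_{i,j}$ is smoother; hence $B\in C^0_tC^1_x$. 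The bounds~\eqref{eq:regXiInterface} and~\eqref{eq:regVelocityInterface} then follow because $\mathrm{im}(\Psi_{i,j})$ has compact closure in space-time (the double bubble is compact and $r_{i,j}$ is finite), so all continuous quantities are bounded, with the constant depending only on the data of the evolving cluster.

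For the divergence identity~\eqref{eq:divConstraintXiInterface} I would compute $\nabla\cdot\xi_{i,j}=\Delta s_{i,j}$ and $B\cdot\xi_{i,j}=H_{i,j}|\no_{i,j}|^2+(\Id-\no_{i,j}\otimes\no_{i,j})\mathcal Y_{i,j}\cdot\no_{i,j}=H_{i,j}$, since $|\no_{i,j}|=1$ and the projected correction is orthogonal to $\no_{i,j}$. By the extension~\eqref{eq:extensionCurvature}, $H_{i,j}(x,t)=-\Delta s_{i,j}(P_{i,j}(x,t),t)$, so $\nabla\cdot\xi_{i,j}+B\cdot\xi_{i,j}=\Delta s_{i,j}(x,t)-\Delta s_{i,j}(P_{i,j}(x,t),t)$; since $\Delta s_{i,j}$ is Lipschitz in $x$ by~\eqref{eq:regProjectionSignedDistance} and $|x-P_{i,j}(x,t)|=|s_{i,j}(x,t)|=\dist(x,\bar I_{i,j}(t))$, this difference is bounded by $C\dist(x,\bar I_{i,j}(t))$, which is~\eqref{eq:divConstraintXiInterface}.

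The transport identities~\eqref{eq:evolEquXiInterface} and~\eqref{eq:evolEquLengthXiInterface} are the heart of the lemma. For~\eqref{eq:evolEquLengthXiInterface}, since $|\xi_{i,j}|^2=|\nabla s_{i,j}|^2\equiv 1$ on all of $\mathrm{im}(\Psi_{i,j})$ (the eikonal identity for the signed distance function, which holds not merely on the interface but in the whole tubular neighborhood), both $\partial_t|\xi_{i,j}|^2$ and $\nabla|\xi_{i,j}|^2$ vanish identically, and~\eqref{eq:evolEquLengthXiInterface} is immediate. For~\eqref{eq:evolEquXiInterface} I would first note that differentiating the eikonal identity gives $(\nabla^2 s_{i,j})\nabla s_{i,j}=0$, i.e.\ $(\nabla\xi_{i,j})\xi_{i,j}=0$, and that $\nabla\xi_{i,j}=\nabla^2 s_{i,j}$ is symmetric, so $(\nabla B)^\mathsf{T}\xi_{i,j}$ needs handling with some care: write $B=H_{i,j}\no_{i,j}+(\Id-\no_{i,j}\otimes\no_{i,j})\mathcal Y_{i,j}$ and use the orthogonality $(\nabla\xi_{i,j})\xi_{i,j}=0$ repeatedly. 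The cleanest route is to observe that $\xi_{i,j}=\nabla s_{i,j}$ is precisely the normal extension used in the companion two-dimensional work~\cite{FischerHenselLauxSimon2D}, where the corresponding identity $\partial_t\nabla s + (B\cdot\nabla)\nabla s + (\nabla B)^\mathsf{T}\nabla s = 0$ is shown to hold \emph{exactly} (not merely approximately) for the particular velocity field $B$ of the form~\eqref{eq:localGradFlowCalibrationInterface}; the argument there only uses that the interface moves by $\no_{i,j}\cdot\vec V_{\barI_{i,j}}=H_{i,j}$, which is guaranteed by~\eqref{NormalVelocityAndMeanCurvatureOnEachInterface} in Definition~\ref{def:smoothSolution}, together with $s_{i,j}$ being the signed distance and $H_{i,j}$ being its (back-projected) Laplacian. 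Concretely I would differentiate the defining relation $s_{i,j}(\psi^t(x),t)=0$ for $x\in\barI_{i,j}^0$ to identify $\partial_t s_{i,j}=-B\cdot\nabla s_{i,j}$ along the interface (with the tangential part of $B$ irrelevant since $\nabla s_{i,j}$ is normal), extend this off the interface using that both sides are determined by their interface values via the projection structure, then take $\nabla$ of the resulting transport equation for $s_{i,j}$ and commute derivatives. The main obstacle is precisely this off-the-interface bookkeeping: ensuring the algebraic cancellation that turns the naively-expected error term $O(\dist(\cdot,\barI_{i,j}))$ into an \emph{exact} zero in~\eqref{eq:evolEquXiInterface}, which hinges on the specific choice $B=H_{i,j}\no_{i,j}+(\text{tangential})$ with $H_{i,j}$ extended by back-projection as in~\eqref{eq:extensionCurvature} rather than by an arbitrary extension. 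Everything else is a routine application of the chain rule and the compactness of $\mathrm{im}(\Psi_{i,j})$.
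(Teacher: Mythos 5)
Your proposal matches the paper's proof in both structure and the key ingredients: the divergence estimate via $\nabla\cdot\xi_{i,j}+B\cdot\xi_{i,j}=\Delta s_{i,j}(x,t)-\Delta s_{i,j}(P_{i,j}(x,t),t)$ and the Lipschitz continuity of $\Delta s_{i,j}$; the length conservation as a triviality of $|\nabla s_{i,j}|\equiv 1$; and the transport identity by differentiating the exact PDE $\partial_t s_{i,j}=-H_{i,j}=-(B\cdot\nabla)s_{i,j}$ (the paper's \eqref{eq:evolutionSignedDistance}) in space and applying the product rule. You correctly flag that exactness hinges on the back-projected extension \eqref{eq:extensionCurvature} of $H_{i,j}$, though the hedging and the appeal to the companion 2D paper are unnecessary: once \eqref{eq:evolutionSignedDistance} is recorded as holding throughout $\mathrm{im}(\Psi_{i,j})$, taking $\nabla$ of it gives \eqref{eq:evolEquXiInterface} immediately, with no off-interface bookkeeping left to do.
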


\begin{proof}
The asserted regularity follows immediately from
the definitions~\eqref{eq:localGradFlowCalibrationInterface}
and the regularity~\eqref{eq:regularityNormalCurvature} of
the constituents. The equation~\eqref{eq:evolEquXiInterface} 
for the time evolution of~$\xi_{i,j}$ follows from
differentiating in the spatial variable the~PDE satisfied
by the signed distance function~$s_{i,j}$, i.e.,
\begin{align}
\label{eq:evolutionSignedDistance}
\partial_t s_{i,j}
=-H_{i,j}=-(B\cdot\nabla) s_{i,j},
\end{align}
relying in the process on 
the product rule and~$\no_{i,j}=\nabla s_{i,j}$. The divergence constraint~\eqref{eq:divConstraintXiInterface}
is a direct consequence of the definitions~\eqref{eq:extensionNormal},
\eqref{eq:extensionCurvature} and~\eqref{eq:localGradFlowCalibrationInterface}
in combination with the regularity~\eqref{eq:regProjectionSignedDistance} 
of the signed distance. Finally, equation~\eqref{eq:evolEquLengthXiInterface}
is satisfied for trivial reasons since~$\xi_{i,j}\in\mathbb{S}^2$.
\end{proof}

\section{Local gradient-flow calibration at a triple line}
\label{sec:existenceCalibrationTripleLine}
This section constitutes the core of the present work. We establish
the existence of a gradient-flow calibration in the vicinity of
the triple line for a double bubble smoothly evolving by~MCF 
in the sense of Definition~\ref{def:smoothSolution}.
The main result of this section reads as follows.

\begin{proposition}[Existence of gradient-flow calibration at triple line]
\label{prop:gradientFlowCalibrationTripleLine}
Consider a regular double bubble $(\bar\Omega_1,\bar\Omega_2,\bar\Omega_3)$ 
smoothly evolving by~MCF on a time interval~$[0,T]$ 
in the sense of Definition~\ref{def:smoothSolution}.
Let $r\in (0,1]$ be an associated admissible localization radius 
for the triple line in the sense of Definition~\ref{def:locRadius} below.
There then exists a potentially smaller radius $\hat r\in (0,r]$, only depending
on the data of the smoothly evolving regular double bubble~$(\bar\Omega_1,\bar\Omega_2,\bar\Omega_3)$ on~$[0,T]$, 
which gives rise to the following assertions:

Denote by~$\mathcal{N}_{\hat r}(\trLine):=\bigcup_{t\in[0,T]}
B_{\hat r}(\trLine(t)){\times}\{t\}$ the neighborhood if the evolving triple line. 
For all $i,j\in\{1,2,3\}$ with $i\neq j$ 
there exists a continuous local extension
\begin{align*}
\xi_{i,j}\colon\mathcal{N}_{\hat r}(\trLine)\to \overline{B_1(0)}
\end{align*}
of the unit normal vector field $\no_{i,j}|_{\bar I_{i,j}}$ of $\barI_{i,j}$,
and a continuous local extension
\begin{align*}
B\colon\mathcal{N}_{\hat r}(\trLine)\to\Rd[3]
\end{align*}
of the velocity vector field of the network $\mathcal{I}=\bigcup_{i,j\in\{1,2,3\},i\neq j}\barI_{i,j}$,
such that the pair $((\xi_{i,j})_{i,j\in\{1,2,3\},i\neq j},B)$ satisfies the following list of requirements:
\begin{itemize}[leftmargin=0.7cm]
\item[i)] For all~$i,j\in\{1,2,3\}$ with~$i\neq j$
					it holds $\xi_{i,j}\in (C^0_tC^1_x \cap C^1_tC^0_x)
					(\mathcal{N}_{\hat r}(\trLine)\setminus\trLine)$ and
					$B\in C^0_tC^1_x(\mathcal{N}_{\hat r}(\trLine)\setminus\trLine)$,
					with corresponding estimates throughout~$\mathcal{N}_{\hat r}(\trLine)\setminus\trLine$
					\begin{align}
					\label{eq:regXiTripleLine}
					|\nabla\xi_{i,j}| + |\partial_t\xi_{i,j}| &\leq C,
					\\
					\label{eq:regVelocityTripleLine}
					|B| + |\nabla B| &\leq C
					\end{align}
					for some constant~$C>0$ which depends only on the data of the
					smoothly evolving regular double bubble~$(\bar\Omega_1,\bar\Omega_2,\bar\Omega_3)$ on~$[0,T]$.
\item[ii)] We have for all~$i,j\in\{1,2,3\}$ with~$i\neq j$
					\begin{align}
					\label{eq:extensionPropertyNormal}
					\xi_{i,j}(\cdot,t) &= \no_{i,j}(\cdot,t) 
					&& \text{along } \barI_{i,j}(t)\cap B_{\hat r}(\trLine(t)),
					\\
					\label{eq:extensionPropertyVelocity}
					B(\cdot,t) &= \vec{V}_{\bar\Gamma}(\cdot,t)
					&& \text{along } \bar\Gamma(t)
					\end{align}
					for all $t\in [0,T]$, where~$\vec{V}_{\bar\Gamma}$ denotes the normal velocity
					of the triple line~$\bar\Gamma$. Moreover,
					the skew-symmetry relation $\xi_{i,j}=-\xi_{j,i}$ holds true.
\item[iii)] The Herring angle condition is satisfied in the whole space-time 
					 tubular neighborhood~$\mathcal{N}_{\hat r}(\trLine)$ of the triple line, i.e.,
					 \begin{align}
					 \label{eq:HerringXi}
					 \sigma_{1,2}\xi_{1,2} + \sigma_{2,3}\xi_{2,3} + \sigma_{3,1}\xi_{3,1} = 0 
					 \quad \text{in } \mathcal{N}_{\hat r}(\trLine).
					 \end{align}
\item[iv)] There exists a constant $C>0$, depending only on the data of the
						smoothly evolving regular double bubble~$(\bar\Omega_1,\bar\Omega_2,\bar\Omega_3)$ on~$[0,T]$, such that
						for all~$i,j\in\{1,2,3\}$ with~$i\neq j$ the estimates
						\begin{align}
						\label{eq:timeEvolutionXiTripleLine}
						|\partial_t\xi_{i,j} + (B\cdot\nabla)\xi_{i,j} + (\nabla B)^\mathsf{T}\xi_{i,j}|
						&\leq C\dist(\cdot,\barI_{i,j}),
						\\
						\label{eq:motionByMeanCurvatureTripleLine}
						|B\cdot\xi_{i,j} + \nabla\cdot\xi_{i,j}|
						&\leq C\dist(\cdot,\barI_{i,j}),
						\\
						\label{eq:timeEvolutionLengthXiTripleLine}
						\partial_t|\xi_{i,j}|^2 + (B\cdot\nabla)|\xi_{i,j}|^2
						&\leq C\dist^2(\cdot,\barI_{i,j})
						\end{align}
						hold true within~$\mathcal{N}_{\hat r}(\trLine)\setminus\trLine$.
\end{itemize}
A pair~$((\xi_{i,j})_{i,j\in\{1,2,3\},i\neq j},B)$ subject to these
conditions is called a \emph{local gradient-flow calibration at the 
triple line~$\trLine$}.
\end{proposition}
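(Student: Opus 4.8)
The plan is to build the pair $((\xi_{i,j})_{i\neq j},B)$ from a tubular-neighborhood description of the evolving triple line together with a suitably \emph{gauged} orthonormal frame adapted to $\trLine$, in which the rigid angular structure forced by the Herring condition \eqref{HerringAngleCondition} becomes explicit; the two higher-order compatibility conditions of Definition~\ref{def:smoothSolution} will be exactly what is needed to make this construction first-order consistent along $\trLine$. First I would fix an admissible localization radius $r$ for $\trLine$ (Definition~\ref{def:locRadius}) and, parametrizing $\trLine(t)$ by (time-dependent) arclength, set up coordinates $(\ell,y,t)$ in which $(\ell,t)$ parametrizes $\trLine$ and $y$ ranges over the $2$-plane normal to $\trLine(t)$; the regularity of this parametrization follows from Definition~\ref{def:smoothSolution} much as for an interface in Section~\ref{sec:localCalibrationInterface}. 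In these coordinates each $\barI_{i,j}$ agrees to first order along $\trLine$ with the half-plane spanned by the conormal $\ta_{i,j}:=\no_{i,j}\times\taTrJ$; the three interfaces meet along $\trLine$ at fixed angles determined by $\sigma$ (equivalently, by \eqref{HerringAngleCondition}); and, by the second-order compatibility \eqref{InitialSecondOrderCompatibility} propagated in time by the flow, $\trLine$ carries a well-defined normal velocity $\vec{V}_{\trLine}$ with $\no_{i,j}\cdot\vec{V}_{\trLine}=H_{i,j}$ along $\trLine$.

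Next I would choose, along $\trLine(t)$, an orthonormal frame $(\taTrJ,e_1,e_2)$ with $\taTrJ$ the unit tangent and $(e_1,e_2)$ \emph{torsion-free} (relatively parallel along $\trLine(t)$), which amounts to solving a linear ODE along the curve and, after a minor global correction if the parallel transport around $\trLine(t)$ has nontrivial holonomy, yields a globally defined frame with the regularity inherited from $\trLine$. This is the gauge choice at the heart of the construction: a frame obtained naively by slicing along the normal planes of $\trLine$ (that is, running the two-dimensional construction of \cite{FischerHenselLauxSimon2D} in each slice) would carry the torsion of $\trLine$, which enters the evolution equation \eqref{eq:timeEvolutionXiTripleLine} as an $O(1)$ error; the torsion-free choice removes precisely that term. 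I would then fix angles $\phi_{i,j}\in\mathbb{R}$ with $\phi_{j,i}=\phi_{i,j}{+}\pi$ and $\sum_{\mathrm{cyclic}}\sigma_{i,j}(\cos\phi_{i,j},\sin\phi_{i,j})=0$ (possible by the strict triangle inequality, Remark~\ref{remark:surface tensions}), extend $(\taTrJ,e_1,e_2)$ into $\mathcal{N}_{\hat r}(\trLine)$ in a controlled way that near each $\barI_{i,j}$ is adapted to that interface, and set $\xi_{i,j}:=\cos(\phi_{i,j}{+}\alpha)\,e_1+\sin(\phi_{i,j}{+}\alpha)\,e_2$, where the scalar gauge angle $\alpha(x,t)$ is chosen so that $\xi_{i,j}=\no_{i,j}$ along $\barI_{i,j}$ for every pair. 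Along each interface this prescribes $\alpha$; the three prescriptions agree along $\trLine$ thanks to the choice of the $\phi_{i,j}$, the parallel frame and \eqref{HerringAngleCondition}, and $\alpha$ is then extended across the three wedge-shaped regions by interpolation. Since $|\xi_{i,j}|\equiv 1$, the Herring identity \eqref{eq:HerringXi} holds throughout $\mathcal{N}_{\hat r}(\trLine)$ by the choice of the $\phi_{i,j}$, the extension property \eqref{eq:extensionPropertyNormal} and the skew-symmetry $\xi_{i,j}=-\xi_{j,i}$ hold by construction, and \eqref{eq:timeEvolutionLengthXiTripleLine} is trivial. For the velocity I would take $B:=\vec{V}_{\trLine}$ extended off $\trLine$ by the nearest-point projection, plus a correction vanishing on $\trLine$ and designed so that $B\cdot\xi_{i,j}+\nabla\cdot\xi_{i,j}=O(\dist(\cdot,\barI_{i,j}))$; on $\barI_{i,j}$ this reads $B\cdot\no_{i,j}=H_{i,j}$, which holds along $\trLine$ by the above, and the \emph{first-order} matching of this relation off $\trLine$, simultaneously for all three interfaces, is exactly what the third-order compatibility condition \eqref{InitialThirdOrderCompatibility} (again propagated in time) provides. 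This gives \eqref{eq:extensionPropertyVelocity} by construction and $B\in C^0_tC^1_x$ with the bound \eqref{eq:regVelocityTripleLine}.

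It then remains to check the regularity \eqref{eq:regXiTripleLine}, the evolution equation \eqref{eq:timeEvolutionXiTripleLine} and the divergence relation \eqref{eq:motionByMeanCurvatureTripleLine}. For \eqref{eq:regXiTripleLine}, the decisive point would be that on the two interfaces bounding any given wedge the prescribed values of $\alpha$ differ only by that wedge's fixed opening angle up to an $O(\dist(\cdot,\trLine))$ error, so $\alpha$ need not wind around $\trLine$; hence $\alpha$ is continuous up to $\trLine$ and of class $C^0_tC^1_x\cap C^1_tC^0_x$ with $|\nabla\alpha|+|\partial_t\alpha|\le C$ off $\trLine$, and \eqref{eq:regXiTripleLine} follows together with the bounds on the frame. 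For \eqref{eq:timeEvolutionXiTripleLine} I would differentiate the defining formula for $\xi_{i,j}$: the $\partial_t\alpha$- and $\nabla\alpha$-terms combine with the normal-plane components of $B$, the $\taTrJ$-direction contribution being killed by the torsion-free gauge, and matching the remainder against the corresponding exact identity for the smooth-interface construction of Lemma~\ref{lemma:gradientFlowCalibrationInterface} — which holds near $\barI_{i,j}$, where $\xi_{i,j}=\no_{i,j}$ and $B$ is the correct transport velocity — leaves an error of size $O(\dist(\cdot,\barI_{i,j}))$. Finally \eqref{eq:motionByMeanCurvatureTripleLine} follows from the design of $B$ together with $\nabla\cdot\xi_{i,j}=-H_{i,j}+O(\dist(\cdot,\barI_{i,j}))$, which I would obtain by arranging $\xi_{i,j}$ to coincide with the nearest-point extension of $\no_{i,j}$ near $\barI_{i,j}$ up to $O(\dist^2(\cdot,\barI_{i,j}))$. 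Passing to a suitably small $\hat r\in(0,r]$, depending only on the data of the smoothly evolving double bubble, validates all the Taylor expansions and interpolations used above.

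The hard part will be the simultaneous reconciliation, along the singular set $\trLine$, of (i) the \emph{exact} Herring identity \eqref{eq:HerringXi}, (ii) the \emph{exact} normalization $\xi_{i,j}=\no_{i,j}$ on each interface, and (iii) the \emph{approximate} evolution equation \eqref{eq:timeEvolutionXiTripleLine}, under the constraint that $\nabla\xi_{i,j}$ and $\partial_t\xi_{i,j}$ stay bounded up to $\trLine$ with a constant independent of $\dist(\cdot,\trLine)$. The gauge-rotation mechanism is what makes all three compatible: the non-winding of $\alpha$ is what prevents the otherwise unavoidable $\dist(\cdot,\trLine)^{-1}$ blow-up of $\nabla\xi_{i,j}$ at the triple line, while the torsion-free choice of the adapted frame is what converts the $O(1)$ error in \eqref{eq:timeEvolutionXiTripleLine} into an $O(\dist(\cdot,\barI_{i,j}))$ one. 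The most delicate bookkeeping is to track that all error bounds come out in terms of $\dist(\cdot,\barI_{i,j})$ — the distance to the individual interface — rather than merely $\dist(\cdot,\trLine)$; it is at this point that the second- and third-order compatibility conditions of Definition~\ref{def:smoothSolution}, which ensure that $\vec{V}_{\trLine}$ and its first-order behaviour along $\trLine$ are seen consistently by all three interfaces, are genuinely used.
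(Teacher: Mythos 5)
Your high-level plan is the same as the paper's: gauge a frame adapted to the triple line, use a rotation ansatz around that frame, glue across a wedge decomposition, and lean on the two higher-order compatibility conditions of Definition~\ref{def:smoothSolution} to make the construction first-order consistent along $\trLine$. But there is a genuine gap in the gauge mechanism, and it is exactly where the real work of the paper lives.

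You write $\xi_{i,j}=\cos(\phi_{i,j}{+}\alpha)\,e_1+\sin(\phi_{i,j}{+}\alpha)\,e_2$ with a single scalar gauge angle $\alpha$. Whatever extension of $(e_1,e_2)$ you use, this is a rotation \emph{around the triple-line tangent} $\taTrJ$, so $\xi_{i,j}$ stays in the $2$-plane $\mathrm{span}(e_1,e_2)$. But the actual normal $\no_{i,j}$ rotates \emph{out} of that plane as one moves along $\barI_{i,j}$ away from $\trLine$ --- this is precisely the phenomenon discussed after Construction~\ref{RotationsTripleLine}, quantified by the off-diagonal shape operator component $\kappa_{\ta\taTrJ}$ and by $\nabla\cdot\taTrJ$ along the interface. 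Your fallback is to say the frame extension is ``adapted to each interface,'' but then that extension (and not the scalar $\alpha$) carries the entire gauge burden, and you have said nothing about what it is or why it is first-order compatible across the three wedges. In the paper this is exactly the role of the gauge rotation $R_{\no}=R_{\no}^{(2)}R_{\no}^{(1)}$ around the \emph{normal} axis, with the very specific angles $\delta=s\,\kappa_{\ta\taTrJ}$ from~\eqref{eq:rotationNormalAngle} and $\omega$ from the ODE~\eqref{ODE_omega}; the point of Lemmas~\ref{lem:compatibilityConditionsTripleLine} and~\ref{lem:firstOrderCompXi} is that with precisely these angles the rotated candidates $\widetilde R'_{\barI}\widetilde\xi$, $\widetilde\xi'$, etc., agree \emph{together with their gradients} along $\trLine$, and it is this first-order agreement that keeps $\nabla\xi_{i,j}$ and $\partial_t\xi_{i,j}$ bounded up to $\trLine$ after interpolation (via the compatibility estimate~\eqref{eq:compEstimateInterpolWedgeXi}, which is $O(\dist^2(\cdot,\trLine))$, not just $O(\dist(\cdot,\trLine))$). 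Without identifying those gauge angles, your construction offers no reason why the interpolated $\alpha$ should not produce a $\dist^{-1}(\cdot,\trLine)$ blow-up of $\nabla\xi_{i,j}$ at the triple line, nor why~\eqref{eq:timeEvolutionXiTripleLine} should hold with the required error $O(\dist(\cdot,\barI_{i,j}))$.

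Two smaller remarks. First, your claim that the ``torsion-free'' choice of the base frame is what kills the $O(1)$ error in the evolution equation is not quite the paper's mechanism: in Lemma~\ref{lem:firstOrderCompVelocity} the $\taTrJ\otimes\ta$ and $\taTrJ\otimes\no$ components of $\nabla\widetilde B$ are \emph{not} reconciled across interfaces, and the error term is harmless only because $\widetilde\xi\perp\taTrJ_*$ by the ansatz~\eqref{eq:AnsatzAuxiliaryXiHalfSpace}; the cancellation comes from orthogonality, not torsion-freeness. Second, your choice to interpolate the scalar $\alpha$ rather than the vectors themselves yields $|\xi_{i,j}|\equiv 1$ on interpolation wedges, which differs from~\eqref{eq:repLengthInterpolatedXiTripleLine} in Construction~\ref{def:gradientFlowCalibrationTripleLine} but would in fact be admissible for Proposition~\ref{prop:gradientFlowCalibrationTripleLine}; this is a legitimate simplification, provided the gauge issue above is actually resolved.
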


\begin{figure}
  \centering      
  \includegraphics{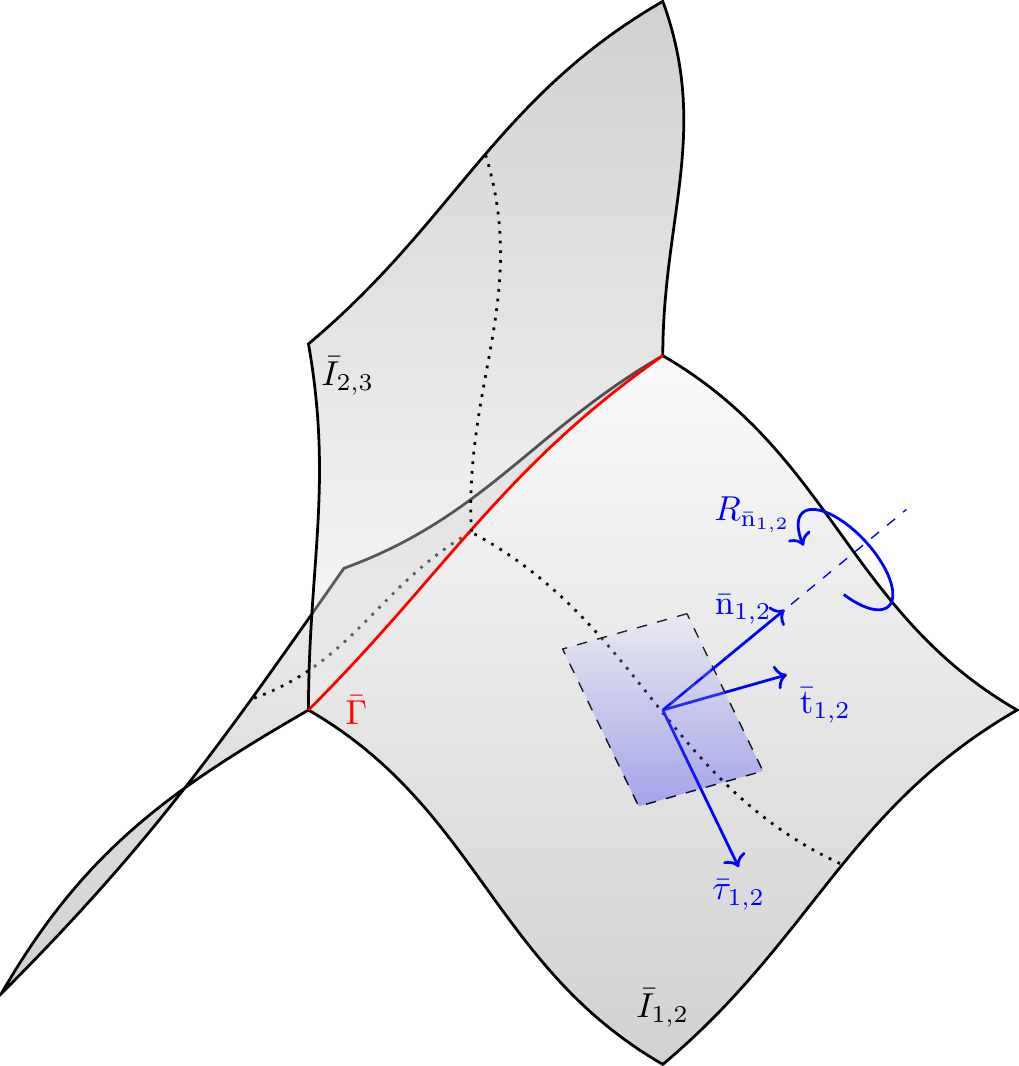}
	\caption{The smooth solution close to the triple line $\bar \Gamma$. Three sheets come together at fixed angles along $\bar \Gamma$ (here $120^\circ$). In this general situation, one needs to introduce an additional gauge rotation field $R_{ \vec{\bar n}_{1,2}}$. At each point, this matrix is a rotation in the tangent plane spanned by $\bar \tau_{1,2}$ and $ \vec{\bar t}_{1,2}$, illustrated here by a shaded (blue) rectangle.}
	\label{fig:curved triple line}
\end{figure}

The remainder of this section is organized as follows. In Subsection~\ref{sec:notationTripleLine}
we introduce the necessary notation employed in the construction of the desired vector fields.
Subsection~\ref{subsection:step_1} implements the construction of the main building blocks for
the vector fields $((\xi_{i,j})_{i\neq j},B)$, which will then be glued together in Subsection~\ref{subsection:step_2}.
Subsection~\ref{sec:proofGradientFlowCalibrationTripleLine}
contains the proof of Proposition~\ref{prop:gradientFlowCalibrationTripleLine}.
In the final Subsection~\ref{subsec:localCompatibility}, we formalize the fact
that the local gradient-flow calibration at the triple line due to Proposition~\ref{prop:gradientFlowCalibrationTripleLine}
represents an admissible perturbation of the local gradient-flow calibrations at the interfaces
in a suitable sense.

\begin{figure}
  \centering      
  \includegraphics{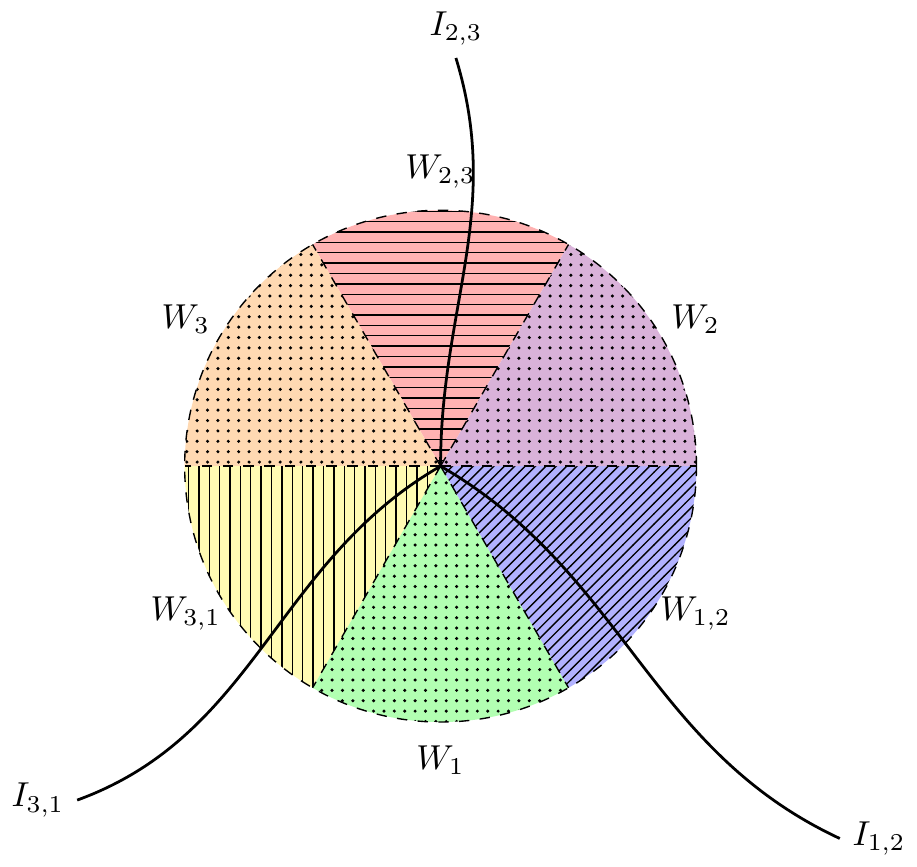}
	\caption{A cross-section othorgonal to the triple line illustrating the wedge decomposition in Definition~\ref{def:locRadius}. The ``interpolation wedges'' are marked with a dotted pattern, the ``interface wedges'' with striped patterns.}
	\label{fig:slice}
\end{figure}

\subsection{Local geometry at a triple line}
\label{sec:notationTripleLine}
We first provide a suitable decomposition of the space-time 
tubular neighborhood of the triple line of a smoothly evolving regular double bubble in 
the sense of Definition~\ref{def:smoothSolution}. The main ingredient is given by the
following notion of an admissible localization radius for the triple line, cf.\ Figure \ref{fig:slice}.

\begin{definition}[Localization radius for triple line]
\label{def:locRadius}
Let $(\bar\Omega_1,\bar\Omega_2,\bar\Omega_3)$ be a regular double 
bubble smoothly evolving by MCF in the sense of Definition~\ref{def:smoothSolution} 
on a time interval~$[0,T]$. For each~$i,j\in\{1,2,3\}$ with~$i\neq j$,
let~$r_{i,j}\in (0,1]$ be an admissible localization radius for the interface~$\barI_{i,j}$
in the sense of Definition~\ref{def:locRadiusInterface}.
We call $r=r_{\trLine}\in \big(0,\min\{r_{i,j}\colon i,j\in\{1,2,3\},\,i\neq j\}\big]$ an 
\emph{admissible localization radius for the triple line~$\trLine$} 
if the following properties are satisfied:
\begin{itemize}[leftmargin=0.7cm]
\item[i)] (Regularity of triple line) Define 
					$\mathcal{N}_{r}(\trLine):=\bigcup_{t\in [0,T]}B_{r}(\trLine(t)){\times}\{t\}$.
					The squared distance to~$\trLine$
					satisfies $\dist^2(\cdot,\trLine)\in C^0_tC^4_x(\mathcal{N}_{r}(\trLine))
					\cap C^1_tC^2_x(\mathcal{N}_{r}(\trLine))$, and for the nearest-point
					projection onto~$\trLine$ it holds $P_{\trLine}\in C^0_tC^4_x(\mathcal{N}_{r}(\trLine))
					\cap C^1_tC^2_x(\mathcal{N}_{r}(\trLine))$.
\item[ii)] (Wedge decomposition) For each~$i,j\in\{1,2,3\}$ with~$i\neq j$, there exist sets
					 $W_{\barI_{i,j}}:=\bigcup_{t\in [0,T]} W_{\barI_{i,j}(t)}{\times}\{t\}$, $W_{\barI_{j,i}}:=W_{\barI_{i,j}}$,
					 and $W_{\bar\Omega_i}:=\bigcup_{t\in [0,T]} W_{\bar\Omega_i}(t){\times}\{t\}$
					 subject to the following conditions:
					
					 First, for each~$t\in [0,T]$ the sets~$(W_{\barI_{i,j}}(t))_{i,j\in\{1,2,3\},i\neq j}$
					 and~$(W_{\bar\Omega_i}(t))_{i\in\{1,2,3\}}$ are non-empty, pairwise disjoint 
					 open subsets of~$B_{r}(\trLine(t))$. For each~$x\in\trLine(t)$, the slice of each of these
					 sets in the normal plane~$x{+}\mathrm{Tan}^\perp_x\trLine(t)$ is the intersection of $B_{r}(\trLine(t))$ and 
					 a cone with apex~$x$, cf.\ Figure \ref{fig:slice}.
					 More precisely, there exist unit length vector 
					 fields~$(X^\pm_{\barI_{i,j}})_{i,j\in\{1,2,3\},i\neq j}$ and~$(X^\pm_{\bar\Omega_i})_{i\in\{1,2,3\}}$
					 along $\trLine$, taking values for each~$t\in [0,T]$ in the normal bundle 
					 $\mathrm{Tan}^\perp\trLine(t)$
					 and being of class $C^0_tC^4_x(\trLine)\cap C^1_tC^2_x(\trLine)$,
					 so that for all~$i,j\in\{1,2,3\}$ with~$i\neq j$ and all~$(x,t)\in\trLine$ it holds
					 \begin{equation}
						\begin{aligned}
						\label{eq:defWedge}
						&W_{\barI_{i,j}}(t)\cap\big(x{+}\mathrm{Tan}_x^\perp\trLine(t)\big) 
						\\
						&= \big(x{+}\{\alpha X^+_{\barI_{i,j}}(x,t) + 
						\beta X^-_{\barI_{i,j}}(x,t)\colon\alpha,\beta\in (0,\infty)\}\big)
						\cap B_{r}(\trLine(t)),
						\end{aligned}
						\end{equation}
						as well as
						\begin{equation}
						\begin{aligned}
						\label{eq:defInterpolWedge}
						&W_{\bar\Omega_i}(t)\cap\big(x{+}\mathrm{Tan}_x^\perp\trLine(t)\big)
						\\
						&= \big(x{+}\{\alpha X^+_{\bar\Omega_i}(x,t) 
						+ \beta X^-_{\bar\Omega_i}(x,t)\colon\alpha,\beta\in (0,\infty)\}\big)
						\cap B_{r}(\trLine(t)).
						\end{aligned}
						\end{equation}
						Moreover, $X^\pm_{\barI_{i,j}}=X^\pm_{\barI_{j,i}}$ and
						$(X^+_{\bar\Omega_i},X^-_{\bar\Omega_i})\in
						\big\{(X^+_{\barI_{i,j}},X^-_{\barI_{k,i}}),(X^+_{\barI_{k,i}},X^-_{\barI_{i,j}})\big\}$
						for all $i,j,k\in\{1,2,3\}$ such that $\{i,j,k\}=\{1,2,3\}$.
						The opening angles of these cones are constant along~$\trLine$
						and take values in~$(0,\pi)$.
					
					 Second, for each~$t\in [0,T]$ these sets provide a decomposition
					 of the tubular neighborhood of the triple line in the sense that
					 	\begin{align}
						\label{eq:decompTripleLine}
						\overline{B_{r}(\trLine(t))} = \overline{W_{\barI_{1,2}}(t)} \cup
						\overline{W_{\barI_{2,3}}(t)} \cup  \overline{W_{\barI_{3,1}}(t)} \cup
						\bigcup_{i\in\{1,2,3\}}\overline{W_{\bar\Omega_i}(t)}.
						\end{align}
						
					 Third, for all~$t\in [0,T]$ and all distinct~$i,j\in\{1,2,3\}$  
					 it holds
					 	\begin{align}
						\label{eq:interfaceWedge}
						\barI_{i,j}(t)\cap B_{r}(\trLine(t))
						&\subset W_{\barI_{i,j}}(t) \cup \trLine(t)
						\subset\{x\in\mathbb{R}^3\colon (x,t)\in\mathrm{im}(\Psi_{i,j})\},
						\\
						\label{eq:interpolWedgeHalfspace}
						W_{\bar\Omega_i}(t) &\subset \bigcap_{j\in\{1,2,3\}\setminus\{i\}}
						\{x\in\mathbb{R}^3\colon (x,t)\in\mathrm{im}(\Psi_{i,j})\},
						\end{align}
						where we refer to Definition~\ref{def:locRadiusInterface} 
						for the diffeomorphisms~$\Psi_{i,j}$.
\item[iii)] (Comparability of distances) There exists~$C>0$ such that for all
						pairwise distinct $i,j,k\in\{1,2,3\}$ it holds
						(recall that~$\mathcal{I}=\bigcup_{i,j\in\{1,2,3\},i\neq j}\barI_{i,j}$)
						\begin{align}
						\label{eq:compDistances}
						\dist(\cdot,\trLine) + \dist(\cdot,\bar I_{i,j}) + \dist(\cdot,\bar I_{k,i})
						&\leq C\dist(\cdot,\mathcal{I})
						&& \text{in } W_{\bar\Omega_i},
						\\
						\label{eq:compDistances2}
						\dist(\cdot,\trLine) &\leq C\dist(\cdot,\barI_{i,j})
						&& \text{in } W_{\barI_{j,k}} \cup W_{\barI_{k,i}},
						\\
						\label{eq:compDistances3}
						\dist(\cdot,\barI_{i,j}) &\leq C\dist(\cdot,\mathcal{I})
						&& \text{in } W_{\barI_{i,j}}.
						\end{align}
\end{itemize}
We refer from here onwards to the sets~$(W_{\barI_{i,j}})_{i,j\in\{1,2,3\},i\neq j}$ 
as \emph{interface wedges}, and to the sets~$(W_{\bar\Omega_i})_{i\in\{1,2,3\}}$ as \emph{interpolation wedges}. 
\end{definition}

Equations~\eqref{eq:defWedge} and~\eqref{eq:defInterpolWedge} simply mean that the 
domains~$W_{\bar\Omega_i}(t)$ and~$W_{\bar I_{i,j}}(t)$ are ``wedges'' in the sense 
that their respective slices across the normal space~$x+\mathrm{Tan}^\perp\trLine(t)$ 
of the triple line have a cone structure close to~$\bar\Gamma(t)$.
The comparability \eqref{eq:compDistances}--\eqref{eq:compDistances3} of distance functions 
in the various slices can be already guessed from Figure \ref{fig:slice}.

Let us first briefly discuss the existence of an admissible localization radius.
\begin{lemma}
\label{lemma:existenceLocRadius}
Let the assumptions and notation of Definition~\ref{def:locRadius}
be in place. Then there exists an admissible localization
radius $r=r_{\trLine}\in (0,1]$ for the triple line. The radius~$r$ and the
associated data only depends on the data of the smoothly evolving regular
double bubble~$(\bar\Omega_1,\bar\Omega_2,\bar\Omega_3)$ on~$[0,T]$.
\end{lemma}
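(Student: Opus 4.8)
The plan is to produce $r=r_\trLine$ by shrinking a radius in three stages, starting from $\min_{i,j}r_{i,j}$ and the injectivity radius of the evolving triple line, and checking at each stage that every threshold depends only on the data of the smoothly evolving double bubble (which includes the matrix~$\sigma$). For the first stage, concerning item~(i): by Definition~\ref{def:smoothSolution}, restricting the diffeomorphisms~$\psi_{i,j}$ to~$\trLine^0\subset\barI_{i,j}^0$ exhibits~$\trLine=\bigcup_t\trLine(t){\times}\{t\}$ as a compact, embedded, boundaryless evolving $1$-manifold carried by a parametrization of class~$C^0_tC^5_x\cap C^1_tC^3_x$. Applying the tubular neighborhood theorem in each time slice, made uniform over~$t\in[0,T]$ by compactness of~$[0,T]$ and continuity of the data, yields~$r_0\in(0,\min_{i,j}r_{i,j}]$, depending only on the data, such that~$P_\trLine(\cdot,t)$ is well defined on~$B_{r_0}(\trLine(t))$ and~$\dist^2(\cdot,\trLine)$,~$P_\trLine$ have on~$\mathcal{N}_{r_0}(\trLine)$ the regularity~$C^0_tC^4_x\cap C^1_tC^2_x$ --- the standard regularity of the squared distance to a moving submanifold of this class (the minor loss of derivatives relative to the parametrization being harmless) --- so item~(i) holds for every~$r\le r_0$.

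Next I would set up the geometry transverse to~$\trLine$ and the gauge wedges. Fix~$(x,t)\in\trLine$ and pass to the normal plane~$x+\mathrm{Tan}^\perp_x\trLine(t)$. Each interface~$\barI_{i,j}$ meets this plane in a single ray through~$x$, of unit direction the inward conormal~$\ta_{i,j}(x,t)$ (symmetric under~$i\leftrightarrow j$); since~$\taTrJ$ is tangent to~$\trLine\subset\barI_{i,j}$, also each~$\no_{i,j}$ lies in this plane and~$\ta_{i,j}$ is its~$90^\circ$-rotation there. Hence Herring's condition~\eqref{HerringAngleCondition}, read in the plane as a fixed linear relation with strictly positive coefficients, rigidly fixes all pairwise angles among the~$\no_{i,j}$, hence among the~$\ta_{i,j}$, as constants depending only on~$\sigma$; and the strict triangle inequality (equivalently $\ell^2$-embeddability) is precisely what forces the three conormal rays to be pairwise non-collinear and to positively span the plane, splitting it into three sectors~$S_1,S_2,S_3$ --- with~$S_i$ bounded by~$\ta_{i,j},\ta_{i,k}$ and corresponding to~$\bar\Omega_i$ --- of constant opening angles~$\theta_i\in(0,\pi)$, $\sum_i\theta_i=2\pi$, each bounded away from~$0$ and~$\pi$ by a constant depending only on~$\sigma$. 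The key observation for the construction is that, transverse to~$\trLine$ and near it, the set~$\mathrm{im}(\Psi_{i,j})$ is essentially the half-plane~$\{v\cdot\ta_{i,j}\ge 0\}$ (up to a curvature error of order~$r$), because~$\barI_{i,j}$ is a manifold with boundary~$\trLine$ leaving~$\trLine$ in the direction~$\ta_{i,j}$. Consequently the interpolation wedge~$W_{\bar\Omega_i}$, which by~\eqref{eq:interpolWedgeHalfspace} must lie in~$\mathrm{im}(\Psi_{i,j})\cap\mathrm{im}(\Psi_{i,k})$, must be taken in the ``central'' part of~$S_i$ --- the directions at angle~$<\pi/2$ from both bounding rays --- so I fix a small~$\eps_0=\eps_0(\sigma)>0$, let~$W_{\bar\Omega_i}$ be that central part shrunk by~$\eps_0$ at each end, and let the interface wedge~$W_{\barI_{i,j}}$ be the remaining portion between the central parts of~$S_i$ and~$S_j$ (straddling~$\ta_{i,j}$). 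The bounding rays of these sectors furnish the fields~$X^\pm$, which arise from~$\ta_{i,j}$ (hence from~$\no_{i,j}|_{\barI_{i,j}}$ and~$\taTrJ$) by fixed rotations in the normal plane and so are unit length, valued in~$\mathrm{Tan}^\perp\trLine$, and of class~$C^0_tC^4_x\cap C^1_tC^2_x$ along~$\trLine$. By construction the wedges are non-empty (all opening angles are constants in~$(0,\pi)$; e.g.\ $W_{\bar\Omega_i}$ has opening~$\min(\theta_i,\pi{-}\theta_i)-2\eps_0$), pairwise disjoint, their closures tile~$\overline{B_r(\trLine(t))}$ (adjacent sectors share an~$X^\pm$-edge and the angles sum to~$2\pi$), and the relations~$X^\pm_{\barI_{i,j}}=X^\pm_{\barI_{j,i}}$ and the membership of~$(X^+_{\bar\Omega_i},X^-_{\bar\Omega_i})$ required in Definition~\ref{def:locRadius} hold by definition.

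Finally I would shrink once more to~$\hat r$ to secure~\eqref{eq:interfaceWedge}--\eqref{eq:compDistances3}. Near~$\trLine$ each~$\barI_{i,j}$ agrees with its tangent cone (the~$\ta_{i,j}$-ray extruded along~$\trLine$) up to~$O(\dist^2(\cdot,\trLine))$; hence for~$r=\hat r$ small, depending only on the data and on~$\eps_0(\sigma)$, the interface~$\barI_{i,j}$ stays inside~$W_{\barI_{i,j}}$ near~$\trLine$ and lies in~$\mathrm{im}(\Psi_{i,j})$ --- giving~\eqref{eq:interfaceWedge} --- while~$W_{\bar\Omega_i}$ lies in both half-planes~$\{v\cdot\ta_{i,j}\ge 0\}$,~$\{v\cdot\ta_{i,k}\ge 0\}$ --- giving~\eqref{eq:interpolWedgeHalfspace}. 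For the comparabilities one uses, besides the slicewise picture, that~$\trLine\subset\barI_{i,j}$, so~$\dist(\cdot,\barI_{i,j})\le\dist(\cdot,\trLine)$ everywhere (in particular the~$\barI_{i,j}$-nearest point to any point within~$2\hat r$ of~$\trLine$ lies within~$2\hat r$ of~$\trLine$), while a point at distance~$\rho$ from~$\trLine$ at angular distance~$\alpha$ from the~$\ta_{k,l}$-ray obeys~$\dist(\cdot,\barI_{k,l})\ge(\sin\alpha)\rho-C\rho^2$. In~$W_{\bar\Omega_i}$ every interface ray is at angular distance~$\ge\eps_0$, so all three~$\dist(\cdot,\barI_{k,l})$ --- and~$\dist(\cdot,\mathcal{I})$ --- are comparable to~$\dist(\cdot,\trLine)$, yielding~\eqref{eq:compDistances}; in~$W_{\barI_{j,k}}\cup W_{\barI_{k,i}}$ the angular distance to~$\ta_{i,j}$ is bounded below by a positive constant, yielding~$\dist(\cdot,\trLine)\lesssim\dist(\cdot,\barI_{i,j})$, i.e.~\eqref{eq:compDistances2}; and in~$W_{\barI_{i,j}}$ a short trigonometric comparison of the three angular distances (using that, by construction, the reach of~$W_{\barI_{i,j}}$ into~$S_i$, resp.~$S_j$, is at most the angular distance to the opposite bounding ray of that sector) gives~$\dist(\cdot,\barI_{i,j})\asymp\dist(\cdot,\mathcal{I})$, i.e.~\eqref{eq:compDistances3}. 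Taking~$r_\trLine$ to be the minimum of~$r_0$ and all smallness thresholds above --- each controlled by the~$C^5$/$C^1_tC^3_x$ data of the~$\psi_{i,j}$ and of~$\trLine$ together with~$\sigma$ --- finishes the proof.

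The one genuinely nontrivial point, which I expect to be the main obstacle, is the interplay in the middle stage between the transverse half-space structure of~$\mathrm{im}(\Psi_{i,j})$ near~$\trLine$ and the compatibility requirement~\eqref{eq:interpolWedgeHalfspace}: this is what forces the interface wedges to carry a definite,~$\sigma$-dependent opening (rather than being arbitrarily thin), and this sizing then has to be checked consistent both with the tiling~\eqref{eq:decompTripleLine} and with the distance comparabilities. Everything else is a soft consequence of the tubular neighborhood theorem, of the rigidity of Herring's relation, and of routine uniform-in-$t$ compactness.
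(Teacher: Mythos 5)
Your proof takes essentially the same route as the paper's: reduce to the slice picture in $x+\mathrm{Tan}^\perp_x\trLine(t)$, observe that the Herring condition together with the strict triangle inequality fixes the three conormal rays $\ta_{i,j}$ with all pairwise angles $\theta_i\in(0,\pi)$ depending only on $\sigma$, build a cone decomposition of the normal plane, and use uniform-in-$t$ regularity plus smallness of $r$ to secure the remaining requirements of Definition~\ref{def:locRadius}. You supply noticeably more detail than the paper does on the parts it disposes of in one sentence (namely item~(i), the inclusions~\eqref{eq:interfaceWedge}--\eqref{eq:interpolWedgeHalfspace}, and the distance comparabilities~\eqref{eq:compDistances}--\eqref{eq:compDistances3}), and your identification of the key constraint --- that $\mathrm{im}(\Psi_{i,j})$ is, in the slice and up to $O(r)$, the half-plane $\{v\cdot\ta_{i,j}\geq 0\}$, which pins down which directions an interpolation wedge may occupy --- is exactly the right thing to single out.

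The one substantive divergence is the choice of interpolation cones, and it is actually a point in your favor. The paper sets $C_{\bar\Omega_1}:=\mathbb{H}_{1,2}\cap\mathbb{H}_{3,1}$ whenever that cone has opening $>90^\circ$ (i.e.\ $\theta_1<90^\circ$), and only falls back to the middle third of $S_1$ otherwise. In the former case $C_{\bar\Omega_1}$ spills out of the sector $S_1$ by $90^\circ-\theta_1$ on each side, and for admissible $\sigma$ with one small and two large sector angles (say $\theta_1=30^\circ$, $\theta_2=\theta_3=165^\circ$, corresponding to $\sigma_{1,2}=\sigma_{3,1}=1$, $\sigma_{2,3}\approx1.93<2$) this overhang exceeds the $\theta_2/3$ buffer of the middle third of $S_2$, so the paper's cones as literally defined fail to be pairwise disjoint. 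Your choice --- the $\eps_0$-shrinking of the central part $S_i\cap\mathbb{H}_{i,j}\cap\mathbb{H}_{k,i}$ of each sector --- stays inside $S_i$ by construction, which gives disjointness for free, keeps each interface wedge's reach into $S_i$ at most $\max(\theta_i-\pi/2,0)+\eps_0<\pi/2$ (so the interface wedge sits inside the correct half-plane), and leaves all opening angles bounded away from $0$ and $\pi$ by quantities depending only on $\sigma$. Your trigonometric reduction of~\eqref{eq:compDistances}--\eqref{eq:compDistances3} to angular-distance comparisons (using $\dist(\cdot,\barI_{k,l})\gtrsim(\sin\alpha_{k,l})\dist(\cdot,\trLine)-O(\dist^2)$ and $\dist(\cdot,\barI_{i,j})\leq\dist(\cdot,\trLine)$) is likewise the correct quantitative form of what the paper leaves implicit. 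In short: correct, essentially the paper's strategy, with one improvement that closes a small gap in the paper's own wedge construction.
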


\begin{proof}
We provide details on how to arrange the vector fields
~$(X^\pm_{\barI_{i,j}})_{i,j\in\{1,2,3\},i\neq j}$ and~$(X^\pm_{\bar\Omega_i})_{i\in\{1,2,3\}}$
in order to ensure the properties~\eqref{eq:defWedge}--\eqref{eq:decompTripleLine}.
The remaining conditions are a consequence of exploiting the uniform space-time regularity of the
interfaces present in the smoothly evolving regular double bubble~$(\bar\Omega_1,\bar\Omega_2,\bar\Omega_3)$
on~$[0,T]$, cf.\ Definition~\ref{def:smoothSolution},
and choosing the scale~$r\in (0,r_{1,2}\wedge r_{2,3} \wedge r_{3,1}]$ sufficiently small.

Fix~$(x,t)\in \trLine$, and up to a translation and rotation we may
assume without loss of generality that~$x=0$ 
and~$\mathrm{Tan}_{x}^\perp\trLine(t) = \{0\}{\times}\Rd[2] = \langle e_2,e_3 \rangle$,
where~$\{e_1,e_2,e_3\}$ denotes the standard basis of~$\Rd[3]$
and~$\langle e_2,e_3 \rangle$ the $\Rd[]$-linear span of~$\{e_2,e_3\}$.
Denote then by~$\ta_{1,2},\ta_{2,3},\ta_{3,1}\in \langle e_2,e_3 \rangle$ the
tangent vectors at~$x=0$ to the interfaces~$\barI_{1,2}$, $\barI_{2,3}$
and~$\barI_{3,1}$, respectively, with the orientation chosen so 
that along $\bar \Gamma$ all of them point in the direction of the associated interface
(which is also described in more detail in Construction~\ref{tangentVectorFields} below).
These tangent vectors define associated half-spaces
\begin{align}
\label{eq:defHalfSpaceSlice}
\mathbb{H}_{1,2} := \big\{y \in \langle e_2,e_3 \rangle\colon
y\cdot\ta_{1,2} > 0\big\},
\end{align}
where $\mathbb{H}_{2,3}$ and~$\mathbb{H}_{3,1}$ are defined analogously.

We now construct a set of pairwise disjoint open 
cones~$C_{\bar\Omega_1},C_{\bar\Omega_2},C_{\bar\Omega_3}\subset\langle e_2,e_3\rangle$,
which will provide the cone structure of the interpolation wedges,
by means of the following procedure: If the cone given
by $\mathbb{H}_{1,2}\cap\mathbb{H}_{3,1}$ has an opening angle
strictly greater than~$90^\circ$, we define
$C_{\bar\Omega_1}:=\mathbb{H}_{1,2}\cap\mathbb{H}_{3,1}$.
In the other case, we define $C_{\bar\Omega_1}$ to be the
middle third of the cone with opening vectors~$\ta_{1,2}$
and~$\ta_{3,1}$. The remaining two cones~$C_{\bar\Omega_2}$
and~$C_{\bar\Omega_3}$ are defined in the same way.

Note that the opening angles of the cones~$(C_{\bar\Omega_i})_{i\in\{1,2,3\}}$
are always strictly smaller than~$180^\circ$ since the surface
tensions satisfy the strict triangle inequality.
We proceed by selecting 
cones~$C_{\barI_{1,2}}=:C_{\barI_{2,1}},C_{\barI_{2,3}}=:C_{\barI_{3,2}},
C_{\barI_{3,1}}=:C_{\barI_{1,3}}\subset\langle e_2,e_3\rangle$,
which are uniquely determined by the requirement that
together with $(C_{\bar\Omega_i})_{i\in\{1,2,3\}}$ they form a family
of pairwise disjoint open cones in~$\langle e_2,e_3\rangle$ such that
\begin{align}
\label{eq:tiling}
\langle e_2,e_3 \rangle &= \overline{C_{\barI_{1,2}}} \cup \overline{C_{\barI_{2,3}}}
\cup \overline{C_{\barI_{3,1}}} \cup \bigcup_{i\in\{1,2,3\}} \overline{C_{\bar\Omega_i}},
\\
\label{eq:interfaceOrientation}
\ta_{1,2} &\in C_{\barI_{1,2}},\quad
\ta_{2,3} \in C_{\barI_{2,3}},\quad
\ta_{3,1} \in C_{\barI_{3,1}}.
\end{align}
We finally define $(X^\pm_{\barI_{i,j}})_{i,j\in\{1,2,3\},i\neq j}$ and~$(X^\pm_{\bar\Omega_i})_{i\in\{1,2,3\}}$
by means of the unit length opening vectors of the cones 
$(C_{\barI_{i,j}})_{i,j\in\{1,2,3\},i\neq j}$ and~$(C_{\bar\Omega_i})_{i\in\{1,2,3\}}$,
respectively. The right hand sides of properties~\eqref{eq:defWedge} and~\eqref{eq:defInterpolWedge}
now serve as the defining objects for the interface and interpolation wedges, respectively.
\end{proof}

In a second preparatory step, we proceed with the definition of a preliminary
orthonormal frame along each of the three respective interfaces in the 
vicinity of the triple line, cf.\ Figure \ref{fig:curved triple line}.

\begin{construction}[Preliminary choice of tangent frame]
\label{tangentVectorFields}
Let the assumptions and notation of Definition~\ref{def:locRadius}
be in place. In particular, let~$r\in (0,r_{1,2}\wedge r_{2,3} \wedge r_{3,1}]$ be an associated 
admissible localization radius for the triple line~$\trLine$.
We then provide for all~$t\in[0,T]$ and all distinct phases $i,j\in\{1,2,3\}$ two tangent vector fields
$\ta_{i,j}(\cdot,t),\taTrJ_{i,j}(\cdot,t)\in\mathbb{S}^2$ along the local interface patch
$\barI_{i,j}(t)\cap B_{r}(\trLine(t))$ by means of the following procedure:

First, slicing the interface $\barI_{i,j}(t)$ along the planes $y{+}\mathrm{Tan}_y^\perp\trLine(t)$
produces a family of curves $\barI_{i,j}^{y}(t):=\barI_{i,j}(t)\cap\big(y{+}\mathrm{Tan}_y^\perp\trLine(t)\big)
\cap B_{r}(\trLine(t))$ for all $y\in\trLine(t)$. Second, for each~$t\in [0,T]$ and each~$y\in\trLine(t)$ denote 
by $\ta_{i,j}^{y}(\cdot,t)\in\mathbb{S}^2$ the tangent vector field along the curve~$\barI_{i,j}^{y}(t)$
which is oriented by $y{+}\frac{r}{2}\ta_{i,j}^{y}(y,t)\in 
W_{\barI_{i,j}}(t)\cap\big(y{+}\mathrm{Tan}_y^\perp\trLine(t)\big)$.
We then define two tangent vector fields on the local interface patch
$\barI_{i,j}\cap\mathcal{N}_{r}(\trLine)$ by means of
\begin{align*}
\ta_{i,j}(x,t) &:= \ta_{i,j}^{y}(x,t)\big|_{y=P_{\trLine}(x,t)} \in \mathbb{S}^2,
&& (x,t)\in \barI_{i,j}\cap\mathcal{N}_{r}(\trLine),
\\
\taTrJ_{i,j}(x,t) &:= (\no_{i,j}\times\ta_{i,j})(x,t) \in \mathbb{S}^2,
&& (x,t)\in \barI_{i,j}\cap\mathcal{N}_{r}(\trLine).
\end{align*}
This yields an orthonormal frame~$(\no_{i,j},\ta_{i,j},\taTrJ_{i,j})$
on $\barI_{i,j}\cap\mathcal{N}_{r}(\trLine)$.
Observe also that it holds
\begin{align}
\label{eq:orientationTangentTripleLine}
\taTrJ_{1,2}|_{\trLine}=\taTrJ_{2,3}|_{\trLine}=\taTrJ_{3,1}|_{\trLine}.
\end{align}

By a minor abuse of notation, we finally introduce extensions of these tangential
vector fields away from the interfaces. Namely, we define
\begin{align}
\label{eq:frameInterface}
(\ta_{i,j},\taTrJ_{i,j})(x,t) &:= 
(\ta_{i,j},\taTrJ_{i,j})(y,t)\big|_{y=P_{i,j}(x,t)},
\quad (x,t) \in  \mathrm{im}(\Psi_{i,j}) \cap\mathcal{N}_{r}(\trLine).
\end{align}
We refer to Definition~\ref{def:locRadiusInterface} 
for the diffeomorphism~$\Psi_{i,j}$ and the projection~$P_{i,j}$
onto the interface~$\barI_{i,j}$. We register in terms of regularity that
\begin{align}
\label{eq:regTangentFields}
\ta_{i,j},\taTrJ_{i,j} \in (C^0_tC^4_x \cap C^1_tC^2_x) 
(\mathrm{im}(\Psi_{i,j}) \cap \mathcal{N}_{r}(\trLine)).
\end{align}
This concludes our construction of \emph{orthonormal frames}~$(\no_{i,j},\ta_{i,j},\taTrJ_{i,j})$.
\hfill$\diamondsuit$
\end{construction}

In the sequel we will repeatedly rely on an explicit
representation of the gradients for the normal
and tangential vector fields. These
formulas are the content of the following result.

\begin{lemma}
\label{lem:gradientOrthonormalFrame}
Let the assumptions and notation of Definition~\ref{def:locRadius} 
and Construction~\ref{tangentVectorFields} be in place. To ease
notation, let~$\barI:=\barI_{1,2}$, $\barI':=\barI_{2,3}$ and~$\barI'':=\barI_{3,1}$
for the three interfaces present in the smoothly evolving regular double bubble~$(\bar\Omega_1,\bar\Omega_2,\bar\Omega_3)$. 
We proceed accordingly for the associated orthonormal frames~$(\no,\ta,\taTrJ)$, $(\no',\ta',\taTrJ')$
and~$(\no'',\ta'',\taTrJ'')$, respectively.

Using also the abbreviations~$\kappa_{\ta\ta}:=-\ta\otimes\ta:\nabla\no$, 
$\kappa_{\taTrJ\taTrJ}:=-\taTrJ\otimes\taTrJ:\nabla\no$
as well as~$\kappa_{\ta\taTrJ}:=-\ta\otimes\taTrJ:\nabla\no$, it holds
$\kappa_{\ta\taTrJ}=-\taTrJ\otimes\ta:\nabla\no$ and
	\begin{align}
	\label{eq:gradientNormal}
	\nabla\no 
	&= -\kappa_{\ta\ta}\,\ta\otimes\ta -\kappa_{\taTrJ\taTrJ}\,\taTrJ\otimes\taTrJ
	-\kappa_{\ta\taTrJ}\,(\taTrJ\otimes\ta + \ta\otimes\taTrJ),
	\\	\label{eq:gradientTau}
	\nabla\ta 
	&= \kappa_{\ta\ta}\,\no\otimes\ta - (\nabla\cdot\taTrJ)\,\taTrJ\otimes\ta
	+ \kappa_{\ta\taTrJ}\,\no\otimes\taTrJ + (\nabla\cdot\ta)\,\taTrJ\otimes\taTrJ,
	\\	\label{eq:gradientTaTrJ}
	\nabla\taTrJ 
	&= \kappa_{\taTrJ\taTrJ}\,\no\otimes\taTrJ 
	+ \kappa_{\ta\taTrJ}\,\no\otimes\ta
	+ (\nabla\cdot\taTrJ)\,\ta\otimes\ta
	- (\nabla\cdot\ta)\,\ta\otimes\taTrJ
	\end{align}
along the local interface patch~$\barI \cap \mathcal{N}_r(\trLine)$.
Analogous formulas of course hold true for~$(\no',\ta',\taTrJ')$ along~$\barI' \cap \mathcal{N}_r(\trLine)$
in terms of~$(\kappa'_{\ta'\ta'},\kappa'_{\taTrJ'\taTrJ'},\kappa'_{\ta'\taTrJ'})$,
and for~$(\no'',\ta'',\taTrJ'')$ along~$\barI'' \cap \mathcal{N}_r(\trLine)$
in terms of~$(\kappa''_{\ta''\ta''},\kappa''_{\taTrJ''\taTrJ''},\kappa''_{\ta''\taTrJ''})$.
\end{lemma}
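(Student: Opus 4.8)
The plan is to establish the three gradient formulas by exploiting that $(\no,\ta,\taTrJ)$ is a \emph{smooth orthonormal frame} on the open set $\barI \cap \mathcal{N}_r(\trLine)$ (by the regularity~\eqref{eq:regTangentFields} and~\eqref{eq:regularityNormalCurvature}), so that each gradient matrix can be expanded in the orthonormal basis of $\Rd[3{\times}3]$ given by the nine tensor products $e_a\otimes e_b$ with $(e_1,e_2,e_3)=(\no,\ta,\taTrJ)$. Concretely, for any smooth unit vector field $\no$ one has $\no\cdot\nabla\no=0$ (differentiating $|\no|^2=1$), hence $\nabla\no$ has no $\no\otimes\cdot$ row, i.e.\ $\nabla\no = \sum_{a\in\{\ta,\taTrJ\}}\sum_{b\in\{\no,\ta,\taTrJ\}} (a\cdot\nabla\no\, b)\, a\otimes b$. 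The key structural input is that $\no=\nabla s_{i,j}$ on a neighborhood (from~\eqref{eq:extensionNormal}), so $\nabla\no=\nabla^2 s_{i,j}$ is \emph{symmetric}; combined with the $\no$-free rows this forces $\nabla\no$ to be supported on the $2\times 2$ symmetric block spanned by $\{\ta,\taTrJ\}$, which is precisely~\eqref{eq:gradientNormal} with the stated symmetry $\kappa_{\ta\taTrJ}=-\taTrJ\otimes\ta:\nabla\no$. (One should note that $\ta,\taTrJ$ are defined off the interface via the projection~\eqref{eq:frameInterface}, but all identities are only claimed \emph{along} $\barI\cap\mathcal{N}_r(\trLine)$, so $\nabla\no$ there still equals $\nabla^2 s_{i,j}$ and the symmetry argument is valid at those points.)

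Next I would derive~\eqref{eq:gradientTau} and~\eqref{eq:gradientTaTrJ} from~\eqref{eq:gradientNormal} together with the orthonormality relations by the same expansion. Writing $\nabla\ta=\sum_{a,b}(a\cdot\nabla\ta\,b)\,a\otimes b$, differentiating $|\ta|^2=1$ kills the $\ta\otimes\cdot$ row, differentiating $\no\cdot\ta=0$ gives $\no\cdot\nabla\ta\,b=-\ta\cdot\nabla\no\,b$ (reproducing the coefficients $\kappa_{\ta\ta}$ and $\kappa_{\ta\taTrJ}$ in the $\no\otimes\ta$ and $\no\otimes\taTrJ$ entries), and differentiating $\taTrJ\cdot\ta=0$ expresses the $\taTrJ\otimes b$ entries of $\nabla\ta$ as minus the $\ta\otimes b$ entries of $\nabla\taTrJ$. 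The only genuinely nontrivial point is then to identify the two remaining scalar coefficients in the $\taTrJ\otimes\ta$ and $\taTrJ\otimes\taTrJ$ slots — the claim is that they equal $-\nabla\cdot\taTrJ$ and $\nabla\cdot\ta$ respectively. This I would obtain by taking the trace: $\nabla\cdot\ta=\operatorname{tr}\nabla\ta$ picks out exactly the $\taTrJ\otimes\taTrJ$ coefficient (the $\no\otimes\ta$ and $\ta\otimes\ta$ diagonal-type contributions vanish since $\nabla\ta$ has no $\ta$-row and the $\no$-row has no $\no\otimes\no$ term — more carefully, $\operatorname{tr}(\no\otimes\ta)=\no\cdot\ta=0$, $\operatorname{tr}(\ta\otimes\ta)$ does not occur, $\operatorname{tr}(\taTrJ\otimes\ta)=\taTrJ\cdot\ta=0$), and similarly $\nabla\cdot\taTrJ$ picks out the negative of the $\ta\otimes\ta$ coefficient of $\nabla\taTrJ$, which by the cross-relation is the $\taTrJ\otimes\ta$ coefficient of $\nabla\ta$. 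Then~\eqref{eq:gradientTaTrJ} follows by the identical bookkeeping applied to $\taTrJ$, using $\no\cdot\nabla\taTrJ\,b=-\taTrJ\cdot\nabla\no\,b$ and the already-established cross-relation, together with $\taTrJ=\no\times\ta$ to fix signs consistently.

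The main obstacle — really the only place where care is needed rather than bookkeeping — is verifying that there are no ``extra'' coefficients and that the two trace computations genuinely isolate $\nabla\cdot\ta$ and $\nabla\cdot\taTrJ$ and nothing else; equivalently, one must check that the antisymmetric part of the $\{\ta,\taTrJ\}\times\{\ta,\taTrJ\}$ block of $\nabla\ta$ (and of $\nabla\taTrJ$) contributes only through a single scalar, which is automatic once one observes that $\nabla\ta$ has no $\ta$-row so its $2\times2$ tangential block is just the single row corresponding to $\taTrJ$. Given that, every coefficient in~\eqref{eq:gradientTau}–\eqref{eq:gradientTaTrJ} has been named, and the formulas along $\barI'\cap\mathcal{N}_r(\trLine)$ and $\barI''\cap\mathcal{N}_r(\trLine)$ are obtained verbatim by replacing $(\no,\ta,\taTrJ)$ with the primed and double-primed frames and $\kappa$ with $\kappa'$, $\kappa''$.
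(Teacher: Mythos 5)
Your plan for \eqref{eq:gradientNormal} matches the paper's argument: kill the $\no$-row via $|\no|^2=1$ and invoke the symmetry of $\nabla\no=\nabla^2 s_{i,j}$. For \eqref{eq:gradientTau} and \eqref{eq:gradientTaTrJ}, your route differs from the paper's (which writes $\ta = J\no$ for the $90^\circ$ rotation $J = \ta\wedge\no + \taTrJ\otimes\taTrJ$ and expands $\nabla(J\no)$ piece by piece). Your orthonormal-basis expansion plus trace identification is a genuinely cleaner way to name the two tangential coefficients $-\nabla\cdot\taTrJ$ and $\nabla\cdot\ta$, and is a legitimate alternative to the paper's computation — \emph{provided} you complete it.

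The gap: your bookkeeping accounts for the $\ta$-row (via $|\ta|^2=1$), the $\no$-row (via differentiating $\no\cdot\ta=0$ and \eqref{eq:gradientNormal}), and the two entries $\taTrJ\otimes\ta$, $\taTrJ\otimes\taTrJ$ of the $\taTrJ$-row (via the two traces), but it never addresses the \emph{third} entry of the $\taTrJ$-row, namely $\taTrJ\otimes\no:\nabla\ta = \taTrJ\cdot(\no\cdot\nabla)\ta$. This coefficient is not controlled by any of the orthonormality relations you list, nor by the trace computations (which are insensitive to the $\no$-column since $\operatorname{tr}(a\otimes\no)=a\cdot\no=0$ for $a\in\{\ta,\taTrJ\}$). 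Yet the asserted formula \eqref{eq:gradientTau} forces $(\nabla\ta)\no = (\no\cdot\nabla)\ta = 0$, and the analogue for \eqref{eq:gradientTaTrJ} forces $(\no\cdot\nabla)\taTrJ = 0$. These are not consequences of the frame being orthonormal; they are consequences of the specific extension \eqref{eq:frameInterface}, namely that $\ta$ and $\taTrJ$ are constant along the integral curves of $\no$ (because they are pulled back through the nearest-point projection $P_{i,j}$, and $(\no\cdot\nabla)P_{i,j}=0$). The paper's proof of \eqref{eq:gradientTau} opens precisely by invoking $(\no\cdot\nabla)\ta = 0$; you should do the same. Your parenthetical remark about the extension notices the right ingredient but applies it only to $\nabla\no$, where it is not actually needed — it is needed here, to kill the $\taTrJ\otimes\no$ and $\ta\otimes\no$ slots. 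Once you add this line, your counting closes and the proof is complete.
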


\begin{proof}
The representation~\eqref{eq:gradientNormal} is essentially just a rephrasing
of the definition of the coefficients~$\kappa_{\ta\ta}$, $\kappa_{\taTrJ\taTrJ}$
and~$\kappa_{\ta\taTrJ}$. The only additional ingredients needed for
the validity of~\eqref{eq:gradientNormal} are 
$(\nabla\no)^\mathsf{T}\no =\frac{1}{2}\nabla|\no|^2=0$
and the symmetry of~$\nabla\no=\nabla^2 s_{1,2}$, cf.~\eqref{eq:extensionNormal}.

For a proof of~\eqref{eq:gradientTau}, we write $\ta=J\no$
where $J=\ta\wedge\no+\taTrJ\otimes\taTrJ$
denotes the associated rotation matrix around the $\taTrJ$-axis.
Based on~$(\no\cdot\nabla)\ta=0$, $(\nabla\ta)^\mathsf{T}\ta =\frac12 \nabla |\ta|^2 = 0$
and~\eqref{eq:gradientNormal} we then obtain
\begin{align*}
\nabla\ta &= \kappa_{\ta\ta} \, \no\otimes\ta
- \kappa_{\taTrJ\taTrJ} \, \taTrJ\otimes\taTrJ
- \kappa_{\ta\taTrJ} \, (\taTrJ\otimes\ta - \no\otimes\taTrJ) 
\\&~~~
+ \big((\ta\cdot\nabla)J\big)\no\otimes\ta 
+ \big((\taTrJ\cdot\nabla)J\big)\no\otimes\taTrJ 
\\&
= \kappa_{\ta\ta} \, \no\otimes\ta
- \kappa_{\taTrJ\taTrJ} \, \taTrJ\otimes\taTrJ
- \kappa_{\ta\taTrJ} \, (\taTrJ\otimes\ta - \no\otimes\taTrJ)
\\&~~~
+ \big(\no\otimes\no:(\ta\cdot\nabla)J\big) \, \no\otimes\ta 
+ \big(\taTrJ\otimes\no:(\ta\cdot\nabla)J\big) \, \taTrJ\otimes\ta 
\\&~~~
+ \big(\no\otimes\no:(\taTrJ\cdot\nabla)J\big) \, \no\otimes\taTrJ 
+ \big(\taTrJ\otimes\no:(\taTrJ\cdot\nabla)J\big) \, \taTrJ\otimes\taTrJ. 
\end{align*}
For the two appearing $(\no\otimes\no)$-components of~$\nabla J$,
it suffices to take the symmetric part of $J$ into account, which is
$\taTrJ\otimes\taTrJ$. It then follows from $\taTrJ\cdot\no=0$ that 
\begin{align*}
\no\otimes\no:(\ta\cdot\nabla)J
=\no\otimes\no:(\taTrJ\cdot\nabla)J=0. 
\end{align*}
Based on~\eqref{eq:gradientNormal}, $\taTrJ \cdot (\taTrJ \cdot \nabla) \taTrJ 
= \frac12 (\taTrJ \cdot \nabla)|\taTrJ|^2 =0$, and
$\taTrJ=\no\times\ta$ we may further compute
\begin{align*}
\taTrJ\otimes\no:(\taTrJ\cdot\nabla)J
&= (\taTrJ\otimes\no) : (\taTrJ\cdot\nabla)
(\ta\wedge\no) + \no\cdot(\taTrJ\cdot\nabla)\taTrJ
\\&
= (\taTrJ\otimes\no) : (\taTrJ\cdot\nabla)
(\ta\wedge\no) - \taTrJ\otimes\taTrJ:\nabla\no
\\&
= (\taTrJ\otimes\no) : (\taTrJ\cdot\nabla)
(\ta\wedge\no) + \kappa_{\taTrJ\taTrJ}.
\end{align*}
Based on~\eqref{eq:gradientNormal}, $\taTrJ\cdot\no=0$,
$(\taTrJ\otimes\no) : (\ta\cdot\nabla)
(\ta\wedge\no) = (\taTrJ\otimes\ta):\nabla\ta$, and $\taTrJ=\no\times\ta$
we in addition have
\begin{align*}
\taTrJ\otimes\no:(\ta\cdot\nabla)J
&= (\taTrJ\otimes\ta):\nabla\ta
+ \no\cdot(\ta\cdot\nabla)\taTrJ
\\&
= (\taTrJ\otimes\ta):\nabla\ta 
- \taTrJ\otimes\ta:\nabla\no
\\&
= (\taTrJ\otimes\ta):\nabla\ta
+ \kappa_{\ta\taTrJ}.
\end{align*}
The combination of the previous four displays yields
\begin{align}
\nabla\ta &= \kappa_{\ta\ta} \, \no\otimes\ta
+ \big((\taTrJ\otimes\ta):\nabla\ta \big) \, \taTrJ\otimes\ta
+ \kappa_{\ta\taTrJ} \, \no\otimes\taTrJ
+ (\nabla\cdot\ta) \, \taTrJ\otimes\taTrJ,
\\
\label{eq:repDivTangent}
\nabla\cdot\ta &=
(\taTrJ\otimes\no) : (\taTrJ\cdot\nabla)(\ta\wedge\no).
\end{align}
Moreover, exploiting that $\taTrJ=\no\times\ta$
yields by the product rule, \eqref{eq:gradientNormal} and the previous display
\begin{align}
\nabla\taTrJ &= \kappa_{\taTrJ\taTrJ} \, \no\otimes\taTrJ
+  (\nabla\cdot\taTrJ) \, \ta\otimes\ta
+ \kappa_{\ta\taTrJ} \, \no\otimes\ta
- (\nabla\cdot\ta) \, \ta\otimes\taTrJ,
\\
\label{eq:repDivTangentTripleLine}
\nabla\cdot\taTrJ &= -(\taTrJ\otimes\ta):\nabla\ta.
\end{align}
The previous two displays in turn directly imply~\eqref{eq:gradientTau} and~\eqref{eq:gradientTaTrJ}.
\end{proof}

\begin{figure}
  \centering      
  \includegraphics{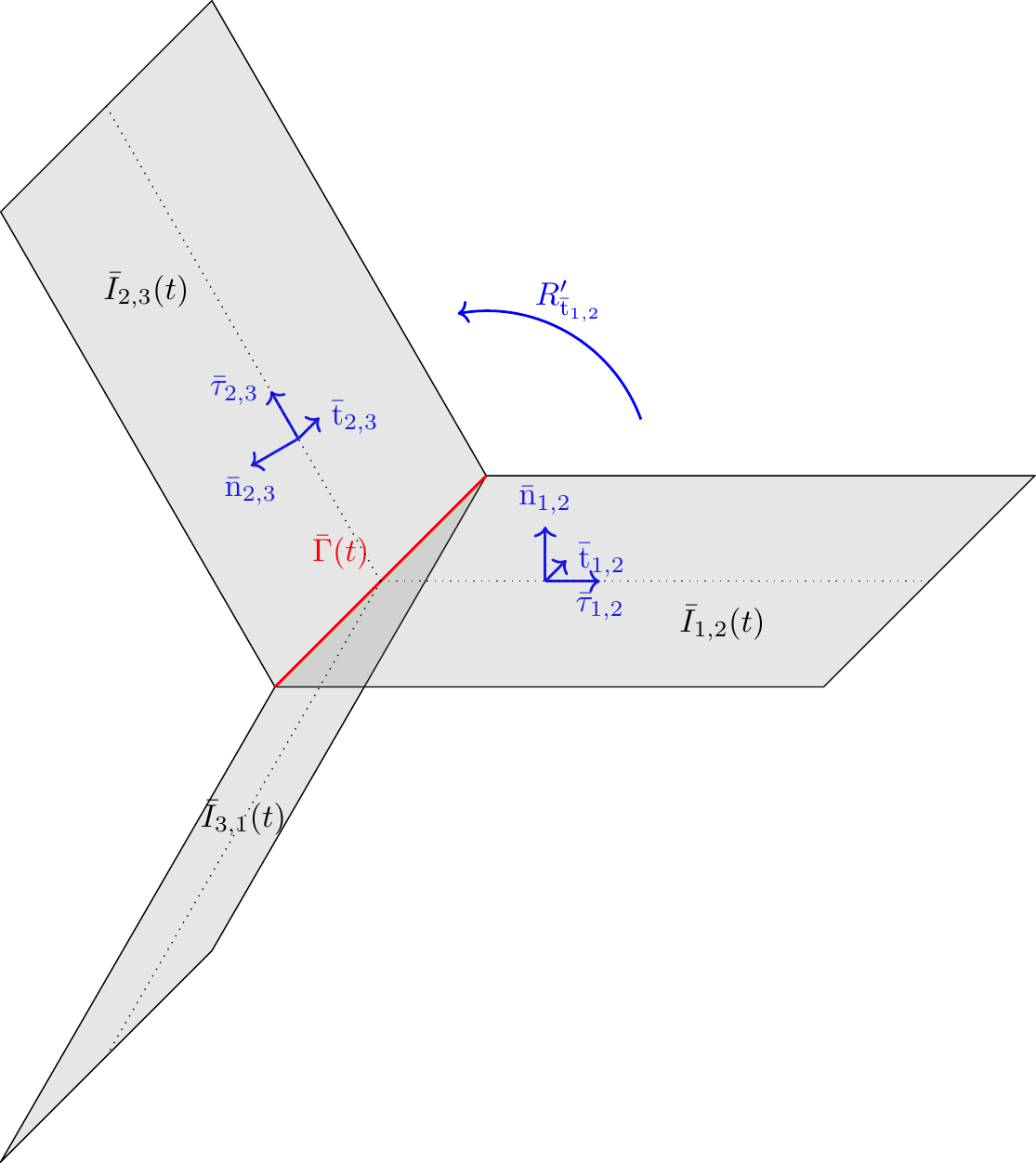}
	\caption{Local geometry at the triple line and preliminary construction of tangent frame. 
	For simplicity, we illustrate here the case of three flat sheets coming together at equal 
	angles of $120^\circ$ along a straight triple line $\trLine(t)$. In this case, the ``Herring'' 
	rotation $R'(y,t)$ is a rotation by $120^\circ$ about the axis given by the tangent 
	vector $\taTrJ= \taTrJ_{1,2}(y,t)$ of $\trLine(t)$. The dotted lines represent the 
	three slices $\barI_{i,j}^y(t)$ of the interfaces $\barI_{i,j}$.}
	\label{fig:flat sheets}
\end{figure}

The orthonormal frames provided by Construction~\ref{tangentVectorFields}
together with the signed distance functions~\eqref{eq:defSignedDistance}
constitute all ingredients for the construction
of a suitable building block~$\widetilde\xi_{i,j}$
for the vector field~$\xi_{i,j}$; at least in~$\mathcal{N}_r(\trLine)\cap\mathrm{im}(\Psi_{i,j})$,
see Construction~\ref{AnsatzAuxiliaryXiHalfSpace} below.
However, we also have to provide a construction of the vector field~$\xi_{i,j}$
outside of the domain~$\mathcal{N}_r(\trLine)\cap\mathrm{im}(\Psi_{i,j})$,
i.e., where this vector field a~priori does not have a ``natural'' definition.
The guiding principle is to mimic the Herring angle condition valid on the triple line: 
\begin{align*}
\sigma_{1,2}\no_{1,2} + \sigma_{2,3}\no_{2,3} + \sigma_{3,1}\no_{3,1} = 0.
\end{align*}
This condition motivates to appropriately rotate the already defined 
candidate vector fields~$\widetilde\xi_{j,k}$ and $\widetilde\xi_{k,i}$ 
to provide the building blocks for the vector field~$\xi_{i,j}$
throughout~$\mathcal{N}_r(\trLine)\cap\mathrm{im}(\Psi_{j,k})$
and $\mathcal{N}_r(\trLine)\cap\mathrm{im}(\Psi_{k,i})$, respectively. 
	
The rotations used in this procedure have to be chosen 
carefully so that our constructions will satisfy the requirements of a local gradient-flow calibration at the triple line, e.g., sufficiently high regularity 
(in particular, adequate compatibility along the triple line)
and the validity of the required evolution equations (up to a desired 
error in the distance to the interface). 

\begin{construction}[Gauged Herring rotation fields]
\label{RotationsTripleLine}
Let the assumptions and notation of Definition~\ref{def:locRadius}, 
Construction~\ref{tangentVectorFields} and Lemma~\ref{lem:gradientOrthonormalFrame} be in place.
Consistent with the notational conventions of the latter, denote by~$\Psi$, $\Psi'$ and~$\Psi''$
the diffeomorphisms from Definition~\ref{def:locRadiusInterface} with respect to the
interfaces~$\bar I$, $\bar I'$ and~$\bar I''$, respectively. We proceed accordingly
for the surface tensions $(\sigma,\sigma',\sigma'')$ and the projections $(P,P',P'')$.

We now define a pair of \emph{Herring rotation fields} 
\begin{align}
R'_{\taTrJ},R''_{\taTrJ}\colon \mathcal{N}_{r}(\trLine) \cap \mathrm{im}(\Psi) \to SO(3) \subset\Rd[3{\times}3]
\end{align} 
around the $\taTrJ$-axis by means of
\begin{align}
\label{eq:rotationTangent}
R'_{\taTrJ}(x,t) &:= \cos\theta'\,\mathrm{Id} + \sin\theta'\, (\ta \wedge \no)(x,t)
+ (1{-}\cos\theta')\,(\taTrJ\otimes\taTrJ)(x,t),
\\
R''_{\taTrJ}(x,t) &:= \cos\theta''\,\mathrm{Id} + \sin\theta''\, (\ta \wedge \no)(x,t)
+ (1{-}\cos\theta'')\,(\taTrJ\otimes\taTrJ)(x,t)
\end{align}
for all~$(x,t)\in\mathcal{N}_{r}(\trLine) \cap \mathrm{im}(\Psi)$, cf.\ Figure \ref{fig:flat sheets}.
The associated angles $\theta',\theta''\in (0,\pi)$ are independent 
of~$(x,t)\in\mathcal{N}_{r}(\trLine) \cap \mathrm{im}(\Psi)$ 
and chosen based on the triple of surface tensions 
$(\sigma,\sigma',\sigma'')$ such that the relations
\begin{align}
\label{eq:rotationTangentAngle1}
R'_{\taTrJ}\no &= \no', 
\\
\label{eq:rotationTangentAngle2}
R''_{\taTrJ}\no &= \no''
\end{align}
hold true along the triple line~$\trLine$. Hence, the Herring condition \eqref{HerringAngleCondition} 
implies that for all~$(x,t)\in\mathcal{N}_{r}(\trLine) \cap \mathrm{im}(\Psi)$
and all~$v\in\Rd[3]$ such that~$v\cdot\taTrJ(x,t)=0$ it holds
\begin{align}
\label{eq:HerringConditionByRotation}
\sigma v + \sigma'R'_{\taTrJ}(x,t)v + \sigma''R''_{\taTrJ}(x,t)v = 0.
\end{align}
Analogously, one defines a pair of rotations $(R_{\taTrJ'},R''_{\taTrJ'})$, 
resp.\ $(R_{\taTrJ''},R'_{\taTrJ''})$, throughout the region~$\mathcal{N}_{r}(\trLine) \cap \mathrm{im}(\Psi')$,
resp.\ $\mathcal{N}_{r}(\trLine) \cap \mathrm{im}(\Psi'')$.

Apart from the Herring rotation fields, we also introduce the \emph{gauge rotation field}  
\begin{align}
\label{eq:defGaugeRotation}
R_{\no}:=R_{\no}^{(2)}R_{\no}^{(1)}\colon \mathcal{N}_{r}(\trLine) 
\cap \mathrm{im}(\Psi) \to SO(3) \subset\Rd[3\times 3]	
\end{align}
around the $\no$-axis, cf.\ Figure \ref{fig:curved triple line}. 
The auxiliary rotation fields $R_{\no}^{(1)}$ and $R_{\no}^{(2)}$ around the $\no$-axis are defined via
\begin{align}
\label{eq:rotationNormal}
R_{\no}^{(1)}(x,t) &:= \cos\delta(x,t)\,\mathrm{Id} 
+ \sin\delta(x,t)\,(\taTrJ \wedge \ta)(x,t)
\\&~~~~ \nonumber
+ (1{-}\cos\delta(x,t))\,(\no\otimes\no)(x,t),
\label{eq:rotationNormal2}
\\
R_{\no}^{(2)}(x,t) &:= \cos\omega(x,t)\,\mathrm{Id} 
+ \sin\omega(x,t)\,(\taTrJ \wedge \ta)(x,t)
\\&~~~~ \nonumber
+ (1{-}\cos\omega(x,t))\,(\no\otimes\no)(x,t).
\end{align}
Here the rotation angle $\delta(x,t)$ is given explicitly by
\begin{align}
\label{eq:rotationNormalAngle}
\delta(x,t) &:= s(x,t)\kappa_{\ta\taTrJ}(x,t),
\quad (x,t)\in\mathcal{N}_{r}(\trLine) \cap \mathrm{im}(\Psi),
\end{align}
and the angle $\omega(x,t)$ is given by the extension
\begin{align}
\label{eq:rotationNormalAngle2}
\omega(x,t) &:= \widehat\omega(P(x,t),t),
\quad (x,t)\in\mathcal{N}_{r}(\trLine) \cap \mathrm{im}(\Psi)
\end{align}
of $\widehat \omega(x,t)$, which in turn is defined by the one-parameter family of ODEs
\begin{align}
\label{ODE_omega}
&
  \begin{cases}
	\widehat\omega(x,t) = 0, & (x,t)\in\trLine, \\
	\left(\ta(x,t) \cdot \nabla \right) \widehat\omega(x,t)  = (\nabla\cdot\taTrJ)(x,t),
	& (x,t) \in \barI \cap \mathcal{N}_r(\trLine).
  \end{cases}
\end{align}

Analogously, one defines a gauge rotation $R_{\no'}:=R_{\no'}^{(2)}R_{\no'}^{(1)}$, 
resp.\ $R_{\no''}:=R_{\no''}^{(2)}R_{\no''}^{(1)}$, throughout the 
region~$\mathcal{N}_{r}(\trLine) \cap \mathrm{im}(\Psi')$,
resp.\ $\mathcal{N}_{r}(\trLine) \cap \mathrm{im}(\Psi'')$.

We finally define via conjugation a pair of \emph{gauged Herring rotation fields}
\begin{align}
\label{eq:defGaugedHerringRotation}
\widetilde R'_{\bar I} := R_{\no}R'_{\taTrJ}R_{\no}^\mathsf{T}&\colon
\mathcal{N}_{r}(\trLine) \cap \mathrm{im}(\Psi) \to SO(3) \subset\Rd[3{\times}3],
\\
\label{eq:defGaugedHerringRotation2}
\widetilde R''_{\bar I} := R_{\no}R''_{\taTrJ}R_{\no}^\mathsf{T}&\colon
\mathcal{N}_{r}(\trLine) \cap \mathrm{im}(\Psi) \to SO(3) \subset\Rd[3{\times}3],
\end{align}
and analogously a pair~$(\widetilde R_{\bar I'},\widetilde R''_{\bar I'})$,
resp.\ $(\widetilde R_{\bar I''},\widetilde R'_{\bar I''})$,
of gauged Herring rotation fields throughout the region~$\mathcal{N}_{r}(\trLine) \cap \mathrm{im}(\Psi')$,
resp.\ $\mathcal{N}_{r}(\trLine) \cap \mathrm{im}(\Psi'')$.
\hfill$\diamondsuit$
\end{construction}

In a symmetric setting with either rotational or translational symmetry, cf.\ Figure~\ref{fig:flat sheets},
the gauge rotations~$R_{\no}$, $R_{\no'}$, and~$R_{\no''}$
are not needed and, in fact, reduce to the identity matrix. In the general case, cf.\ Figure~\ref{fig:curved triple line}, 
they account for the fact that, for instance,
the normal vector field~$\no(\cdot,t)$ evaluated along a slice $\barI(t)\cap(x{+}\mathrm{Tan}_x^\perp\trLine(t))$
for some $x\in\trLine(t)$ will in general rotate out of the plane $x{+}\mathrm{Tan}_x^\perp\trLine(t)$
as one moves away from the triple line point~$x$.

We conclude this section with the derivation of compatibility
conditions along the triple line. These represent the last missing ingredients
to ensure compatibility of the main building blocks~$\widetilde\xi_{i,j}$
(cf.\ Construction~\ref{AnsatzAuxiliaryXiHalfSpace} below)
for the vector field~$\xi_{i,j}$ and its rotated counterparts
along the triple line (see Lemma~\ref{lem:firstOrderCompXi} below).

\begin{lemma}
\label{lem:compatibilityConditionsTripleLine}
Let the assumptions and notation of Definition~\ref{def:locRadius}, 
Construction~\ref{tangentVectorFields}, Lemma~\ref{lem:gradientOrthonormalFrame},
and Construction~\ref{RotationsTripleLine} be in place. Consistently with
the notational conventions of the latter two, denote by~$H$, $H'$ and~$H''$
the extended scalar mean curvatures defined by~\eqref{eq:extensionCurvature} 
with respect to the interfaces~$\barI$, $\barI'$ and~$\barI''$, respectively.
Denote by~$\vec{V}_{\trLine}$ the normal velocity vector field of the triple line.

Then, the following compatibility conditions are satisfied along the triple line~$\trLine$:
\begin{align}
\label{eq:compConditionTripleLine}
\ta' &= R'_{\taTrJ}\ta, \quad \ta'' = R''_{\taTrJ}\ta,
\\
\label{eq:compConditionTripleLine1}
\kappa'_{\ta'\taTrJ'} &= \kappa_{\ta\taTrJ},
\quad \kappa''_{\ta''\taTrJ''} = \kappa_{\ta\taTrJ},
\\
\label{eq:compConditionTripleLine2}
\kappa'_{\taTrJ'\taTrJ'} &= (R'_{\taTrJ}\no\cdot\no)\kappa_{\taTrJ\taTrJ}
- (R'_{\taTrJ}\no\cdot\ta) \nabla\cdot\ta,
\\
\label{eq:compConditionTripleLine3}
\kappa''_{\taTrJ''\taTrJ''} &= (R''_{\taTrJ}\no\cdot\no)\kappa_{\taTrJ\taTrJ}
- (R''_{\taTrJ}\no\cdot\ta) \nabla\cdot\ta,
\\
\label{eq:compConditionTripleLine4}
\nabla\cdot\ta' &= (R'_{\taTrJ}\no\cdot\ta)\kappa_{\taTrJ\taTrJ}
+ (R'_{\taTrJ}\no\cdot\no) \nabla\cdot\ta,
\\
\label{eq:compConditionTripleLine5}
\nabla\cdot\ta'' &= (R''_{\taTrJ}\no\cdot\ta)\kappa_{\taTrJ\taTrJ}
+ (R''_{\taTrJ}\no\cdot\no) \nabla\cdot\ta,
\\
\label{eq:compConditionTripleLine6}
\sigma H {+} \sigma'H' {+} \sigma''H'' &= 0,
\\
\label{eq:compConditionTripleLine7}
\kappa''_{\ta''\ta''}(\ta''\cdot \vec{V}_{\trLine}) {+} (\ta''\cdot\nabla)H''
&= \kappa'_{\ta'\ta'}(\ta'\cdot \vec{V}_{\trLine}) {+} (\ta'\cdot\nabla)H'
\\&\nonumber 
= \kappa_{\ta\ta}(\ta\cdot \vec{V}_{\trLine}) {+} (\ta\cdot\nabla)H.
\end{align}
Of course, the analogues of~\eqref{eq:compConditionTripleLine}
as well as~\emph{\eqref{eq:compConditionTripleLine2}--\eqref{eq:compConditionTripleLine5}}
hold true for the appropriate relabellings of the associated data.

Introduce next a gauged orthonormal frame on~$\mathcal{N}_r(\trLine)\cap\mathrm{im}(\Psi)$
by means of
\begin{align}
\label{eq:defGaugedOrthonormalFrame}
(\no,\ta_*,\taTrJ_*) := (\no,R_{\no}\ta,R_{\no}\taTrJ).
\end{align}
Then, the following compatibility condition holds true:
\begin{align}
\label{eq:compConditionTripleLine8}
(\no,\ta_*,\taTrJ_*) &= (\no,\ta,\taTrJ)\quad \text{along the triple line }  \trLine.
\end{align}
The analogue of~\eqref{eq:compConditionTripleLine8} 
with respect to the gauged frame~$(\no',\ta'_*,\taTrJ'_*) := (\no',R_{\no'}\ta',R_{\no'}\taTrJ')$
on~$\mathcal{N}_r(\trLine)\cap\mathrm{im}(\Psi')$, resp.\ $(\no'',\ta''_*,\taTrJ''_*) 
:= (\no'',R_{\no''}\ta'',R_{\no''}\taTrJ'')$ on~$\mathcal{N}_r(\trLine)\cap\mathrm{im}(\Psi'')$,
is also satisfied.
\end{lemma}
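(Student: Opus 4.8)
The identities fall into three groups, and I would handle them in the following order. First, the purely geometric relations along $\trLine$ that involve only the three orthonormal frames: \eqref{eq:compConditionTripleLine}, the symmetry $\kappa_{\ta\taTrJ}=-\taTrJ\otimes\ta:\nabla\no$ (already recorded in Lemma~\ref{lem:gradientOrthonormalFrame}), and the curvature/divergence transfer relations \eqref{eq:compConditionTripleLine1}--\eqref{eq:compConditionTripleLine5}. Second, the curvature relations coming from the first- and second-order compatibility built into Definition~\ref{def:smoothSolution}, namely \eqref{eq:compConditionTripleLine6} and \eqref{eq:compConditionTripleLine7}. Third, the gauge-frame identity \eqref{eq:compConditionTripleLine8}, which is essentially a restatement of how $\delta$ and $\widehat\omega$ were defined.

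\textbf{Step 1 (frame relations).} The starting point is that along $\trLine$ all three tangent-to-triple-line vectors agree, $\taTrJ=\taTrJ'=\taTrJ''$, by \eqref{eq:orientationTangentTripleLine}, while by construction $R'_{\taTrJ}$ is the rotation about the common $\taTrJ$-axis with $R'_{\taTrJ}\no=\no'$ (relations \eqref{eq:rotationTangentAngle1}--\eqref{eq:rotationTangentAngle2}); since $R'_{\taTrJ}$ fixes $\taTrJ$ and sends $\no\mapsto\no'$, and since $\ta=\no\times\taTrJ$, $\ta'=\no'\times\taTrJ'=\no'\times\taTrJ$, it must also send $\ta\mapsto\ta'$ — this is \eqref{eq:compConditionTripleLine}. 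For the curvature transfer relations I would argue as follows: the second fundamental form $\nabla\no$ restricted to the tangent plane of $\barI$ at a triple-line point, expressed in the $(\ta,\taTrJ)$-basis, has entries $(\kappa_{\ta\ta},\kappa_{\ta\taTrJ};\kappa_{\ta\taTrJ},\kappa_{\taTrJ\taTrJ})$, and similarly for $\barI'$ in the $(\ta',\taTrJ')$-basis. The key input is that the two interfaces share the curve $\trLine$, so the $\taTrJ$-directional derivative of the normal is linked: differentiating $\no'=R'_{\taTrJ}\no$ along $\trLine$ in the direction $\taTrJ$ (noting $R'_{\taTrJ}$ is constant-angle, so only the $(\ta\wedge\no)$ and $(\taTrJ\otimes\taTrJ)$ pieces vary, and along $\trLine$ their $\taTrJ$-derivatives contribute exactly the terms encoded in Lemma~\ref{lem:gradientOrthonormalFrame}) produces the system \eqref{eq:compConditionTripleLine2}, \eqref{eq:compConditionTripleLine4}, with the coefficients $R'_{\taTrJ}\no\cdot\no=\cos\theta'$ and $R'_{\taTrJ}\no\cdot\ta=-\sin\theta'$ (up to sign conventions matching the orientation of $\ta\wedge\no$). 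The relation $\kappa'_{\ta'\taTrJ'}=\kappa_{\ta\taTrJ}$ in \eqref{eq:compConditionTripleLine1} is the cleanest: it says the mixed second fundamental form entry is the same along the shared curve, which follows because $\ta\otimes\taTrJ:\nabla\no$ measures how the normal twists along $\trLine$ relative to the interface, and this twisting rate is frame-independent once one rotates by $R'_{\taTrJ}$ about the $\taTrJ$-axis.

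\textbf{Step 2 (curvature compatibility) and Step 3 (gauge frame).} Identity \eqref{eq:compConditionTripleLine6} is a direct restatement of \eqref{InitialSecondOrderCompatibility} at time $0$ propagated forward — since the evolution is smooth and each interface moves by MCF with the zeroth-order compatibility $\no_{i,j}\cdot\vec V_{\barI_{i,j}}=H_{i,j}$ holding along $\trLine$, differentiating the Herring condition \eqref{HerringAngleCondition} in time and using $\partial_t\no_{i,j}=-\nabla^{\mathrm{tan}}(\no_{i,j}\cdot\vec V_{\barI_{i,j}})=-\nabla^{\mathrm{tan}}H_{i,j}$ along the moving curve — but since \eqref{InitialSecondOrderCompatibility} is already assumed, one just needs that the quantity $\sigma H+\sigma'H'+\sigma''H''$ satisfies a transport-type ODE along $\trLine$ with zero initial data; alternatively, the cleanest route is that \eqref{eq:compConditionTripleLine6} is literally the statement that there exists $\vec V_{\trLine}$ with $\no_{i,j}\cdot\vec V_{\trLine}=H_{i,j}$, which is guaranteed for all $t$ by the smooth evolution since it holds at $t=0$ and the Herring condition persists. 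Identity \eqref{eq:compConditionTripleLine7} is analogously the time-propagation of \eqref{InitialThirdOrderCompatibility}; I would show the bracketed expression $\kappa_{\ta\ta}(\ta\cdot\vec V_{\trLine})+(\ta\cdot\nabla)H$ equals $(\no\cdot\partial_t^{\,\trLine}\vec V_{\trLine})$-type quantity or, more directly, differentiate \eqref{eq:compConditionTripleLine6} along $\trLine$ in the $\taTrJ$-direction and in time, combine with the frame relations from Step 1, and read off that the $i,j$-dependent piece must be the common value; the assumed initial compatibility \eqref{InitialThirdOrderCompatibility} closes the induction/ODE. Finally, \eqref{eq:compConditionTripleLine8}: by definition $\delta(x,t)=s(x,t)\kappa_{\ta\taTrJ}(x,t)$ vanishes on $\trLine$ since $s=0$ there, and $\widehat\omega=0$ on $\trLine$ by the initial condition in \eqref{ODE_omega}; hence $R_{\no}^{(1)}=R_{\no}^{(2)}=\mathrm{Id}$ along $\trLine$, so $R_{\no}=\mathrm{Id}$ there, giving $(\no,\ta_*,\taTrJ_*)=(\no,\ta,\taTrJ)$ — this step is immediate.

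\textbf{Main obstacle.} The bookkeeping in Step 1 for \eqref{eq:compConditionTripleLine2}--\eqref{eq:compConditionTripleLine5}, and the time-propagation argument for \eqref{eq:compConditionTripleLine7}, are where the real work is: one must differentiate the rotation-field identities and the Herring condition \emph{tangentially along the moving curve} $\trLine(t)$, carefully tracking which terms live in which plane and getting the signs in the coefficients $R'_{\taTrJ}\no\cdot\no$ versus $R'_{\taTrJ}\no\cdot\ta$ correct relative to the orientation conventions for $\ta\wedge\no$ fixed in Construction~\ref{RotationsTripleLine}. I expect \eqref{eq:compConditionTripleLine7} to be the hardest single identity, since it requires combining the MCF evolution law, the tangential derivative of \eqref{eq:compConditionTripleLine6}, the frame-transfer relations, and the assumed third-order initial compatibility \eqref{InitialThirdOrderCompatibility} into a single transport ODE along $\trLine$ with vanishing right-hand side.
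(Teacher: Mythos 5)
Your three-group decomposition and your handling of the frame relations~\eqref{eq:compConditionTripleLine}--\eqref{eq:compConditionTripleLine5} (differentiating the rotation-transfer identity $\no'=R'_{\taTrJ}\no$ in the $\taTrJ$- and $\ta$-directions and matching coefficients against the gradient formulas of Lemma~\ref{lem:gradientOrthonormalFrame}) match the paper's Steps~1--3, as does the gauge-frame identity~\eqref{eq:compConditionTripleLine8}, which is indeed immediate from $\delta=s\,\kappa_{\ta\taTrJ}=0$ and $\widehat\omega=0$ on $\trLine$.

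The divergence is in~\eqref{eq:compConditionTripleLine6} and especially~\eqref{eq:compConditionTripleLine7}, and it is a real gap. You frame both as ``time-propagation of~\eqref{InitialSecondOrderCompatibility}/\eqref{InitialThirdOrderCompatibility}'', but the paper's proof invokes neither. For~\eqref{eq:compConditionTripleLine6} the argument is purely instantaneous: at each fixed $t$, the normal velocity $\vec{V}_{\trLine}$ of the triple line satisfies $\vec{V}_{\trLine}\cdot\no_{i,j}=H_{i,j}$ along $\trLine(t)$ because $\trLine(t)$ lies in each $\barI_{i,j}(t)$ moving by MCF; weighting by $\sigma_{i,j}$ and summing against the Herring condition yields~\eqref{eq:compConditionTripleLine6} on the spot. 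Your ``cleaner route'' is this, but there is no ``since it holds at $t=0$'' -- it holds automatically at each $t$. For~\eqref{eq:compConditionTripleLine7} the paper applies $(\partial_t+\vec{V}_{\trLine}\cdot\nabla)$ to the full vector-valued Herring identity $\sigma\no+\sigma'\no'+\sigma''\no''=0$, computes $\partial_t\no+(\vec{V}_{\trLine}\cdot\nabla)\no$ via~\eqref{eq:extensionNormal}, \eqref{eq:evolutionSignedDistance} and~\eqref{eq:gradientNormal}, and decomposes the resulting vanishing vector into its $\taTrJ$-, $\ta$-, $\ta'$- and $\ta''$-components. The $\taTrJ$-component vanishes by the tangential derivative of~\eqref{eq:compConditionTripleLine6} together with the rotated Herring condition; what survives is $a\,\sigma\ta+a'\sigma'\ta'+a''\sigma''\ta''=0$ with $a,a',a''$ precisely the three expressions in~\eqref{eq:compConditionTripleLine7}. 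Since $\sigma\ta+\sigma'\ta'+\sigma''\ta''=0$ and the three tangents are coplanar with any two linearly independent (strict triangle inequality), this forces $a=a'=a''$. Your suggestion to ``differentiate~\eqref{eq:compConditionTripleLine6} in the $\taTrJ$-direction and in time'' only recovers the $\taTrJ$-component -- which vanishes and carries none of the content -- whereas the $\ta,\ta',\ta''$-components are the ones that produce~\eqref{eq:compConditionTripleLine7}, and the closing step is two lines of linear algebra, not an ODE in time seeded by~\eqref{InitialThirdOrderCompatibility}. A transport-in-time argument could perhaps be made to work, but your sketch does not produce a concrete closed equation and is in any case unnecessary.
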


\begin{proof}
Except for the conditions~\eqref{eq:compConditionTripleLine} 
and~\eqref{eq:compConditionTripleLine8}, 
the asserted compatibility conditions are consequences of differentiating
the existing zeroth and first order compatibility conditions along the triple line.

\textit{Step 1: Proof of~\eqref{eq:compConditionTripleLine}.}
By~\eqref{eq:orientationTangentTripleLine} and the choice of the
orientation for the tangent fields~$(\ta,\ta',\ta'')$ along
the triple line, cf.\ Construction~\ref{tangentVectorFields},
it holds
\begin{align}
\label{eq:defNinetyDegreeRotation}
\ta = J\no, \quad \ta' = J\no', \quad \ta'' = J\no''\quad \text{on } \trLine
\end{align}
in terms of a single $90^\circ$ rotation field around the $\taTrJ$-axis
\begin{align}
\label{eq:rotationNinetyDegreeTripleLine}
J = (\ta\wedge \no) + \taTrJ\otimes\taTrJ
  = (\ta'\wedge \no') + \taTrJ'\otimes\taTrJ'
	= (\ta''\wedge \no'') + \taTrJ''\otimes\taTrJ''
	\quad\text{on } \trLine.
\end{align}
Hence, it follows from~\eqref{eq:rotationTangentAngle1}
and the fact that the Herring rotation~$R'_{\taTrJ}$
is a rotation around the same axis
\begin{align*}
R'_{\taTrJ}\ta=R'_{\taTrJ}J\no=JR'_{\taTrJ}\no=J\no'=\ta'
	\quad\text{on } \trLine.
\end{align*} 
This proves the first asserted identity of~\eqref{eq:compConditionTripleLine};
the second of course follows analogously based on~\eqref{eq:rotationTangentAngle2}.

\textit{Step 2: Proof of~\emph{\eqref{eq:compConditionTripleLine1}--\eqref{eq:compConditionTripleLine3}}.}
Since the Herring rotation~$R'_{\taTrJ}$ defined by~\eqref{eq:rotationTangent}
is a rotation around the $\taTrJ$-axis with constant angle, the coefficients in 
the representation $R'_{\taTrJ}\no = (R'_{\taTrJ}\no \cdot \no) \no +(R'_{\taTrJ}\no \cdot \ta) \ta$ 
are constant. Hence  we may compute along~$\trLine$
together with the formulas~\eqref{eq:gradientNormal} and~\eqref{eq:gradientTau}
\begin{align*}
(\taTrJ\cdot\nabla)R'_{\taTrJ}\no &= (R'_{\taTrJ}\no\cdot\no) (\taTrJ\cdot\nabla)\no
+ (R'_{\taTrJ}\no\cdot\ta) (\taTrJ\cdot\nabla)\ta
\\
&= \big((R'_{\taTrJ}\no\cdot\ta)(\nabla\cdot\ta)
-(R'_{\taTrJ}\no\cdot\no)\kappa_{\taTrJ\taTrJ}\big)\taTrJ
- (R'_{\taTrJ}\no\cdot\no)\kappa_{\ta\taTrJ}\ta
+ (R'_{\taTrJ}\no\cdot\ta)\kappa_{\ta\taTrJ}\no.
\end{align*}
Furthermore, by the analogue of~\eqref{eq:gradientTau} for the tangent field~$\ta'$ 
as well as the identities~\eqref{eq:orientationTangentTripleLine}
and~\eqref{eq:compConditionTripleLine}, and again the fact that $R'_{\taTrJ}$ and $J$ commute, 
we obtain along the triple line~$\trLine$
\begin{align*}
(\taTrJ'\cdot\nabla)\no' &= -\kappa'_{\taTrJ'\taTrJ'}\taTrJ'
-\kappa'_{\ta'\taTrJ'}\ta'
\\
&= -\kappa'_{\taTrJ'\taTrJ'}\taTrJ
-(R'_{\taTrJ}\ta\cdot\ta)\kappa'_{\ta'\taTrJ'}\ta
-(R'_{\taTrJ}\ta\cdot\no)\kappa'_{\ta'\taTrJ'}\no
\\
&= -\kappa'_{\taTrJ'\taTrJ'}\taTrJ
-(R'_{\taTrJ}\no\cdot\no)\kappa'_{\ta'\taTrJ'}\ta
+(R'_{\taTrJ}\no\cdot\ta)\kappa'_{\ta'\taTrJ'}\no.
\end{align*}
Hence, the defining condition~\eqref{eq:rotationTangentAngle1}
of the Herring rotation~$R'_{\taTrJ}$ and matching coefficients
in the previous two displays implies the first identity of~\eqref{eq:compConditionTripleLine1}
as well as~\eqref{eq:compConditionTripleLine2} (note that of course, either~$(R'_{\taTrJ}\no\cdot\no)$
or~$(R'_{\taTrJ}\no\cdot\ta)$ is non-zero). The second identity of~\eqref{eq:compConditionTripleLine1}
as well as~\eqref{eq:compConditionTripleLine3} in turn follow from an analogous computation
based on~\eqref{eq:rotationTangentAngle2}.

\textit{Step 3: Proof of~\emph{\eqref{eq:compConditionTripleLine4}--\eqref{eq:compConditionTripleLine5}}.}
These two compatibility conditions are derived as in the previous step, this
time computing the tangential derivative along the triple line for both sides
of the identities from~\eqref{eq:compConditionTripleLine}, respectively.

\textit{Step 4: Proof of~\emph{\eqref{eq:compConditionTripleLine6}--\eqref{eq:compConditionTripleLine7}}.}
By \eqref{NormalVelocityAndMeanCurvatureOnEachInterface}, the 
normal velocity~$\vec{V}_{\trLine}$ of the triple line satisfies
along~$\trLine$
\begin{align}
\label{eq:uniqueVelocityAtTripleLine}
\vec{V}_{\trLine} \cdot \sigma\no = \sigma H,
\quad \vec{V}_{\trLine} \cdot \sigma'\no' = \sigma' H',
\quad \vec{V}_{\trLine} \cdot \sigma''\no'' = \sigma'' H''.
\end{align}
Summing these identities results in~\eqref{eq:compConditionTripleLine6}
thanks to the Herring angle condition~\eqref{HerringAngleCondition}
being satisfied at each time.

To derive the compatibility condition~\eqref{eq:compConditionTripleLine7}, we differentiate the Herring angle condition and obtain
\begin{align*}
(\partial_t + \vec{V}_{\trLine} \cdot \nabla ) \left( \sigma \no + \sigma'\no' + \sigma''\no''\right) = 0.
\end{align*}
Now we compute using \eqref{eq:extensionNormal} and
and~\eqref{eq:evolutionSignedDistance} for the first term and \eqref{eq:gradientNormal} for the second one
\begin{align}
\label{AdvectiveDerivativeOnHerringCondition}
 \partial_t \no + (\vec{V}_{\trLine} \cdot \nabla )\no
= -(\taTrJ\cdot\nabla H)  \taTrJ -(\ta\cdot\nabla H)  \ta   
- (\vec{V}_{\trLine}\cdot\ta)(\kappa_{\ta\ta}\ta{+}\kappa_{\ta\taTrJ}\taTrJ)
\end{align}
on $\trLine$. The analogous equations hold for $\no' $ and $\no''$. Plugging those into \eqref{AdvectiveDerivativeOnHerringCondition}, using~\eqref{eq:orientationTangentTripleLine} and \eqref{eq:compConditionTripleLine1}, we obtain
\begin{align*}
0 &=   \big(\taTrJ \cdot \nabla ( \sigma H + \sigma' H' + \sigma'' H'')\big) \taTrJ
		+ (\ta\cdot\nabla H) \sigma\ta
		+ (\ta'\cdot\nabla H') \sigma'\ta'
		+ (\ta''\cdot\nabla H'') \sigma''\ta'' 
\\&~~~+ \kappa_{\ta\ta} (\vec{V}_{\trLine}\cdot\ta) \sigma\ta
				 + \kappa'_{\ta'\ta'} (\vec{V}_{\trLine}\cdot\ta') \sigma'\ta'
				 + \kappa''_{\ta''\ta''} (\vec{V}_{\trLine}\cdot\ta'') \sigma''\ta''
\\&~~~
+   \vec{V}_{\trLine}\cdot(\sigma\ta{+}
  \sigma'\ta'{+}\sigma''\ta'') (\kappa_{\ta\taTrJ}\taTrJ)
\end{align*}
on $\trLine$. 
Differentiating \eqref{eq:compConditionTripleLine6} along $\trLine$, we see that the first term vanishes. 
The last term vanishes by applying the fixed rotation $J$ to the Herring condition \eqref{HerringAngleCondition}. 
Thus, since the three vectors $\ta$, $\ta'$, and $\ta''$ lie in one plane,
we deduce~\eqref{eq:compConditionTripleLine7} from the previous display.

\textit{Step 5: Proof of~\emph{\eqref{eq:compConditionTripleLine8}
}.}
The requirement~\eqref{eq:compConditionTripleLine8} is immediate from the 
definitions~\eqref{eq:defGaugeRotation}--\eqref{ODE_omega} in form of
\begin{align}
\label{eq:gaugeRotationOnTripleLine}
R_{\no} = \mathrm{Id}
\end{align}
along the triple line~$\trLine$.
\end{proof}

With all of these ingredients in place, we may eventually move on with the construction 
of a local gradient-flow calibration at a triple line.

\subsection{Extension of vector fields close to each interface}
\label{subsection:step_1}
The aim of this section is to provide auxiliary extensions of the unit normal
vector fields and an auxiliary extension of the normal velocity vector field
which are defined in the neighborhood~$\mathcal{N}_r(\trLine) \cap \mathrm{im}(\Psi_{i,j})$ for each interface~$\barI_{i,j}$,
respectively. These extensions constitute the main building blocks for the
desired extensions from Proposition~\ref{prop:gradientFlowCalibrationTripleLine}.

Throughout this whole subsection, let the assumptions of Proposition~\ref{prop:gradientFlowCalibrationTripleLine}
and the notation of Section~\ref{sec:localCalibrationInterface} and Subsection~\ref{sec:notationTripleLine} 
be in place. In particular, let us again make use of the
following notational conventions which basically aim to drop the indices $i,j\in\{1,2,3\}$.
We denote by $\barI:=\barI_{1,2},\barI':=\barI_{2,3},\barI'':=\barI_{3,1}$
the three interfaces present in the given smoothly evolving 
regular double bubble~$(\bar\Omega_1,\bar\Omega_2,\bar\Omega_3)$ on~$[0,T]$. 
We proceed accordingly for the associated orthonormal frames~$(\no,\ta,\taTrJ)$, $(\no',\ta',\taTrJ')$
$(\no'',\ta'',\taTrJ'')$ due to Construction~\ref{tangentVectorFields},
the surface tensions~$(\sigma,\sigma',\sigma'')$, the signed distances~$(s,s',s'')$,
the projections~$(P,P',P'')$, the scalar mean curvatures~$(H,H',H'')$ and the diffeomorphisms~$(\Psi,\Psi',\Psi'')$
from Definition~\ref{def:locRadiusInterface}. 

\begin{construction}[Extension of normal vector fields close to their associated interfaces]
\label{AnsatzAuxiliaryXiHalfSpace}
Define a coefficient function~$\alpha\colon\mathcal{N}_r(\trLine)\cap\mathrm{im}(\Psi)\to\mathbb{R}$ by
\begin{align}
\label{eq:alpha_coefficient_xi}
\alpha(x,t) &:= \alpha_{\mathrm{vel}}(x,t) + (\nabla\cdot\ta)(x,t),
\quad (x,t) \in \mathcal{N}_r(\trLine)\cap\mathrm{im}(\Psi),
\end{align}
where~$\alpha_{\mathrm{vel}}\colon\mathcal{N}_r(\trLine)\cap\mathrm{im}(\Psi)\to\mathbb{R}$
denotes, for the time being, an arbitrary coefficient function of 
class $C^0_tC^2_x(\mathcal{N}_r(\trLine)\cap\mathrm{im}(\Psi))$ 
such that along the triple line it holds
\begin{align}
\label{eq:initialValueODEalpha}
	\alpha_{\mathrm{vel}}(x,t) & = \ta(x,t)\cdot\vec{V}_{\trLine}(x,t),\quad (x,t)\in\trLine.
\end{align}
Here, $\vec{V}_{\trLine}$ denotes again the normal velocity vector field of the triple line~$\trLine$.
Recall finally the definition~\eqref{eq:defGaugedOrthonormalFrame} of the gauged
orthonormal frame~$(\no,\ta_*,\taTrJ_*)$.

We then define an initial extension~$\widetilde\xi\colon\mathcal{N}_r(\trLine)\cap\mathrm{im}(\Psi)\to\mathbb{R}^3$ 
for the normal vector field~$\no|_{\bar I}$ 
of the interface~$\barI$ by means of the \emph{gauged expansion ansatz}
\begin{align}
\label{eq:AnsatzAuxiliaryXiHalfSpace}
	\widetilde\xi(x,t)  
	& := 
	\no(x,t)
	\\&~~~~\nonumber
	+\alpha(P_{\trLine}(x,t),t) s(x,t) \ta_*(x,t) 
	\\&~~~~\nonumber 
	-\frac{1}{2} \alpha^2(P_{\trLine}(x,t),t) s^2(x,t) \no(x,t) 
\end{align}
for all $(x,t)\in \mathcal{N}_r(\trLine)\cap\mathrm{im}(\Psi)$.

Analogously, one defines initial extensions $\widetilde\xi'\colon\mathcal{N}_r(\trLine)\cap\mathrm{im}(\Psi')\to\mathbb{R}^3$ 
as well as $\widetilde\xi''\colon\mathcal{N}_r(\trLine)\cap\mathrm{im}(\Psi'')\to\mathbb{R}^3$ of the normal vector
fields~$\no'|_{\barI'}$ and $\no''|_{\barI''}$.
\hfill$\diamondsuit$
\end{construction}

The following result shows that, after applying the correct gauged
Herring rotation as provided by Construction~\ref{RotationsTripleLine}, 
the initial extensions of our normal vector fields are regular and compatible to first 
order along the triple line~$\trLine$.
%

\begin{lemma}
\label{lem:firstOrderCompXi}
Let~$(\widetilde\xi,\widetilde\xi',\widetilde\xi'')$
be the initial extensions from Construction~\ref{AnsatzAuxiliaryXiHalfSpace} 
of the normal vector fields~$(\no|_{\barI}, \no'|_{\barI'}, \no''|_{\barI''})$. Moreover,
let~$(\widetilde R'_{\barI},\widetilde R''_{\barI})$, $(\widetilde R_{\barI'},\widetilde R''_{\barI'})$
and~$(\widetilde R_{\barI''}, \widetilde R'_{\barI''})$ be the gauged Herring rotations
as provided by Construction~\ref{RotationsTripleLine}.

Then it holds $(\widetilde\xi,\widetilde R'_{\barI}\,\widetilde\xi,\widetilde R''_{\barI}\,\widetilde\xi\,) \in 
(C^0_tC^2_x\cap C^1_tC^0_x)(\mathcal{N}_r(\trLine)\cap\mathrm{im}(\Psi))$
with corresponding estimates
\begin{align}
\label{eq:regEstimateAuxiliarxExtensionsNormal}
|(\nabla,\nabla^2,\partial_t)(\widetilde\xi,\widetilde R'_{\barI}\,\widetilde\xi,\widetilde R''_{\barI}\,\widetilde\xi\,)|
\leq C \quad\text{in } \mathcal{N}_r(\trLine)\cap\mathrm{im}(\Psi),
\end{align}
where the constant~$C>0$ only depends on the data of the
smoothly evolving regular double bubble~$(\bar\Omega_1,\bar\Omega_2,\bar\Omega_3)$ on~$[0,T]$.
Moreover, the constructions are compatible to first order 
in the sense that along the triple line~$\trLine$
\begin{align}
\label{eq:compatibilityTripleLineXi}
\widetilde R'_{\barI}\,\widetilde\xi &= \widetilde\xi',
\quad
\widetilde R''_{\barI}\,\widetilde\xi = \widetilde\xi'',
\\
\label{eq:compatibilityTripleLineXi2}
\nabla \big(\widetilde R'_{\barI}\,\widetilde\xi\,\big) &= 
\nabla \widetilde\xi',
\quad
\nabla \big(\widetilde R''_{\barI}\,\widetilde\xi\,\big) = 
\nabla \widetilde\xi''.
\end{align}
Analogous claims are satisfied in terms of the vector fields~$(\widetilde R_{\barI'}\,\widetilde\xi',
\widetilde\xi',\widetilde R''_{\barI'}\,\widetilde\xi')$,
resp.\ the vector fields~$(\widetilde R_{\barI''}\,\widetilde\xi'',
\widetilde R'_{\barI''}\,\widetilde\xi'',\widetilde\xi'')$,
throughout the region~$\mathcal{N}_r(\trLine)\cap\mathrm{im}(\Psi')$, resp.\ the region
$\mathcal{N}_r(\trLine)\cap\mathrm{im}(\Psi'')$.
\end{lemma}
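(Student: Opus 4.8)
The plan is to prove the three assertions in turn: first the regularity bound~\eqref{eq:regEstimateAuxiliarxExtensionsNormal}, then the zeroth-order matching~\eqref{eq:compatibilityTripleLineXi}, and finally the first-order matching~\eqref{eq:compatibilityTripleLineXi2}, which is the real work. Throughout I would evaluate everything at a fixed point of~$\trLine$ and use repeatedly that there $s\equiv0$, $P_{\trLine}=\mathrm{Id}$, and --- because $\delta=s\kappa_{\ta\taTrJ}$ and $\widehat\omega$ both vanish on~$\trLine$ --- $R_{\no}=\mathrm{Id}$, so that $\ta_*=\ta$ and $\widetilde R'_{\barI}=R'_{\taTrJ}$, $\widetilde R''_{\barI}=R''_{\taTrJ}$ there (cf.~\eqref{eq:defGaugedOrthonormalFrame}).

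For the regularity I would merely bookkeep the regularity of the ingredients of the gauged expansion ansatz~\eqref{eq:AnsatzAuxiliaryXiHalfSpace}: $\no$, $s$, $P$ from~\eqref{eq:regProjectionSignedDistance}--\eqref{eq:regularityNormalCurvature}, the orthonormal frame and its curvature coefficients from~\eqref{eq:regTangentFields} and Lemma~\ref{lem:gradientOrthonormalFrame}, the coefficient $\alpha=\alpha_{\mathrm{vel}}+\nabla\cdot\ta$ with $\alpha_{\mathrm{vel}}\in C^0_tC^2_x$, and the gauge rotation $R_{\no}=R_{\no}^{(2)}R_{\no}^{(1)}$, which is a trigonometric expression in the frame and in the angles $\delta=s\kappa_{\ta\taTrJ}$ and $\omega=\widehat\omega\circ P$, where $\widehat\omega$ inherits its regularity from the right-hand side $\nabla\cdot\taTrJ$ of the transport ODE~\eqref{ODE_omega} by standard ODE theory. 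Since $R'_{\taTrJ}$ is a rotation of constant angle about~$\taTrJ$, the gauged Herring rotation $\widetilde R'_{\barI}=R_{\no}R'_{\taTrJ}R_{\no}^{\mathsf{T}}$ is at least as regular as the frame, and the product and chain rules then yield~\eqref{eq:regEstimateAuxiliarxExtensionsNormal} with a constant depending only on the data of the double bubble (the binding constraints being $\alpha_{\mathrm{vel}}$ and $\widehat\omega$). The zeroth-order claim~\eqref{eq:compatibilityTripleLineXi} is then immediate: on~$\trLine$ the ansatz collapses to $\widetilde\xi=\no$, $\widetilde\xi'=\no'$, $\widetilde\xi''=\no''$ and $\widetilde R'_{\barI}=R'_{\taTrJ}$, $\widetilde R''_{\barI}=R''_{\taTrJ}$, so it reduces to~\eqref{eq:rotationTangentAngle1}--\eqref{eq:rotationTangentAngle2}.

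For~\eqref{eq:compatibilityTripleLineXi2}, since the two sides already agree on the one-dimensional curve~$\trLine$ their derivatives tangent to~$\trLine$ coincide automatically, so I only need to compare directional derivatives along the two directions spanning the normal plane $\mathrm{Tan}^{\perp}\trLine=\langle\ta,\no\rangle=\langle\ta',\no'\rangle$. First, differentiating~\eqref{eq:AnsatzAuxiliaryXiHalfSpace} and using $\nabla s=\no$, $|\nabla s|\equiv 1$ on~$\mathrm{im}(\Psi)$ and $s|_{\trLine}=0$, every $s$- and $s^2$-term drops except where a single derivative falls on~$s$ in the $\no$-direction, so that on~$\trLine$
\[
(\ta\cdot\nabla)\widetilde\xi=(\ta\cdot\nabla)\no=-\kappa_{\ta\ta}\ta-\kappa_{\ta\taTrJ}\taTrJ,\qquad
(\no\cdot\nabla)\widetilde\xi=\alpha\,\ta=\big(\ta\cdot\vec{V}_{\trLine}+\nabla\cdot\ta\big)\,\ta,
\]
by~\eqref{eq:gradientNormal} and $\alpha_{\mathrm{vel}}|_{\trLine}=\ta\cdot\vec{V}_{\trLine}$ from~\eqref{eq:initialValueODEalpha}, with the analogous formulas on~$\barI'$. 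Second, for $\widetilde R'_{\barI}\widetilde\xi=R_{\no}R'_{\taTrJ}R_{\no}^{\mathsf{T}}\widetilde\xi$ I would expand by the product rule and evaluate on~$\trLine$ (where $R_{\no}=\mathrm{Id}$, $\widetilde\xi=\no$); the only extra inputs are $(v\cdot\nabla)R_{\no}$ and $(v\cdot\nabla)R'_{\taTrJ}$ for $v\in\{\ta,\no\}$. Here the explicit choices $\delta=s\kappa_{\ta\taTrJ}$ and the ODE~\eqref{ODE_omega} --- using $\widehat\omega|_{\trLine}=0$, so $(\taTrJ\cdot\nabla)\widehat\omega=0$ on~$\trLine$ --- give $\nabla\delta=\kappa_{\ta\taTrJ}\no$ and $\nabla^{\mathrm{tan}}\widehat\omega=(\nabla\cdot\taTrJ)\ta$ on~$\trLine$, hence $(v\cdot\nabla)R_{\no}=\big[(\nabla\cdot\taTrJ)(v\cdot\ta)+\kappa_{\ta\taTrJ}(v\cdot\no)\big](\taTrJ\wedge\ta)$, while $(v\cdot\nabla)R'_{\taTrJ}$ is computed from~\eqref{eq:gradientNormal}--\eqref{eq:gradientTaTrJ}. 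Carrying this out, all contributions of $\nabla R_{\no}$ and $\nabla R'_{\taTrJ}$ point along~$\taTrJ$, and the remaining part is $R'_{\taTrJ}(v\cdot\nabla)\widetilde\xi$; matching with $(v\cdot\nabla)\widetilde\xi'$ --- after writing $\ta,\no$ in the basis $\ta'=R'_{\taTrJ}\ta$, $\no'=R'_{\taTrJ}\no$ --- then reduces on the $\taTrJ$-component to $\kappa_{\ta\taTrJ}=\kappa'_{\ta'\taTrJ'}$, i.e.~\eqref{eq:compConditionTripleLine1}, and on the $\langle\ta',\no'\rangle$-components, after inserting~\eqref{eq:compConditionTripleLine}, \eqref{eq:compConditionTripleLine2} and~\eqref{eq:compConditionTripleLine4} (together with $H'=\kappa'_{\ta'\ta'}+\kappa'_{\taTrJ'\taTrJ'}$ on~$\barI'$), to the two scalar identities
\[
H=\cos\theta'H'-\sin\theta'\big(\ta'\cdot\vec{V}_{\trLine}\big),\qquad
\ta\cdot\vec{V}_{\trLine}=\cos\theta'\big(\ta'\cdot\vec{V}_{\trLine}\big)+\sin\theta'H',
\]
both of which follow at once from $\no\cdot\vec{V}_{\trLine}=H$, $\no'\cdot\vec{V}_{\trLine}=H'$ on~$\trLine$ (cf.~\eqref{eq:uniqueVelocityAtTripleLine}, whose consistency already uses the second-order compatibility~\eqref{eq:compConditionTripleLine6}) together with $\no'=\cos\theta'\no+\sin\theta'\ta$ and $\ta'=\cos\theta'\ta-\sin\theta'\no$. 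The main obstacle is exactly this last step: keeping straight which parts of $\nabla(\widetilde R'_{\barI}\widetilde\xi)$ are produced by $\nabla R_{\no}$, by $\nabla R'_{\taTrJ}$, or by $\nabla\widetilde\xi$, and reconciling them with $\nabla\widetilde\xi'$ via the compatibility conditions~\eqref{eq:compConditionTripleLine}--\eqref{eq:compConditionTripleLine7} of Lemma~\ref{lem:compatibilityConditionsTripleLine} --- this being where the precise design of the gauge angles~$\delta$, $\omega$ in Construction~\ref{RotationsTripleLine} is indispensable.

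Finally, the remaining assertions --- for $\widetilde R''_{\barI}\widetilde\xi$, and for the vector fields on $\mathcal{N}_r(\trLine)\cap\mathrm{im}(\Psi')$ and $\mathcal{N}_r(\trLine)\cap\mathrm{im}(\Psi'')$ --- follow by the identical argument under the cyclic relabeling of the interfaces and of all associated data fixed in Lemma~\ref{lem:compatibilityConditionsTripleLine}.
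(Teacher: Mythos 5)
Your treatment of the zeroth-order matching~\eqref{eq:compatibilityTripleLineXi} coincides with the paper's, and your first-order argument is a valid, in fact arguably cleaner, reorganization of the paper's. The paper compares the full matrix gradients and shows, after using Lemma~\ref{lem:compatibilityConditionsTripleLine}, that the mismatch~\eqref{eq:firstOrderCompWithoutRotationNormalAxis} is cancelled by the gauge-rotation gradients; you instead observe that the $\taTrJ$-directional derivatives agree for free (both sides already coincide on~$\trLine$), compute only the $\ta$- and $\no$-directional derivatives, and reduce the matching --- via~\eqref{eq:compConditionTripleLine}, \eqref{eq:compConditionTripleLine1}, \eqref{eq:compConditionTripleLine2}, \eqref{eq:compConditionTripleLine4} and $H=\kappa_{\ta\ta}+\kappa_{\taTrJ\taTrJ}$ --- to the two scalar identities you state, both immediate from $\no\cdot\vec V_{\trLine}=H$, $\no'\cdot\vec V_{\trLine}=H'$ and the rotation relations $\no'=\cos\theta'\no+\sin\theta'\ta$, $\ta'=\cos\theta'\ta-\sin\theta'\no$. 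I checked your claimed formulas for $\nabla\delta$ and $\nabla^{\mathrm{tan}}\widehat\omega$ on~$\trLine$, and the claim that the contributions of $\nabla R_{\no}$ and $\nabla R'_{\taTrJ}$ applied to~$\no$ are purely $\taTrJ$-directed, and they hold.

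There is, however, a genuine gap in the regularity step. You correctly record that Construction~\ref{AnsatzAuxiliaryXiHalfSpace} only provides $\alpha_{\mathrm{vel}}\in C^0_tC^2_x$, but the estimate~\eqref{eq:regEstimateAuxiliarxExtensionsNormal} asks for a bound on $\partial_t\widetilde\xi$, and away from~$\barI$ this requires a bound on $\partial_t\alpha_{\trLine}$, where $\alpha_{\trLine}(\cdot,t):=\alpha(P_{\trLine}(\cdot,t),t)$. Since $\alpha_{\mathrm{vel}}$ is not assumed to have any time regularity, "product and chain rules" do not yield this. The essential observation, which you omit, is that $\alpha_{\trLine}$ only depends on the restriction of~$\alpha$ to~$\trLine$, and that restriction is pinned down: $\alpha_{\mathrm{vel}}|_{\trLine}=\ta\cdot\vec V_{\trLine}$. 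The paper then proves the representation
\begin{equation*}
\alpha_{\mathrm{vel}}(x,t)=(1-c_1^2)^{-1}c_2\big(H'(x,t)-c_1H(x,t)\big),\qquad (x,t)\in\trLine,
\end{equation*}
with constants $c_1,c_2$ depending only on the surface tensions, obtained by solving the $2\times2$ linear system coming from $\no\cdot\vec V_{\trLine}=H$, $\no'\cdot\vec V_{\trLine}=H'$. Since $H,H'$ are functions on full tubular neighborhoods with the regularity~\eqref{eq:regularityNormalCurvature}, and $P_{\trLine}\in C^1_tC^2_x$, this gives $|(\nabla,\nabla^2,\partial_t)\alpha_{\trLine}|\leq C$, which is exactly what your regularity bookkeeping needs and what your write-up is missing. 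Flagging $\alpha_{\mathrm{vel}}$ as a "binding constraint" is correct but does not resolve the issue; you need some version of this explicit representation (or an equivalent argument for the time regularity of $\ta\cdot\vec V_{\trLine}$ along the moving triple line) to close Step~1.
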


\begin{proof}
We split the proof into two steps.

\textit{Step 1: Regularity estimates.} 
We first claim that for each~$\mathcal{R}\in\{R_{\taTrJ}',R_{\taTrJ}'',R_{\no}\}$
\begin{align}
\label{eq:regEstimateRotations}
|(\nabla,\nabla^2,\partial_t)\mathcal{R}| \leq C
\quad\text{in } \mathcal{N}_r(\trLine)\cap\mathrm{im}(\Psi)
\end{align}
for some constant~$C>0$ which depends only on the data of the
smoothly evolving regular double bubble~$(\bar\Omega_1,\bar\Omega_2,\bar\Omega_3)$ on~$[0,T]$,
and that analogous estimates hold true for 
$\mathcal{R}\in\{R_{\taTrJ'},R''_{\taTrJ'},R_{\no'}\}$
in $\mathcal{N}_r(\trLine)\cap\mathrm{im}(\Psi')$,
or for $\mathcal{R}\in\{R_{\taTrJ''},R'_{\taTrJ''},R_{\no''}\}$
in $\mathcal{N}_r(\trLine)\cap\mathrm{im}(\Psi'')$.

For a Herring rotation~$\mathcal{R}\in\{R_{\taTrJ}',R_{\taTrJ}''\}$,
the claim~\eqref{eq:regEstimateRotations} follows directly from
the regularity of the frame~$(\no,\ta,\taTrJ)$, see~\eqref{eq:regularityNormalCurvature}
and~\eqref{eq:regTangentFields}, since the associated angles~$\theta',\theta''$
are independent of $(x,t)\in\mathcal{N}_r(\trLine)\cap\mathrm{im}(\Psi)$,
see Construction~\ref{RotationsTripleLine}. In terms of the
gauge rotation~$\mathcal{R}=R_{\no}$, it suffices to show that
\begin{align}
\label{eq:regGaugeRotationAngles}
|(\nabla,\nabla^2,\partial_t)(\delta,\omega)| \leq C
\quad\text{in } \mathcal{N}_r(\trLine)\cap\mathrm{im}(\Psi)
\end{align}
for the associated angles~$(\delta,\omega)$ defined in~\eqref{eq:rotationNormalAngle}
and~\eqref{eq:rotationNormalAngle2}, respectively. For the angle~$\delta$,
the regularity estimate from the previous display can be deduced 
from the regularity~\eqref{eq:regularityNormalCurvature} of the normal~$\no$.
The regularity estimate for the angle~$\omega$ in turn follows
from the regularity~\eqref{eq:regProjectionSignedDistance} of the projection onto the interface~$\barI$,
the regularity~\eqref{eq:regTangentFields} of the tangent vector fields~$(\ta,\taTrJ)$,
and from explicitly integrating (in each time slice) the ODE~\eqref{ODE_omega} along the 
integral lines of the tangent vector field~$\ta$.

We next claim that there exist constants~$c_1,c_2\in (-1,1)$
only depending on the surface tensions such that
\begin{align}
\label{eq:repTangentialVelocityAlongTripleLine}
\alpha_{\mathrm{vel}}(x,t)
= (1{-}c_1^2)^{-1}c_2\big(H'(x,t) - c_1 H(x,t)\big)
\end{align}
for all~$(x,t)\in\trLine$. For a proof of~\eqref{eq:repTangentialVelocityAlongTripleLine},
we define~$c_1:=\ta(x,t)\cdot\ta'(x,t)$
and $c_2:=\no'(x,t)\cdot\ta(x,t)
=-\no(x,t)\cdot\ta'(x,t)$, and
then simply observe from~\eqref{eq:uniqueVelocityAtTripleLine} and~\eqref{eq:initialValueODEalpha} that
\begin{align*}
\alpha_{\mathrm{vel}}(x,t)
&= c_2H'(x,t) + c_1\alpha'_{\mathrm{vel}}(x,t),
\\
\alpha'_{\mathrm{vel}}(P_{\trLine}(x,t),t)
&= -c_2H(x,t) + c_1\alpha_{\mathrm{vel}}(x,t) \quad \text{on } \trLine.
\end{align*}
Inserting the second identity of the previous display into the first one then 
directly yields the claim~\eqref{eq:repTangentialVelocityAlongTripleLine}.

The upshot of~\eqref{eq:regEstimateRotations} and~\eqref{eq:repTangentialVelocityAlongTripleLine}
is now the following. First, it follows from~\eqref{eq:alpha_coefficient_xi},
the regularity of the projection onto the triple line~$\trLine$ (cf.\ Definition~\ref{def:locRadius}~\textit{i)}),
the regularity~\eqref{eq:regTangentFields} of the tangent~$\ta$,
the representation~\eqref{eq:repTangentialVelocityAlongTripleLine} 
and finally the regularity~\eqref{eq:regularityNormalCurvature}
of the extended scalar mean curvatures that~$\alpha_{\trLine}(x,t):=\alpha(P_{\trLine}(x,t),t)$
satisfies
\begin{align*}
|\alpha_{\trLine}| + |(\nabla,\nabla^2,\partial_t)\alpha_{\trLine}| \leq C
\quad\text{in } \mathcal{N}_r(\trLine)\cap\mathrm{im}(\Psi).
\end{align*}
The previous display in combination with~\eqref{eq:regEstimateRotations}
and the expansion ansatz~\eqref{eq:AnsatzAuxiliaryXiHalfSpace} finally
implies the asserted regularity estimate~\eqref{eq:regEstimateAuxiliarxExtensionsNormal}.

\textit{Step 2: First order compatibility along triple line.} The zeroth
order conditions~\eqref{eq:compatibilityTripleLineXi} are immediate
from the definitions~\eqref{eq:AnsatzAuxiliaryXiHalfSpace}
as well as the identities~\eqref{eq:rotationTangentAngle1} and \eqref{eq:rotationTangentAngle2},
respectively. For a proof of the first order condition, we focus on deriving the first
identity of~\eqref{eq:compatibilityTripleLineXi2}. The second follows along the same lines.

Recalling the definition~\eqref{eq:defGaugedHerringRotation} of the gauged Herring rotation
and the gauged expansion ansatz~\eqref{eq:AnsatzAuxiliaryXiHalfSpace} we compute on the interface~$\barI$
(abbreviating~$\alpha_{\trLine}(\cdot,t):= \alpha(P_{\trLine}(\cdot,t),t)$ for~$t\in [0,T]$)
\begin{align}
\label{eq:derivGaugedRotatedAnsatz1}
\nabla \big(\widetilde R'_{\barI}\,\widetilde\xi\,\big) &=
(\nabla R_{\no}) R'_{\taTrJ}\no + R_{\no}\nabla \big(R'_{\taTrJ}\no\big)
+ \alpha_{\trLine}\big(R_{\no}R_{\taTrJ}'\ta\big)\otimes\no.
\end{align}
Let us now first compute $\nabla\big(R'_{\taTrJ}\no\big)$ and neglect the gauge rotations for a while. Recalling the fact that
$R'_{\taTrJ}$ is a rotation around the $\taTrJ$-axis with constant angle, see \eqref{eq:rotationTangent},
we obtain on the interface~$\bar I$
\begin{align*}
\nabla \big(R'_{\taTrJ}\no\big) = \nabla\big((R'_{\taTrJ}\no\cdot\no)\no+(R'_{\taTrJ}\no\cdot\ta)\ta\big)
= (R'_{\taTrJ}\no\cdot\no)\,\nabla\no + (R'_{\taTrJ}\no\cdot\ta)\,\nabla\ta. 
\end{align*}
Plugging in the identities \eqref{eq:gradientNormal} and \eqref{eq:gradientTau}, and using
in a second step that $R'_{\taTrJ}\no\cdot\no=R'_{\taTrJ}\ta\cdot\ta$
as well as $R'_{\taTrJ}\no\cdot\ta=-R'_{\taTrJ}\ta\cdot\no$, we further compute
\begin{align}
\nonumber
\nabla \big(R'_{\taTrJ}\no\big) &= 
-\kappa_{\ta\ta}\big((R'_{\taTrJ}\no\cdot\no)\,\ta\otimes\ta
-(R'_{\taTrJ}\no\cdot\ta)\,\no\otimes\ta\big)
\\&~~~\nonumber
-\big((R'_{\taTrJ}\no\cdot\no)\kappa_{\ta\taTrJ}
+(R'_{\taTrJ}\no\cdot\ta) (\nabla\cdot\taTrJ)\big)\,\taTrJ\otimes\ta
\\&~~~\nonumber
-\kappa_{\ta\taTrJ}\big((R'_{\taTrJ}\no\cdot\no)\,\ta\otimes\taTrJ
-(R'_{\taTrJ}\no\cdot\ta)\,\no\otimes\taTrJ\big)
\\&~~~\nonumber
-\big((R'_{\taTrJ}\no\cdot\no)\kappa_{\taTrJ\taTrJ}
-(R'_{\taTrJ}\no\cdot\ta)(\nabla\cdot\ta)\big)\,\taTrJ\otimes\taTrJ
\\&
\label{eq:auxRepDerivRotatedXiDiv}
= -\kappa_{\ta\ta}R'_{\taTrJ}\ta\otimes\ta
\\&~~~\nonumber
-\big((R'_{\taTrJ}\no\cdot\no)\kappa_{\ta\taTrJ}
+(R'_{\taTrJ}\no\cdot\ta)(\nabla\cdot\taTrJ)\big)\,\taTrJ\otimes\ta
\\&~~~\nonumber
-\kappa_{\ta\taTrJ}R'_{\taTrJ}\ta\otimes\taTrJ
\\&~~~\nonumber
-\big((R'_{\taTrJ}\no\cdot\no)\kappa_{\taTrJ\taTrJ}
-(R'_{\taTrJ}\no\cdot\ta)(\nabla\cdot\ta)\big)\,\taTrJ\otimes\taTrJ,
\end{align}
which holds true on the interface $\barI$.

Recalling the choice~\eqref{eq:alpha_coefficient_xi} for $\alpha$,  we may infer
from the formula~\eqref{eq:auxRepDerivRotatedXiDiv} for~$\nabla \big(R'_{\taTrJ}\no\big)$, 
substituting $\kappa_{\ta\ta}= H - \kappa_{\taTrJ\taTrJ}$ along~$\barI$,
the identity~\eqref{eq:gaugeRotationOnTripleLine}, and the formula~\eqref{eq:derivGaugedRotatedAnsatz1}
the following representation for the gradient of $\widetilde R'_{\barI}\,\widetilde\xi$
along the triple line~$\trLine$
\begin{align}
\nabla \big(\widetilde R'_{\barI}\,\widetilde\xi\big) 
\label{eq:gradientRotatedXi}
&= R'_{\taTrJ}\ta\otimes (-H\ta + \alpha_{\mathrm{vel}}\no)
+ R'_{\taTrJ}\ta\otimes (\kappa_{\taTrJ\taTrJ}\ta + (\nabla\cdot\ta)\no)
\\&~~~
\nonumber
+ \big((\taTrJ\cdot\nabla)\widetilde R'_{\barI}\,\widetilde\xi\,\big) \otimes \taTrJ
\\&~~~
\nonumber
-\big((R'_{\taTrJ}\no\cdot\no)\kappa_{\ta\taTrJ}
+(R'_{\taTrJ}\no\cdot\ta)(\nabla\cdot\taTrJ)\big)\,\taTrJ\otimes\ta
\\&~~~
\nonumber
+ (\nabla R_{\no}) R'_{\taTrJ}\no.
\end{align} 
A direct computation based on the ansatz~\eqref{eq:AnsatzAuxiliaryXiHalfSpace}, 
the identities~\eqref{eq:gradientNormal},~\eqref{eq:gradientTau}, and~\eqref{eq:gaugeRotationOnTripleLine}, and
substituting~$\kappa'_{\ta'\ta'}=H'-\kappa'_{\taTrJ'\taTrJ'}$
also yields along the triple line~$\trLine$
\begin{align}
\label{eq:gradientRotatedXiPrime}
\nabla \widetilde\xi' &= \ta'\otimes (-H'\ta' + \alpha'_{\mathrm{vel}}\no')
+ \ta'\otimes (\kappa'_{\taTrJ'\taTrJ'}\ta' + (\nabla\cdot\ta')\no')
\\&~~~
\nonumber
+ (\taTrJ'\cdot\nabla)\widetilde\xi'\otimes\taTrJ'
\\&~~~
\nonumber
- \kappa'_{\ta'\taTrJ'}\,\taTrJ'\otimes\ta'
\\&~~~
\nonumber
+ (\nabla R_{\no'}) \no'.
\end{align}

We proceed by comparing the respective formulas~\eqref{eq:gradientRotatedXi}
and~\eqref{eq:gradientRotatedXiPrime}.
Recalling that we denoted by~$\vec{V}_{\trLine}$ the normal velocity vector field of
the triple line, we obtain from~\eqref{eq:uniqueVelocityAtTripleLine}, 
the choice of $\alpha_{\mathrm{vel}}$~\eqref{eq:initialValueODEalpha},
the identities~\eqref{eq:defNinetyDegreeRotation} and~\eqref{eq:rotationNinetyDegreeTripleLine},
as well as the zeroth order compatibility~\eqref{eq:compConditionTripleLine} along the triple line that 
the first terms in~\eqref{eq:gradientRotatedXi} and~\eqref{eq:gradientRotatedXiPrime} are identical:
\begin{align*}
R'_{\taTrJ}\ta\otimes (-H\ta + \alpha_{\mathrm{vel}}\no)
= -\ta'\otimes J\vec{V}_{\trLine} = \ta'\otimes (-H'\ta' + \alpha'_{\mathrm{vel}}\no') \quad \text{along } \trLine. 
\end{align*}
Moreover, by the compatibility conditions~\eqref{eq:compConditionTripleLine}, 
\eqref{eq:compConditionTripleLine2} and~\eqref{eq:compConditionTripleLine4} along the triple line, 
as well as $R'_{\taTrJ}\ta\cdot\ta=R'_{\taTrJ}\no\cdot\no$ and
$R'_{\taTrJ}\ta\cdot\no=-R'_{\taTrJ}\no\cdot\ta$, we may infer that the second terms agree, too:
\begin{align*}
R'_{\taTrJ}\ta\otimes (\kappa_{\taTrJ\taTrJ}\ta + (\nabla\cdot\ta)\no) = 
\ta'\otimes (\kappa'_{\taTrJ'\taTrJ'}\ta' + (\nabla\cdot\ta')\no') \quad \text{along } \trLine.
\end{align*}
From the last two identities together with~\eqref{eq:gradientRotatedXi}, \eqref{eq:gradientRotatedXiPrime},
\eqref{eq:compatibilityTripleLineXi}, and~\eqref{eq:orientationTangentTripleLine} 
we therefore obtain along the triple line $\trLine$
\begin{align}
\label{eq:firstOrderCompWithoutRotationNormalAxis}
\nabla \big(\widetilde R_{\barI}\,\widetilde\xi\big) - \nabla\widetilde\xi'
&= -\big((R'_{\taTrJ}\no\cdot\no)\kappa_{\ta\taTrJ}
+(R'_{\taTrJ}\no\cdot\ta) (\nabla\cdot\taTrJ)\big)\,\taTrJ\otimes\ta
+ \kappa'_{\ta'\taTrJ'}\,\taTrJ\otimes\ta'
\\&~~~ \nonumber
+ (\nabla R_{\no})R'_{\taTrJ}\no - (\nabla R_{\no'})\no'.
\end{align}
In the rotationally symmetric case, 
the right hand side terms in the first line of~\eqref{eq:firstOrderCompWithoutRotationNormalAxis}
actually vanish. However, there is no reason
in general why these terms should vanish without assuming additional symmetry.
This is the motivation for the introduction of the additional gauge rotation matrices around the normal axis.
Their definition is arranged in such a way so that their contribution in~\eqref{eq:firstOrderCompWithoutRotationNormalAxis}
exactly cancels the right hand side terms of the first line.

First, we obtain from the definitions~\eqref{eq:defGaugeRotation}--\eqref{ODE_omega} 
along the triple line
\begin{align}
\label{eq:derivativeRotationNormal1}
(\nabla R_{\no})R'_{\taTrJ}\no &=
\big((\ta\cdot\nabla)R^{(2)}_{\no}\big)R'_{\taTrJ}\no\otimes\ta
+ \big((\no\cdot\nabla)R^{(1)}_{\no}\big)R'_{\taTrJ}\no\otimes\no.
\end{align}
Let us next compute the two relevant directional derivatives of the gauge
rotation matrices. We first observe that due to~\eqref{eq:rotationNormal}
and~\eqref{eq:rotationNormalAngle}
\begin{align}
\label{eq:derivativeRotationNormal2}
(\no\cdot\nabla)R^{(1)}_{\no} = \kappa_{\ta\taTrJ}\,\taTrJ\wedge\ta
\end{align} 
along the interface $\barI$. This in turn entails
by~$R'_{\taTrJ}\ta\cdot\no=-R'_{\taTrJ}\no\cdot\ta$ 
\begin{align}
\label{eq:derivativeRotationNormal3}
\big((\no\cdot\nabla)R^{(1)}_{\no}\big)R'_{\taTrJ}\no\otimes\no
= -\kappa_{\ta\taTrJ}(R'_{\taTrJ}\ta\cdot\no)\,\taTrJ\otimes\no
\quad\text{along } \trLine.
\end{align}
Moreover, we may compute based on~\eqref{eq:rotationNormal2},~\eqref{eq:rotationNormalAngle2}, and~\eqref{ODE_omega} on the triple line~$\trLine$ 
\begin{align}
\label{eq:derivativeRotationNormal4}
(\ta\cdot\nabla)R^{(2)}_{\no}
= (\nabla\cdot\taTrJ)\,\taTrJ\wedge\ta,
\end{align}
from which we deduce
\begin{align}
\label{eq:derivativeRotationNormal5}
\big((\ta\cdot\nabla)R^{(2)}_{\no}\big)R'_{\taTrJ}\no\otimes\ta
= \big((R'_{\taTrJ}\no\cdot\ta)(\nabla\cdot\taTrJ)\big)\,\taTrJ\otimes\ta
\quad\text{along } \trLine.
\end{align}
A straightforward computation  
shows that along the triple line~$\trLine$ it holds
\begin{align}
\label{eq:derivativeRotationNormal6}
(\nabla R_{\no'})\no' = \nabla \big(R_{\no'}\no'\big) - R_{\no'} \nabla \no' = \nabla \no' - \nabla \no' =0.
\end{align}
Combining~\eqref{eq:derivativeRotationNormal1},~\eqref{eq:derivativeRotationNormal3},~\eqref{eq:derivativeRotationNormal5}, 
and~\eqref{eq:derivativeRotationNormal6} 
with the compatibility conditions~\eqref{eq:compConditionTripleLine}
and~\eqref{eq:compConditionTripleLine1} finally yields
the desired cancellation
\begin{align*}
-\big((R'_{\taTrJ}\no\cdot\no)\kappa_{\ta\taTrJ}
+(R'_{\taTrJ}\no\cdot\ta) (\nabla\cdot\taTrJ)\big)\,\taTrJ\otimes\ta
+ \kappa'_{\ta'\taTrJ'}\,\taTrJ\otimes\ta'
+ (\nabla R_{\no})R'_{\taTrJ}\no - (\nabla R_{\no'})\no' = 0
\end{align*}
along the triple line~$\trLine$. By~\eqref{eq:firstOrderCompWithoutRotationNormalAxis}, 
this in turn concludes the proof of Lemma~\ref{lem:firstOrderCompXi}.
\end{proof}

We proceed with the construction of suitable candidate velocity fields.

\begin{construction}[Extension of velocity fields close to their associated interfaces]
\label{AnsatzAuxiliaryVelocityHalfSpace}
Recall that~$\vec{V}_{\trLine}$ denotes the normal velocity of the triple line~$\trLine$,
and recall the definition~\eqref{eq:defGaugedOrthonormalFrame} of the gauged
orthonormal frame~$(\no,\ta_*,\taTrJ_*)$.
We then define a coefficient function
\begin{align}
\label{eq:coefficientTangentialVelocity}
\alpha_{\mathrm{vel}}\colon\mathcal{N}_r(\trLine)\cap\mathrm{im}(\Psi)\to\mathbb{R},
\quad &(x,t) \mapsto \widehat\alpha_{\mathrm{vel}}(P(x,t),t),
\end{align}
where the coefficient~$\alpha_{\mathrm{vel}}$ is defined by projection onto the interface~$\barI$ in terms of the
solution of the following family of ODEs, solved along the integral lines of the 
tangent vector field~$\ta_*$ with initial condition posed on the triple line~$\trLine$
\begin{align}
\label{ODE_alpha2}
  \begin{cases}
	\widehat\alpha_{\mathrm{vel}}(x,t) & = (\ta_*\cdot\vec{V}_{\trLine})(x,t),
	\hspace*{0.43cm} (x,t)\in\trLine, 
	\\
	\left(\ta_* \cdot \nabla \right) \widehat\alpha_{\mathrm{vel}}(x,t) & = 
	(H\kappa_{\ta_*\ta_*})(x,t), 
	\quad (x,t)\in \barI \cap \mathcal{N}_r(\trLine).
  \end{cases}
\end{align}
Note that the choice of the initial value in~\eqref{ODE_alpha2} is consistent with~\eqref{eq:initialValueODEalpha}. 
Next, we define another coefficient function
\begin{align}
\label{eq:coefficientBeta}
\beta\colon\mathcal{N}_r(\trLine)\cap\mathrm{im}(\Psi)\to\mathbb{R},
\quad (x,t) \mapsto -\big((\ta_*\cdot\nabla)H\big)(x,t) 
- (\alpha_{\mathrm{vel}}\kappa_{\ta_*\ta_*})(x,t).
\end{align}

We now define a preliminary extension~$\widetilde B\colon\mathcal{N}_r(\trLine)\cap\mathrm{im}(\Psi)\to\mathbb{R}^3$
of the normal velocity vector field $(H\no)|_{\barI}$ 
for the interface~$\barI$ in terms of the \emph{gauged expansion ansatz}
\begin{align}
\label{eq:AnsatzAuxiliaryVelocityHalfSpace}
	\widetilde B(x,t)  
	& := 
	H(x,t) \, \no(x,t)
	\\&~~~~\nonumber
	+ \alpha_{\mathrm{vel}}(x,t) \, \ta_*(x,t) 
	\\&~~~~\nonumber 
	+\beta(x,t) s(x,t) \, \ta_*(x,t)
\end{align}
for all~$(x,t)\in\mathcal{N}_r(\trLine)\cap\mathrm{im}(\Psi)$.

Analogously, one defines preliminary extensions $\widetilde B'\colon\mathcal{N}_r(\trLine)\cap\mathrm{im}(\Psi')\to\mathbb{R}^3$ 
as well as $\widetilde B''\colon\mathcal{N}_r(\trLine)\cap\mathrm{im}(\Psi'')\to\mathbb{R}^3$ of the normal velocity
vector fields~$(H'\no')|_{\barI'}$ and $(H''\no'')|_{\barI''}$, respectively.
\hfill$\diamondsuit$
\end{construction}
%
%

Note carefully that even away from the triple line
we do not introduce a tangential velocity in $\taTrJ_*$-direction.
As the proof of the following result shows, this will entail
that the gradients of the auxiliary velocities~$\widetilde B$, $\widetilde B'$
and~$\widetilde B''$ do not fully match along the triple line.
However, the only mismatch appears in, at least for our purposes, inessential components. More precisely,
in terms of, say, $\nabla \widetilde B$ the only non-matching
terms result from its $\taTrJ_*\otimes\ta_*$ resp.\ $\taTrJ_*\otimes\no$ component. In view of
the desired evolution equation~\eqref{TransportEquationXi}
and the fact that~$\widetilde\xi \perp \taTrJ_*$ due to~\eqref{eq:AnsatzAuxiliaryXiHalfSpace},
this specific component of~$\nabla \widetilde B$ is intrinsically irrelevant
for a gradient-flow calibration
(this argument turns out to be robust even with respect to the interpolation construction from
Subsection~\ref{subsection:step_2}).

\begin{lemma}
\label{lem:firstOrderCompVelocity}
Let~$(\widetilde B,\widetilde B',\widetilde B'')$
be the preliminary extensions from Construction~\ref{AnsatzAuxiliaryVelocityHalfSpace}
of the normal velocity vector fields~$((H\no)|_{\barI}, (H'\no')|_{\barI'}, (H''\no'')|_{\barI''})$.

Then it holds $\widetilde B \in C^0_tC^2_x(\mathcal{N}_r(\trLine)\cap\mathrm{im}(\Psi))$
with corresponding estimate
\begin{align}
\label{eq:regEstimateAuxiliaryExtensionVelTripleLine}
|\widetilde B| + |\nabla \widetilde B| + |\nabla^2 \widetilde B|
\leq C \quad\text{in } \mathcal{N}_r(\trLine)\cap\mathrm{im}(\Psi),
\end{align}
where the constant~$C>0$ only depends on the data of the
smoothly evolving regular double bubble~$(\bar\Omega_1,\bar\Omega_2,\bar\Omega_3)$ on~$[0,T]$.
Analogous claims hold true for~$\widetilde B'$ resp.\ $\widetilde B''$
throughout~$\mathcal{N}_r(\trLine)\cap\mathrm{im}(\Psi')$
resp.\ $\mathcal{N}_r(\trLine)\cap\mathrm{im}(\Psi'')$.

Moreover, the constructions are essentially compatible to first order 
in the sense that along the triple line~$\trLine$ it holds
\begin{align}
\label{eq:compatibilityTripleLineVelocity}
\widetilde B &= \widetilde B' = \widetilde B'' = \vec{V}_{\trLine},
\\
\label{eq:compatibilityTripleLineVelocityGradient}
(\mathrm{Id}{-}\taTrJ\otimes\taTrJ)(\nabla\widetilde B) &= 
(\mathrm{Id}{-}\taTrJ'\otimes\taTrJ')(\nabla\widetilde B') = 
(\mathrm{Id}{-}\taTrJ''\otimes\taTrJ'')(\nabla\widetilde B''),
\end{align}
for which one should also recall that $\taTrJ=\taTrJ'=\taTrJ''$ along~$\trLine$,
cf.\ \eqref{eq:orientationTangentTripleLine}.
\end{lemma}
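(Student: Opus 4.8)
The plan is to establish the two assertions of Lemma~\ref{lem:firstOrderCompVelocity} separately, following the same two-step structure as in the proof of Lemma~\ref{lem:firstOrderCompXi}. For the regularity estimate~\eqref{eq:regEstimateAuxiliaryExtensionVelTripleLine}, I would first record that the scalar mean curvature~$H$ and the orthonormal frame~$(\no,\ta,\taTrJ)$, hence also the gauged frame~$(\no,\ta_*,\taTrJ_*)$, satisfy uniform $C^0_tC^2_x$-bounds by~\eqref{eq:regularityNormalCurvature}, \eqref{eq:regTangentFields} and the estimate~\eqref{eq:regGaugeRotationAngles} already obtained in the proof of Lemma~\ref{lem:firstOrderCompXi}. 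The coefficient~$\alpha_{\mathrm{vel}}$ is obtained by integrating the transport equation~\eqref{ODE_alpha2} along the integral lines of~$\ta_*$ with initial datum~$(\ta_*\cdot\vec{V}_{\trLine})$ on~$\trLine$; since the right-hand side~$H\kappa_{\ta_*\ta_*}$ and the vector field~$\ta_*$ are of class~$C^0_tC^2_x$ and the triple-line data~$\vec{V}_{\trLine}$ is smooth (being determined by~\eqref{eq:uniqueVelocityAtTripleLine}), the flow map is $C^2$ in the space variable and so is~$\widehat\alpha_{\mathrm{vel}}$; composing with the projection~$P\in C^0_tC^4_x\cap C^1_tC^2_x$ preserves this. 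The same applies to~$\beta$ via~\eqref{eq:coefficientBeta}. Plugging these bounds and the bound on~$s$ into the ansatz~\eqref{eq:AnsatzAuxiliaryVelocityHalfSpace} gives~\eqref{eq:regEstimateAuxiliaryExtensionVelTripleLine}.

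For the zeroth-order compatibility~\eqref{eq:compatibilityTripleLineVelocity}, I would evaluate~\eqref{eq:AnsatzAuxiliaryVelocityHalfSpace} along~$\trLine$: the $s$-term drops, the initial condition in~\eqref{ODE_alpha2} gives $\alpha_{\mathrm{vel}}=\ta_*\cdot\vec{V}_{\trLine}$, and by~\eqref{eq:compConditionTripleLine8} we have $\ta_*=\ta$ on~$\trLine$, so $\widetilde B = H\no + (\ta\cdot\vec{V}_{\trLine})\ta$ there. Using~\eqref{eq:uniqueVelocityAtTripleLine}, i.e.\ $\no\cdot\vec{V}_{\trLine}=H$, and the fact that $\vec{V}_{\trLine}$ lies in the normal bundle $\mathrm{Tan}^\perp\trLine$, hence $\taTrJ\cdot\vec{V}_{\trLine}=0$ (recall $\taTrJ|_{\trLine}$ is the tangent of~$\trLine$ by~\eqref{eq:orientationTangentTripleLine}), we get $\widetilde B=(\no\cdot\vec{V}_{\trLine})\no+(\ta\cdot\vec{V}_{\trLine})\ta=\vec{V}_{\trLine}$ on~$\trLine$; the same holds for~$\widetilde B'$ and~$\widetilde B''$.

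The first-order statement~\eqref{eq:compatibilityTripleLineVelocityGradient} is the main obstacle. The strategy is to compute $\nabla\widetilde B$ on~$\barI$ from~\eqref{eq:AnsatzAuxiliaryVelocityHalfSpace} via the product rule, using~\eqref{eq:gradientNormal}--\eqref{eq:gradientTaTrJ} (and their gauged analogues) for $\nabla\no$, $\nabla\ta_*$, $\nabla\taTrJ_*$, and $\nabla s=\no$ on~$\barI$, evaluate the result along~$\trLine$ (where $R_{\no}=\mathrm{Id}$, so $\ta_*=\ta$, $\taTrJ_*=\taTrJ$), and then project with $(\mathrm{Id}-\taTrJ\otimes\taTrJ)$. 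One organizes $\nabla\widetilde B$ by its column structure $\cdot\otimes\ta$, $\cdot\otimes\taTrJ$, $\cdot\otimes\no$: the $\otimes\taTrJ$-columns involve the derivative $(\taTrJ\cdot\nabla)$ of the coefficients, which is \emph{not} controlled by the triple-line data of the adjacent interface — this is exactly the mismatch flagged in the text. However, the projection $(\mathrm{Id}-\taTrJ\otimes\taTrJ)$ kills precisely the $\taTrJ\otimes(\cdot)$ rows, i.e.\ the $\taTrJ_*\otimes\ta_*$ and $\taTrJ_*\otimes\no$ components, which is where the uncontrolled $(\taTrJ\cdot\nabla)$-derivatives of $\alpha_{\mathrm{vel}}$, $\beta$ and of the gauge-rotation angle $\omega$ reside. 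For the surviving $\no\otimes\bullet$ and $\ta\otimes\bullet$ rows one substitutes: the transport equations~\eqref{ODE_alpha2} to rewrite $(\ta\cdot\nabla)\alpha_{\mathrm{vel}}=H\kappa_{\ta\ta}$; the definition~\eqref{eq:coefficientBeta} of $\beta$ together with $\kappa_{\ta\ta}=H-\kappa_{\taTrJ\taTrJ}$; the zeroth-order compatibilities~\eqref{eq:compConditionTripleLine}--\eqref{eq:compConditionTripleLine5} linking the primed and unprimed curvature coefficients via the Herring rotation $R'_{\taTrJ}$; the higher-order compatibility~\eqref{eq:compConditionTripleLine7}, i.e.\ $\kappa_{\ta\ta}(\ta\cdot\vec{V}_{\trLine})+(\ta\cdot\nabla)H=\kappa'_{\ta'\ta'}(\ta'\cdot\vec{V}_{\trLine})+(\ta'\cdot\nabla)H'$, which is exactly the condition making the $\ta\otimes\no$-component match; and the fact that $R'_{\taTrJ}$ commutes with the $90^\circ$ rotation $J$, so that $R'_{\taTrJ}\ta=\ta'$ on~$\trLine$ by~\eqref{eq:compConditionTripleLine}. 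Comparing the resulting expressions for $(\mathrm{Id}-\taTrJ\otimes\taTrJ)\nabla\widetilde B$ and $(\mathrm{Id}-\taTrJ'\otimes\taTrJ')\nabla\widetilde B'$ term by term then yields equality; the case of $\widetilde B''$ is identical with primes replaced by double primes. The bookkeeping of which curvature coefficient transforms how under $R'_{\taTrJ}$, and verifying that no genuinely needed component secretly hides a $(\taTrJ\cdot\nabla)$-derivative, is the delicate part of the argument.
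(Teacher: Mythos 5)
Your Steps~1 and~2 match the paper: for~\eqref{eq:regEstimateAuxiliaryExtensionVelTripleLine} the paper also reduces to the bound~\eqref{eq:regEstimateTangentialVelGeneral} on~$\alpha_{\mathrm{vel}}$ obtained by integrating~\eqref{ODE_alpha2} along integral lines of~$\ta_*$, and the zeroth-order identity~\eqref{eq:compatibilityTripleLineVelocity} is immediate from the ansatz, \eqref{eq:uniqueVelocityAtTripleLine} and~\eqref{eq:compConditionTripleLine8} as you argue.

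In Step~3 there is a genuine conceptual error in how you identify the ``mismatch.'' You assert that the $(\cdot)\otimes\taTrJ$ columns carry the uncontrolled $(\taTrJ\cdot\nabla)$-derivatives of $\alpha_{\mathrm{vel}}$, $\beta$, $\omega$ and that the projection kills these; but the projection $(\mathrm{Id}-\taTrJ\otimes\taTrJ)$ acts on the output components of $\widetilde B$ and hence kills the $\taTrJ\otimes(\cdot)$ \emph{rows}, not the $(\cdot)\otimes\taTrJ$ \emph{columns}. These are different matrix blocks, and you assign the $(\taTrJ\cdot\nabla)$-derivatives to the wrong one. In fact the $(\cdot)\otimes\taTrJ$ columns survive the projection and \emph{do} match, but not because of any coefficient-level comparison: they assemble into the intrinsic quantity $(\taTrJ_*\cdot\nabla)\widetilde B\otimes\taTrJ_*$, and since $\widetilde B=\widetilde B'=\widetilde B''=\vec{V}_{\trLine}$ on~$\trLine$ and $\taTrJ|_{\trLine}$ is the tangent of~$\trLine$, their $\taTrJ$-directional derivatives on~$\trLine$ coincide automatically. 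Conversely, the terms the projection actually removes are the $\taTrJ_*\otimes\ta_*$ and $\taTrJ_*\otimes\no$ components, whose coefficients are $-(H\kappa_{\ta_*\taTrJ_*}+\alpha_{\mathrm{vel}}\nabla\cdot\taTrJ_*)$ and $\alpha_{\mathrm{vel}}\kappa_{\ta\taTrJ}$ — cross-curvature and torsion terms reflecting the deliberate absence of a $\taTrJ_*$-tangential velocity in the ansatz, not $(\taTrJ\cdot\nabla)$-derivatives.

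The paper's proof is in fact considerably leaner than what you propose: it substitutes \eqref{ODE_alpha2} and the definition~\eqref{eq:coefficientBeta} of $\beta$ to collapse the $\no\otimes\ta_*$ and $\ta_*\otimes\ta_*$ entries, arriving at
\begin{align*}
\nabla\widetilde B = \beta\,\ta_*\wedge\no + (\taTrJ_*\cdot\nabla)\widetilde B\otimes\taTrJ_*
+ [\taTrJ_*\otimes\ta_*\text{ and }\taTrJ_*\otimes\no\text{ terms}]
\end{align*}
along~$\barI$, and then only needs three facts on~$\trLine$: $\beta=\beta'=\beta''$ (from~\eqref{eq:compConditionTripleLine7} combined with~\eqref{eq:compConditionTripleLine8}), $\ta\wedge\no=\ta'\wedge\no'=\ta''\wedge\no''$ (the skew part of the common $90^\circ$ rotation $J$ in~\eqref{eq:rotationNinetyDegreeTripleLine}), and $(\taTrJ\cdot\nabla)\widetilde B=(\taTrJ\cdot\nabla)\widetilde B'=(\taTrJ\cdot\nabla)\widetilde B''$ from the zeroth-order compatibility. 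The Herring-rotation compatibilities \eqref{eq:compConditionTripleLine}--\eqref{eq:compConditionTripleLine5} you invoke are not used at all here — they are needed for the $\xi$ comparison in Lemma~\ref{lem:firstOrderCompXi}, and trying to translate the curvature coefficients across frames via $R'_{\taTrJ}$ is the long way around that the paper's tensorial grouping avoids. Without the observation that the surviving $(\cdot)\otimes\taTrJ_*$ block equals $(\taTrJ_*\cdot\nabla)\widetilde B\otimes\taTrJ_*$, a ``term by term'' comparison of the $\no\otimes\taTrJ$ and $\ta\otimes\taTrJ$ entries does not close, because $(\taTrJ\cdot\nabla)\alpha_{\mathrm{vel}}$ and $(\taTrJ'\cdot\nabla)\alpha'_{\mathrm{vel}}$ are genuinely different scalar functions along~$\trLine$.
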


Note that here the projection $\mathrm{Id}{-}\taTrJ\otimes\taTrJ$ acts on the components of $\widetilde B$, not $\nabla$.

\begin{proof}
\textit{Step 1: Regularity estimates.} 
Due to the definition~\eqref{eq:coefficientBeta}, the regularity
estimates~\eqref{eq:regEstimateRotations} for the gauge rotations,
the regularity of the frame~$(\no,\ta,\taTrJ)$, see~\eqref{eq:regularityNormalCurvature}
and~\eqref{eq:regTangentFields}, the regularity~\eqref{eq:regularityNormalCurvature}
of the extended scalar mean curvatures, and finally the
expansion ansatz~\eqref{eq:AnsatzAuxiliaryVelocityHalfSpace}
it suffices to prove that
\begin{align}
\label{eq:regEstimateTangentialVelGeneral}
|\alpha_{\mathrm{vel}}| + |(\nabla,\nabla^2)\alpha_{\mathrm{vel}}|
\leq C \quad\text{in } \mathcal{N}_r(\trLine)\cap\mathrm{im}(\Psi),
\end{align}
where~$C>0$ is a constant which depends only on the data of the
smoothly evolving regular double bubble~$(\bar\Omega_1,\bar\Omega_2,\bar\Omega_3)$ on~$[0,T]$.

The estimate~\eqref{eq:regEstimateTangentialVelGeneral} in turn follows directly
from explicitly integrating (in each time slice) the ODEs~\eqref{ODE_alpha2}
along the integral lines of the tangent field~$\ta_*$,
and exploiting as before the regularity of the associated geometric quantities.

\textit{Step 2: Zeroth order compatibility at triple line.}
The condition~\eqref{eq:compatibilityTripleLineVelocity} is
immediate from the definition~\eqref{eq:AnsatzAuxiliaryVelocityHalfSpace},
the identities~\eqref{eq:uniqueVelocityAtTripleLine},
and the specific 
choices~\eqref{eq:coefficientTangentialVelocity}--\eqref{ODE_alpha2}.

\textit{Step 3: First order compatibility at triple line.}
We proceed with the proof of~\eqref{eq:compatibilityTripleLineVelocityGradient}.
Observe that we have on the interface~$\barI$ 
by direct analogy to the proofs of~\eqref{eq:gradientNormal} and~\eqref{eq:gradientTau} that
\begin{align}
\label{eq:gradientNormal2}
\nabla\no &= -\kappa_{\ta_*\ta_*}\,\ta_*\otimes\ta_* -\kappa_{\taTrJ_*\taTrJ_*}\,\taTrJ_*\otimes\taTrJ_*
-\kappa_{\ta_*\taTrJ_*}\,(\taTrJ_*\otimes\ta_* + \ta_*\otimes\taTrJ_*),
\\
\label{eq:gradientTau2}
\nabla\ta_* &= \kappa_{\ta_*\ta_*}\,\no\otimes\ta_* - (\nabla\cdot\taTrJ_*)\,\taTrJ_*\otimes\ta_*
+ \kappa_{\ta_*\taTrJ_*}\,\no\otimes\taTrJ_* 
+ (\nabla\cdot\ta_*)\,\taTrJ_*\otimes\taTrJ_*
\\&~~~\nonumber
+ (\no\cdot\nabla)\ta_*\otimes\no.
\end{align}
It follows directly from the definitions~\eqref{eq:frameInterface}
resp.\ \eqref{eq:defGaugedOrthonormalFrame} of our orthonormal frames,
the definitions~\eqref{eq:defGaugeRotation}--\eqref{ODE_omega} of the gauge rotations,
as well as the formula~\eqref{eq:derivativeRotationNormal2} being valid along the interface~$\barI$ that
\begin{align*}
(\no\cdot\nabla)\ta_* = R_{\no}^{(2)}\big((\no\cdot\nabla)R_{\no}^{(1)}\big)\ta
= \kappa_{\ta\taTrJ}R_{\no}\taTrJ = \kappa_{\ta\taTrJ}\taTrJ_*
\quad\text{along } \barI.
\end{align*}

Starting now from the definition~\eqref{eq:AnsatzAuxiliaryVelocityHalfSpace}, the previous display,
the choices~\eqref{eq:coefficientTangentialVelocity}--\eqref{eq:coefficientBeta}
of the coefficient functions, as well as the formulas~\eqref{eq:gradientNormal2}
and~\eqref{eq:gradientTau2} directly entail along the interface~$\barI$
\begin{align}
\nonumber
\nabla \widetilde B
&= \beta\,\ta_*\otimes\no + \big((\ta_*\cdot\nabla)H 
+ \widehat\alpha_{\mathrm{vel}}\kappa_{\ta_*\ta_*}\big)\,\no\otimes\ta_*
\\&~~~ \nonumber
+ \big((\ta_*\cdot\nabla)\widehat\alpha_{\mathrm{vel}} - H\kappa_{\ta_*\ta_*}\big)\,\ta_*\otimes\ta_*
\\&~~~ \nonumber
+ (\taTrJ_*\cdot\nabla)\widetilde B\otimes\taTrJ_*
\\&~~~ \nonumber
- \big(H\kappa_{\ta_*\taTrJ_*} + \widehat\alpha_{\mathrm{vel}}(\nabla\cdot\taTrJ_*)\big)\,\taTrJ_*\otimes\ta_*
+ \kappa_{\ta\taTrJ}\taTrJ_*\otimes\no
\\& \label{eq:derivAuxiliaryVelocityTripleLine}
= \beta\,\ta_* \wedge \no + (\taTrJ_*\cdot\nabla)\widetilde B\otimes\taTrJ_*
\\&~~~ \nonumber 
- \big(H\kappa_{\ta_*\taTrJ_*} + \widehat\alpha_{\mathrm{vel}}(\nabla\cdot\taTrJ_*)\big)\,\taTrJ_*\otimes\ta_*
+ \kappa_{\ta\taTrJ}\taTrJ_*\otimes\no.
\end{align}
Hence, the already established zeroth order condition~\eqref{eq:compatibilityTripleLineVelocity}
together with the compatibility conditions~\eqref{eq:compConditionTripleLine7} and~\eqref{eq:compConditionTripleLine8}
in form of~$\beta=\beta'=\beta''$ along~$\trLine$ imply~\eqref{eq:compatibilityTripleLineVelocityGradient}.
\end{proof}

The following result provides the approximate evolution equations for our 
auxiliary constructions $(\widetilde\xi,\widetilde R'_{\barI}\,\widetilde\xi,\widetilde R''_{\barI}\,\widetilde\xi\,)$
in terms of the associated auxiliary velocity~$\widetilde B$, which will eventually 
lead us to~\eqref{TransportEquationXi}--\eqref{Dissip}.

\begin{lemma}
\label{lem:propertiesInterfaceWedgesCalibration}
Let~$(\widetilde\xi,\widetilde\xi',\widetilde\xi'')$
be the initial extensions from Construction~\ref{AnsatzAuxiliaryXiHalfSpace} 
of the normal vector fields~$(\no|_{\barI}, \no'|_{\barI'}, \no''|_{\barI''})$. Moreover,
let~$(\widetilde R'_{\barI},\widetilde R''_{\barI})$, $(\widetilde R_{\barI'},\widetilde R''_{\barI'})$
and~$(\widetilde R_{\barI''}, \widetilde R'_{\barI''})$ be the gauged Herring rotations
as provided by Construction~\ref{RotationsTripleLine}, respectively.
Finally, let~$(\widetilde B,\widetilde B',\widetilde B'')$
be the initial extensions from Construction~\ref{AnsatzAuxiliaryVelocityHalfSpace}
of the normal velocity vector fields~$((H\no)|_{\barI}, (H'\no')|_{\barI'}, (H''\no'')|_{\barI''})$.

Then there exists a constant~$C>0$, which depends only on the data of the
smoothly evolving regular double bubble~$(\bar\Omega_1,\bar\Omega_2,\bar\Omega_3)$ on~$[0,T]$,
such that for each rotation $\mathcal{R}\in\{\mathrm{Id},\widetilde R'_{\barI},\widetilde R''_{\barI}\}$
it holds
\begin{align}
\label{eq:lengthControlXiHalfspace}
\big|1-|\mathcal{R}\widetilde\xi|^2\big| 
&\leq C\dist^4(\cdot,\bar I),
\\
\label{eq:gradientLengthXi}
\big|\nabla|\mathcal{R}\widetilde\xi|^2\big|
&\leq C\dist^3(\cdot,\bar I),
\\
\label{eq:timeDerivLengthXi}
\big|\partial_t|\mathcal{R}\widetilde\xi|^2\big|
&\leq C\dist^3(\cdot,\bar I),
\\
\label{eq:evolEquXiHalfspace}
\big|\partial_t\mathcal{R}\widetilde\xi
+ (\widetilde B\cdot\nabla)\mathcal{R}\widetilde\xi
+ (\nabla\widetilde B)^\mathsf{T}\mathcal{R}\widetilde\xi\,\big|
&\leq C\begin{cases}
			 \dist(\cdot,\bar I) &\text{if } \mathcal{R}=\mathrm{Id}, 
			 \\
			 \dist(\cdot,\trLine) &\text{else},
			 \end{cases}
\\
\label{eq:divConstraintXiHalfspace}
\big|\nabla\cdot\mathcal{R}\widetilde\xi 
+ \widetilde B\cdot \mathcal{R}\widetilde\xi\,\big|
&\leq C\begin{cases}
			 \dist(\cdot,\bar I) &\text{if } \mathcal{R}=\mathrm{Id}, 
			 \\
			 \dist(\cdot,\trLine) &\text{else},
			 \end{cases}
\end{align}
throughout the domain~$\mathcal{N}_r(\trLine)\cap\mathrm{im}(\Psi)$.

Analogous estimates hold true throughout the domain~$\mathcal{N}_r(\trLine)\cap\mathrm{im}(\Psi')$
in terms of the vector fields~$(\mathcal{R}\widetilde\xi',\widetilde B')$
for each rotation $\mathcal{R}\in\{\widetilde R_{\barI'},\mathrm{Id},\widetilde R''_{\barI'}\}$,
as well as throughout~$\mathcal{N}_r(\trLine)\cap\mathrm{im}(\Psi'')$
in terms of~$(\mathcal{R}\widetilde\xi'',\widetilde B'')$
for each $\mathcal{R}\in\{\widetilde R_{\barI''},\widetilde R'_{\barI''},\mathrm{Id}\}$.
\end{lemma}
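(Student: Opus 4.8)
The strategy is to exploit the fact that $\widetilde\xi$ is a polynomial expansion in the signed distance $s$ whose coefficients are transported along the triple line, so that all the asserted estimates reduce to Taylor-type remainder bounds plus the observation that, modulo $O(\dist(\cdot,\bar I))$, the evolution equations are the same as in the flat/interface case treated in Lemma~\ref{lemma:gradientFlowCalibrationInterface}. First I would establish \eqref{eq:lengthControlXiHalfspace}--\eqref{eq:timeDerivLengthXi}: since $\widetilde\xi = \no + \alpha_{\trLine}s\,\ta_* - \tfrac12\alpha_{\trLine}^2 s^2\no$ with $\no\perp\ta_*$ and $|\no|=|\ta_*|=1$, a direct computation gives $|\widetilde\xi|^2 = 1 + \alpha_{\trLine}^2 s^2 - \alpha_{\trLine}^2 s^2 + \tfrac14\alpha_{\trLine}^4 s^4 = 1 + \tfrac14\alpha_{\trLine}^4 s^4$, which is exactly \eqref{eq:lengthControlXiHalfspace}; differentiating and using the regularity of $\alpha_{\trLine}$, $s$ established in Lemma~\ref{lem:firstOrderCompXi} yields the cubic bounds \eqref{eq:gradientLengthXi}--\eqref{eq:timeDerivLengthXi}. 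The case $\mathcal R\in\{\widetilde R'_{\barI},\widetilde R''_{\barI}\}$ is immediate because rotations preserve lengths, so $|\mathcal R\widetilde\xi|^2 = |\widetilde\xi|^2$.

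The substantive content is in \eqref{eq:evolEquXiHalfspace} and \eqref{eq:divConstraintXiHalfspace}. For $\mathcal R = \mathrm{Id}$ I would differentiate the ansatz \eqref{eq:AnsatzAuxiliaryXiHalfSpace} directly. The leading-order term $\no$ reproduces, by the PDE \eqref{eq:evolutionSignedDistance} for the signed distance and the formulas \eqref{eq:gradientNormal}, exactly the computation of Lemma~\ref{lemma:gradientFlowCalibrationInterface} (giving $0$ along $\bar I$), so that the contribution is $O(\dist(\cdot,\bar I))$; the $s$-linear and $s$-quadratic correction terms in $\widetilde\xi$, together with the correction terms in $\widetilde B$ (the $\beta s\,\ta_*$ piece of \eqref{eq:AnsatzAuxiliaryVelocityHalfSpace}), each carry an explicit factor of $s = \pm\dist(\cdot,\bar I)$, hence are automatically $O(\dist(\cdot,\bar I))$ after using the uniform bounds \eqref{eq:regEstimateAuxiliarxExtensionsNormal}, \eqref{eq:regEstimateAuxiliaryExtensionVelTripleLine}; here the key point --- which is precisely why $\alpha$ was chosen as $\alpha_{\mathrm{vel}} + \nabla\cdot\ta$ in \eqref{eq:alpha_coefficient_xi}, $\beta$ as in \eqref{eq:coefficientBeta}, and $\widehat\alpha_{\mathrm{vel}}$ via the ODE \eqref{ODE_alpha2} --- is that the $O(1)$ (in $s$) parts of $\partial_t\widetilde\xi + (\widetilde B\cdot\nabla)\widetilde\xi + (\nabla\widetilde B)^{\mathsf T}\widetilde\xi$ and of $\nabla\cdot\widetilde\xi + \widetilde B\cdot\widetilde\xi$ must vanish identically along $\bar I$. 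This is a matching-coefficients computation: expanding in the frame $(\no,\ta_*,\taTrJ_*)$ using \eqref{eq:gradientNormal2}, \eqref{eq:gradientTau2}, the $(\no\cdot\nabla)\ta_* = \kappa_{\ta\taTrJ}\taTrJ_*$ identity, and the defining ODEs for $\widehat\omega$ and $\widehat\alpha_{\mathrm{vel}}$, one checks that the $\ta_*$-, $\no$-, and $\taTrJ_*$-components of the residual all cancel along $\bar I$, the cancellation of the $\taTrJ_*$-component being harmless anyway since it multiplies a factor orthogonal to $\widetilde\xi$ (as flagged in the discussion preceding the lemma).

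For $\mathcal R\in\{\widetilde R'_{\barI},\widetilde R''_{\barI}\}$ the estimates \eqref{eq:evolEquXiHalfspace}, \eqref{eq:divConstraintXiHalfspace} are claimed only up to $O(\dist(\cdot,\trLine))$, which is the easier regime: I would write $\mathcal R\widetilde\xi = \widetilde\xi + (\mathcal R - \mathrm{Id})\widetilde\xi$, note from \eqref{eq:gaugeRotationOnTripleLine} and $R'_{\taTrJ}\no = \no'$ on $\trLine$ that $\mathcal R\widetilde\xi$ agrees with $\widetilde\xi'$ to first order along $\trLine$ by Lemma~\ref{lem:firstOrderCompXi}, hence $\mathcal R\widetilde\xi$ and its gradient differ from the interface-type building block by quantities vanishing linearly in $\dist(\cdot,\trLine)$; combined with the uniform bounds \eqref{eq:regEstimateAuxiliarxExtensionsNormal} on all constituents, the residuals in the evolution and divergence identities are then $O(\dist(\cdot,\trLine))$. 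Concretely, one transports the $\mathcal R = \mathrm{Id}$ identities by $\mathcal R$, picks up commutator terms of the form $(\partial_t + \widetilde B\cdot\nabla)\mathcal R \cdot \widetilde\xi$ and $(\nabla\mathcal R)\mathpunct{:}(\cdots)$, and bounds these using \eqref{eq:regEstimateRotations} together with the fact that $\mathcal R$ is, to leading order in $\dist(\cdot,\trLine)$, transported by $\widetilde B$ along $\trLine$ — this last transport property of the gauged Herring rotations being the one genuinely new input needed. \textbf{The main obstacle} is precisely verifying that the $O(1)$-in-$s$ residual of the evolution equation \eqref{eq:evolEquXiHalfspace} for $\mathcal R = \mathrm{Id}$ vanishes along $\bar I$: this is where the specific choices of $\alpha$, $\beta$, the ODE \eqref{ODE_alpha2} for $\widehat\alpha_{\mathrm{vel}}$, and the ODE \eqref{ODE_omega} for the gauge angle $\omega$ all have to conspire, and it requires a careful term-by-term expansion in the gauged frame rather than any soft argument; everything else is either an exact algebraic identity (the length estimates) or a routine consequence of the uniform regularity bounds already proved.
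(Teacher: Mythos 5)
The length estimates and their proof are correct and identical to the paper's: the exact algebraic identity $|\widetilde\xi|^2 = 1 + \tfrac14\alpha_{\trLine}^4 s^4$ together with rotation-invariance gives \eqref{eq:lengthControlXiHalfspace}--\eqref{eq:timeDerivLengthXi}. However, your identification of ``the main obstacle'' is backwards. For $\mathcal R = \mathrm{Id}$ the evolution residual on $\barI$ is elementary and requires \emph{none} of the ODEs \eqref{ODE_alpha2}, \eqref{ODE_omega}: since $\mathcal R\no = \no$ lies entirely in the $\no$-direction, at $s=0$ the residual collapses to $\big(\partial_t\no + (\widetilde B\cdot\nabla)\no + (\nabla\widetilde B)^\mathsf{T}\no\big) + \alpha_{\trLine}\big(\partial_t s + (\widetilde B\cdot\nabla)s\big)\ta_*$; the second bracket vanishes because $\widetilde B\cdot\no = H = -\partial_t s$ (the choice of $\alpha_{\mathrm{vel}}$, $\beta$, and $\omega$ plays no role here since $\ta_*\perp\no$), and the first bracket is exactly $\nabla$ applied to the second. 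Similarly $\nabla\cdot\widetilde\xi + \widetilde B\cdot\widetilde\xi = -H + H$ on $\barI$ without invoking $\alpha = \alpha_{\mathrm{vel}} + \nabla\cdot\ta$.

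The genuine work is in the case $\mathcal R \in \{\widetilde R'_{\barI}, \widetilde R''_{\barI}\}$, which you dismiss as ``the easier regime.'' There the coefficient $(\mathcal R_{\taTrJ}\no\cdot\ta) \neq 0$ brings the full residual of $\ta_*$ into play, and in particular its $\taTrJ_*$-component. You claim that component is ``harmless anyway since it multiplies a factor orthogonal to $\widetilde\xi$,'' but this conflates the $\taTrJ_*\otimes\cdot$-rows of $\nabla\widetilde B$ (which are indeed killed by $\widetilde\xi\perp\taTrJ_*$ in the $(\nabla\widetilde B)^\mathsf{T}\widetilde\xi$ term) with the $\taTrJ_*$-component of the residual vector $\partial_t\ta_* + (\widetilde B\cdot\nabla)\ta_* + (\nabla\widetilde B)^\mathsf{T}\ta_*$, which is an independent entry of the estimated quantity and must be shown to vanish on $\trLine$. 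The paper establishes exactly this via the normal-parametrization argument for $\trLine$ culminating in \eqref{eq:auxclaim}; the ODE \eqref{ODE_alpha2} for $\widehat\alpha_{\mathrm{vel}}$ is the input that makes $\ta_*\otimes\ta_*:\nabla\widetilde B = 0$ and $\ta_*\otimes\no:\nabla\widetilde B$ skew-symmetric in \eqref{eq:derivAuxiliaryVelocityTripleLine}, and hence the other two components cancel. As for the choice $\alpha = \alpha_{\mathrm{vel}} + \nabla\cdot\ta$, it is used precisely to make the divergence constraint exact on $\trLine$ in the rotated case, not in the case $\mathcal R=\mathrm{Id}$ as you suggest. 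Your alternative ``transfer from $\widetilde\xi'$'' route can in principle be made rigorous, but it requires not just Lemma~\ref{lem:firstOrderCompXi} but also the velocity compatibility \eqref{eq:compatibilityTripleLineVelocity}--\eqref{eq:compatibilityTripleLineVelocityGradient} of Lemma~\ref{lem:firstOrderCompVelocity} (which you do not cite), together with the observation that the unmatched $\taTrJ\otimes\cdot$-component of $\nabla\widetilde B - \nabla\widetilde B'$ annihilates $\mathcal R\widetilde\xi$ on $\trLine$; the ``commutator'' heuristic you propose instead would need an unproved transport equation for the gauged Herring rotation $\mathcal R$ and is not the soft regularity argument you describe.
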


\begin{proof}
Fix a rotation $\mathcal{R}\in\{\mathrm{Id},\widetilde R'_{\barI},\widetilde R''_{\barI}\}$,
and for the purposes of the proof abbreviate~$\alpha_{\trLine}(\cdot,t)
:= \alpha(P_{\trLine}(\cdot,t),t)$, $t\in [0,T]$.

\textit{Step 1: Proof of~\emph{\eqref{eq:lengthControlXiHalfspace}--\eqref{eq:timeDerivLengthXi}}.}
It follows immediately from the ansatz \eqref{eq:AnsatzAuxiliaryXiHalfSpace}
and the orthogonality~$\ta_*\cdot\no=0$ that
\begin{align*}
|\mathcal{R}\widetilde\xi|^2 = |\widetilde\xi|^2 =
\Big(1-\frac{1}{2}\alpha_{\trLine}^2 s^2\Big)^2 + \alpha_{\trLine}^2s^2 
= 1 + \frac{1}{4}\alpha_{\trLine}^4s^4.
\end{align*}
The previous display of course immediately implies the 
estimates~\eqref{eq:lengthControlXiHalfspace}--\eqref{eq:timeDerivLengthXi}.

\textit{Step 2: Proof of~\eqref{eq:divConstraintXiHalfspace}.} By the regularity
estimates~\eqref{eq:regEstimateAuxiliarxExtensionsNormal} 
and~\eqref{eq:regEstimateAuxiliaryExtensionVelTripleLine}, 
it suffices to show that~\eqref{eq:divConstraintXiHalfspace}
is exact~\textit{on} the interface~$\barI$ if~$\mathcal{R}=\mathrm{Id}$,
or otherwise that~\eqref{eq:divConstraintXiHalfspace}
is exact~\textit{on} the triple line~$\trLine$.  
To this end, let us first assume that $\mathcal{R}_{\taTrJ}=\mathrm{Id}$. Then we 
also have $\mathrm{R} =\mathrm{Id}$ and hence we may directly infer from the 
definitions \eqref{eq:AnsatzAuxiliaryXiHalfSpace} and \eqref{eq:AnsatzAuxiliaryVelocityHalfSpace} 
of $\widetilde \xi$ and $\widetilde B$, respectively, that $\nabla \cdot \widetilde \xi = 
H = \widetilde \xi \cdot \widetilde B$ on the interface $\barI$.
In the remaining cases, we express $\mathcal{R}=R_{\no}\mathcal{R}_{\taTrJ}R_{\no}^\mathsf{T}$
in terms of the associated Herring rotation $\mathcal{R}_{\taTrJ}\in\{
R'_{\taTrJ},R''_{\taTrJ}\}$, and then simply read off from~\eqref{eq:derivGaugedRotatedAnsatz1},
\eqref{eq:auxRepDerivRotatedXiDiv}, \eqref{eq:derivativeRotationNormal1},
\eqref{eq:derivativeRotationNormal3} and~\eqref{eq:derivativeRotationNormal5} that
\begin{align*}
\nabla\cdot\mathcal{R}\widetilde\xi = 
- H (\mathcal{R}_{\taTrJ}\no\cdot\no) + (\nabla\cdot\ta) (\mathcal{R}_{\taTrJ}\no\cdot\ta)
- \alpha_{\trLine} (\mathcal{R}_{\taTrJ}\no\cdot\ta)
\end{align*}
along the triple line~$\trLine$. Moreover, the definitions~\eqref{eq:AnsatzAuxiliaryXiHalfSpace} 
and~\eqref{eq:AnsatzAuxiliaryVelocityHalfSpace} directly imply that
\begin{align*}
\widetilde B \cdot \mathcal{R}\widetilde\xi
= H (\mathcal{R}_{\taTrJ}\no\cdot\no) 
+ \alpha_{\mathrm{vel}} (\mathcal{R}_{\taTrJ}\no\cdot\ta)
\end{align*}
holds true on the interface~$\barI$. Hence, the estimate~\eqref{eq:divConstraintXiHalfspace}
follows from the previous two displays in combination with the choice~\eqref{eq:alpha_coefficient_xi}.

\textit{Step 3: Proof of~\eqref{eq:evolEquXiHalfspace}.} It suffices
again to check that~\eqref{eq:evolEquXiHalfspace} is exact on 
the interface~$\barI$ if~$\mathcal{R}=\mathrm{Id}$,
or otherwise that~\eqref{eq:evolEquXiHalfspace}
is exact on the triple line~$\trLine$.  Let us also
again express $\mathcal{R}=R_{\no}\mathcal{R}_{\taTrJ}R_{\no}^\mathsf{T}$
in terms of the associated Herring rotation $\mathcal{R}_{\taTrJ}\in\{\mathrm{Id},
R'_{\taTrJ},R''_{\taTrJ}\}$.

Using that the vector field~$\mathcal{R}\no=R_{\no}\mathcal{R}_{\taTrJ}\no$ 
lies in the $(\no,R_{\no}\ta)$-plane and has constant coefficients in this frame, we compute
along the interface~$\barI$ relying also on~\eqref{eq:AnsatzAuxiliaryXiHalfSpace}
\begin{align}
\nonumber
&\partial_t\mathcal{R}\widetilde\xi
+ (\widetilde B\cdot\nabla)\mathcal{R}\widetilde\xi
+ (\nabla\widetilde B)^\mathsf{T}\mathcal{R}\widetilde\xi 
\\&
\label{eq:aux}
=(R_{\no}\mathcal{R}_{\taTrJ}\no\cdot\no)
\big(\partial_t\no + (\widetilde B\cdot \nabla)\no + (\nabla\widetilde B)^\mathsf{T}\no\big)
\\&~~~
\nonumber
+(R_{\no}\mathcal{R}_{\taTrJ}\no\cdot R_{\no}\ta)
\big(\partial_t\ta_* + (\widetilde B\cdot \nabla)\ta_* + (\nabla\widetilde B)^\mathsf{T}\ta_*\big)
\\&~~~
\nonumber
+ \alpha_{\trLine} (\partial_ts + (\widetilde B\cdot\nabla)s)\,\ta_*.
\end{align}
The last right hand side term of~\eqref{eq:aux} 
vanishes due to $(\widetilde B\cdot\nabla)s = H$
and~\eqref{eq:evolutionSignedDistance}.
Differentiating this equation in space yields because of~$\nabla s = \no$  
\begin{align*}
0 &= \nabla\big(\partial_ts + (\widetilde B\cdot\nabla)s\big)
= \partial_t\no + (\widetilde B\cdot \nabla)\no + (\nabla\widetilde B)^\mathsf{T}\no.
\end{align*}
Hence, also the first right hand side term of~\eqref{eq:aux} vanishes.
Since~$\mathcal{R}_{\taTrJ}=\mathrm{Id}$ if and only if~$\mathcal{R}=\mathrm{Id}$,
the estimate~\eqref{eq:evolEquXiHalfspace} already follows from these
arguments in the case~$\mathcal{R}=\mathrm{Id}$. Hence, let us restrict
to the case~$\mathcal{R}\neq \mathrm{Id}$ in the following. Recall from
the claim~\eqref{eq:evolEquXiHalfspace} that it then suffices to
estimate in terms of the distance to the triple line.

It follows from~$|\ta_*|=1$ that $\ta_*\cdot\big(\partial_t\ta_* + 
(\widetilde B\cdot\nabla)\ta_*\big)=0$. Furthermore, the ansatz for the
velocity field $\widetilde B$ is arranged such that
$\ta_*\otimes\ta_*:\nabla\widetilde B=0$; cf.\ the identity~\eqref{eq:derivAuxiliaryVelocityTripleLine}.
Hence, in the evolution equation for the tangent vector field $\ta_*$
we may neglect the $\ta_*$-component. The $\no$-component also 
vanishes as a consequence of the orthogonality $\ta_\ast \cdot \no =0$, the skew-symmetry 
$\ta_*\otimes\no:\nabla\widetilde B = -\no\otimes\ta_*:\nabla\widetilde B$,
cf.\ again~\eqref{eq:derivAuxiliaryVelocityTripleLine},
and the already established evolution equation for the unit normal vector field~$\no$
\begin{align*}
&\no\cdot \big(\partial_t\ta_* + (\widetilde B\cdot \nabla)\ta_* + (\nabla\widetilde B)^\mathsf{T}\ta_*\big)
= -\ta_*\cdot \big(\partial_t\no + (\widetilde B\cdot \nabla)\no + (\nabla\widetilde B)^\mathsf{T}\no\big)
= 0.
\end{align*}
It therefore suffices to check that the velocity field $\widetilde B$ correctly captures
the translation and rotation of the tangent vector field $\ta_*$ in $\taTrJ_*$-direction
\textit{on} the triple line~$\trLine$, i.e.,
$\taTrJ_*\cdot \big(\partial_t\ta_* + (\widetilde B\cdot \nabla)\ta_* 
+ (\nabla\widetilde B)^\mathsf{T}\ta_*\big) = 0$,
or equivalently by exploiting the orthogonality $\ta_*\cdot\taTrJ_*=0$ that
\begin{align}
\label{eq:auxclaim}
\ta_*\cdot \big(\partial_t\taTrJ_* + (\widetilde B\cdot \nabla)\taTrJ_*\big)
= \taTrJ_*\cdot(\nabla\widetilde B)^\mathsf{T}\ta_*
\end{align}
along the triple line~$\trLine$.

In order to prove \eqref{eq:auxclaim}, we start by noticing that as a consequence 
of the definition~\eqref{eq:AnsatzAuxiliaryVelocityHalfSpace}, as well as the 
formulas~\eqref{eq:gradientNormal2} and \eqref{eq:gradientTau2} we have
\begin{align}
\label{eq:auxRightHandSide1}
\taTrJ_*\cdot(\nabla\widetilde B)^\mathsf{T}\ta_*
= \ta_* \cdot (\taTrJ_* \cdot \nabla ) \widetilde B
=-H\kappa_{\ta_*\taTrJ_*} + (\taTrJ_*\cdot\nabla)\alpha_{\mathrm{vel}} \quad \text{on } \barI.
\end{align}
That this expression equals~$\ta_*\cdot \big(\partial_t\taTrJ_* + (\widetilde B\cdot \nabla)\taTrJ_*\big)$
on the triple line~$\trLine$ is a consequence of the following considerations.
Let $\psi_{\trLine}(\cdot,t)\colon\trLine^0{\times} [0,T] \to \trLine(t)$, $t\in [0,T]$,
be a normal parametrization of the triple line, i.e., it holds
$\partial_t\psi_{\trLine}(x_0,t) = \vec{V}_{\trLine}(\psi_{\trLine}(x_0,t),t)$ 
for all~$(x_0,t)\in\trLine^0{\times}[0,T]$. Choose moreover a $C^5$ diffeomorphic
parametrization $\varphi_0\colon [0,1] \to \trLine^0$ of the initial triple line,
and define for all~$t\in [0,T]$ the dynamic parametrizations
\begin{align*}
\varphi \colon [0,1]{\times}[0,T] \to \trLine(t),
\quad (s,t) \mapsto \psi_{\trLine}(\varphi_0(s),t).
\end{align*}
Observe then that due to the zeroth order compatibility condition~\eqref{eq:compatibilityTripleLineVelocity} 
and the definition of $\widetilde B$ \eqref{eq:AnsatzAuxiliaryVelocityHalfSpace} it holds
for all~$(s,t)\in [0,1]{\times}[0,T]$
\begin{align}
\label{eq:evolDynamicParametrization}
\partial_t \varphi(s,t) = \widetilde B(\varphi(s,t),t)
= (H\no)(\varphi(s,t),t) + (\alpha_{\mathrm{vel}}\ta_*)(\varphi(s,t),t).
\end{align}
Define finally the differential operator~$\partial_v
:=\frac{\partial_s}{|\partial_s\varphi|}$.
Note that~$\partial_v\varphi(\cdot,t)$ is a unit tangent vector field
along the triple line~$\trLine(t)$ for all~$t\in [0,T]$,
and we may choose the orientation such that $\partial_v\varphi(\cdot,t)
= \taTrJ_*(\varphi(\cdot,t),t)$ for all~$t\in [0,T]$.
A straightforward computation now yields
\begin{align*}
\partial_t\partial_v\varphi = \partial_v\partial_t\varphi
-(\partial_v\partial_t\varphi\cdot\partial_v\varphi)\partial_v\varphi.
\end{align*}
In particular, the commutator $[\partial_t\partial_v,\partial_v\partial_t]\varphi$ 
vanishes in $\ta_*$-direction along the triple line. Using the chain rule and 
the first identity in~\eqref{eq:evolDynamicParametrization}, we thus obtain 
for all~$(s,t)\in [0,1]{\times}[0,T]$, by the orthogonality of the 
frame~$(\no,\ta_*,\taTrJ_*)$, the second identity in~\eqref{eq:evolDynamicParametrization},
as well as~\eqref{eq:gradientNormal2} and~\eqref{eq:gradientTau2}
\begin{align*}
&\big(\ta_*\cdot \big(\partial_t\taTrJ_* 
+ (\widetilde B\cdot\nabla)\taTrJ_*\big)\big)(\varphi(s,t),t)
\\
&= \ta_*(\varphi(s,t),t)\cdot\partial_t\partial_v\varphi(s,t)
\\
&= \ta_*(\varphi(s,t),t)\cdot\partial_v\partial_t\varphi(s,t)
\\&
= \big(\ta_* \cdot (\taTrJ_*\cdot \nabla) (H\no + \alpha_{\mathrm{vel}} \ta_*)\big)(\varphi(s,t),t)
\\&
= -(H\kappa_{\ta_*\taTrJ_*})(\varphi(s,t),t)
+ \big((\taTrJ_*\cdot\nabla)\alpha_{\mathrm{vel}}\big)(\varphi(s,t),t).
\end{align*}
Hence, we may obtain~\eqref{eq:auxclaim} by~\eqref{eq:auxRightHandSide1}, which concludes the proof.
\end{proof}

\subsection{Global construction by interpolation}
\label{subsection:step_2}
Throughout this whole subsection, let the assumptions of Proposition~\ref{prop:gradientFlowCalibrationTripleLine}
and the notation of Section~\ref{sec:localCalibrationInterface}, Subsection~\ref{sec:notationTripleLine} 
and Subsection~\ref{subsection:step_1} be in place. The next results provide the last missing
ingredient for the construction of a local gradient-flow calibration at the triple line. 
We refer to Definition \ref{def:locRadius} and Figure \ref{fig:slice} to recall the geometric setup.

\begin{lemma}
\label{lemma:existenceInterpolationFunctions}
%
Let~$i,j,k\in\{1,2,3\}$ such that~$\{i,j,k\}=\{1,2,3\}$.
For each interpolation wedge~$W_{\bar\Omega_i}$
there exists a pair of associated interpolation functions
\begin{align*}
\lambda_{\bar\Omega_i}^{\barI_{i,j}},\lambda_{\bar\Omega_i}^{\barI_{k,i}}\colon\bigcup_{t\in [0,T]} 
\big(\overline{W_{\bar\Omega_i}(t)}\setminus\trLine(t)\big)
{\times}\{t\}\to [0,1]
\end{align*}
of class $(C^0_tC^1_x \cap C^1_tC^0_x)\big(\bigcup_{t\in [0,T]} 
\big(\overline{W_{\bar\Omega_i}(t)}\setminus\trLine(t)\big)
{\times}\{t\}\big)$ such that $\lambda_{\bar\Omega_i}^{\barI_{k,i}}
=1{-}\lambda_{\bar\Omega_i}^{\barI_{i,j}}$,
and where~$\lambda_{\bar\Omega_i}^{\barI_{i,j}}$ is subject to the following additional requirements:
\begin{itemize}[leftmargin=0.7cm]
\item[i)] On the boundary of the interpolation wedge $W_{\bar\Omega_i}$,
					the values of~$\lambda_{\bar\Omega_i}^{\barI_{i,j}}$ and its derivatives are given by
					\begin{align*}
					\lambda_{\bar\Omega_i}^{\barI_{i,j}}(\cdot,t) &= 0, &&\text{ on } 
					\big(\partial W_{\bar\Omega_i}(t)\cap\partial W_{\barI_{k,i}}(t)\big)\setminus\trLine(t),
					\\
					\lambda_{\bar\Omega_i}^{\barI_{i,j}}(\cdot,t) &= 1, &&\text{ on }
					\big(\partial W_{\bar\Omega_i}(t)\cap\partial W_{\barI_{i,j}}(t)\big)\setminus\trLine(t),
					\\
					\nabla\lambda_{\bar\Omega_i}^{\barI_{i,j}}(\cdot,t) &= 0, \quad
					\partial_t\lambda_{\bar\Omega_i}^{\barI_{i,j}}(\cdot,t) = 0, &&\text{ on } 
					\big(B_r(\trLine(t))\cap\partial W_{\bar\Omega_i}(t)\big)\setminus\trLine(t),
					\end{align*}
					for all $t\in [0,T]$.
\item[ii)] There exists a constant $C>0$, which depends only on the data of the
					 smoothly evolving regular double bubble~$(\bar\Omega_1,\bar\Omega_2,\bar\Omega_3)$ on~$[0,T]$, 
					 such that the estimate
					 \begin{align}
					 \label{eq:regEstimateInterpolFunction}
					 |\partial_t\lambda_{\bar\Omega_i}^{\barI_{i,j}}| + |\nabla\lambda_{\bar\Omega_i}^{\barI_{i,j}}|
					 &\leq C\dist^{-1}(\cdot,\trLine)
					 \end{align}
					 holds true on~$\bigcup_{t\in [0,T]}\big(\overline{W_{\bar\Omega_i}(t)}\setminus\trLine(t)\big){\times}\{t\}$.
\item[iii)] Denoting again by~$\vec{V}_{\trLine}$
						the normal velocity vector field of the triple line~$\trLine$,
						we have an improved estimated on the advective derivative
						\begin{align}
						\label{eq:advDerivInterpolFunction}
						\big|\partial_t\lambda_{\bar\Omega_i}^{\barI_{i,j}}(\cdot,t) 
						+ \big(\vec{V}_{\trLine}(P_{\trLine}(\cdot,t),t)\cdot\nabla\big)\lambda_{\bar\Omega_i}^{\barI_{i,j}}(\cdot,t)\big|
						\leq C
						\end{align}
						on~$\overline{W_{\bar\Omega_i}(t)}\setminus\trLine(t)$ for all~$t\in [0,T]$. 
						The constant~$C>0$ depends only on the data of the
					  smoothly evolving regular double bubble~$(\bar\Omega_1,\bar\Omega_2,\bar\Omega_3)$ on~$[0,T]$.
\end{itemize}
\end{lemma}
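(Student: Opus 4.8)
The plan is to construct $\lambda_{\bar\Omega_i}^{\barI_{i,j}}$ as a fixed smooth non‑decreasing function of an angular coordinate $\varphi^{(i)}$ adapted to the cone structure of the interpolation wedge $W_{\bar\Omega_i}$, and to simply set $\lambda_{\bar\Omega_i}^{\barI_{k,i}}:=1-\lambda_{\bar\Omega_i}^{\barI_{i,j}}$. Everything takes place inside $\mathcal{N}_r(\trLine)$, so I may freely use the regularity of $P_{\trLine}$ and of $\dist^2(\cdot,\trLine)$ from Definition~\ref{def:locRadius}~\textit{i)}. First I would introduce polar‑type coordinates around $\trLine$: away from $\trLine$ put $e(x,t):=(x-P_{\trLine}(x,t))/\dist(x,\trLine(t))$, which takes values in $\mathrm{Tan}^\perp_{P_{\trLine}(x,t)}\trLine(t)$. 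From $\dist^2(\cdot,\trLine)=|\cdot-P_{\trLine}|^2$ and a Taylor expansion about $\trLine$ (where $\dist^2$ has its minimum $0$) one records $|\nabla\dist^2(\cdot,\trLine)|+|\partial_t\dist^2(\cdot,\trLine)|\le C\dist(\cdot,\trLine)$, hence $|\nabla\dist(\cdot,\trLine)|+|\partial_t\dist(\cdot,\trLine)|\le C$, and consequently $|\nabla e|+|\partial_t e|\le C\dist^{-1}(\cdot,\trLine)$ on $\mathcal{N}_r(\trLine)\setminus\trLine$. Next let $Y^{k,i}$, $Y^{i,j}$ be the two unit opening vector fields of $W_{\bar\Omega_i}$ along $\trLine$, labelled so that $Y^{k,i}$ (resp.\ $Y^{i,j}$) is the one shared with $W_{\barI_{k,i}}$ (resp.\ $W_{\barI_{i,j}}$); by the last line of Definition~\ref{def:locRadius}~\textit{ii)} these are exactly the two opening vectors of $W_{\bar\Omega_i}$, and the angle between them is the (constant, $(0,\pi)$‑valued) opening angle $\varphi_{\bar\Omega_i}$ of $W_{\bar\Omega_i}$. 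Extending $e_1(x,t):=Y^{k,i}(P_{\trLine}(x,t),t)$ and $e_2(x,t):=\pm\,\taTrJ(P_{\trLine}(x,t),t)\times e_1(x,t)$, with the sign chosen (locally, hence globally by connectedness of $\trLine$) so that $Y^{i,j}\cdot e_2>0$ along $\trLine$, gives an orthonormal frame $(e_1,e_2)$ of $\mathrm{Tan}^\perp\trLine$ with $|\nabla e_m|+|\partial_t e_m|\le C$, and I set
\[
\varphi^{(i)}(x,t):=\operatorname{atan2}\bigl(e(x,t)\cdot e_2(x,t),\,e(x,t)\cdot e_1(x,t)\bigr),\qquad \lambda_{\bar\Omega_i}^{\barI_{i,j}}(x,t):=h\bigl(\varphi^{(i)}(x,t)/\varphi_{\bar\Omega_i}\bigr),
\]
where $h\in C^\infty([0,1];[0,1])$ is a fixed non‑decreasing transition function which is flat near $0$ and near $1$ with $h(0)=0$, $h(1)=1$.

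Since $e$ lies in the cone spanned by $Y^{k,i},Y^{i,j}$ precisely when $\varphi^{(i)}\in(0,\varphi_{\bar\Omega_i})$ — with $\varphi^{(i)}=0$ on $\partial W_{\bar\Omega_i}\cap\partial W_{\barI_{k,i}}$ and $\varphi^{(i)}=\varphi_{\bar\Omega_i}$ on $\partial W_{\bar\Omega_i}\cap\partial W_{\barI_{i,j}}$ — and since $\varphi_{\bar\Omega_i}<\pi$, the argument of $\operatorname{atan2}$ stays away from the negative half‑axis where $\operatorname{atan2}$ fails to be smooth; hence $\varphi^{(i)}$, and therefore $\lambda_{\bar\Omega_i}^{\barI_{i,j}}$, belongs to $(C^0_tC^1_x\cap C^1_tC^0_x)\bigl(\bigcup_{t\in[0,T]}(\overline{W_{\bar\Omega_i}(t)}\setminus\trLine(t))\times\{t\}\bigr)$. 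Property~\textit{i)} is then immediate: on the ray $\partial W_{\bar\Omega_i}\cap\partial W_{\barI_{k,i}}$ one has $\varphi^{(i)}\equiv 0$, so $\lambda_{\bar\Omega_i}^{\barI_{i,j}}=h(0)=0$ and, as $h$ is flat near $0$, also $\nabla\lambda_{\bar\Omega_i}^{\barI_{i,j}}=\partial_t\lambda_{\bar\Omega_i}^{\barI_{i,j}}=0$ there; symmetrically $\lambda_{\bar\Omega_i}^{\barI_{i,j}}\equiv 1$ with vanishing derivatives on $\partial W_{\bar\Omega_i}\cap\partial W_{\barI_{i,j}}$, and these two rays exhaust $(B_r(\trLine(t))\cap\partial W_{\bar\Omega_i}(t))\setminus\trLine(t)$. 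Property~\textit{ii)} follows from the chain rule $|\nabla\lambda_{\bar\Omega_i}^{\barI_{i,j}}|+|\partial_t\lambda_{\bar\Omega_i}^{\barI_{i,j}}|\le\|h'\|_\infty\,\varphi_{\bar\Omega_i}^{-1}\bigl(|\nabla\varphi^{(i)}|+|\partial_t\varphi^{(i)}|\bigr)$ together with $|\nabla\varphi^{(i)}|+|\partial_t\varphi^{(i)}|\le C\dist^{-1}(\cdot,\trLine)$, which comes from differentiating the defining formula and inserting the bounds on $e$ and $e_1,e_2$ (only $e$ contributes a singular factor).

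The main work — and the step I expect to be the real obstacle — is the improved advective estimate~\textit{iii)}. Write $\widetilde V(x,t):=\vec V_{\trLine}(P_{\trLine}(x,t),t)$ for the advecting field and $D_t:=\partial_t+(\widetilde V\cdot\nabla)$. By the chain rule $D_t\lambda_{\bar\Omega_i}^{\barI_{i,j}}=\varphi_{\bar\Omega_i}^{-1}h'(\varphi^{(i)}/\varphi_{\bar\Omega_i})\,D_t\varphi^{(i)}$, so it suffices to show $|D_t\varphi^{(i)}|\le C$, i.e.\ that the $\dist^{-1}(\cdot,\trLine)$ blow‑up in $\partial_t\varphi^{(i)}$ and $\nabla\varphi^{(i)}$ cancels along the flow of $\widetilde V$. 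The key identity is $\widetilde V=D_tP_{\trLine}$ \emph{along} $\trLine$: for $x_0\in\trLine(t_0)$ one has $\widetilde V(x_0,t_0)=\vec V_{\trLine}(x_0,t_0)$; the map $dP_{\trLine}(x_0,t_0)$ is the orthogonal projection onto $\mathrm{Tan}_{x_0}\trLine(t_0)$, so it annihilates the purely normal vector $\vec V_{\trLine}(x_0,t_0)$, giving $(\widetilde V\cdot\nabla)P_{\trLine}(x_0,t_0)=0$; and differentiating the defining perpendicularity $x_0-P_{\trLine}(x_0,t)\perp\mathrm{Tan}_{P_{\trLine}(x_0,t)}\trLine(t)$ at $t_0$ shows $\partial_tP_{\trLine}(x_0,t_0)$ has no component tangential to $\trLine(t_0)$, whence $\partial_tP_{\trLine}(x_0,t_0)=\vec V_{\trLine}(x_0,t_0)$. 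Since in addition $\widetilde V-D_tP_{\trLine}$ is Lipschitz in $x$ uniformly in $t$ (from the regularity of $P_{\trLine}$ and of $\vec V_{\trLine}$ recorded in Definitions~\ref{def:smoothSolution} and~\ref{def:locRadius}) and vanishes on each $\trLine(t)$, we get $|\widetilde V-D_tP_{\trLine}|\le C\dist(\cdot,\trLine)$ on $\mathcal{N}_r(\trLine)$. Therefore $|D_t(x-P_{\trLine}(x,t))|=|\widetilde V-D_tP_{\trLine}|\le C\dist(\cdot,\trLine)$, hence $|D_t\dist^2(\cdot,\trLine)|=2|(x-P_{\trLine})\cdot D_t(x-P_{\trLine})|\le C\dist^2(\cdot,\trLine)$, hence $|D_t\dist(\cdot,\trLine)|\le C\dist(\cdot,\trLine)$; consequently $D_te=\dist^{-1}D_t(x-P_{\trLine})-\dist^{-1}e\,D_t\dist$ is bounded (the two $\dist^{-1}$ factors cancel), and likewise $D_te_m$ is bounded since $e_m$ is a smooth field composed with $P_{\trLine}$ and $D_tP_{\trLine}$ is bounded. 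Applying $D_t$ to the formula for $\varphi^{(i)}$ and using that $\operatorname{atan2}$ has bounded derivatives on the relevant compact arc then yields $|D_t\varphi^{(i)}|\le C$, which is~\textit{iii)}. The constructions for the remaining two interpolation wedges are obtained by the obvious relabelling, completing the proof.
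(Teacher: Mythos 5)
Your proof is correct, and takes the same geometric tack as the paper's for the construction itself: both realize $\lambda^{\barI_{i,j}}_{\bar\Omega_i}$ as a fixed smooth profile of an angular coordinate of $e:=(x-P_{\trLine})/\dist(\cdot,\trLine)$ in the frame spanned by the wedge's opening vector fields — you via the angle $\varphi^{(i)}=\operatorname{atan2}(e\cdot e_2,e\cdot e_1)$, the paper via its cosine $X^+_{\bar\Omega_i}\cdot e$, which differ only by a smooth monotone reparametrization on the compact arc traced out inside $W_{\bar\Omega_i}$ — and items (i), (ii) are then checked along identical lines. Where you diverge is in establishing the advective estimate (iii). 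The paper first derives the identity $\vec V_{\trLine}(P_{\trLine}(x,t),t)=(\mathrm{Id}-\taTrJ\otimes\taTrJ)\big|_{P_{\trLine}(x,t)}\partial_t P_{\trLine}(x,t)$, valid everywhere in $\mathcal{N}_r(\trLine)$, from the standard facts $-\nabla\partial_t g|_{\trLine}=\vec V_{\trLine}$ and $\nabla^2 g|_{\trLine}=\mathrm{Id}-\taTrJ\otimes\taTrJ$ for $g=\tfrac12\dist^2(\cdot,\trLine)$, and then exhibits the cancellation of the $\dist^{-1}$-singular part by an explicit chain-rule computation for $\partial_t\lambda_i^+$, using in addition the rank-one structure $\nabla P_{\trLine}=(\taTrJ\cdot\nabla)P_{\trLine}\otimes\taTrJ$. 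You instead prove the on-$\trLine$ identity $D_t P_{\trLine}|_{\trLine}=\vec V_{\trLine}$ (projection property of $dP_{\trLine}$ plus differentiating the perpendicularity relation), propagate the vanishing of $\widetilde V - D_t P_{\trLine}$ off $\trLine$ by a uniform Lipschitz argument, and package the entire estimate through the advective operator $D_t$, reducing it to the boundedness of $D_t e$ and $D_t e_m$. Your version is more modular and its pieces are easy to check term by term; the paper's is more self-contained in that it invokes only cited facts about $\dist^2$ rather than an ab initio argument about $dP_{\trLine}$, and it yields an identity (not just a bound) for $\widetilde V$ in terms of $\partial_t P_{\trLine}$. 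Both ultimately exploit the same two geometric features of the nearest-point projection along $\trLine$ (compatibility of $\partial_t P_{\trLine}$ with the normal velocity, and the tangential nature of $\nabla P_{\trLine}$), so the two proofs are close cousins rather than genuinely independent.
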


\begin{proof}
Let~$i,j,k\in\{1,2,3\}$ such that~$\{i,j,k\}=\{1,2,3\}$. For the
construction of the interpolation function~$\lambda_{\bar\Omega_i}^{\barI_{i,j}}=:1{-}\lambda_{\bar\Omega_i}^{\barI_{k,i}}$,
we first choose a smooth function $\widetilde\lambda\colon\mathbb{R}\to [0,1]$
such that $\widetilde\lambda\equiv 0$ on $[\frac{2}{3},\infty)$
and $\widetilde\lambda\equiv 1$ on $(-\infty,\frac{1}{3}]$.
Denote next by~$\theta_i\in (0,\pi)$ the constant opening
angle of the interpolation wedge~$W_i$, cf.\ the representation~\eqref{eq:defInterpolWedge}.
We then define $\lambda_i\colon [-1,1]\to [0,1]$
by $\lambda_i(u):=\widetilde\lambda(\frac{1{-}u}{1{-}\cos(\theta_i)})$,
and based on this auxiliary map an interpolation function 
\begin{align*}
\lambda_i^{+}(x,t) := \lambda_i\bigg(X_{\bar\Omega_i}^+\big(P_{\trLine}(x,t),t\big)
\cdot\frac{x{-}P_{\trLine}(x,t)}{|x{-}P_{\trLine}(x,t)|}\bigg),
\,t\in[0,T],\,x\in \overline{W_{\bar\Omega_i}(t)}\setminus\trLine(t).
\end{align*}
The interpolation function~$\lambda_{\bar\Omega_i}^{\barI_{i,j}}$ is then either
defined by~$\lambda_i^{+}$ or by~$1{-}\lambda_i^{+}$,
depending on the right choice of ``orientation'' to satisfy
the first item of~\eqref{lemma:existenceInterpolationFunctions},
which in turn is then an immediate consequence of the definitions.
For the proof of~\eqref{eq:regEstimateInterpolFunction}
and~\eqref{eq:advDerivInterpolFunction}, it anyhow suffices
to work on the level of the interpolation function~$\lambda_i^{+}$.

The qualitative regularity of~$\lambda_i^{+}$ and the
corresponding regularity estimate~\eqref{eq:regEstimateInterpolFunction}
follow directly from the chain rule, the definition of~$\lambda_i^{+}$,
and the regularity requirements of Definition~\ref{def:locRadius}.
For the improved estimate~\eqref{eq:advDerivInterpolFunction} on the
advective derivative, we need an appropriate representation of $\partial_t P_{\trLine}$
in~$\mathcal{N}_{r}(\trLine)$. Abbreviating $g(x,t):=\frac{1}{2}\mathrm{\dist}^2(x,\trLine(t))$
as well as~$g_{\trLine}(x,t):=g(P_{\trLine}(x,t),t)$ for all~$(x,t)\in\mathcal{N}_r(\trLine)$,
we obtain by the chain rule
\begin{align*}
0 &= \frac{\mathrm{d}}{\mathrm{d}t} \Big(\nabla g_{\trLine}(x,t) \Big)
\\&
= (\nabla\partial_t g)(y,t)\big|_{y=P_{\trLine}(x,t)}
+(\nabla^2 g)(y,t)\big|_{y=P_{\trLine}(x,t)}\partial_tP_{\trLine}(x,t),
\quad (x,t)\in\mathcal{N}_{r}(\trLine).
\end{align*}
However, it is a well-known fact that $-\nabla\partial_t g$ evaluated along~$\trLine$ 
precisely represents the normal velocity of~$\trLine$
(cf.\ \cite[Theorem~7~\textit{ii)}, p.\ 18]{Ambrosio2000}). Hence, the previous display
updates to
\begin{align*}
\vec{V}_{\trLine}\big(P_{\trLine}(x,t),t\big) = 
\nabla^2 g(y,t)\big|_{y=P_{\trLine}(x,t)}
\partial_tP_{\trLine}(x,t)
\end{align*}
for all~$(x,t)\in\mathcal{N}_{r}(\trLine)$.
Moreover, $\nabla^2g(\cdot,t)$ evaluated along the triple line~$\trLine(t)$ represents for all~$t\in [0,T]$ 
the projection onto the normal bundle~$\mathrm{Tan}^\perp\trLine(t)$ for all $t\in[0,T]$
(cf.\ \cite[Theorem~2~\textit{ii)}, p.\ 12]{Ambrosio2000}).
In other words,
\begin{align}
\label{eq:timeEvolutionProjContactLine}
\vec{V}_{\trLine}\big(P_{\trLine}(x,t),t\big)
= (\mathrm{Id}{-}\taTrJ\otimes\taTrJ)(y,t)
\big|_{y=P_{\trLine}(x,t)}
\partial_tP_{\trLine}(x,t)
\end{align}
for all $(x,t)\in\mathcal{N}_{r}(\trLine)$.

Abbreviating $u^+_i:=u^+_i(x,t):=X_{\bar\Omega_i}^+\big(P_{\trLine}(x,t),t\big)
\cdot\frac{x{-}P_{\trLine}(x,t)}{|x{-}P_{\trLine}(x,t)|}$
we may now compute by an application of the chain rule
\begin{align*}
&\partial_t\lambda^+_i(x,t) 
\\
&= \lambda'_i(u^+_i)X_{\bar\Omega_i}^+\big(P_{\trLine}(x,t),t\big)
\cdot\partial_t\frac{x{-}P_{\trLine}(x,t)}{|x{-}P_{\trLine}(x,t)|}
\\&~~~
+ \lambda'_i(u^+_i)\frac{x{-}P_{\trLine}(x,t)}{|x{-}P_{\trLine}(x,t)|}
\cdot \big((\taTrJ\cdot\nabla)X_{\bar\Omega_i}^+\big)(y,t)\big|_{y=P_{\trLine}(x,t)}
\big(\taTrJ(y,t)|_{y=P_{\trLine}(x,t)}\cdot\partial_tP_{\trLine}(x,t)\big)
\\&~~~
+ \lambda'_i(u^+_i)\frac{x{-}P_{\trLine}(x,t)}{|x{-}P_{\trLine}(x,t)|}
\cdot\big(\partial_tX_{\bar\Omega_i}^+\big)(y,t)\big|_{y=P_{\trLine}(x,t)}
\end{align*}
for all~$(x,t)\in \bigcup_{t\in [0,T]} \big(\overline{W_{\bar\Omega_i}(t)}\setminus\trLine(t)\big){\times}\{t\}$.
Observe that the last two right hand side terms in the previous display are bounded by the regularity 
of the projection~$P_{\trLine}$ and the regularity of the vector field~$X^+_{\bar\Omega_i}$, 
cf.\ Definition~\ref{def:locRadius}. Next, for all~$(x,t)\in\mathcal{N}_r(\trLine)\setminus \trLine$
\begin{align*}
\partial_t\frac{x{-}P_{\trLine}(x,t)}{|x{-}P_{\trLine}(x,t)|}
= -\frac{1}{|x{-}y|}\Big(\mathrm{Id}{-}\frac{x{-}y}{|x{-}y|}
\otimes\frac{x{-}y}{|x{-}y|}\Big)\bigg|_{y=P_{\trLine}(x,t)}
\partial_t P_{\trLine}(x,t),
\end{align*}
so that together with~\eqref{eq:timeEvolutionProjContactLine},
$X_{\bar\Omega_i}^+(y,t),\vec{V}_{\trLine}(y,t)\in\mathrm{Tan}_y^\perp\trLine(t)$ 
for all $(y,t)\in\trLine$, as well as $\nabla P_{\trLine}(x,t)
=\big(\taTrJ(y,t)|_{y=P_{\trLine}(x,t)}\cdot\nabla\big) P_{\trLine}(x,t)
\otimes \taTrJ(y,t)|_{y=P_{\trLine}(x,t)}$ for all~$(x,t)\in\mathcal{N}_r(\trLine)$
\begin{align*}
&\partial_t\lambda^+_i(x,t)
\\
&= -\lambda'_i(u^+_i)\frac{1}{|x{-}y|}\Big(\mathrm{Id}{-}\frac{x{-}y}{|x{-}y|}
\otimes\frac{x{-}y}{|x{-}y|}\Big)X_{\bar\Omega_i}^+(y,t)
\bigg|_{y=P_{\trLine}(x,t)}
\cdot \partial_tP_{\trLine}(x,t) + O(1)
\\&
= -\lambda'_i(u^+_i)\frac{1}{|x{-}y|}\Big(\mathrm{Id}{-}\frac{x{-}y}{|x{-}y|}
\otimes\frac{x{-}y}{|x{-}y|}\Big)X_{\bar\Omega_i}^+(y,t)
\cdot \vec{V}_{\trLine}(y,t)\bigg|_{y=P_{\trLine}(x,t)} + O(1)
\\&
=-\big(\vec{V}_{\trLine}\big(P_{\trLine}(x,t),t\big)\cdot\nabla\big)\lambda_i^+(x,t) + O(1)
\end{align*}
for all~$(x,t)\in \bigcup_{t\in [0,T]} \big(\overline{W_{\bar\Omega_i}(t)}\setminus\trLine(t)\big){\times}\{t\}$
as asserted.
\end{proof}

We may now provide the desired extensions~$(\xi_{i,j})_{i,j\in\{1,2,3\},i\neq j}$ 
for the unit normal vector fields as well as the desired extension~$B$ of the velocity vector field
within a space-time tubular neighborhood~$\mathcal{N}_{\hat r}(\trLine)$ of the evolving triple line~$\trLine$,
where the radius~$\hat r>0$ has to be chosen suitably and is potentially
smaller than the admissible localization radius~$r$.  

\begin{construction}[Gradient-flow calibration at triple line]
\label{def:gradientFlowCalibrationTripleLine}
Let~$(\widetilde\xi,\widetilde\xi',\widetilde\xi'')$
be the preliminary extensions from Construction~\ref{AnsatzAuxiliaryXiHalfSpace} 
of the normal vector fields $(\no|_{\barI}, \no'|_{\barI'}, \no''|_{\barI''})$. 
Let~$(\widetilde R'_{\barI},\widetilde R''_{\barI})$, $(\widetilde R_{\barI'},\widetilde R''_{\barI'})$
and~$(\widetilde R_{\barI''}, \widetilde R'_{\barI''})$ be the gauged Herring rotations
as provided by Construction~\ref{RotationsTripleLine}, and
let~$(\widetilde B,\widetilde B',\widetilde B'')$
be the preliminary extensions of the normal velocity vector fields 
from Construction~\ref{AnsatzAuxiliaryVelocityHalfSpace}. We also introduce
the abbreviations~$\bar\Omega:=\bar\Omega_1$, $\bar\Omega':=\bar\Omega_2$ and~$\bar\Omega'':=\bar\Omega_3$.

With these ingredients in place, we first define a scale~$\hat r:=r\wedge (2C)^{-\frac14}$, 
where $C>0$ denotes the (maximum of the) constant(s) from the estimate(s)~\eqref{eq:lengthControlXiHalfspace}.
This choice of~$\hat r\in (0,r]$ then entails due to~\eqref{eq:lengthControlXiHalfspace} that
\begin{align}
\label{eq:lengthNonDegenerate}
|\widetilde\xi|^2  &\in \big[{\textstyle\frac{1}{2},\frac{3}{2}}\big] 
&&\text{in } \mathcal{N}_{\hat r}(\trLine)\cap\mathrm{im}(\Psi),
\\
\label{eq:lengthNonDegenerate2}
|\widetilde\xi'|^2  &\in \big[{\textstyle\frac{1}{2},\frac{3}{2}}\big] 
&&\text{in } \mathcal{N}_{\hat r}(\trLine)\cap\mathrm{im}(\Psi'),
\\
\label{eq:lengthNonDegenerate3}
|\widetilde\xi''|^2  &\in \big[{\textstyle\frac{1}{2},\frac{3}{2}}\big]
&&\text{in } \mathcal{N}_{\hat r}(\trLine)\cap\mathrm{im}(\Psi'').
\end{align}
Based on these non-degeneracy conditions and the 
properties~\eqref{eq:decompTripleLine}--\eqref{eq:interpolWedgeHalfspace}
from the wedge decomposition of~$\mathcal{N}_r(\trLine)$,
we construct a well-defined set of vector fields
\begin{align}
\label{eq:calibrationTripleLineXi}
\xi,\xi',\xi''\colon&\mathcal{N}_{\hat r}(\trLine) \to \overline{B_1(0)},
\\
B\colon&\mathcal{N}_{\hat r}(\trLine) \to \Rd[3]
\end{align}
by the following procedure: 
On the closure of the interface wedges we define
\begin{align}
\label{eq:defXiInterfaceWedgeFinal}
(\xi,\xi',\xi'') &:= \big|\widetilde\xi\,\big|^{-1}\big(\widetilde\xi,\,
\widetilde R'_{\barI}\,\widetilde\xi,\,\widetilde R''_{\barI}\,\widetilde\xi\,\big)
&& \text{on } \overline{W_{\barI}},
\\
(\xi,\xi',\xi'') &:= \big|\widetilde\xi'\big|^{-1}\big(\widetilde R_{\barI'}\,\widetilde\xi',\,
\widetilde\xi',\,\widetilde R''_{\barI'}\,\widetilde\xi'\big)
&& \text{on } \overline{W_{\barI'}},
\\
\label{eq:defXiInterfaceWedgeFinal3}
(\xi,\xi',\xi'') &:= \big|\widetilde\xi''\big|^{-1}\big(\widetilde R_{\barI''}\,\widetilde\xi'',\,
\widetilde R'_{\barI''}\,\widetilde\xi'',\,\widetilde\xi''\big)
&& \text{on } \overline{W_{\barI''}},
\end{align}
as well as
\begin{align}
\label{eq:defVelInterfaceWedgeFinal}
B &:= \widetilde B \text{ on }  \overline{W_{\barI}},
\quad B := \widetilde B' \text{ on }  \overline{W_{\barI'}},
\quad B := \widetilde B'' \text{ on }  \overline{W_{\barI''}}.
\end{align}
On the interpolation wedges, say~$W_{\bar\Omega}$, we define
\begin{align}
\label{eq:defXiInterpolWedgeFinal}
\xi &:= \lambda^{\barI}_{\bar\Omega} \big|\widetilde\xi\,\big|^{-1} \widetilde\xi
								+ \lambda^{\barI''}_{\bar\Omega} \big|\widetilde\xi''\big|^{-1} 
								  \widetilde R_{\barI''}\,\widetilde\xi'',
\\
\xi' &:= \lambda^{\barI}_{\bar\Omega} \big|\widetilde\xi\,\big|^{-1} \widetilde R'_{\barI}\,\widetilde\xi
									+ \lambda^{\barI''}_{\bar\Omega} \big|\widetilde\xi''\big|^{-1} \widetilde R'_{\barI''}\,\widetilde\xi'', 
\\
\label{eq:defXiInterpolWedgeFinal3}
\xi'' &:= \lambda^{\barI}_{\bar\Omega} \big|\widetilde\xi\,\big|^{-1} \widetilde R''_{\barI}\,\widetilde\xi
									 + \lambda^{\barI''}_{\bar\Omega} \big|\widetilde\xi''\big|^{-1} \widetilde\xi'',
\\
\label{eq:defVelInterpolWedgeFinal}
B &:= \lambda^{\barI}_{\bar\Omega}\widetilde B + \lambda^{\barI''}_{\bar\Omega}\widetilde B''.
\end{align}
On the remaining two interpolation wedges~$W_{\bar\Omega'}$ and~$W_{\bar\Omega''}$, 
one proceeds analogously for the definition of these vector fields.
\hfill$\diamondsuit$
\end{construction}

\subsection{Proof of Proposition~\ref{prop:gradientFlowCalibrationTripleLine}}
\label{sec:proofGradientFlowCalibrationTripleLine}
Let $(\xi,\xi',\xi'',B)$ be the vector fields 
from Construction~\ref{def:gradientFlowCalibrationTripleLine}.
We aim to show that this tuple of vector fields gives rise to a local gradient
flow calibration at the triple line~$\trLine$ in the sense of 
Proposition~\ref{prop:gradientFlowCalibrationTripleLine} after defining
\begin{align}
\label{eq:calibrationTripleLineXiIndices}
\xi_{1,2} := \xi,\quad
\xi_{2,3} := \xi',\quad
\xi_{3,1} := \xi''
\quad\text{in } \mathcal{N}_{\hat r}(\trLine),
\end{align}
as well as~$\xi_{j,i}:=-\xi_{i,j}$ for the remaining set of distinct phases~$i,j\in\{1,2,3\}$.
The proof is now split into several steps.

In \textit{Step~1}
of the proof, we will derive the following useful
compatibility estimates valid throughout interpolation
wedges and which are needed in all subsequent steps: 
\begin{align}
\label{eq:compEstimateInterpolWedgeXi}
\bigg|\frac{\widetilde\xi}{|\widetilde\xi|}{-}
\frac{\widetilde R_{\barI''}\widetilde\xi''}{|\widetilde\xi''|}\bigg|
+
\bigg|\frac{\widetilde R'_{\barI}\,\widetilde\xi}{|\widetilde\xi|}{-}
\frac{\widetilde R'_{\barI''}\widetilde\xi''}{|\widetilde\xi''|}\bigg|
+
\bigg|\frac{\widetilde R''_{\barI}\,\widetilde\xi}{|\widetilde\xi|}{-}
\frac{\widetilde\xi''}{|\widetilde\xi''|}\bigg| 
\leq C\dist^2(\cdot,\trLine)
\end{align}
in $W_{\bar\Omega}\cap\mathcal{N}_{\hat r}(\trLine)$,
with analogous estimates being satisfied in the other two interpolation wedges.
Moreover, the constant~$C>0$ only depends on the data of the smoothly
evolving regular double bubble $(\bar\Omega_1,\bar\Omega_2,\bar\Omega_3)$ on~$[0,T]$.

In \textit{Step~2}, we will verify that $(\xi,\xi',\xi'',B)$
are continuous vector fields throughout~$\mathcal{N}_{\hat r}(\trLine)$, that
the extensions of the unit normals $(\xi,\xi',\xi'')$
are of class $(C^0_tC^1_x \cap C^1_tC^0_x)(\mathcal{N}_{\hat r}(\trLine)\setminus\trLine)$ whereas
the extended velocity~$B$ is of class $C^0_tC^1_x(\mathcal{N}_{\hat r}(\trLine)\setminus\trLine)$,
and that there exists a constant~$C>0$ depending only on the data of the smoothly
evolving regular double bubble $(\bar\Omega_1,\bar\Omega_2,\bar\Omega_3)$ on~$[0,T]$
such that the estimate
\begin{align}
\label{eq:regEstimateAlmostFinalCalibration}
|(\partial_t,\nabla)(\xi,\xi',\xi'')|
+ |B| + |\nabla B| \leq C
\end{align}
holds true throughout~$\mathcal{N}_{\hat r}(\trLine)\setminus\trLine$. Moreover, we will
show that
\begin{align}
\label{eq:extensionPropertyAux1}
\xi &= \no|_{\barI} &&\text{along } \barI\cap\mathcal{N}_{\hat r}(\trLine),
\\
\label{eq:extensionPropertyAux4}
B &= \vec{V}_{\trLine} &&\text{along } \trLine,
\\
\label{eq:extensionPropertyAux5}
\sigma\xi + \sigma'\xi' + \sigma''\xi'' &= 0
&&\text{in } \mathcal{N}_{\hat r}(\trLine),
\end{align}
where property~\eqref{eq:extensionPropertyAux1} is also satisfied 
in terms of~$(\xi',\no'|_{\barI'})$ along~$\barI'\cap\mathcal{N}_{\hat r}(\trLine)$,
or in terms of~$(\xi'',\no''|_{\barI''})$ along~$\barI''\cap\mathcal{N}_{\hat r}(\trLine)$.

\textit{Step~3} of the proof is then devoted to the verification of the 
approximate evolution equation
\begin{align}
\label{eq:evolEquAlmostFinalCalibration}
|\partial_t\xi + (B\cdot\nabla)\xi + (\nabla B)^\mathsf{T}\xi|
\leq C\dist(\cdot,\barI) \quad\text{in } \mathcal{N}_{\hat r}(\trLine)\setminus\trLine,
\end{align}
whereas in \textit{Step~4} we will prove the estimate
\begin{align}
\label{eq:evolMCFAlmostFinalCalibration}
|\nabla\cdot\xi + B\cdot\xi|
\leq C\dist(\cdot,\barI) \quad\text{in } \mathcal{N}_{\hat r}(\trLine)\setminus\trLine.
\end{align}
We finally conclude in \textit{Step~5} by deducing the estimate
\begin{align}
\label{eq:evolLengthXiCalibrationTripleLine}
(\partial_t + B\cdot\nabla)|\xi|^2 &\leq C\dist^2(\cdot,\barI)
\quad\text{in } \mathcal{N}_{\hat r}(\trLine).
\end{align}
We record for completeness that analogous estimates with respect 
to~\eqref{eq:evolEquAlmostFinalCalibration}--\eqref{eq:evolLengthXiCalibrationTripleLine}
are satisfied for~$(\xi',B)$, resp.\ $(\xi'',B)$,
in terms of~$\dist(\cdot,\barI')$, resp.\ $\dist(\cdot,\barI'')$,
and that the constant~$C>0$ again only depends on the data of the smoothly
evolving regular double bubble $(\bar\Omega_1,\bar\Omega_2,\bar\Omega_3)$ on~$[0,T]$.

\textit{Step 1: Proof of~\eqref{eq:compEstimateInterpolWedgeXi}.}
Adding zero, making use of the reverse triangle inequality
and recalling the non-degeneracy condition~\eqref{eq:lengthNonDegenerate}--\eqref{eq:lengthNonDegenerate3}, 
we may estimate
\begin{align*}
\bigg|\frac{\widetilde\xi}{|\widetilde\xi|}-
\frac{\widetilde R_{\barI''}\widetilde\xi''}{|\widetilde\xi''|}\bigg|
&\leq \frac{1}{|\widetilde\xi|}\big|\widetilde\xi-\widetilde R_{\barI''}\widetilde\xi''\big|
+ \bigg|\frac{1}{|\widetilde\xi|}-\frac{1}{|\widetilde R_{\barI''}\widetilde\xi''|}\bigg|
\big|\widetilde R_{\barI''}\widetilde\xi''\big|
\\&
\leq \frac{1}{|\widetilde\xi|}\big|\widetilde\xi-\widetilde R_{\barI''}\widetilde\xi''\big|
+ \frac{1}{|\widetilde\xi|}\Big|\big|\widetilde\xi\big|-\big|\widetilde R_{\barI''}\widetilde\xi''\big|\Big|
\leq 2\sqrt{2}\big|\widetilde\xi-\widetilde R_{\barI''}\widetilde\xi''\big|.
\end{align*}
Due to the compatibility conditions~\eqref{eq:compatibilityTripleLineXi}
and~\eqref{eq:compatibilityTripleLineXi2} as well as the regularity
estimates~\eqref{eq:regEstimateAuxiliarxExtensionsNormal}, the
previous estimate then easily upgrades to
\begin{align*}
\bigg|\frac{\widetilde\xi}{|\widetilde\xi|}{-}
\frac{\widetilde R_{\barI''}\widetilde\xi''}{|\widetilde\xi''|}\bigg|
\leq C\dist^2(\cdot,\trLine)
\quad\text{in } W_{\bar\Omega}\cap\mathcal{N}_{\hat r}(\trLine)
\end{align*}
by inserting a second-order Taylor expansion with base point located at the unique
nearest point on the triple line~$\trLine$. The other two terms
on the left hand side of~\eqref{eq:compEstimateInterpolWedgeXi}
are treated analogously.

\textit{Step 2: Proof of 
\emph{\eqref{eq:regEstimateAlmostFinalCalibration}--\eqref{eq:extensionPropertyAux5}}.}
In terms of the asserted qualitative regularity, we observe that
the first item of Lemma~\ref{lemma:existenceInterpolationFunctions}
together with the definitions from Construction~\ref{def:gradientFlowCalibrationTripleLine}
ensure that the vector fields $(\xi,\xi',\xi'',B)$
and their required derivatives are continuous across the boundaries of the
interpolation wedges (away from the triple line). Continuity of~$B$ throughout
the whole space-time neighborhood~$\mathcal{N}_r(\trLine)$ with 
the asserted representation~\eqref{eq:extensionPropertyAux4}
along the triple line~$\trLine$ follows from the
compatibility condition~\eqref{eq:compatibilityTripleLineVelocity}.
The unit normal extensions $(\xi,\xi',\xi'')$
are continuous throughout~$\mathcal{N}_r(\trLine)$ due to
the compatibility estimates~\eqref{eq:compEstimateInterpolWedgeXi}.
The representation~\eqref{eq:extensionPropertyAux1} along the
associated interface in turn is a consequence of
the expansion ansatz~\eqref{eq:AnsatzAuxiliaryXiHalfSpace} and the inclusion~\eqref{eq:interfaceWedge}.

Next, on interface wedges the regularity estimate~\eqref{eq:regEstimateAlmostFinalCalibration}
follows directly from the estimates~\eqref{eq:regEstimateAuxiliarxExtensionsNormal} 
and~\eqref{eq:regEstimateAuxiliaryExtensionVelTripleLine}. For the derivation
of~\eqref{eq:regEstimateAlmostFinalCalibration} throughout an interpolation
wedge, say~$W_{\bar\Omega}\cap\mathcal{N}_{\hat r}(\trLine)$, we simply compute by plugging in the definitions
from Construction~\ref{def:gradientFlowCalibrationTripleLine}
and recalling from Lemma~\ref{lemma:existenceInterpolationFunctions}
that $\lambda_{\bar\Omega}^{\barI''}=1{-}\lambda_{\bar\Omega}^{\barI}$
\begin{align*}
(\partial_t,\nabla)\xi &= 
\lambda_{\bar\Omega}^{\barI} (\partial_t,\nabla)\frac{\widetilde\xi}{|\widetilde\xi|}
+ \lambda_{\bar\Omega}^{\barI''} (\partial_t,\nabla)
\frac{\widetilde R_{\barI''}\widetilde\xi''}{|\widetilde\xi''|}
+ \bigg(\frac{\widetilde\xi}{|\widetilde\xi|}{-}
\frac{\widetilde R_{\barI''}\widetilde\xi''}{|\widetilde\xi''|}\bigg)\otimes
(\partial_t,\nabla)\lambda_{\bar\Omega}^{\barI},
\\
\nabla B &= \lambda_{\bar\Omega}^{\barI}\nabla \widetilde B
+ \lambda_{\bar\Omega}^{\barI''}\nabla\widetilde B''
+ (\widetilde B{-}\widetilde B'')\otimes\nabla\lambda_{\bar\Omega}^{\barI}.
\end{align*}
We thus infer~\eqref{eq:regEstimateAlmostFinalCalibration}
from the chain rule in form of $\nabla\frac{1}{|f|}=-\frac{(\nabla f)^\mathsf{T}f}{|f|^3}$, 
the regularity estimates~\eqref{eq:regEstimateAuxiliarxExtensionsNormal},
\eqref{eq:regEstimateAuxiliaryExtensionVelTripleLine} and~\eqref{eq:regEstimateInterpolFunction},
and the compatibility conditions~\eqref{eq:compEstimateInterpolWedgeXi}
and~\eqref{eq:compatibilityTripleLineVelocity}.

We turn to the proof of~\eqref{eq:extensionPropertyAux5}. Recalling
the expansion ansatz~\eqref{eq:AnsatzAuxiliaryXiHalfSpace} and
the definitions~\eqref{eq:defGaugedHerringRotation} resp.\ \eqref{eq:defGaugedHerringRotation2}
of the gauged Herring rotations, we deduce from~\eqref{eq:HerringConditionByRotation} 
\begin{align}
\label{eq:extensionPropertyAux6}
\sigma \widetilde\xi + \sigma'\widetilde R'_{\barI} \widetilde\xi
+ \sigma''\widetilde R''_{\barI} \widetilde\xi = 0
\quad\text{throughout } \mathcal{N}_r(\trLine)\cap\mathrm{im}(\Psi),
\end{align}
and analogously throughout $\mathcal{N}_r(\trLine)\cap\mathrm{im}(\Psi')$
in terms of $(\widetilde R_{\barI'}\widetilde\xi',\widetilde\xi',\widetilde R''_{\barI'}\widetilde\xi')$,
or throughout $\mathcal{N}_r(\trLine)\cap\mathrm{im}(\Psi'')$
in terms of $(\widetilde R_{\barI''}\widetilde\xi'',\widetilde R'_{\barI''}\widetilde\xi'',\widetilde \xi'')$.
Due to the inclusion~\eqref{eq:interfaceWedge}
and the definitions from Construction~\ref{def:gradientFlowCalibrationTripleLine},
we thus obtain from~\eqref{eq:extensionPropertyAux6}
\begin{align*}
\sigma\xi + \sigma'\xi' + \sigma''\xi''
&= |\widetilde\xi|^{-1} \big(\sigma \widetilde\xi + \sigma'\widetilde R'_{\barI} \widetilde\xi
+ \sigma''\widetilde R''_{\barI} \widetilde\xi\,\big) = 0
\quad\text{in } W_{\barI}\cap\mathcal{N}_{\hat r}(\trLine).
\end{align*}
An analogous argument works in the case of the other two interface wedges.

On interpolation wedges, say~$W_{\bar\Omega}$, the extended Herring angle
condition~\eqref{eq:extensionPropertyAux5} follows from a linear combination
of the previous ingredients. More precisely, the definitions from 
Construction~\ref{def:gradientFlowCalibrationTripleLine} and the
cancellations~\eqref{eq:extensionPropertyAux6} directly imply
\begin{align*}
&\sigma\xi + \sigma'\xi' + \sigma''\xi''
\\
&= \lambda_{\bar\Omega}^{\barI}|\widetilde\xi|^{-1} \big(\sigma \widetilde\xi 
+ \sigma'\widetilde R'_{\barI} \widetilde\xi
+ \sigma''\widetilde R''_{\barI} \widetilde\xi\,\big)
+ \lambda_{\bar\Omega}^{\barI''}|\widetilde\xi''|^{-1} 
  \big(\sigma \widetilde R_{\barI''}\widetilde\xi'' 
+ \sigma'\widetilde R'_{\barI''} \widetilde\xi''
+ \sigma''\widetilde\xi''\,\big)
= 0  
\end{align*}
throughout~$W_{\bar\Omega}\cap\mathcal{N}_{\hat r}(\trLine)$ as desired.
This concludes the proof of~\eqref{eq:extensionPropertyAux5}, and thus \textit{Step~2} of the proof,
as on the other interpolation wedges \eqref{eq:extensionPropertyAux5}~follows
analogously. 

\textit{Step 3: Proof of~\eqref{eq:evolEquAlmostFinalCalibration}.}
We first claim that for each rotation $\mathcal{R}\in\{\mathrm{Id},\widetilde R'_{\barI},\widetilde R''_{\barI}\}$
it holds throughout~$\mathcal{N}_{\hat r}(\trLine)\cap\mathrm{im}(\Psi)$
\begin{align}
\label{eq:evolEquXiHalfspaceNormalized}
\Big|\big(\partial_t {+} (\widetilde B\cdot\nabla) {+} (\nabla\widetilde B)^\mathsf{T}\big)
\frac{\mathcal{R}\widetilde\xi}{|\mathcal{R}\widetilde\xi|}\Big|
&\leq C\begin{cases}
			 \dist(\cdot,\bar I) &\text{if } \mathcal{R}=\mathrm{Id}, 
			 \\
			 \dist(\cdot,\trLine) &\text{else},
			 \end{cases}
\\
\label{eq:divConstraintXiHalfspaceNormalized}
\Big|\widetilde B\cdot \frac{\mathcal{R}\widetilde\xi}{|\mathcal{R}\widetilde\xi|}
+ \nabla\cdot\frac{\mathcal{R}\widetilde\xi}{|\mathcal{R}\widetilde\xi|}\Big|
&\leq C\begin{cases}
			 \dist(\cdot,\bar I) &\text{if } \mathcal{R}=\mathrm{Id}, 
			 \\
			 \dist(\cdot,\trLine) &\text{else},
			 \end{cases}
\end{align}
for some constant~$C>0$ which depends only on the smoothly evolving regular 
double bubble~$(\bar\Omega_1,\bar\Omega_2,\bar\Omega_3)$ on~$[0,T]$.
Moreover, analogous estimates hold true throughout the domain~$\mathcal{N}_{\hat r}(\trLine)\cap\mathrm{im}(\Psi')$
in terms of the vector fields~$(\mathcal{R}\widetilde\xi',\widetilde B')$
for each rotation $\mathcal{R}\in\{\widetilde R_{\barI'},\mathrm{Id},\widetilde R''_{\barI'}\}$,
as well as throughout~$\mathcal{N}_{\hat r}(\trLine)\cap\mathrm{im}(\Psi'')$
in terms of~$(\mathcal{R}\widetilde\xi'',\widetilde B'')$
for each $\mathcal{R}\in\{\widetilde R_{\barI''},\widetilde R'_{\barI''},\mathrm{Id}\}$.

The estimate~\eqref{eq:evolEquXiHalfspaceNormalized} follows from the straightforward computation
\begin{align*}
&\big(\partial_t {+} (\widetilde B\cdot\nabla) {+} (\nabla\widetilde B)^\mathsf{T}\big)
\frac{\mathcal{R}\widetilde\xi}{|\mathcal{R}\widetilde\xi|}
\\&
= \frac{1}{|\mathcal{R}\widetilde\xi|}
\big(\partial_t {+} (\widetilde B\cdot\nabla) {+} (\nabla\widetilde B)^\mathsf{T}\big)\mathcal{R}\widetilde\xi
- \frac{\partial_t |\mathcal{R}\widetilde\xi|^2
{+}(\widetilde B\cdot\nabla)|\mathcal{R}\widetilde\xi|^2}
{2|\mathcal{R}\widetilde\xi|^3}\mathcal{R}\widetilde\xi
\end{align*}
together with the condition~\eqref{eq:lengthNonDegenerate}
and the estimates~\eqref{eq:gradientLengthXi}, \eqref{eq:timeDerivLengthXi}
and~\eqref{eq:evolEquXiHalfspace}. The estimate~\eqref{eq:divConstraintXiHalfspaceNormalized}
in turn can be deduced from the same ingredients as well as
\begin{align*}
&\widetilde B\cdot \frac{\mathcal{R}\widetilde\xi}{|\mathcal{R}\widetilde\xi|}
+ \nabla\cdot\frac{\mathcal{R}\widetilde\xi}{|\mathcal{R}\widetilde\xi|}
= \frac{1}{|\mathcal{R}\widetilde\xi|}
\big(B\cdot\mathcal{R}\widetilde\xi + \nabla\cdot \mathcal{R}\widetilde\xi\big)
- \frac{(\mathcal{R}\widetilde\xi\cdot\nabla)|\mathcal{R}\widetilde\xi|^2}{2|\mathcal{R}\widetilde\xi|^3}.
\end{align*}

On interface wedges, facilitated by the inclusion~\eqref{eq:interfaceWedge}
the claim~\eqref{eq:evolEquAlmostFinalCalibration} 
now follows from an application of the estimate~\eqref{eq:evolEquXiHalfspaceNormalized}
and, if needed, a simple post-processing by means of~\eqref{eq:compDistances2}.
Hence, let us directly move on with the verification of~\eqref{eq:evolEquAlmostFinalCalibration}
throughout interpolation wedges, say~$W_{\bar\Omega}\cap\mathcal{N}_{\hat r}(\trLine)$.
Plugging in the definitions~\eqref{eq:defXiInterpolWedgeFinal}--\eqref{eq:defVelInterpolWedgeFinal} 
from Construction~\ref{def:gradientFlowCalibrationTripleLine}
we may compute based on the product rule, adding zero,
and recalling from Lemma~\ref{lemma:existenceInterpolationFunctions}
that $\lambda_{\bar\Omega}^{\barI''}=1{-}\lambda_{\bar\Omega}^{\barI}$
\begin{align}
\nonumber
\big(\partial_t{+}(B\cdot\nabla){+}(\nabla B)^\mathsf{T}\big)\xi
&= \lambda_{\bar\Omega}^{\barI}\big(\partial_t{+}(\widetilde B\cdot\nabla)
{+}(\nabla \widetilde B)^\mathsf{T}\big)\frac{\widetilde\xi}{|\widetilde\xi|} 
\\&~~~ \label{eq:auxRightHandSide2}
+ \big(1{-}\lambda_{\bar\Omega}^{\barI}\big)\big(\partial_t{+}(\widetilde B''\cdot\nabla)
{+}(\nabla \widetilde B'')^\mathsf{T}\big)
\frac{\widetilde R_{\barI''}\widetilde\xi''}{|\widetilde\xi''|} 
\\&~~~ \nonumber
+ \bigg(\frac{\widetilde\xi}{|\widetilde\xi|} 
{-} \frac{\widetilde R_{\barI''}\widetilde\xi''}{|\widetilde\xi''|} \bigg) 
\big(\partial_t{+}(B\cdot\nabla)\big)\lambda_{\bar\Omega}^{\barI}
\\&~~~ \nonumber
+ \lambda_{\bar\Omega}^{\barI}\big((B{-}\widetilde B)\cdot\nabla\big)
\frac{\widetilde\xi}{|\widetilde\xi|}  
+ \big(1{-}\lambda_{\bar\Omega}^{\barI}\big)\big((B{-}\widetilde B'')\cdot\nabla\big)
\frac{\widetilde R_{\barI''}\widetilde\xi''}{|\widetilde\xi''|} 
\\&~~~ \nonumber
+ \lambda_{\bar\Omega}^{\barI}\big(\nabla{B}{-}\nabla\widetilde B\big)^\mathsf{T}
\frac{\widetilde\xi}{|\widetilde\xi|} 
+ \big(1{-}\lambda_{\bar\Omega}^{\barI}\big)\big(\nabla{B}{-}\nabla\widetilde B''\big)^\mathsf{T}
\frac{\widetilde R_{\barI''}\widetilde\xi''}{|\widetilde\xi''|} .
\end{align}

The first two right hand side terms of the previous display
are at least of the order~$O(\dist(\cdot,\trLine))$ due to 
the estimates~\eqref{eq:evolEquXiHalfspaceNormalized}, which in turn are 
available this time due to the inclusion~\eqref{eq:interpolWedgeHalfspace}.
The third, fourth and fifth right hand side terms are of the same order
thanks to the compatibility conditions~\eqref{eq:compEstimateInterpolWedgeXi}
and~\eqref{eq:compatibilityTripleLineVelocity}, the regularity
estimates~\eqref{eq:regEstimateAuxiliarxExtensionsNormal},
\eqref{eq:regEstimateAuxiliaryExtensionVelTripleLine} 
and~\eqref{eq:regEstimateAlmostFinalCalibration},
the estimate~\eqref{eq:advDerivInterpolFunction}
on the advective derivative of an interpolation function,
as well as the non-degeneracy conditions~\eqref{eq:lengthNonDegenerate}--\eqref{eq:lengthNonDegenerate3}.

Regarding the two right hand side terms from the last line of the previous display,
we may argue as follows. Plugging in the definition of~$B$ from Construction~\ref{def:gradientFlowCalibrationTripleLine},
we compute by the product rule, the identity~$\lambda_{\bar\Omega}^{\barI}+\lambda_{\bar\Omega}^{\barI''}=1$
and by carefully noting that $\widetilde\xi\perp\taTrJ_*$ throughout~$\mathcal{N}_r(\trLine)\cap\mathrm{im}(\Psi)$
due to the expansion ansatz~\eqref{eq:AnsatzAuxiliaryXiHalfSpace}
\begin{align*}
(\nabla B {-} \nabla \widetilde B)^\mathsf{T}\widetilde\xi 
&= (1{-}\lambda_{\bar\Omega}^{\barI}) (\nabla \widetilde B'' {-} \nabla \widetilde B)^\mathsf{T}
(\mathrm{Id}{-}\taTrJ_*\otimes\taTrJ_*)\widetilde\xi
\\&~~~
+ \big((\widetilde B{-}\widetilde B'')
\cdot(\mathrm{Id}{-}\taTrJ_*\otimes\taTrJ_*)\widetilde\xi\,\big)
\nabla\lambda_{\bar\Omega}^{\barI}.
\end{align*}
Abbreviating~$\taTrJ_{\trLine}(x,t):=\taTrJ(P_{\trLine}(x,t),t)$
for all~$(x,t)\in\mathcal{N}_r(\trLine)\cap\mathrm{im}(\Psi)$
and recalling the compatibility conditions~\eqref{eq:defGaugedOrthonormalFrame}
resp.\ \eqref{eq:compatibilityTripleLineVelocity} 
as well as the regularity estimate~\eqref{eq:regEstimateInterpolFunction}
for the interpolation function,
we may switch from~$\taTrJ_*$ to~$\taTrJ_{\trLine}$ in the previous
display at the cost of an admissible error:
\begin{align*}
(\nabla B {-} \nabla \widetilde B)^\mathsf{T}\widetilde\xi 
&= (1{-}\lambda_{\bar\Omega}^{\barI}) (\nabla \widetilde B'' {-} \nabla \widetilde B)^\mathsf{T}
(\mathrm{Id}{-}\taTrJ_{\trLine}\otimes\taTrJ_{\trLine})\widetilde\xi
\\&~~~
+ \big((\widetilde B{-}\widetilde B'')
\cdot(\mathrm{Id}{-}\taTrJ_{\trLine}\otimes\taTrJ_{\trLine})\widetilde\xi\,\big)
\nabla\lambda_{\bar\Omega}^{\barI}
+ O(\dist(\cdot,\trLine)).
\end{align*}
It then follows from the compatibility conditions~\eqref{eq:orientationTangentTripleLine},
\eqref{eq:compatibilityTripleLineVelocity} and~\eqref{eq:compatibilityTripleLineVelocityGradient}, 
and again the regularity estimate~\eqref{eq:regEstimateInterpolFunction}
for the interpolation function that
\begin{align*}
(\nabla B {-} \nabla \widetilde B)^\mathsf{T}\widetilde\xi = O(\dist(\cdot,\trLine)).
\end{align*}
One may argue similarly for the second term
after replacing~$|\widetilde\xi''|^{-1}\widetilde R_{\barI''}\widetilde\xi''$ 
by~$|\widetilde\xi|^{-1}\widetilde\xi$
using the compatibility estimate~\eqref{eq:compEstimateInterpolWedgeXi}.

In summary, the asserted estimate~\eqref{eq:evolEquAlmostFinalCalibration}
in terms of~$\xi$ now follows from the previously derived estimates
for the right hand side terms of~\eqref{eq:auxRightHandSide2} and
a subsequent post-processing of them by means of~\eqref{eq:compDistances}.
We finally remark that the argument proceeds analogously for the other two vector fields~$\xi'$
and~$\xi''$, respectively.

\textit{Step 4: Proof of~\eqref{eq:evolMCFAlmostFinalCalibration}.}
Thanks to the inclusion~\eqref{eq:interfaceWedge},
the estimate~\eqref{eq:divConstraintXiHalfspaceNormalized},
and, if needed, the estimate~\eqref{eq:compDistances2},
it again suffices to provide additional details only for the
argument for~\eqref{eq:evolMCFAlmostFinalCalibration} 
on interpolation wedges, say~$W_{\bar\Omega}\cap\mathcal{N}_{\hat r}(\trLine)$.
Plugging in the definitions~\eqref{eq:defXiInterpolWedgeFinal}--\eqref{eq:defVelInterpolWedgeFinal} 
from Construction~\ref{def:gradientFlowCalibrationTripleLine},
applying the product rule, recalling from Lemma~\ref{lemma:existenceInterpolationFunctions}
that $\lambda_{\bar\Omega}^{\barI''}=1{-}\lambda_{\bar\Omega}^{\barI}$, and adding zero yields
\begin{align*}
B\cdot\xi + \nabla\cdot\xi
&= \lambda_{\bar\Omega}^{\barI}
\bigg(\widetilde B\cdot\frac{\widetilde\xi}{|\widetilde\xi|} 
+ \nabla\cdot\frac{\widetilde\xi}{|\widetilde\xi|} \,\bigg)
+ \big(1{-}\lambda_{\bar\Omega}^{\barI}\big)
\bigg(\widetilde B''\cdot\frac{\widetilde R_{\barI''}\widetilde\xi''}{|\widetilde\xi''|}
+ \nabla\cdot\frac{\widetilde R_{\barI''}\widetilde\xi''}{|\widetilde\xi''|}\bigg)
\\&~~~
+ \lambda_{\bar\Omega}^{\barI}(B{-}\widetilde B)\cdot
\frac{\widetilde\xi}{|\widetilde\xi|} 
+ \big(1{-}\lambda_{\bar\Omega}^{\barI}\big)(B{-}\widetilde B'')\cdot 
\frac{\widetilde R_{\barI''}\widetilde\xi''}{|\widetilde\xi''|}
\\&~~~
+ \bigg(\frac{\widetilde\xi}{|\widetilde\xi|} 
{-} \frac{\widetilde R_{\barI''}\widetilde\xi''}{|\widetilde\xi''|}\bigg)
\cdot\nabla\lambda^{\barI}_{\bar\Omega}.
\end{align*}
The right hand side terms of the previous display are all at least of order~$O(\dist(\cdot,\trLine))$
---and thus of required order due to~\eqref{eq:compDistances}---by an application
of the inclusion~\eqref{eq:interpolWedgeHalfspace}, the estimates~\eqref{eq:divConstraintXiHalfspaceNormalized}, 
the compatibility conditions~\eqref{eq:compatibilityTripleLineVelocity}
and~\eqref{eq:compEstimateInterpolWedgeXi}, as well as the regularity 
estimate~\eqref{eq:regEstimateInterpolFunction} for the interpolation function.

This proves~\eqref{eq:evolMCFAlmostFinalCalibration} in terms of~$\xi$.
The argument proceeds again analogously for the other two vector fields~$\xi'$
and~$\xi''$.

\textit{Step 5: Proof of~\eqref{eq:evolLengthXiCalibrationTripleLine}.}
There is nothing to prove throughout interface wedges
since the unit normal extensions~$(\xi,\xi',\xi'')$
are unit length vectors, cf.\ the definitions from Construction~\ref{def:gradientFlowCalibrationTripleLine}.
On interpolation wedges, say~$W_{\bar\Omega}\cap\mathcal{N}_{\hat r}(\trLine)$, we may compute
by the definition \eqref{eq:defXiInterpolWedgeFinal} from Construction~\ref{def:gradientFlowCalibrationTripleLine}
\begin{align}
\label{eq:repLengthInterpolatedXiTripleLine}
|\xi|^2 = 1 - \lambda_{\bar\Omega}^{\barI}\lambda_{\bar\Omega}^{\barI''}
\Bigg|\frac{\widetilde\xi}{|\widetilde\xi|} 
{-} \frac{\widetilde R_{\barI''}\widetilde\xi''}{|\widetilde\xi''|}\Bigg|^2.
\end{align}
The estimate~\eqref{eq:evolLengthXiCalibrationTripleLine} ist
thus a consequence of~\eqref{eq:compEstimateInterpolWedgeXi},
\eqref{eq:regEstimateInterpolFunction}, \eqref{eq:regEstimateAlmostFinalCalibration}
and~\eqref{eq:compDistances}.
One may argue analogously for the other two vector fields~$\xi'$ and~$\xi''$, respectively.
\qed

\subsection{Compatibility of local gradient-flow calibrations}
\label{subsec:localCompatibility}
A regular double bubble is built out of two
distinct topological features: the three two-phase interfaces
and the triple line. For each of these topological features, we so far constructed
a tuple of vector fields living in a space-time neighborhood of the feature
and locally mimicking the requirements of a gradient-flow calibration. The remaining step
in the construction consists of pasting 
together these local vector fields into 
globally defined ones. This task will be carried out in Section~\ref{sec:globalCalibrations}.
The key issue is to transfer properties from the local to the global level, 
which turns out to be possible because, among other things, the local
constructions for the two distinct topological features 
can be arranged to be sufficiently compatible. We formalize this as follows.

\begin{proposition}
\label{prop:compatibilityEstimatesLocalConstructions}
Let $(\bar\Omega_1,\bar\Omega_2,\bar\Omega_3)$ be a regular double bubble smoothly evolving by MCF in the
sense of Definition~\ref{def:smoothSolution} on a time interval~$[0,T]$. Let~$\hat r\in (0,1]$
be the localization scale of Proposition~\ref{prop:gradientFlowCalibrationTripleLine},
and for each pair of distinct phases~$i,j\in\{1,2,3\}$, denote by~$(\xi^{\barI_{i,j}}_{i,j},B^{\barI_{i,j}})$
the local gradient-flow calibration for the interface~$\barI_{i,j}$ from
Construction~\ref{gradientFlowCalibrationInterface}. 

For all~$i,j\in\{1,2,3\}$ with~$i\neq j$, there exists a choice of the 
tangential component~$\mathcal{Y}_{i,j}$ of $B^{\barI_{i,j}}$
and a local gradient-flow calibration~$((\xi_{i,j}^{\trLine})_{i,j\in\{1,2,3\},i\neq j},B^{\trLine})$
at the triple line in the sense of Proposition~\ref{prop:gradientFlowCalibrationTripleLine}
such that in addition the following compatibility estimates hold true
\begin{align}
\label{eq:compEstimate1}
\big|\xi^{\barI_{i,j}}_{i,j} - \xi^{\trLine}_{i,j}\big| 
+ \big|(\nabla \xi_{i,j}^{\barI_{i,j}})^\mathsf{T}\xi^{\trLine}_{i,j}\big|
+ \big|(\nabla\xi^{\trLine}_{i,j})^\mathsf{T}\xi_{i,j}^{\barI_{i,j}}\big|
&\leq C\dist(\cdot,\barI_{i,j}),
\\
\label{eq:compEstimate2}
\big|(\xi^{\barI_{i,j}}_{i,j} {-} \xi^{\trLine}_{i,j})\cdot\xi^{\barI_{i,j}}_{i,j}\big|
&\leq C\dist^2(\cdot,\barI_{i,j}),
\\
\label{eq:compEstimate3}
\big|B^{\barI_{i,j}} - B^{\trLine}\big|
&\leq C\dist(\cdot,\barI_{i,j}),
\\
\label{eq:compEstimate4}
\big|(\nabla B^{\barI_{i,j}} {-} \nabla B^{\trLine})^\mathsf{T}\xi_{i,j}^{\barI_{i,j}}\big|
&\leq C\dist(\cdot,\barI_{i,j})
\end{align}
throughout~$\mathcal{N}_{\frac{\hat r}{2}}(\trLine)\cap (W_{\barI_{i,j}}\cup W_{\bar\Omega_i}
\cup W_{\bar\Omega_j})$, where $C>0$ is a constant which depends only on the smoothly evolving regular 
double bubble~$(\bar\Omega_1,\bar\Omega_2,\bar\Omega_3)$ on~$[0,T]$.
\end{proposition}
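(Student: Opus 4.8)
The strategy is to exploit the freedom left open in Construction~\ref{gradientFlowCalibrationInterface} (the choice of the tangential field $\mathcal{Y}_{i,j}$) together with the already-established first-order compatibility of the triple-line constructions along $\trLine$ (Lemmas~\ref{lem:firstOrderCompXi} and~\ref{lem:firstOrderCompVelocity}). Recall that the interface calibration has $\xi_{i,j}^{\barI_{i,j}} = \no_{i,j}$ and $B^{\barI_{i,j}} = H_{i,j}\no_{i,j} + (\mathrm{Id}{-}\no_{i,j}\otimes\no_{i,j})\mathcal{Y}_{i,j}$, while on the relevant wedges the triple-line construction is built from the gauged expansion ansätze $\widetilde\xi_{i,j}$ and $\widetilde B_{i,j}$ (possibly normalized and interpolated). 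The key structural observation is that both constructions agree \emph{on the interface $\barI_{i,j}$ itself} up to the choice of $\mathcal{Y}_{i,j}$: indeed $\widetilde\xi_{i,j} = \no_{i,j}$ on $\barI_{i,j}$ by~\eqref{eq:AnsatzAuxiliaryXiHalfSpace}, and $\widetilde B_{i,j} = H_{i,j}\no_{i,j} + \alpha_{\mathrm{vel}}\ta_*$ on $\barI_{i,j}$ by~\eqref{eq:AnsatzAuxiliaryVelocityHalfSpace}. So the plan is: \emph{choose} $\mathcal{Y}_{i,j}$ so that its tangential part matches the tangential part of $\widetilde B_{i,j}$ on $\barI_{i,j}\cap\mathcal{N}_{\hat r}(\trLine)$, extended off the triple-line neighborhood by a cutoff; then the two velocity fields literally coincide on $\barI_{i,j}$ inside the relevant region, which drives~\eqref{eq:compEstimate3} and~\eqref{eq:compEstimate4} via first-order Taylor expansion in the signed distance $s_{i,j}$.

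The steps, in order: \textbf{(1)} Fix the tangential component $\mathcal{Y}_{i,j}$: on $\mathcal{N}_{\frac34\hat r}(\trLine)\cap\mathrm{im}(\Psi_{i,j})$ set $(\mathrm{Id}{-}\no_{i,j}\otimes\no_{i,j})\mathcal{Y}_{i,j} := \alpha_{\mathrm{vel}}(P_{i,j}(\cdot),\cdot)\,\ta_*(\cdot)$, interpolating to an arbitrary smooth tangential field away from $\trLine$ by a cutoff depending only on $\dist(\cdot,\trLine)$; invoke the regularity estimates~\eqref{eq:regEstimateTangentialVelGeneral}, \eqref{eq:regEstimateRotations} and~\eqref{eq:regTangentFields} to see $\mathcal{Y}_{i,j}\in C^0_tC^1_x$, so Construction~\ref{gradientFlowCalibrationInterface} and Lemma~\ref{lemma:gradientFlowCalibrationInterface} apply. \textbf{(2)} Prove~\eqref{eq:compEstimate1}–\eqref{eq:compEstimate2} on the interface wedge $W_{\barI_{i,j}}$: here $\xi^{\trLine}_{i,j}$ equals $|\widetilde\xi_{i,j}|^{-1}\widetilde\xi_{i,j}$ (up to relabeling via a gauged Herring rotation, but on its \emph{own} wedge it is just the normalized expansion ansatz), and since $\widetilde\xi_{i,j}|_{\barI_{i,j}} = \no_{i,j} = \xi^{\barI_{i,j}}_{i,j}|_{\barI_{i,j}}$, a first-order (resp.\ second-order) Taylor expansion in $s_{i,j}$ together with the uniform $C^2_x$ bounds~\eqref{eq:regEstimateAuxiliarxExtensionsNormal} and~\eqref{eq:regXiInterface} gives the $O(\dist(\cdot,\barI_{i,j}))$ and $O(\dist^2(\cdot,\barI_{i,j}))$ bounds; the quadratic improvement in~\eqref{eq:compEstimate2} uses that $\widetilde\xi_{i,j}\cdot\no_{i,j} = 1 - \tfrac12\alpha_{\trLine}^2 s^2 = 1 + O(s^2)$, so the $O(s)$-terms in the difference are purely tangential to $\no_{i,j}$ and hence orthogonal to $\xi^{\barI_{i,j}}_{i,j}=\no_{i,j}$ at leading order. \textbf{(3)} Prove~\eqref{eq:compEstimate3}–\eqref{eq:compEstimate4} on $W_{\barI_{i,j}}$: with the choice in~(1), $B^{\barI_{i,j}} = \widetilde B_{i,j}$ holds \emph{on} $\barI_{i,j}$, so again Taylor-expand in $s_{i,j}$, using~\eqref{eq:regEstimateAuxiliaryExtensionVelTripleLine} and~\eqref{eq:regVelocityInterface}; for~\eqref{eq:compEstimate4}, contract against $\xi^{\barI_{i,j}}_{i,j}=\no_{i,j}$ and note that the non-matching $\taTrJ_*\otimes\ta_*$, $\taTrJ_*\otimes\no$ components of $\nabla\widetilde B_{i,j}$ identified in~\eqref{eq:derivAuxiliaryVelocityTripleLine} are annihilated because $\no_{i,j}\perp\taTrJ_*$ on $\barI_{i,j}$, exactly as the remark preceding Lemma~\ref{lem:firstOrderCompVelocity} anticipates. \textbf{(4)} Transfer to the interpolation wedges $W_{\bar\Omega_i}\cup W_{\bar\Omega_j}$: here $\xi^{\trLine}_{i,j}$ and $B^{\trLine}$ are convex combinations (via $\lambda^{\barI}_{\bar\Omega}$, $1{-}\lambda^{\barI}_{\bar\Omega}$) of the building blocks from two adjacent interface wedges; using the distance comparison~\eqref{eq:compDistances} (which controls $\dist(\cdot,\trLine)+\dist(\cdot,\barI_{i,j})+\dist(\cdot,\barI_{k,i})$ by $\dist(\cdot,\mathcal{I})\le\dist(\cdot,\barI_{i,j})$ in $W_{\bar\Omega_i}$), the interpolation estimate~\eqref{eq:regEstimateInterpolFunction}, the compatibility estimate~\eqref{eq:compEstimateInterpolWedgeXi}, and the Step-2/Step-3 bounds now available for \emph{each} summand, the convex-combination structure preserves the $O(\dist(\cdot,\barI_{i,j}))$ (resp.\ $O(\dist^2(\cdot,\barI_{i,j}))$) control; crucially the $\nabla\lambda$-terms, which a priori blow up like $\dist^{-1}(\cdot,\trLine)$, are tamed because they multiply differences of the two building blocks which vanish to order $\dist^2(\cdot,\trLine)$ by~\eqref{eq:compEstimateInterpolWedgeXi} and to order $\dist(\cdot,\trLine)$ by~\eqref{eq:compatibilityTripleLineVelocity}, leaving a net factor of $\dist(\cdot,\trLine)\le\dist(\cdot,\barI_{i,j})$.

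\textbf{The main obstacle} I expect is Step~(4), specifically~\eqref{eq:compEstimate4} on the interpolation wedges: one must contract $(\nabla B^{\barI_{i,j}} - \nabla B^{\trLine})^\mathsf{T}$ against $\xi^{\barI_{i,j}}_{i,j}$ and show the dangerous $\nabla\lambda^{\barI}_{\bar\Omega}\otimes(\widetilde B - \widetilde B'')$-type terms, which are only $O(\dist^{-1}(\cdot,\trLine))\cdot O(\dist^0)$, actually contribute at order $O(\dist(\cdot,\barI_{i,j}))$ after contraction. This requires the same "$\taTrJ_*$-component is inessential" mechanism used in Step~3 of the proof of Proposition~\ref{prop:gradientFlowCalibrationTripleLine}: one writes $(\widetilde B - \widetilde B'')\cdot\xi^{\barI_{i,j}}_{i,j}$, replaces $\xi^{\barI_{i,j}}_{i,j}=\no_{i,j}$ by $(\mathrm{Id}{-}\taTrJ_{\trLine}\otimes\taTrJ_{\trLine})\no_{i,j}$ up to $O(\dist(\cdot,\trLine))$, and then invokes~\eqref{eq:compatibilityTripleLineVelocityGradient}, i.e.\ the fact that the velocity mismatch lives purely in the $\taTrJ_*$-components. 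Everything else is bookkeeping with the cutoff and routine Taylor expansions; the genuine content is recognizing that the choice of $\mathcal{Y}_{i,j}$ in Step~(1) is forced to be the projected tangential part of $\widetilde B_{i,j}$, and that this is exactly what makes the two velocity fields agree on the interface to the order demanded.
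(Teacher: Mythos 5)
Your proposal follows essentially the same route as the paper's own proof. You correctly identify the pivotal freedom — the choice of $\mathcal{Y}_{i,j}$ — and you arrange for the tangential part of $B^{\barI_{i,j}}$ to match the triple-line velocity ansatz near $\trLine$ via a cutoff, which is exactly what the paper does in its Step~3 (cf.~\eqref{eq:choiceTangentialComponentVelInterface}). Your Steps~2--4 mirror the paper's Steps~1--2 and~4: the zeroth-order Taylor expansion for $|\xi^{\barI_{i,j}}_{i,j}-\xi^{\trLine}_{i,j}|$, the orthogonality of the $O(s)$-correction to $\no_{i,j}$ for the quadratic improvement~\eqref{eq:compEstimate2}, and the $\taTrJ_*$-projection mechanism on interpolation wedges (borrowed from the treatment of~\eqref{eq:auxRightHandSide2}) to tame the $\nabla\lambda$-terms in~\eqref{eq:compEstimate4}.

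Two minor deviations are worth noting. First, you set $(\mathrm{Id}-\no_{i,j}\otimes\no_{i,j})\mathcal{Y}_{i,j}=\alpha_{\mathrm{vel}}\,\ta_*$ near $\trLine$, whereas the paper takes the full tangential projection of $\widetilde B^{\barI_{i,j}}$, i.e.\ $(\alpha_{\mathrm{vel}}+\beta s)\ta_*$. The paper's choice buys the clean \emph{exact} identities $B^{\barI_{i,j}}=\widetilde B^{\barI_{i,j}}$ and (on interface wedges) $B^{\barI_{i,j}}=B^{\trLine}$, which trivialize~\eqref{eq:compEstimate3} and~\eqref{eq:compEstimate4} there; your choice introduces an extra $\beta s\,\ta_*$ discrepancy which is still of admissible order (since $\ta_*\perp\no_{i,j}$, its contribution to~\eqref{eq:compEstimate4} after contraction is $O(|s|)$), but costs you some additional bookkeeping on the interpolation wedges. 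Second, your Step~2 handles the gradient contractions in~\eqref{eq:compEstimate1} by ``first-order Taylor expansion,'' which is not quite the mechanism: a bare Taylor expansion does not make $(\nabla\xi_{i,j}^{\barI_{i,j}})^\mathsf{T}\xi^{\trLine}_{i,j}$ small, since both factors are $O(1)$. What one actually needs is the unit-length cancellation $(\nabla\xi^{\barI_{i,j}}_{i,j})^\mathsf{T}\xi^{\barI_{i,j}}_{i,j}=\tfrac12\nabla|\xi^{\barI_{i,j}}_{i,j}|^2=0$ (so the contraction reduces to the already-controlled difference $\xi^{\trLine}_{i,j}-\xi^{\barI_{i,j}}_{i,j}$), and analogously a control of $\tfrac12\nabla|\xi^{\trLine}_{i,j}|^2$ via the explicit representation~\eqref{eq:repLengthInterpolatedXiTripleLine} on interpolation wedges. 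This is the structural input your sketch glosses over, though it is clearly retrievable from the tools you cite.
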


\begin{proof}
Let~$((\xi_{i,j}^{\trLine})_{i,j\in\{1,2,3\},i\neq j},B^{\trLine})$
be the local gradient-flow calibration at the triple line~$\trLine$
as constructed in the proof of Proposition~\ref{prop:gradientFlowCalibrationTripleLine},
and let~$i,j\in\{1,2,3\}$ be distinct phases.

\textit{Step 1: Proof of~\eqref{eq:compEstimate1}.}
The estimate $|\xi^{\barI_{i,j}}_{i,j} - \xi^{\trLine}_{i,j}| \leq C\dist(\cdot,\barI_{i,j})$
is an immediate consequence of the regularity estimates~\eqref{eq:regXiInterface}
and~\eqref{eq:regXiTripleLine}, the inclusions~\eqref{eq:interfaceWedge}--\eqref{eq:interpolWedgeHalfspace}, 
as well as the extension property~$\xi^{\barI_{i,j}}_{i,j}=\no_{i,j}
=\xi^{\trLine}_{i,j}$ along~$\barI_{i,j}\cap\mathcal{N}_{\hat r}(\trLine)$.

The estimate $|(\nabla \xi_{i,j}^{\barI_{i,j}})^\mathsf{T}\xi^{\trLine}_{i,j}|
\leq C\dist(\cdot,\barI_{i,j})$ follows from adding zero, the already established estimate for the 
first left hand side term of~\eqref{eq:compEstimate1}, and~$\xi_{i,j}^{\barI_{i,j}}$
being a unit length vector field due to~\eqref{eq:localGradFlowCalibrationInterface}.

For the remaining part of~\eqref{eq:compEstimate1}, it suffices
to estimate~$\frac{1}{2}\nabla|\xi^{\trLine}_{i,j}|^2$ due to~\eqref{eq:regXiTripleLine}
and the already established estimate for the first left hand side term of~\eqref{eq:compEstimate1}.
Throughout the interpolation wedge $W_{\barI_{i,j}}\cap\mathcal{N}_{\hat r}(\trLine)$,
we have~$|\xi^{\trLine}_{i,j}|\equiv 1$ in view of the 
definitions~\eqref{eq:defXiInterfaceWedgeFinal}--\eqref{eq:defXiInterfaceWedgeFinal3},
so that the desired estimate is satisfied for trivial reasons. Within
the relevant interpolation wedges, one may employ the representation~\eqref{eq:repLengthInterpolatedXiTripleLine}
and then deduce~$\big|\frac{1}{2}\nabla|\xi^{\trLine}_{i,j}|^2\big|\leq C\dist(\cdot,\barI_{i,j})$
from~\eqref{eq:compEstimateInterpolWedgeXi}, \eqref{eq:regEstimateInterpolFunction}
and~\eqref{eq:compDistances}.

\textit{Step 2: Proof of~\eqref{eq:compEstimate2}.}
Denote by~$\widetilde\xi^{\,\barI_{i,j}}$ the auxiliary extension
of the unit normal~$\no_{i,j}|_{\barI_{i,j}}$ from Construction~\ref{AnsatzAuxiliaryXiHalfSpace}.
Due to~\eqref{eq:defXiInterfaceWedgeFinal}--\eqref{eq:defXiInterfaceWedgeFinal3}, \eqref{eq:defXiInterpolWedgeFinal}--\eqref{eq:defXiInterpolWedgeFinal3}, and the compatibility estimates~\eqref{eq:compEstimateInterpolWedgeXi} it holds
\begin{align*}
\xi^{\trLine}_{i,j} = |\widetilde\xi^{\,\barI_{i,j}}|^{-1} 
\widetilde\xi^{\,\barI_{i,j}} + O(\dist^2(\cdot,\barI_{i,j}))
\quad\text{in } \mathcal{N}_{\hat r}(\trLine)
\cap (W_{\barI_{i,j}}\cup W_{\bar\Omega_i}\cup W_{\bar\Omega_j}).
\end{align*}
Making use of the non-degeneracy conditions~\eqref{eq:lengthNonDegenerate}--\eqref{eq:lengthNonDegenerate3}
and the estimate~\eqref{eq:lengthControlXiHalfspace} we also obtain
\begin{align*}
|\widetilde\xi^{\,\barI_{i,j}}|^{-1} - 1
= \frac{1-|\widetilde\xi^{\,\barI_{i,j}}|^2}
{|\widetilde\xi^{\,\barI_{i,j}}|\big(1{+}|\widetilde\xi^{\,\barI_{i,j}}|\big)}
= O(\dist^2(\cdot,\barI_{i,j}))
\quad\text{in } \mathcal{N}_{\hat r}(\trLine) \cap \mathrm{im}(\Psi_{i,j}).
\end{align*}
Recalling the precise representations~\eqref{eq:localGradFlowCalibrationInterface}
and~\eqref{eq:AnsatzAuxiliaryXiHalfSpace}, we thus infer from the 
previous two displays that
\begin{align*}
\big|(\xi^{\barI_{i,j}}_{i,j} {-} \xi^{\trLine}_{i,j})\cdot\xi^{\barI_{i,j}}_{i,j}\big|
\leq \big| 1 - |\widetilde\xi^{\,\barI_{i,j}}|^{-1} \big| + O(\dist^2(\cdot,\barI_{i,j}))
=  O(\dist^2(\cdot,\barI_{i,j}))
\end{align*}
throughout $\mathcal{N}_{\hat r}(\trLine)
\cap (W_{\barI_{i,j}}\cup W_{\bar\Omega_i}\cup W_{\bar\Omega_j})$
as asserted.

\textit{Step 3: Construction of the tangential component~$\mathcal{Y}_{i,j}$ of~$B^{\barI_{i,j}}$.}
Let~$\theta$ be a smooth and even cutoff function with $\theta(r)=1$ for $|r|\leq\frac{1}{2}$ and $\theta(r)= 0$
for $|r|\geq 1$. Denote by~$\widetilde B^{\barI_{i,j}}$ the auxiliary
local velocity field from Construction~\ref{AnsatzAuxiliaryVelocityHalfSpace}
with respect to the interface~$\barI_{i,j}$.
The tangential component~$\mathcal{Y}_{i,j}$ of~$B^{\barI_{i,j}}$
is then simply defined by means of
\begin{align}
\label{eq:choiceTangentialComponentVelInterface}
\mathcal{Y}_{i,j} := \theta\Big(\frac{\dist(\cdot,\trLine)}{\hat r}\Big)
(\mathrm{Id}{-}\no_{i,j}\otimes\no_{i,j}) \widetilde B^{\barI_{i,j}} \quad\text{in } \mathrm{im}(\Psi_{i,j}).
\end{align}
Note that~$\mathcal{Y}_{i,j}\in C^0_tC^1_x(\mathrm{im}(\Psi_{i,j}))$
as required by Construction~\ref{gradientFlowCalibrationInterface}
due to the regularity~\eqref{eq:regularityNormalCurvature} of the normal~$\no_{i,j}$ and
the regularity estimate~\eqref{eq:regEstimateAuxiliaryExtensionVelTripleLine}
for~$\widetilde B^{\barI_{i,j}}$.

\textit{Step 4: Proof of~\emph{\eqref{eq:compEstimate3}--\eqref{eq:compEstimate4}}.}
It follows from the expansion ansatz~\eqref{eq:AnsatzAuxiliaryVelocityHalfSpace},
the definitions~\eqref{eq:defVelInterfaceWedgeFinal} and~\eqref{eq:localGradFlowCalibrationInterface},
the choice of the tangential component~\eqref{eq:choiceTangentialComponentVelInterface},
as well as the choice of the cutoff~$\theta$ from the previous step that
\begin{align}
\label{eq:compInterfaceTripleLineVel}
B^{\barI_{i,j}} = B^{\trLine} \quad
\text{throughout } W_{\barI_{i,j}} \cap \mathcal{N}_{\frac{\hat r}{2}}(\trLine).
\end{align}
More precisely, denoting by~$\widetilde B^{\barI_{i,j}}$ the auxiliary
local velocity field from Construction~\ref{AnsatzAuxiliaryVelocityHalfSpace}
with respect to the interface~$\barI_{i,j}$, we in fact have
\begin{align}
\label{eq:compInterfaceTripleLineVel2}
B^{\barI_{i,j}} = \widetilde B^{\barI_{i,j}} \quad
\text{throughout } ( W_{\barI_{i,j}} \cup W_{\bar\Omega_i} \cup W_{\bar\Omega_j} )
\cap \mathcal{N}_{\frac{\hat r}{2}}(\trLine).
\end{align}
Now \eqref{eq:compEstimate3} follows directly from a Taylor-expansion
argument exploiting the regularity estimates~\eqref{eq:regVelocityInterface}
resp.\ \eqref{eq:regVelocityTripleLine} as well as the 
inclusions~\eqref{eq:interfaceWedge}--\eqref{eq:interpolWedgeHalfspace}.

In view of~\eqref{eq:compInterfaceTripleLineVel}, the estimate~\eqref{eq:compEstimate4}
is satisfied for trivial reasons throughout the interface 
wedge~$\mathcal{N}_{\frac{\hat r}{2}}(\trLine) \cap W_{\barI_{i,j}}$.
Within the relevant interpolation wedges, say for 
concreteness~$\mathcal{N}_{\frac{\hat r}{2}}(\trLine) \cap W_{\bar\Omega_i}$,
we make use of~\eqref{eq:compInterfaceTripleLineVel2}. Let $k\in\{1,2,3\}\setminus\{i,j\}$
denote the third phase. It then follows from~\eqref{eq:compInterfaceTripleLineVel2}
and expressing the definition~\eqref{eq:defVelInterpolWedgeFinal}
in form of $B^{\trLine} = \lambda_{\bar\Omega_i}^{\barI_{i,j}} \widetilde B^{\barI_{i,j}} 
+ (1{-}\lambda_{\bar\Omega_i}^{\barI_{i,j}}) \widetilde B^{\barI_{k,i}}$
\begin{align*}
\nabla B^{\barI_{i,j}} - \nabla B^{\trLine}
= \big(1{-}\lambda_{\bar\Omega_i}^{\barI_{i,j}}\big)
\big(\nabla\widetilde B^{\barI_{i,j}} {-} \nabla \widetilde B^{\barI_{k,i}}\big)
- \big(\widetilde B^{\barI_{i,j}} {-} \widetilde B^{\barI_{k,i}}\big) \otimes \nabla\lambda_{\bar\Omega_i}^{\barI_{i,j}}.
\end{align*}
Since $\xi^{\barI_{i,j}}_{i,j}=\no_{i,j}$ due to~\eqref{eq:localGradFlowCalibrationInterface},
the estimate~\eqref{eq:compEstimate4} now follows throughout
the interpolation wedge~$\mathcal{N}_{\frac{\hat r}{2}}(\trLine) \cap W_{\bar\Omega_i}$
by the same argument which deals with estimating the last two right hand side terms of~\eqref{eq:auxRightHandSide2}.
We recall for convenience that the essential input for the latter is given by the
compatibility conditions~\eqref{eq:compatibilityTripleLineVelocity} 
and~\eqref{eq:compatibilityTripleLineVelocityGradient} for the auxiliary velocity
fields~$\widetilde B^{\barI_{i,j}}$ and~$\widetilde B^{\barI_{k,i}}$. 
\end{proof}

\section{Gradient-flow calibrations for double bubbles}
\label{sec:globalCalibrations}

\subsection{Localization of topological features}
We start by introducing a family of suitable cutoff functions localizing
around the interfaces and the triple line in a smoothly evolving regular double bubble. 
This family will be used to provide the construction of a gradient-flow calibration
by means of gluing together the local constructions from the previous two sections. 

\begin{lemma}
\label{lemma:partitionOfUnity}
Let $(\bar\Omega_1,\bar\Omega_2,\bar\Omega_3)$ be a regular 
double bubble smoothly evolving by~MCF in the sense of Definition~\ref{def:smoothSolution} 
on a time interval~$[0,T]$. Let the notation of Definition~\ref{def:locRadiusInterface}
resp.\ Definition~\ref{def:locRadius} be in place, and let $\hat r\in (0,1]$ be the radius of
Proposition~\ref{prop:gradientFlowCalibrationTripleLine}. In particular, let~$(r_{i,j})_{i,j\in\{1,2,3\},i\neq j}$
be admissible localization radii for the interfaces in the sense of Definition~\ref{def:locRadiusInterface}
such that $\hat r \leq r_{1,2}\wedge r_{2,3} \wedge r_{3,1}$. We next define 
for each pair~$i,j\in\{1,2,3\}$ with~$i\neq j$ a scale
\begin{align*}
3\ell_{i,j} := \min_{t\in [0,T]}\min_{\substack{k,l\in\{1,2,3\},\,k\neq l,
\\ (k,l)\notin\{(i,j),(j,i)\}}}
\dist\big(\barI_{i,j}(t)\setminus B_{\hat r}(\trLine(t)),\barI_{k,l}(t)\big)> 0,
\end{align*}
and based on these a localization 
scale~$\bar r\in (0,r_{1,2}\wedge r_{2,3}\wedge r_{3,1}]$ by means of
\begin{align}
\label{eq:defLocScale}
2\bar r := \hat r \wedge \min_{i,j\in\{1,2,3\},\,i\neq j} \ell_{i,j}. 
\end{align}

There then exists a collection 
of continuous cutoff functions
\begin{align*}
\eta_{\trLine},\,\eta_{\barI_{1,2}},\,\eta_{\barI_{2,3}},\,\eta_{\barI_{3,1}} 
\colon \mathbb{R}^3 \times [0,T] \to [0,1]
\end{align*}
satisfying the following properties: 
\begin{itemize}[leftmargin=0.7cm]
\item[i)] The cutoff functions are of class $(C^0_tC^1_x\cap C^1_tC^0_x)
					(\mathbb{R}^3{\times}[0,T]\setminus\trLine)$ with corresponding
					regularity estimates
					\begin{align}
					\label{eq:regEstimateCutoffs}
					|(\partial_t,\nabla)(\eta_{\trLine},\eta_{\barI_{1,2}},\eta_{\barI_{2,3}},\eta_{\barI_{3,1}})|
					\leq C \quad\text{in } \Rd[3]{\times}[0,T]\setminus\trLine
					\end{align}
					for some constant~$C>0$ depending only on the data of the smoothly evolving regular double 
					bubble~$(\bar\Omega_1,\bar\Omega_2,\bar\Omega_3)$ on~$[0,T]$.
\item[ii)] The family $(\eta_{\trLine},(\eta_{\barI_{i,j}})_{i,j\in\{1,2,3\},i\neq j})$ 
				  is a partition of unity for the evolving
					surface cluster in the sense that
					$\eta_{\trLine}+\eta_{\barI_{1,2}}
					+\eta_{\barI_{2,3}}+\eta_{\barI_{3,1}}\equiv 1$ 
					holds true on the surface cluster $\mathcal{I}:=\bigcup_{i,j\in\{1,2,3\},i\neq j}\barI_{i,j}$.
					
					Moreover, for all pairwise distinct~$i,j,k\in\{1,2,3\}$ it holds
					\begin{align}
					\label{eq:coercivityInterfaceCutoff}
					\eta_{\barI_{k,i}} &\leq C(\dist^2(\cdot,\barI_{i,j}) \wedge 1) 
					&&\text{in } \Rd[3]{\times} [0,T],
					\\ \label{eq:coercivityGradientInterfaceCutoff}
					|\nabla\eta_{\barI_{k,i}}| &\leq C(\dist(\cdot,\barI_{i,j}) \wedge 1) 
					&&\text{in } \Rd[3]{\times} [0,T]\setminus\trLine,
					\\ \label{eq:coercivityTimeDerivativeInterfaceCutoff}
					|\partial_t\eta_{\barI_{k,i}}| &\leq C(\dist(\cdot,\barI_{i,j}) \wedge 1) 
					&&\text{in } \Rd[3]{\times} [0,T]\setminus\trLine.
					\end{align}
					Defining
					$\eta_{\mathrm{bulk}}:=1-\eta_{\trLine}-\eta_{\barI_{1,2}}-\eta_{\barI_{2,3}}
					-\eta_{\barI_{3,1}}$ we have $\eta_{\mathrm{bulk}}\in [0,1]$ on $\Rd[3]\times[0,T]$,
					and the bulk cutoff is subject to the estimates
					\begin{align}
					\label{eq:coercivityBulkCutoff}
					\frac{1}{C}(\dist^2(\cdot,\mathcal{I}) \wedge 1) &\leq
					\eta_{\mathrm{bulk}} \leq C(\dist^2(\cdot,\mathcal{I}) \wedge 1) 
					&&\text{in } \Rd[3]{\times} [0,T],
					\\ \label{eq:coercivityGradientBulkCutoff}
					|\nabla\eta_{\mathrm{bulk}}| &\leq C(\dist(\cdot,\mathcal{I}) \wedge 1) 
					&&\text{in } \Rd[3]{\times} [0,T]\setminus\trLine,
					\\ \label{eq:coercivityTimeDerivBulkCutoff}
					|\partial_t\eta_{\mathrm{bulk}}| &\leq C(\dist(\cdot,\mathcal{I}) \wedge 1) 
					&&\text{in } \Rd[3]{\times} [0,T]\setminus\trLine.
					\end{align}
					The constant~$C\geq 1$ in the 
					estimates~\emph{\eqref{eq:coercivityInterfaceCutoff}--\eqref{eq:coercivityTimeDerivBulkCutoff}}
					depends only on the data of the smoothly evolving regular double 
					bubble~$(\bar\Omega_1,\bar\Omega_2,\bar\Omega_3)$ on~$[0,T]$.
					
\item[iii)] For all pairwise distinct~$i,j,k\in\{1,2,3\}$ and all~$t\in[0,T]$ it holds
					 \begin{align}
					 \label{eq:supportCutOffInterfaces}
					 \supp\eta_{\barI_{i,j}}(\cdot,t) &\subset 
					 \Psi_{i,j}(\barI_{i,j}(t){\times}\{t\}{\times}[-\bar r,\bar r]),
					 \\
					 \label{eq:supportCutOffInterfacesNearTripleLine}
					 B_{\hat r}(\trLine(t))\cap\supp\eta_{\barI_{i,j}}(\cdot,t)
					 &\subset B_{\hat r}(\trLine(t))\cap\big(W_{\barI_{i,j}}(t)
					 \cup W_{\bar\Omega_i}(t)\cup W_{\bar\Omega_j}(t)\big),
					 \\
					 \label{eq:supportCutOffTwoInterfaces}
					 \supp\eta_{\barI_{i,j}}(\cdot,t)\cap\supp\eta_{\barI_{j,k}}(\cdot,t)
					 &\subset B_{\hat r}(\trLine(t))\cap W_{\bar\Omega_j}(t),
					 \\
					 \label{eq:supportCutOffTripleLine}
					 \supp\eta_{\trLine}(\cdot,t) &\subset B_{\hat r/2}(\trLine(t)).
					 \end{align}
\end{itemize}
\end{lemma}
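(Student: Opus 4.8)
The plan is to build the cutoffs explicitly from one-dimensional profiles composed with the signed distance functions $s_{i,j}$ and the distance $\dist(\cdot,\trLine)$, then verify the listed properties by the chain rule. First I would fix a smooth even profile $\zeta\colon\mathbb{R}\to[0,1]$ with $\zeta\equiv 1$ on $[-\frac12,\frac12]$ and $\zeta\equiv 0$ outside $(-1,1)$, together with its ``half'' version $\zeta_{\trLine}$ adapted to the scale $\hat r/2$, i.e.\ $\zeta_{\trLine}\equiv 1$ on $[-\hat r/4,\hat r/4]$ and $\zeta_{\trLine}\equiv 0$ outside $(-\hat r/2,\hat r/2)$. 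Define the triple-line cutoff by $\eta_{\trLine}:=\zeta_{\trLine}(\dist(\cdot,\trLine))$; by Definition~\ref{def:locRadius}~\textit{i)} the squared distance, hence $\dist(\cdot,\trLine)$ away from $\trLine$, is $C^0_tC^4_x\cap C^1_tC^2_x$ on $\mathcal{N}_r(\trLine)$, which gives~\eqref{eq:regEstimateCutoffs} for $\eta_{\trLine}$ and the support inclusion~\eqref{eq:supportCutOffTripleLine}. For the interface cutoffs I would set, for each ordered pair $(i,j)$, a preliminary bump $\widehat\eta_{\barI_{i,j}}:=\zeta(s_{i,j}/\bar r)$ supported in $\Psi_{i,j}(\barI_{i,j}{\times}\{t\}{\times}[-\bar r,\bar r])$, then multiply by $(1-\eta_{\trLine})$ and renormalize so that the four functions $\eta_{\trLine},\eta_{\barI_{1,2}},\eta_{\barI_{2,3}},\eta_{\barI_{3,1}}$ sum to $1$ on the cluster $\mathcal{I}$. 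The renormalization has to be done carefully: on $\mathcal{I}\setminus B_{\hat r}(\trLine)$ the bumps $\widehat\eta_{\barI_{i,j}}$ have pairwise disjoint supports by the choice of $3\ell_{i,j}$ and $2\bar r\le\min\ell_{i,j}$, so exactly one of them equals $1$ there; inside $B_{\hat r}(\trLine)$ one uses the wedge decomposition~\eqref{eq:decompTripleLine} to see that at most the two bumps meeting a given interpolation wedge overlap, and one normalizes by their sum plus $\eta_{\trLine}$.

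The second block of properties is then essentially bookkeeping. The partition-of-unity identity on $\mathcal{I}$ holds by construction. For the coercivity estimates~\eqref{eq:coercivityInterfaceCutoff}--\eqref{eq:coercivityTimeDerivativeInterfaceCutoff}, the point is that $\supp\eta_{\barI_{k,i}}$ is contained (near the triple line) in $W_{\barI_{k,i}}\cup W_{\bar\Omega_k}\cup W_{\bar\Omega_i}$ and (away from the triple line) in a $\bar r$-tube around $\barI_{k,i}$; in either region the comparability of distances~\eqref{eq:compDistances}--\eqref{eq:compDistances3} together with $2\bar r\le\min\ell_{i,j}$ forces $\dist(\cdot,\barI_{i,j})\gtrsim 1$ on that support minus a neighborhood where $\eta_{\barI_{k,i}}$ itself is bounded, whence $\eta_{\barI_{k,i}}\le C(\dist^2(\cdot,\barI_{i,j})\wedge 1)$ after adjusting $C$; the gradient and time-derivative bounds follow from~\eqref{eq:regEstimateCutoffs} by the same trade-off between ``we are on a set where the cutoff is small/its derivative vanishes'' and ``we are on a set where $\dist(\cdot,\barI_{i,j})\gtrsim 1$''. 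The bulk cutoff $\eta_{\mathrm{bulk}}=1-\eta_{\trLine}-\sum\eta_{\barI_{i,j}}$ is nonnegative because the renormalization only decreased the sum off $\mathcal{I}$; the two-sided bound~\eqref{eq:coercivityBulkCutoff} comes from: $\eta_{\mathrm{bulk}}=0$ on $\mathcal{I}$ and the profiles are quadratically flat there (choose $\zeta,\zeta_{\trLine}$ so that $1-\zeta(u)\sim u^2$ near $0$), giving the upper bound via $\dist(\cdot,\mathcal{I})$; for the lower bound one observes $\eta_{\mathrm{bulk}}\ge$ const wherever all cutoffs vanish, i.e.\ at distance $\gtrsim\bar r$ from $\mathcal{I}$, and $\eta_{\mathrm{bulk}}\gtrsim\dist^2(\cdot,\mathcal{I})$ in the transition region again by the quadratic flatness of the chosen profiles. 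The support inclusions~\eqref{eq:supportCutOffInterfaces}--\eqref{eq:supportCutOffTwoInterfaces} are immediate from the construction: \eqref{eq:supportCutOffInterfaces} from $\supp\widehat\eta_{\barI_{i,j}}\subset\{|s_{i,j}|\le\bar r\}$, \eqref{eq:supportCutOffInterfacesNearTripleLine} from~\eqref{eq:interfaceWedge}--\eqref{eq:interpolWedgeHalfspace} (the support of $\eta_{\barI_{i,j}}$ near $\trLine$ lies in the closed tube $\{|s_{i,j}|\le\bar r\}$, which by the wedge geometry and $2\bar r\le\hat r$ sits inside $\overline{W_{\barI_{i,j}}}\cup\overline{W_{\bar\Omega_i}}\cup\overline{W_{\bar\Omega_j}}$), and \eqref{eq:supportCutOffTwoInterfaces} from the fact that $\barI_{i,j}$ and $\barI_{j,k}$ only come close inside $B_{\hat r}(\trLine)$ and, there, their tubes intersect only in the single interpolation wedge $W_{\bar\Omega_j}$ shared by both.

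The main obstacle I expect is the \emph{compatibility of the renormalization across wedge boundaries}: one must arrange that, where two preliminary interface bumps overlap inside $B_{\hat r}(\trLine)$, the denominator one divides by is smooth (away from $\trLine$) and bounded below, and that the resulting $\eta_{\barI_{i,j}}$ still matches the expected behaviour — equal to $\widehat\eta_{\barI_{i,j}}/(1-\eta_{\trLine})\cdot(\text{something})$ on one side of a wedge wall and to the competing expression on the other side — so that no spurious jump in the gradient is introduced at $\partial W_{\bar\Omega_j}$. This is where one genuinely uses that the wedges have a clean cone structure with opening angles in $(0,\pi)$ bounded away from $0$ and $\pi$ (Definition~\ref{def:locRadius}~\textit{ii)}), so that $\dist(\cdot,\barI_{i,j})$ and $s_{i,j}$ are comparable up to the boundary of the wedge and the overlap region has uniformly positive ``width''. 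The cleanest route, which I would take, is to instead build a \emph{single} smooth interface indicator first — e.g.\ $\eta_{\mathcal{I}}:=1-\eta_{\mathrm{bulk}}$ defined directly via a profile of $\dist(\cdot,\mathcal{I})$ adjusted near the wedges — and then split $\eta_{\mathcal{I}}-\eta_{\trLine}$ among the three interfaces using the partition of the tube into the $W_{\barI_{i,j}}$'s and a fixed partition of unity subordinate to the cone sectors in the normal plane (pulled back by $P_{\trLine}$), the latter being exactly the kind of object already produced in Lemma~\ref{lemma:existenceInterpolationFunctions}. With that device the estimates~\eqref{eq:coercivityInterfaceCutoff}--\eqref{eq:coercivityTimeDerivBulkCutoff} reduce to the one-variable facts about $\zeta$ plus~\eqref{eq:compDistances}--\eqref{eq:compDistances3}, and the support properties are read off directly.
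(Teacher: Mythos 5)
Your plan is in the right spirit --- profiles composed with $s_{i,j}$ and $\dist(\cdot,\trLine)$, glued along the wedge decomposition --- but the specific construction you propose breaks the two-sided bound~\eqref{eq:coercivityBulkCutoff}, and the renormalization step is not actually what the paper does. First, note an internal inconsistency in your write-up: you begin with a profile $\zeta$ that is $\equiv 1$ on $[-\tfrac12,\tfrac12]$ and only later require $1-\zeta(u)\sim u^2$ near $0$; these are mutually exclusive, and the quadratic flatness at the origin is the essential one (the paper takes $\zeta(r)=(1-r^2)\theta(r^2)$, so $\zeta<1$ off the origin). With the flat profile, your $\eta_{\trLine}=\zeta_{\trLine}(\dist(\cdot,\trLine))\equiv 1$ on an open tube around $\trLine$, so after you multiply the interface bumps by $1-\eta_{\trLine}$ you get $\eta_{\barI_{i,j}}\equiv 0$ there and hence $\eta_{\mathrm{bulk}}\equiv 0$ on an open set of points with $\dist(\cdot,\mathcal{I})>0$; the lower bound in~\eqref{eq:coercivityBulkCutoff} fails. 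More to the point, even with a quadratic profile, defining $\eta_{\trLine}$ purely as a function of $\dist(\cdot,\trLine)$ cannot work: $\eta_{\mathrm{bulk}}$ must vanish \emph{only} on $\mathcal{I}$ and quadratically in $\dist(\cdot,\mathcal{I})$, so $\eta_{\trLine}$ must itself decay in $\dist(\cdot,\mathcal{I})$. The paper achieves this by taking $\eta_{\trLine}=\zeta_{\trLine}\zeta_{\barI_{i,j}}$ on the interface wedge $W_{\barI_{i,j}}$ and $\eta_{\trLine}=\lambda^{\barI_{i,j}}_{\bar\Omega_i}\zeta_{\trLine}\zeta_{\barI_{i,j}}+\lambda^{\barI_{k,i}}_{\bar\Omega_i}\zeta_{\trLine}\zeta_{\barI_{k,i}}$ on the interpolation wedge $W_{\bar\Omega_i}$; the extra factor $\zeta_{\barI}$ is exactly what supplies the vanishing in the bulk. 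On the interface wedge one then gets the clean identity $\eta_{\mathrm{bulk}}=1-\zeta_{\barI_{i,j}}$, which is quadratic in $s_{i,j}$ by design.

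Second, the paper avoids any renormalization. You propose to divide by a denominator $D$ so that the sum is $1$ on $\mathcal{I}$; but any denominator that forces $\eta_{\trLine}+\sum\eta_{\barI_{i,j}}=1$ on a full open tube (e.g.\ $D=\max(\sum\widehat\eta_{\barI_{i,j}},1)$) again makes $\eta_{\mathrm{bulk}}\equiv 0$ on that tube, while any denominator that makes the sum $<1$ off $\mathcal{I}$ must go through a $0/0$ degeneracy in the bulk that threatens the $C^0_tC^1_x\cap C^1_tC^0_x$ regularity required by~\eqref{eq:regEstimateCutoffs}. The paper instead builds the interface cutoffs multiplicatively so that the partition-of-unity identity on $\mathcal{I}$ holds \emph{by construction}: $\eta_{\barI_{i,j}}=(1-\zeta_{\trLine})\zeta_{\barI_{i,j}}$ on $W_{\barI_{i,j}}$ and $\eta_{\barI_{i,j}}=\lambda^{\barI_{i,j}}_{\bar\Omega_\ell}(1-\zeta_{\trLine})\zeta_{\barI_{i,j}}$ on the two adjacent interpolation wedges $W_{\bar\Omega_\ell}$. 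Because $\lambda^{\barI_{i,j}}_{\bar\Omega_i}+\lambda^{\barI_{k,i}}_{\bar\Omega_i}=1$, the weights telescope to $1$ along the cluster with no denominator; the $O(\dist^{-1}(\cdot,\trLine))$ blow-up of $\nabla\lambda$ from Lemma~\ref{lemma:existenceInterpolationFunctions} is cancelled either by the factor $1-\zeta_{\trLine}$ (for the interface cutoffs) or by the factor $\zeta_{\barI_{i,j}}-\zeta_{\barI_{k,i}}$ (for the triple-line and bulk cutoffs), each of which vanishes to first order at $\trLine$; and compatibility of the piecewise definition across $\partial B_{\hat r}(\trLine(t))$ is secured by an extra scale parameter $\delta$ in $\zeta_{\barI_{i,j}}=\zeta(s_{i,j}/(\delta\bar r))$ chosen so that~\eqref{eq:choiceDelta1} holds. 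Your final suggestion --- to build $\eta_{\mathcal{I}}=1-\eta_{\mathrm{bulk}}$ directly from $\dist(\cdot,\mathcal{I})$ and split it by interpolation --- is closer in spirit, but $\dist(\cdot,\mathcal{I})$ is only Lipschitz on the equidistant sets filling the interpolation wedges, so producing a $C^1$ interface indicator from it already requires essentially the same wedge-by-wedge surgery.
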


\begin{proof}
The proof is split into several steps.

\textit{Step 1: Definition of building blocks.}
Let $\theta$ be a smooth and even cutoff function with $\theta(r)=1$ for $|r|\leq\frac{1}{2}$ and $\theta(r) = 0$
for $|r|\geq 1$. Then define a smooth quadratic profile~$\zeta\colon\mathbb{R}\to [0,1]$ by means of
\begin{align}
\label{QuadraticInterfaceCutOff}
\zeta(r) = (1-r^2)\theta(r^2), \quad r\in\Rd[].
\end{align}
Let~$\delta\in (0,1]$ be a constant
whose value will be determined in subsequent steps of the proof. For all
distinct~$i,j\in\{1,2,3\}$ we define auxiliary cutoff functions
\begin{align}
\label{eq:interfaceCutoff}
\zeta_{\barI_{i,j}} &:= \zeta\Big(
\frac{s_{i,j}}{\delta\bar r}\Big)
&&\text{in } \mathrm{im}(\Psi_{i,j}),
\\
\label{eq:tripleLineCutoff}
\zeta_{\trLine} &:= \zeta\Big(
\frac{\dist(\cdot,\trLine)}{\hat r/2}\Big)
&&\text{in } \Rd[3] {\times} [0,T].
\end{align}
Note that as a consequence of the regularity~\eqref{eq:regProjectionSignedDistance}
of the signed distance, expressing $\dist(x,\trLine(t))=|x{-}P_{\trLine}(x,t)|$
for all~$x\in B_{\hat r}(\trLine(t))$ and all~$t\in [0,T]$,
the regularity of the projection~$P_{\trLine}$ onto
the triple line~$\trLine$ from Definition~\ref{def:locRadius},
and~\eqref{QuadraticInterfaceCutOff} it holds
\begin{align}
\label{eq:regEstimateInterfaceCutoff}
|(\partial_t,\nabla)\zeta_{\barI_{i,j}}| & \leq C\dist(\cdot,\barI_{i,j})
&&\text{in } \mathrm{im}(\Psi_{i,j}),
\\
\label{eq:regEstimateTripleLineCutoff}
|(\partial_t,\nabla)\zeta_{\trLine}| & \leq C\dist(\cdot,\trLine)
&&\text{in } \Rd[3] {\times} [0,T].
\end{align}

\textit{Step 2: Definition of interface cutoffs.} Fix distinct~$i,j\in\{1,2,3\}$.
We define the cutoff~$\eta_{\barI_{i,j}}\colon\Rd[3]{\times}[0,T]\to [0,1]$
for the two-phase interface~$\barI_{i,j}$ by means of
\begin{align}
\label{eq:defInterfaceCutoffAwayTripleLine}
\eta_{\barI_{i,j}}(\cdot,t) &:= \zeta_{\barI_{i,j}}(\cdot,t)
&&\text{in } \mathrm{im}(\Psi_{i,j}(t)) \setminus B_{\hat r}(\trLine(t)),
\\
\label{eq:defInterfaceCutoffInterfaceWedge}
\eta_{\barI_{i,j}}(\cdot,t) &:= (1{-}\zeta_{\trLine}(\cdot,t))\zeta_{\barI_{i,j}}(\cdot,t)
&&\text{in } B_{\hat r}(\trLine(t)) \cap W_{\barI_{i,j}}(t),
\\
\label{eq:defInterfaceCutoffInterpolWedge1}
\eta_{\barI_{i,j}}(\cdot,t) &:= \lambda_{\bar\Omega_i}^{\barI_{i,j}}(\cdot,t)(1{-}\zeta_{\trLine}(\cdot,t))
\zeta_{\barI_{i,j}}(\cdot,t)
&&\text{in } B_{\hat r}(\trLine(t)) \cap W_{\bar\Omega_i}(t),
\\
\label{eq:defInterfaceCutoffInterpolWedge2}
\eta_{\barI_{i,j}}(\cdot,t) &:= \lambda_{\bar\Omega_j}^{\barI_{i,j}}(\cdot,t)(1{-}\zeta_{\trLine}(\cdot,t))
\zeta_{\barI_{i,j}}(\cdot,t)
&&\text{in } B_{\hat r}(\trLine(t)) \cap W_{\bar\Omega_j}(t),
\\
\label{eq:defInterfaceCutoffElse}
\eta_{\barI_{i,j}}(\cdot,t) &:= 0
&&\text{else}
\end{align}
for all~$t\in [0,T]$. Here, the maps~$\lambda^{\barI_{i,j}}_{\bar\Omega_i}$ resp.\ $\lambda^{\barI_{i,j}}_{\bar\Omega_j}$
are the interpolation functions of Lemma~\ref{lemma:existenceInterpolationFunctions} on
the interpolation wedges~$W_{\bar\Omega_i}$ resp.\ $W_{\bar\Omega_j}$.
Observe that~\eqref{eq:defInterfaceCutoffInterfaceWedge} is well-defined
because of~\eqref{eq:interfaceWedge}, and that~\eqref{eq:defInterfaceCutoffInterpolWedge1}
resp.\ \eqref{eq:defInterfaceCutoffInterpolWedge2} are well-defined
as a consequence of~\eqref{eq:interpolWedgeHalfspace}. In particular, the 
properties~\eqref{eq:supportCutOffInterfaces}--\eqref{eq:supportCutOffTwoInterfaces} 
are immediate consequences of the definitions~\eqref{eq:defInterfaceCutoffAwayTripleLine}--\eqref{eq:defInterfaceCutoffElse}
and the choice~\eqref{eq:defLocScale} of the localization scale~$\bar r$.
Finally, in order to ensure continuity of~$\eta_{\barI_{i,j}}$ throughout~$\Rd[3]{\times}[0,T]$
(i.e., compatibility of the definition~\eqref{eq:defInterfaceCutoffAwayTripleLine} 
resp.\ the definition \eqref{eq:defInterfaceCutoffElse} 
with the definitions~\eqref{eq:defInterfaceCutoffInterfaceWedge}--\eqref{eq:defInterfaceCutoffInterpolWedge2})
we choose the constant~$\delta\in (0,\frac{1}{2}]$ small enough such that for all~$t\in [0,T]$
and all distinct~$i,j\in\{1,2,3\}$ it holds
\begin{align}
\label{eq:choiceDelta1}
\partial B_{\hat r}(\trLine(t)) \cap
\overline{\Psi_{i,j}(\barI_{i,j}(t){\times}\{t\}{\times}[-\delta\hat r,\delta\hat r])}
&\subset\subset W_{\bar I_{i,j}}(t).
\end{align}

\textit{Step 3: Definition of triple line cutoff.}
We construct a cutoff for the triple line $\eta_{\trLine}\colon\Rd[3]{\times}[0,T]\to [0,1]$
as follows: for all distinct~$i,j,k\in\{1,2,3\}$ and all~$t\in[0,T]$ we define
\begin{align}
\label{eq:defTripleLineCutoffInterfaceWedge}
\eta_{\trLine}(\cdot,t) &:= \zeta_{\trLine}(\cdot,t)\zeta_{\barI_{i,j}}(\cdot,t)
&&\text{in } B_{\hat r}(\trLine(t))\cap W_{\barI_{i,j}}(t),
\\
\label{eq:defTripleLineCutoffInterpolWedge}
\eta_{\trLine}(\cdot,t) &:= \lambda_{\bar\Omega_{i}}^{\barI_{i,j}}(\cdot,t)
\zeta_{\trLine}(\cdot,t)\zeta_{\barI_{i,j}}(\cdot,t)
\\\nonumber
&~~~~
+ \lambda_{\bar\Omega_{i}}^{\barI_{k,i}}(\cdot,t)
\zeta_{\trLine}(\cdot,t)\zeta_{\barI_{k,i}}(\cdot,t)
&&\text{in } B_{\hat r}(\trLine(t))\cap W_{\bar\Omega_i}(t),
\\
\label{eq:defTripleLineCutoffAwayTripleLine}
\eta_{\trLine}(\cdot,t) &:= 0
&&\text{in } \Rd[3]\setminus B_{\hat r}(\trLine(t)).
\end{align}
Because of~\eqref{eq:decompTripleLine}, the 
definitions~\eqref{eq:defTripleLineCutoffInterfaceWedge}--\eqref{eq:defTripleLineCutoffAwayTripleLine}
provide a definition of~$\eta_{\trLine}$ on the whole space-time domain~$\Rd[3]{\times}[0,T]$.
Property~\eqref{eq:supportCutOffTripleLine} is obviously satisfied in view of~\eqref{eq:defTripleLineCutoffAwayTripleLine}.
Since~$\lambda_{\bar\Omega_{i}}^{\barI_{i,k}}=1{-}\lambda_{\bar\Omega_{i}}^{\barI_{i,j}}$
on interpolation wedges~$W_{\bar\Omega_i}$, we indeed have~$\eta_{\trLine}(x,t)\in [0,1]$
for all~$(x,t)\in\Rd[3]{\times}[0,T]$.

\textit{Step 4: Partition of unity property along the surface cluster}.
Define the bulk cutoff $\eta_{\mathrm{bulk}}:=1-\eta_{\trLine}-\eta_{\barI_{1,2}}
-\eta_{\barI_{2,3}}-\eta_{\barI_{3,1}}$. We claim that
\begin{align}
\label{eq:partitionOfUnity}
\eta_{\mathrm{bulk}} = 0 \quad\text{along } \mathcal{I} = \bigcup_{i,j\in\{1,2,3\},\,i\neq j}\barI_{i,j}.
\end{align}
Fix~$t\in [0,T]$ and a point~$x\in\mathcal{I}(t)\setminus B_{\hat r}(\trLine(t))$.
There exists a unique pair of distinct
phases~$i,j\in\{1,2,3\}$ such that~$x\in\barI_{i,j}(t)$
and, because of the localization properties~\eqref{eq:supportCutOffTwoInterfaces}
and~\eqref{eq:supportCutOffTripleLine},~$\eta_{\mathrm{bulk}}(x,t)=1-\eta_{\barI_{i,j}}(x,t)$. It then
follows from the definitions~\eqref{eq:defInterfaceCutoffAwayTripleLine} 
and~\eqref{eq:interfaceCutoff} that~$\eta_{\mathrm{bulk}}(x,t)=0$.

Now fix~$t\in [0,T]$ and consider a point~$x\in\mathcal{I}(t)\cap B_{\hat r}(\trLine(t))$. 
Let~$i,j\in\{1,2,3\}$ be the unique pair of distinct
phases such that~$x\in\barI_{i,j}(t)$. As a consequence of~\eqref{eq:interfaceWedge},
the localization properties~\eqref{eq:supportCutOffInterfacesNearTripleLine}--\eqref{eq:supportCutOffTripleLine},
and the definitions~\eqref{eq:defInterfaceCutoffInterfaceWedge} resp.\
\eqref{eq:defTripleLineCutoffInterfaceWedge}, we obtain that
$\eta_{\mathrm{bulk}}(x,t) = 1 - \eta_{\trLine}(x,t) - \eta_{\barI_{i,j}}(x,t)
= 1 - \zeta_{\barI_{i,j}}(x,t)$. Hence, $\eta_{\mathrm{bulk}}(x,t)=0$ due to
the definition~\eqref{eq:interfaceCutoff}. This concludes the proof of~\eqref{eq:partitionOfUnity}.

\textit{Step 5: Regularity of cutoff functions.} 
Fix~$i,j\in\{1,2,3\}$ such that~$i\neq j$.
The required derivatives of~$\eta_{\barI_{i,j}}$
exist in~$\Rd[3]\setminus \overline{B_{\hat r}(\trLine(t))}$
resp.\ in~$B_{\hat r}(\trLine(t))\setminus\trLine(t)$
in a pointwise sense for all~$t\in [0,T]$ due to
the definition of~$\eta_{\barI_{i,j}}$ from \textit{Step~2} of this proof,
the definitions~\eqref{eq:interfaceCutoff} and~\eqref{eq:tripleLineCutoff},
the properties of the interpolation functions from Lemma~\ref{lemma:existenceInterpolationFunctions},
and the regularity~\eqref{eq:regEstimateInterfaceCutoff}
and~\eqref{eq:regEstimateTripleLineCutoff} of the auxiliary cutoff functions.
By the choice~\eqref{eq:choiceDelta1}
of the scale~$\delta\in (0,1]$, these derivatives do not jump across the boundary
of~$B_{\hat r}(\trLine(t))$. Hence, $\partial_t\eta_{\barI_{i,j}}$ and~$\nabla \eta_{\barI_{i,j}}$
exist in a pointwise sense in~$\Rd[3]{\times}[0,T]\setminus\trLine$.

In terms of the required bounds~\eqref{eq:regEstimateCutoffs} 
for these derivatives, the only possibly critical cases are those
for which at least one derivative hits an interpolation function present in
the definitions~\eqref{eq:defInterfaceCutoffInterpolWedge1}
resp.\ \eqref{eq:defInterfaceCutoffInterpolWedge2}. The blow-up of these derivatives (see 
Lemma~\ref{lemma:existenceInterpolationFunctions}), however, is always cured by 
the presence of the term~$1{-}\zeta_{\trLine}$.
In summary, $\eta_{\barI_{i,j}}\in (C^0_tC^1_x\cap C^1_tC^0_x)
(\mathbb{R}^3{\times}[0,T]\setminus\trLine)$
and~\eqref{eq:regEstimateCutoffs} holds true.

Along similar lines, one checks that~$\partial_t\eta_{\trLine}$ and~$\nabla \eta_{\trLine}$
exist in a pointwise sense in~$\Rd[3]{\times}[0,T]\setminus\trLine$.
The required cancellations to counteract the blow-up of derivatives of the interpolation
parameter in interpolation wedges this time comes from 
recalling~$\lambda_{\bar\Omega_{i}}^{\barI_{k,i}}=1{-}\lambda_{\bar\Omega_{i}}^{\barI_{i,j}}$,
which in turn ensures that potentially critical terms always involve the term
$\zeta_{\barI_{i,j}}-\zeta_{\barI_{k,i}}$.
As the latter vanishes to first order at the triple line and has a bounded
second-order spatial derivative within interpolation wedges, it follows that
$\eta_{\trLine}\in (C^0_tC^1_x\cap C^1_tC^0_x)(\mathbb{R}^3{\times}[0,T]\setminus\trLine)$,
and that~\eqref{eq:regEstimateCutoffs} holds true.

\textit{Step 6: Estimates for the bulk cutoff.}
By construction it holds~$\eta_{\mathrm{bulk}}(\cdot,t)\equiv 1$ outside of the space-time 
domain~$B_{\hat{r}}(\trLine(t))\cup\bigcup_{i,j\in\{1,2,3\},i\neq j}
\Psi_{i,j}(\barI_{i,j}(t){\times}\{t\}{\times}[-\bar r,\bar r])$
for all~$t\in [0,T]$. Hence, for a proof of~$\eta_{\mathrm{bulk}}\in [0,1]$
and the estimates~\eqref{eq:coercivityBulkCutoff}--\eqref{eq:coercivityTimeDerivBulkCutoff},
we may restrict our attention to~$\bigcup_{i,j\in\{1,2,3\},i\neq j}
\Psi_{i,j}(\barI_{i,j}(t){\times}\{t\}{\times}[-\bar r,\bar r])\setminus B_{\hat{r}}(\trLine(t))$
and~$B_{\hat{r}}(\trLine(t))$ for all~$t\in [0,T]$.

In view of the choice~\eqref{eq:defLocScale} of the localization scale~$\bar r$, 
one may argue separately on $\Psi_{i,j}(\barI_{i,j}(t){\times}\{t\}{\times}[-\bar r,\bar r])
\setminus B_{\hat{r}}(\trLine(t))$ for each pair of distinct
phases~$i,j\in\{1,2,3\}$ and all~$t\in [0,T]$. Because of the localization
properties~\eqref{eq:supportCutOffTwoInterfaces}
and~\eqref{eq:supportCutOffTripleLine} it holds
\begin{align}
\nonumber
\eta_{\mathrm{bulk}}(\cdot,t) &= 1 - \eta_{\barI_{i,j}}(\cdot,t)
\\&\label{eq:bulkCutoffAwayTripleLine}
= 1 - \zeta_{\barI_{i,j}}(\cdot,t)
\quad\text{in } \Psi_{i,j}(\barI_{i,j}(t){\times}\{t\}{\times}[-\bar r,\bar r])
\setminus B_{\hat{r}}(\trLine(t))
\end{align}
for all~$t\in [0,T]$. Hence, $\eta_{\mathrm{bulk}}\in [0,1]$
and the estimates~\eqref{eq:coercivityBulkCutoff}--\eqref{eq:coercivityTimeDerivBulkCutoff}
follow from the definitions~\eqref{eq:defInterfaceCutoffAwayTripleLine} 
and~\eqref{eq:interfaceCutoff} in combination with the quadratic behaviour around the origin
of the profile~\eqref{QuadraticInterfaceCutOff}. Note in this context that~\eqref{eq:defLocScale}
precisely ensures that the error can be expressed in terms of~$\dist(\cdot,\mathcal{I})$ as required.

We move on to the argument in the ball~$B_{\hat r}(\trLine(t))$ for all~$t\in [0,T]$.
On interface wedges, we infer from the localization properties~\eqref{eq:supportCutOffInterfacesNearTripleLine}
and~\eqref{eq:supportCutOffTwoInterfaces} as well as the definitions~\eqref{eq:defInterfaceCutoffInterfaceWedge}
and~\eqref{eq:defTripleLineCutoffInterfaceWedge} that
\begin{align}
\nonumber
\eta_{\mathrm{bulk}}(\cdot,t) &= 1 
- \eta_{\trLine}(\cdot,t) - \eta_{\barI_{i,j}}(\cdot,t)
\\&\label{eq:bulkCutoffInterfaceWedge}
= 1 - \zeta_{\barI_{i,j}}(\cdot,t)
\quad\text{in } B_{\hat{r}}(\trLine(t)) \cap W_{\barI_{i,j}}(t)
\end{align}
for all~$t\in [0,T]$, so that the asserted bounds follow as in the previous case
together with the bound~\eqref{eq:compDistances3} to express the error in terms of~$\dist(\cdot,\mathcal{I})$.

On interpolation wedges, we may compute based on~\eqref{eq:supportCutOffInterfacesNearTripleLine}
and~\eqref{eq:supportCutOffTwoInterfaces} as well as~\eqref{eq:defInterfaceCutoffInterpolWedge1}
and~\eqref{eq:defTripleLineCutoffInterpolWedge} that
(recall the relation~$\lambda_{\bar\Omega_{i}}^{\barI_{k,i}}=1{-}\lambda_{\bar\Omega_{i}}^{\barI_{i,j}}$)
\begin{align}
\nonumber
\eta_{\mathrm{bulk}}(\cdot,t) &= 1 - \eta_{\trLine}(\cdot,t)
- \eta_{\barI_{i,j}}(\cdot,t) - \eta_{\barI_{k,i}}(\cdot,t)
\\&\label{eq:bulkCutoffInterpolWedge}
= \lambda_{\bar\Omega_{i}}^{\barI_{i,j}}(1{-}\zeta_{\barI_{i,j}})(\cdot,t)
+ (1{-}\lambda_{\bar\Omega_{i}}^{\barI_{i,j}})(1{-}\zeta_{\barI_{k,i}})(\cdot,t)
\quad\text{in } B_{\hat{r}}(\trLine(t)) \cap W_{\bar\Omega_{i}}(t)
\end{align}
for all~$t\in [0,T]$. It follows immediately that~$\eta_{\mathrm{bulk}}(\cdot,t)\in [0,1]$.
Moreover, the definition~\eqref{eq:interfaceCutoff}, the quadratic behavior around the origin
of the profile~\eqref{QuadraticInterfaceCutOff}, and the estimate~\eqref{eq:compDistances} directly imply~\eqref{eq:coercivityBulkCutoff}.
Finally, since
\begin{align}
\label{eq:bulkCutoffGradientInterpolWedge}
\nabla\eta_{\mathrm{bulk}}(\cdot,t) &= 
- \lambda_{\bar\Omega_{i}}^{\barI_{i,j}}(\cdot,t)\nabla\zeta_{\barI_{i,j}}(\cdot,t)
- (1{-}\lambda_{\bar\Omega_{i}}^{\barI_{i,j}})(\cdot,t)\nabla\zeta_{\barI_{k,i}}(\cdot,t)
\\&~~~\nonumber
- (\zeta_{\barI_{i,j}}{-}\zeta_{\barI_{k,i}})(\cdot,t)\nabla\lambda_i^{\barI_{i,j}}(\cdot,t)
\quad\text{in } B_{\hat{r}}(\trLine(t)) \cap W_{\bar\Omega_{i}}(t)
\end{align}
for all~$t\in [0,T]$, we obtain~\eqref{eq:coercivityGradientBulkCutoff} 
and~\eqref{eq:coercivityTimeDerivBulkCutoff}
because the blow-up of~$\nabla\lambda_{\bar\Omega_{i}}^{\barI_{i,j}}$, see 
Lemma~\ref{lemma:existenceInterpolationFunctions}, is cancelled to required
order by the term~$\zeta_{\barI_{i,j}}{-}\zeta_{\barI_{k,i}}$. Indeed, the 
latter vanishes to first order at the triple line and has a bounded
second-order spatial derivative within interpolation wedges.

\textit{Step 7: Error estimates for interface cutoffs.}
The bounds~\eqref{eq:coercivityInterfaceCutoff}--\eqref{eq:coercivityTimeDerivativeInterfaceCutoff}
are trivially fulfilled outside of~$B_{\hat r}(\trLine(t))$
for all~$t\in[0,T]$ by construction and the choice~\eqref{eq:defLocScale} of the
localization scale~$\bar r$. In view of the 
definitions~\eqref{eq:defInterfaceCutoffInterfaceWedge}--\eqref{eq:defInterfaceCutoffInterpolWedge2}
and the definition~\eqref{eq:tripleLineCutoff}, we also have
$\eta_{\barI_{k,i}}(\cdot,t) \leq 1 - \zeta_{\trLine}(\cdot,t)
\leq C\dist^2(\cdot,\trLine(t))$ in~$B_{\hat r}(\trLine(t))\cap\big(W_{\barI_{k,i}}(t)
\cup W_{\bar\Omega_i}(t)\cup W_{\bar\Omega_k}(t)\big)$
for all~$t\in[0,T]$. Recalling the bounds~\eqref{eq:compDistances} and~\eqref{eq:compDistances2},
this in turn implies~\eqref{eq:coercivityInterfaceCutoff}
throughout~$B_{\hat r}(\trLine(t))$ for all~$t\in[0,T]$.

For a proof of~\eqref{eq:coercivityGradientInterfaceCutoff}
and~\eqref{eq:coercivityTimeDerivativeInterfaceCutoff}, note that
\begin{align*}
|(\partial_t,\nabla) \eta_{\barI_{k,i}}(\cdot,t)| \leq C(1 {-} \zeta_{\trLine}(\cdot,t)) +
C|(\partial_t,\nabla)\dist(\cdot,\trLine(t))|\dist(\cdot,\trLine(t))
\end{align*}
in~$B_{\hat r}(\trLine(t))\cap\big(W_{\barI_{k,i}}(t)
\cup W_{\bar\Omega_i}(t)\cup W_{\bar\Omega_k}(t)\big)$
for all~$t\in[0,T]$. The first right hand side term is estimated as before,
while the second one is of required order due to the bounds~\eqref{eq:compDistances} resp.\ \eqref{eq:compDistances2}
and the regularity of the projection onto the triple line~$\trLine$,
see Definition~\ref{def:locRadius}, which in turn one may employ throughout~$B_{\hat r}(\trLine(t))$ based
on the representation~$|x{-}P_{\trLine}(x,t)|=\dist(x,\trLine(t))$. 
\end{proof}

\subsection{Construction of a gradient-flow calibration}
We have everything in place to provide the construction
of a gradient-flow calibration for a regular double bubble
smoothly evolving by MCF.
We first introduce a global definition for the vector fields $\xi_{i,j}$
extending the unit normal vector fields $\no_{i,j}|_{\barI_{i,j}}$ of the interfaces $\barI_{i,j}$.

\begin{construction}[Global extensions of the unit normal vector fields~$\no_{i,j}|_{\barI_{i,j}}$]
\label{globalXi}
Let $(\bar\Omega_1,\bar\Omega_2,\bar\Omega_3)$ be a regular 
double bubble smoothly evolving by MCF in the sense of Definition~\ref{def:smoothSolution} 
on a time interval~$[0,T]$. Let $(\eta_{\trLine},(\eta_{\barI_{i,j}})_{i,j\in\{1,2,3\},i\neq j})$
be the partition of unity from the proof of Lemma~\ref{lemma:partitionOfUnity}. Fix $i,j\in\{1,2,3\}$
with $i\neq j$. We then define a family of vector fields
\begin{align}
\label{eq:localXiInterface}
\xi_{i,j}^{\barI_{k,l}}\colon\bigcup_{t\in [0,T]}
\supp\eta_{\barI_{k,l}}(\cdot,t)\times\{t\}&\to \overline{B_1(0)},
\quad k,l\in\{1,2,3\},\, k\neq l,
\\
\label{eq:localXiTripleLine}
\xi_{i,j}^{\trLine}\colon\bigcup_{t\in [0,T]}
\supp\eta_{\trLine}(\cdot,t)\times\{t\}&\to \overline{B_1(0)}
\end{align}
by means of the following procedure:

For $k,l\in\{1,2,3\}$ with $(k,l)\in\{(i,j),(j,i)\}$ we let
$\xi_{i,j}^{\barI_{k,l}}$ be the corresponding vector field
from Construction~\ref{gradientFlowCalibrationInterface}
for the interface $\barI_{k,l}$. For $k,l\in\{1,2,3\}$ with $(k,l)\notin\{(i,j),(j,i)\}$ and $k\neq l$ we define
$\xi^{\barI_{k,l}}_{i,j}:=\frac{1}{2}(\frac{\sigma_{l,i}-\sigma_{l,j}}{\sigma_{i,j}}\xi^{\barI_{k,l}}_{k,l}
+\frac{\sigma_{k,i}-\sigma_{k,j}}{\sigma_{i,j}}\xi^{\barI_{k,l}}_{l,k})$, which
is well-defined reversing the roles of $i,j$ and $k,l$ in the previous step.
Finally, we denote by~$\xi_{i,j}^{\trLine}$ the corresponding vector field
from the proof of Proposition~\ref{prop:compatibilityEstimatesLocalConstructions}.

With this family of local vector fields in place, we now define 
a global vector field~$\xi_{i,j}\colon \Rd[3]\times [0,T] \to \Rd[3]$ by means of
\begin{align}
\label{eq:globalXi}
\xi_{i,j} &:=  \eta_{\trLine} \xi^{\trLine}_{i,j}
+ \eta_{\barI_{1,2}} \xi_{i,j}^{\barI_{1,2}}
+ \eta_{\barI_{2,3}} \xi_{i,j}^{\barI_{2,3}}
+ \eta_{\barI_{3,1}} \xi_{i,j}^{\barI_{3,1}}
\end{align}
for all distinct pairs of phases~$i,j\in\{1,2,3\}$.
\hfill$\diamondsuit$
\end{construction}

We proceed by showing that the vector fields from the previous construction
satisfy the structural assumption~\eqref{Calibrations} and the coercivity estimate~\eqref{LengthControlXi}
of a gradient-flow calibration.

\begin{lemma}
\label{lemma:calibrationProperty}
Let the assumptions and notation of Construction~\ref{globalXi} be in place.
Fix~$i,j\in\{1,2,3\}$ such that~$i\neq j$.
The vector field $\xi_{i,j}$ is then
subject to the following list of properties:
\begin{itemize}[leftmargin=0.7cm]
\item[i)] It holds $\xi_{i,j} \in (C^0_tC^1_x\cap C^1_tC^0_x)(\mathbb{R}^3{\times}[0,T]\setminus\trLine)$,
					and there exists a constant~$C>0$ which depends only on the data of the smoothly evolving regular double 
					bubble~$(\bar\Omega_1,\bar\Omega_2,\bar\Omega_3)$ on~$[0,T]$ such that
					\begin{align}
					\label{eq:regEstimateGlobalExtensionXi}
					|(\partial_t,\nabla)\xi_{i,j}| \leq C \quad\text{in } \mathbb{R}^3{\times}[0,T]\setminus\trLine.
					\end{align}
					Moreover, it holds $\xi_{i,j}=\no_{i,j}$ along~$\barI_{i,j}$.
\item[ii)] For each phase~$i\in\{1,2,3\}$, there exists a vector 
					 field~$\xi_{i}\colon\mathbb{R}^3{\times}[0,T]\to\Rd[3]$
					 of class $(C^0_tC^1_x\cap C^1_tC^0_x)(\mathbb{R}^3{\times}[0,T]\setminus\trLine)$
					 such that $\sigma_{i,j}\xi_{i,j}=\xi_{i}-\xi_{j}$ holds true on~$\mathbb{R}^3{\times}[0,T]$.
\item[iii)] There exists a constant~$c\in (0,1)$, which depends only on the data of the smoothly evolving regular double 
						bubble~$(\bar\Omega_1,\bar\Omega_2,\bar\Omega_3)$ on~$[0,T]$, such that
						\begin{align}
						\label{eq:quadraticLengthControl}
						c(\dist^2(\cdot,\barI_{i,j}) \wedge 1) \leq 1 - |\xi_{i,j}| 
						\quad\text{in } \mathbb{R}^3{\times}[0,T].
						\end{align}
\end{itemize}
\end{lemma}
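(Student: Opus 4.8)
The three assertions are established by combining the local estimates already proven in Section~\ref{sec:localCalibrationInterface} and Section~\ref{sec:existenceCalibrationTripleLine} with the properties of the partition of unity from Lemma~\ref{lemma:partitionOfUnity} and the compatibility estimates of Proposition~\ref{prop:compatibilityEstimatesLocalConstructions}. The key structural point is the decomposition~\eqref{eq:globalXi}: away from the triple line every cutoff is of class $(C^0_tC^1_x\cap C^1_tC^0_x)$ with bounded derivatives by~\eqref{eq:regEstimateCutoffs}, each local vector field $\xi_{i,j}^{\barI_{k,l}}$ resp.\ $\xi_{i,j}^{\trLine}$ is defined and has bounded derivatives on the corresponding support (by~\eqref{eq:regXiInterface}, \eqref{eq:regXiTripleLine}, and the fact that the ``transversal'' fields $\xi_{i,j}^{\barI_{k,l}}$ with $(k,l)\neq(i,j),(j,i)$ are fixed linear combinations of such fields), and the supports of the cutoffs overlap only where the relevant constructions are compatible by Proposition~\ref{prop:compatibilityEstimatesLocalConstructions}. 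I would first record that $\xi_{i,j}$ is well-defined and continuous on all of $\Rd[3]{\times}[0,T]$ — including across $\trLine$ — because on a neighborhood of the triple line the sum reduces, using~\eqref{eq:supportCutOffInterfacesNearTripleLine}--\eqref{eq:supportCutOffTwoInterfaces}, to a convex-type combination of $\xi_{i,j}^{\trLine}$ and at most two fields $\xi_{i,j}^{\barI_{k,l}}$, each continuous up to $\trLine$, and the difference $|\xi_{i,j}^{\barI_{k,l}}-\xi_{i,j}^{\trLine}|$ is $O(\dist(\cdot,\barI_{i,j}))$ by~\eqref{eq:compEstimate1}, which in particular is bounded. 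The identity $\xi_{i,j}=\no_{i,j}$ along $\barI_{i,j}$ is then immediate: at a point of $\barI_{i,j}$ every nonzero cutoff multiplies a field equal to $\no_{i,j}$ there (the interface field by~\eqref{eq:localGradFlowCalibrationInterface}, the triple-line field by~\eqref{eq:extensionPropertyAux1}, and the transversal fields $\xi_{i,j}^{\barI_{k,l}}$ do not contribute since $\eta_{\barI_{k,l}}$ vanishes to second order on $\barI_{i,j}$ by~\eqref{eq:coercivityInterfaceCutoff}), while $\sum\eta=1$ along the cluster by the partition-of-unity property~\eqref{eq:partitionOfUnity}.

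For item~\textit{i)}, the regularity away from $\trLine$ follows by the product rule applied to~\eqref{eq:globalXi}: $(\partial_t,\nabla)\xi_{i,j}$ is a sum of terms of the form $\eta\,(\partial_t,\nabla)\xi^{\bullet}_{i,j}$ and $(\partial_t,\nabla)\eta\,\otimes\,\xi^{\bullet}_{i,j}$, all bounded by~\eqref{eq:regEstimateCutoffs}, \eqref{eq:regXiInterface}, \eqref{eq:regXiTripleLine}; the only subtle point is differentiability across the boundaries of interface/interpolation wedges and across $\partial B_{\hat r}(\trLine(t))$, which is handled exactly as in Lemma~\ref{lemma:partitionOfUnity} — the cutoffs were constructed precisely so that no jump in first derivatives occurs there. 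This gives~\eqref{eq:regEstimateGlobalExtensionXi}.

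For item~\textit{ii)}, I would construct the $\xi_i$ from the local constructions by the analogous globalization. Locally near each interface and near the triple line, the fields $\xi_i^{\bullet}$ exist with $\sigma_{i,j}\xi_{i,j}^{\bullet}=\xi_i^{\bullet}-\xi_j^{\bullet}$: near $\barI_{k,l}$ one sets $\sigma_{i,j}\xi_{i,j}^{\barI_{k,l}}=\xi_i^{\barI_{k,l}}-\xi_j^{\barI_{k,l}}$ by choosing $\xi_i^{\barI_{k,l}}$ as a fixed affine function of the single normal $\no_{k,l}$ dictated by the embeddable simplex of surface tensions (cf.\ Remark~\ref{remark:surface tensions}), which reproduces the defining formula for $\xi^{\barI_{k,l}}_{i,j}$ in Construction~\ref{globalXi}; near $\trLine$ one takes the $\xi_i^{\trLine}$ built from the simplex together with $\xi^{\trLine}_{1,2},\xi^{\trLine}_{2,3},\xi^{\trLine}_{3,1}$, which are consistent because of the extended Herring relation~\eqref{eq:extensionPropertyAux5}. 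Setting $\xi_i:=\eta_{\trLine}\xi_i^{\trLine}+\sum_{k,l}\eta_{\barI_{k,l}}\xi_i^{\barI_{k,l}}$, linearity of this definition in the superscript data and $\sum\eta+\eta_{\mathrm{bulk}}=1$ (with $\eta_{\mathrm{bulk}}$ multiplying, say, a fixed vertex $q_i$ of the simplex) yields $\sigma_{i,j}\xi_{i,j}=\xi_i-\xi_j$ globally, and the regularity is inherited as in item~\textit{i)}.

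For item~\textit{iii)}, the coercivity estimate, I would split according to whether $\dist(x,\barI_{i,j})$ is small or not. Where it is bounded below, $1-|\xi_{i,j}|$ is bounded below by a positive constant because $\xi_{i,j}$ is a convex combination (coefficients in $[0,1]$ summing to $\le 1$, with $\eta_{\mathrm{bulk}}$ making up the deficit) of unit-or-subunit vectors that point in genuinely different directions away from a neighborhood of $\barI_{i,j}$ — more precisely, by~\eqref{eq:compEstimate1} and~\eqref{eq:timeEvolutionXiTripleLine}-type bounds all contributing fields lie in a half-space not containing $\no_{i,j}$ once one is at bounded distance, so their combination has length uniformly below $1$; alternatively, and more robustly, one uses the partition-of-unity structure to write $|\xi_{i,j}|\le 1-c\,\eta_{\mathrm{bulk}}-c\sum_{(k,l)\ne(i,j)}\eta_{\barI_{k,l}}$ and invokes $\eta_{\mathrm{bulk}}+\sum_{(k,l)\ne(i,j)}\eta_{\barI_{k,l}}\ge c'>0$ at bounded distance. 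The quadratic regime near $\barI_{i,j}$ is the technical heart: here I would write $\xi_{i,j}=\eta_{\barI_{i,j}}\xi_{i,j}^{\barI_{i,j}}+(\text{rest})$ with $\text{rest}=\eta_{\trLine}\xi^{\trLine}_{i,j}+\sum_{(k,l)\ne(i,j),(j,i)}\eta_{\barI_{k,l}}\xi_{i,j}^{\barI_{k,l}}$, and compute
\begin{align*}
|\xi_{i,j}|^2 &= \eta_{\barI_{i,j}}^2 + 2\eta_{\barI_{i,j}}\,\xi_{i,j}^{\barI_{i,j}}\cdot(\text{rest}) + |\text{rest}|^2.
\end{align*}
Using~\eqref{eq:compEstimate2} to replace $\xi_{i,j}^{\barI_{i,j}}\cdot\xi^{\trLine}_{i,j}$ by $|\xi^{\trLine}_{i,j}|^2+O(\dist^2)$ — and noting by~\eqref{eq:coercivityInterfaceCutoff} that the transversal fields in $\text{rest}$ carry a factor $O(\dist^2(\cdot,\barI_{i,j}))$, hence contribute negligibly — one obtains $|\xi_{i,j}|^2 = (\eta_{\barI_{i,j}}+\eta_{\trLine})^2 + O(\dist^2(\cdot,\barI_{i,j}))$ modulo the length defect of $\xi^{\trLine}_{i,j}$, which by the triple-line construction (namely~\eqref{eq:timeEvolutionLengthXiTripleLine} / the representation~\eqref{eq:repLengthInterpolatedXiTripleLine} and~\eqref{eq:compEstimateInterpolWedgeXi}) is itself $1-O(\dist^2(\cdot,\barI_{i,j}))$. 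Since $\eta_{\barI_{i,j}}+\eta_{\trLine}=1-\eta_{\mathrm{bulk}}-\sum_{(k,l)\ne(i,j),(j,i)}\eta_{\barI_{k,l}}$ and the subtracted terms are $\ge c\,(\dist^2(\cdot,\mathcal{I})\wedge 1)\ge c\,(\dist^2(\cdot,\barI_{i,j})\wedge 1)$ near $\barI_{i,j}$ by~\eqref{eq:coercivityBulkCutoff}, \eqref{eq:coercivityInterfaceCutoff} and the comparability~\eqref{eq:compDistances}--\eqref{eq:compDistances3}, taking square roots via $1-|\xi_{i,j}|\ge \tfrac12(1-|\xi_{i,j}|^2)$ yields~\eqref{eq:quadraticLengthControl}. \textbf{The main obstacle} is precisely this last estimate: one must make sure that the $O(\dist^2)$ errors coming from normalizing the triple-line field, from the compatibility mismatch~\eqref{eq:compEstimate2}, and from the transversal fields are all dominated by the positive quadratic lower bound extracted from $\eta_{\mathrm{bulk}}$ and the ``wrong'' interface cutoffs — which forces one to choose the localization constants ($\delta$ in Lemma~\ref{lemma:partitionOfUnity}, and implicitly the coercivity constant $c$) small enough, and to use the distance-comparability estimates of Definition~\ref{def:locRadius} to translate everything into $\dist(\cdot,\barI_{i,j})$ uniformly across interface and interpolation wedges.
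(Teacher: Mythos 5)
For items \textit{i)} and \textit{ii)} your argument follows the paper's: the product rule on~\eqref{eq:globalXi}, the regularity of the cutoffs and of the local building blocks, and the linearity of the local $\xi_i^{\bullet}$-constructions. One small wrinkle: your parenthetical ``(with $\eta_{\mathrm{bulk}}$ multiplying, say, a fixed vertex $q_i$ of the simplex)'' should be dropped. If you actually added a term $\eta_{\mathrm{bulk}}\,q_i$ to $\xi_i$, then $\xi_i-\xi_j$ would pick up a spurious contribution $\eta_{\mathrm{bulk}}(q_i-q_j)\neq 0$, breaking $\sigma_{i,j}\xi_{i,j}=\xi_i-\xi_j$; the paper (correctly) defines $\xi_i$ without any bulk term precisely because~\eqref{eq:globalXi} has none either.

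For item \textit{iii)} you take a genuinely different route from the paper, and this is where a real problem appears. You split into the two cases (near and far from $\barI_{i,j}$) as the paper does, but near $\barI_{i,j}$ you expand $|\xi_{i,j}|^2$, replace the cross term $\xi^{\barI_{i,j}}_{i,j}\cdot\xi^{\trLine}_{i,j}$ via the compatibility estimate~\eqref{eq:compEstimate2}, and arrive at $|\xi_{i,j}|^2=(\eta_{\barI_{i,j}}+\eta_{\trLine})^2+O(\dist^2(\cdot,\barI_{i,j}))$. You then correctly identify as ``the main obstacle'' that the $O(\dist^2)$ error has the same order as the positive lower bound coming from~\eqref{eq:coercivityBulkCutoff}, and you propose to resolve it by ``choosing the localization constants small enough.'' That remedy is not available: the constant in~\eqref{eq:compEstimate2} and the coercivity constant in~\eqref{eq:coercivityBulkCutoff} both depend only on the data of the smoothly evolving double bubble, and there is no free parameter to tune to make the error constant dominate. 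This is a genuine gap in your argument.

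The gap is not in the statement --- it is created by passing to $|\xi_{i,j}|^2$ and then using~\eqref{eq:compEstimate2}, which is a two-sided estimate and therefore introduces an error of unknown sign. Two observations close it. First, the error actually has a definite (favorable) sign: since $|\xi^{\trLine}_{i,j}|\leq 1$ and $|\xi^{\barI_{i,j}}_{i,j}|=1$, Cauchy--Schwarz gives $\xi^{\barI_{i,j}}_{i,j}\cdot\xi^{\trLine}_{i,j}\leq 1$, so one always has $|\xi_{i,j}|^2\leq(\eta_{\barI_{i,j}}+\eta_{\trLine}+\text{transversal cutoffs})^2$ without any compensating error to worry about; invoking~\eqref{eq:compEstimate2} here is both unnecessary and misleading. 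Second --- and this is the cleaner observation the paper makes --- one never needs to square at all. Since every local field appearing in~\eqref{eq:globalXi} is a unit or subunit vector, the plain triangle inequality gives $|\xi_{i,j}|\leq\eta_{\trLine}+\eta_{\barI_{1,2}}+\eta_{\barI_{2,3}}+\eta_{\barI_{3,1}}=1-\eta_{\mathrm{bulk}}$ throughout the region where $(k,l)\in\{(i,j),(j,i)\}$, and then the lower bound in~\eqref{eq:coercivityBulkCutoff} together with the comparability of $\dist(\cdot,\mathcal{I})$ and $\dist(\cdot,\barI_{i,j})$ on those wedges yields~\eqref{eq:quadraticLengthControl} at once. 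For the case $(k,l)\notin\{(i,j),(j,i)\}$, the crucial structural input --- which you mention but don't isolate --- is that the surface tension simplex is non-degenerate, so $|\xi^{\barI_{k,l}}_{i,j}|\leq\varepsilon<1$ with $\varepsilon$ depending only on $\sigma$; this is what produces the uniform length deficit away from $\barI_{i,j}$, with $1-\zeta_{\trLine}\geq c\,(\dist^2(\cdot,\trLine)\wedge 1)\geq c\,(\dist^2(\cdot,\barI_{i,j})\wedge 1)$ closing the estimate. In short: your structure is right, but the squaring plus the compatibility estimate is a detour that manufactures a problem not present in the paper's triangle-inequality argument.
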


\begin{proof}
The proof is performed in three steps.

\textit{Step 1: Regularity and structural properties.} The asserted qualitative regularity
of the vector fields $\xi_{i,j}$
together with the estimate~\eqref{eq:regEstimateGlobalExtensionXi}
follows from the definition~\eqref{eq:globalXi},
the regularity~\eqref{eq:regEstimateCutoffs} of the cutoff functions,
as well as the regularity of the local building blocks~\eqref{eq:localXiInterface} and~\eqref{eq:localXiTripleLine}
in form of
\begin{align}
\label{eq:auxRegEstimatesLocalXi}
\big|(\partial_t,\nabla)(\xi_{i,j}^{\barI_{k,l}},\xi^{\trLine}_{i,j})\big| 
\leq C \quad\text{in } \Rd[3]{\times} [0,T],
\end{align}
which in turn is a consequence of the definitions from Construction~\ref{globalXi}
and the regularity estimates~\eqref{eq:regXiInterface} and~\eqref{eq:regXiTripleLine}.
The property~$\xi_{i,j}|_{\barI_{i,j}}\equiv\no_{i,j}$
is immediate from the definition~\eqref{eq:globalXi}, the fact that 
$(\eta_{\trLine},(\eta_{\barI_{i,j}})_{i,j\in\{1,2,3\},i\neq j})$
constitutes a partition of unity along the network~$\mathcal{I}$,
and the corresponding property in terms of the local constructions
from Lemma~\ref{lemma:gradientFlowCalibrationInterface} and Proposition~\ref{prop:gradientFlowCalibrationTripleLine}.

The existence of vector fields $(\xi_i)_{i\in\{1,2,3\}}$ of class 
$(C^0_tC^1_x\cap C^1_tC^0_x)(\mathbb{R}^3{\times}[0,T]\setminus\trLine)$
such that $\sigma_{i,j}\xi_{i,j}=\xi_{i}-\xi_{j}$ holds true on~$\mathbb{R}^3{\times}[0,T]$
follows from the following considerations.
Let $i,j,k\in\{1,2,3\}$ be pairwise distinct.
We define $\xi^{\trLine}_{i}:=\frac{1}{3}(\sigma_{i,j}\xi^{\trLine}_{i,j}+\sigma_{i,k}\xi^{\trLine}_{i,k})$.
Since $\sigma_{1,2}\xi^{\trLine}_{1,2}+\sigma_{2,3}\xi^{\trLine}_{2,3}+\sigma_{3,1}\xi^{\trLine}_{3,1}=0$
holds true in the support of $\eta_{\trLine}$, see Proposition~\ref{prop:gradientFlowCalibrationTripleLine},
we indeed obtain $\sigma_{i,j}\xi^{\trLine}_{i,j}=\xi^{\trLine}_{i}-\xi^{\trLine}_{j}$.
Next, fix $k,l\in\{1,2,3\}$ with $k\neq l$, and let $i\in\{1,2,3\}$.
We may then define $\xi^{\barI_{k,l}}_i := \frac{1}{2}
(\sigma_{l,i}\xi^{\barI_{k,l}}_{k,l}+\sigma_{k,i}\xi^{\barI_{k,l}}_{l,k})$.
Again, plugging in the definitions immediately shows
$\sigma_{i,j}\xi^{\barI_{k,l}}_{i,j}=\xi^{\barI_{k,l}}_{i}-\xi^{\barI_{k,l}}_{j}$
for all $i,j\in\{1,2,3\}$ such that~$i\neq j$. Defining
$\xi_i := \eta_{\trLine}\xi^{\trLine}_i + \eta_{\barI_{1,2}}\xi^{\barI_{1,2}}_i 
+ \eta_{\barI_{2,3}}\xi^{\barI_{2,3}}_i + \eta_{\barI_{3,1}}\xi^{\barI_{3,1}}_i$
therefore entails the desired conclusion.

\textit{Step 2: A coercivity condition.}
As a preparation for the proof of~\eqref{eq:quadraticLengthControl},
we claim that there exists a constant $\varepsilon=\varepsilon(\sigma)\in (0,1)$ such that for all $i,j\in\{1,2,3\}$
with $i\neq j$, as well as all $k,l\in\{1,2,3\}$ with $(k,l)\notin\{(i,j),(j,i)\}$ and $k\neq l$
it holds
\begin{align}
\label{eq:coercivityAbsentPhase}
\big|\xi^{\barI_{k,l}}_{i,j}\big| \leq \varepsilon < 1.
\end{align}
Indeed, the estimate~\eqref{eq:coercivityAbsentPhase} is an 
immediate consequence of the definition
of the vector field~$\xi^{\barI_{k,l}}_{i,j}=\frac{1}{2}(\frac{\sigma_{l,i}-\sigma_{l,j}}{\sigma_{i,j}}\xi^{\barI_{k,l}}_{k,l}
+\frac{\sigma_{k,i}-\sigma_{k,j}}{\sigma_{i,j}}\xi^{\barI_{k,l}}_{l,k})$, see Construction~\ref{globalXi},
and the fact that $|\frac{\sigma_{l,i}-\sigma_{l,j}}{\sigma_{i,j}}|<1$
resp.\ $|\frac{\sigma_{k,i}-\sigma_{k,j}}{\sigma_{i,j}}|<1$, which in turn
is true since the matrix of surface tensions satisfies the strict triangle inequality
by assumption.

\textit{Step 3: Proof of the estimate~\eqref{eq:quadraticLengthControl}.}
Fix~$i,j\in\{1,2,3\}$ such that~$i\neq j$.
By the localization properties~\eqref{eq:supportCutOffInterfaces}--\eqref{eq:supportCutOffTripleLine}
and the choice~\eqref{eq:defLocScale} of the localization scale~$\bar r$,
it suffices to establish the desired estimate throughout~$\supp\eta_{\barI_{k,l}}(\cdot,t)\setminus B_{\hat r}(\trLine(t))$,
$B_{\hat r}(\trLine(t))\cap W_{\barI_{k,l}}(t)$ or $B_{\hat r}(\trLine(t))\cap W_{\bar\Omega_l}(t)$ 
for all distinct phases~$k,l\in\{1,2,3\}$ and all~$t\in [0,T]$. Hence, fix such~$k,l\in\{1,2,3\}$
with~$k\neq l$ and~$t\in [0,T]$, and then observe that due to the definition~\eqref{eq:globalXi}
and the localization properties~\eqref{eq:supportCutOffInterfaces}--\eqref{eq:supportCutOffTripleLine}
it holds
\begin{align}
\label{eq:representationGlobalXi}
\xi_{i,j} = \begin{cases}
						\eta_{\barI_{k,l}}\xi^{\barI_{k,l}}_{i,j} 
						& \text{on } \supp\eta_{\barI_{k,l}}(\cdot,t)\setminus B_{\hat r}(\trLine(t)),
						\\[0.5ex]
						\eta_{\trLine}\xi^{\trLine}_{i,j} + \eta_{\barI_{k,l}}\xi^{\barI_{k,l}}_{i,j} 
						& \text{on } B_{\hat r}(\trLine(t))\cap W_{\barI_{k,l}}(t),
						\\[0.5ex]
						\eta_{\trLine}\xi^{\trLine}_{i,j} + \eta_{\barI_{k,l}}\xi^{\barI_{k,l}}_{i,j}
						+ \eta_{\barI_{l,m}}\xi^{\barI_{l,m}}_{i,j}
						& \text{on } B_{\hat r}(\trLine(t))\cap W_{\bar\Omega_l}(t),\,m\in\{1,2,3\}\setminus\{k,l\}.
						\end{cases}
\end{align}
Based on~\eqref{eq:representationGlobalXi}, we now distinguish between two cases.

\textit{Substep 3.1:} 
Assume that~$(k,l)\in\{(i,j),(j,i)\}$. In other words, both the phases~$k$ and~$l$
are present at the interface~$\barI_{i,j}$. In this case, observe first that throughout the 
three domains represented in~\eqref{eq:representationGlobalXi} it holds 
due to~\eqref{eq:compDistances}, \eqref{eq:compDistances3} and~\eqref{eq:defLocScale} that
the distance to~$\mathcal{I}$ is comparable to the distance to~$\barI_{i,j}$:
$\frac{1}{C}\dist(\cdot,\barI_{i,j})\leq\dist(\cdot,\mathcal{I})\leq C\dist(\cdot,\barI_{i,j})$
for some constant~$C\geq 1$. Furthermore, it follows from~\eqref{eq:representationGlobalXi}
and the triangle inequality that~$|\xi_{i,j}|\leq\mathrm 1{-}\eta_{\mathrm{bulk}}$
throughout the three domains represented in~\eqref{eq:representationGlobalXi}.
Hence, the bound~\eqref{eq:quadraticLengthControl} follows from the lower bound in~\eqref{eq:coercivityBulkCutoff}.

\textit{Substep 3.2:} Assume that~$(k,l)\notin\{(i,j),(j,i)\}$.
In the first case of~\eqref{eq:representationGlobalXi}, the estimate~\eqref{eq:quadraticLengthControl}
follows immediately from the coercivity condition~\eqref{eq:coercivityAbsentPhase}.
In the third case of~\eqref{eq:representationGlobalXi}, we may additionally
assume that~$(l,m)\notin\{(i,j),(j,i)\}$; otherwise, we are again in the setting of the
argument from \textit{Substep~3.1} above. Plugging in the definitions~\eqref{eq:defInterfaceCutoffInterpolWedge1},
\eqref{eq:defInterfaceCutoffInterpolWedge2} and~\eqref{eq:defTripleLineCutoffInterpolWedge}, 
as well as exploiting the coercivity condition~\eqref{eq:coercivityAbsentPhase}
for both the vector fields~$\xi^{\barI_{k,l}}_{i,j}$ and~$\xi^{\barI_{l,m}}_{i,j}$
(which is admissible due to our assumptions), we may estimate from below
\begin{align*}
1-|\xi_{i,j}| &\geq 1 {-} \big(\eta_{\trLine} + \varepsilon\eta_{\barI_{k,l}} + \varepsilon\eta_{\barI_{l,m}}\big) 
\\&
\geq (1-\varepsilon)(1-\zeta_{\trLine}) \geq (1-\varepsilon)(\dist^2(\cdot,\trLine) \wedge 1)
\end{align*}
on~$B_{\hat r}(\trLine(t))\cap W_{\bar\Omega_l}(t)$ for all~$t\in [0,T]$,
so that~\eqref{eq:quadraticLengthControl} follows again. Since
the argument proceeds similarly in the second case of~\eqref{eq:representationGlobalXi},
we may conclude the proof.
%
%
\end{proof}

The next step consists of providing the global definition
of a suitable velocity field along which a smoothly
evolving regular double bubble and our associated constructions
are transported.

\begin{construction}[Global extension of velocity vector field]
\label{globalVelocity}
Let $(\bar\Omega_1,\bar\Omega_2,\bar\Omega_3)$ be a regular 
double bubble smoothly evolving by MCF in the sense of Definition~\ref{def:smoothSolution} 
on a time interval~$[0,T]$. Let $(\eta_{\trLine},(\eta_{\barI_{i,j}})_{i,j\in\{1,2,3\},i\neq j})$
be the partition of unity from the proof of Lemma~\ref{lemma:partitionOfUnity}. We then introduce a family of vector fields
\begin{align}
\label{eq:localVelocityInterface}
B^{\barI_{i,j}}\colon\bigcup_{t\in [0,T]}
\supp\eta_{\barI_{i,j}}(\cdot,t)\times\{t\}&\to\Rd[3]
\quad\text{for all } i,j\in\{1,2,3\},\, i\neq j,
\\
\label{eq:localVelocityTripleLine}
B^{\trLine}\colon\bigcup_{t\in [0,T]}
\supp\eta_{\trLine}(\cdot,t)\times\{t\}&\to\Rd[3]
\end{align}
as follows: the velocity field~$B^{\trLine}$ denotes the
corresponding vector field from the proof of 
Proposition~\ref{prop:compatibilityEstimatesLocalConstructions},
whereas~$B^{\barI_{i,j}}$ is the velocity field from Construction~\ref{gradientFlowCalibrationInterface}
with tangential component chosen as in the proof of Proposition~\ref{prop:compatibilityEstimatesLocalConstructions}.

With this family of local vector fields in place, we now define 
a global velocity field by means of
\begin{align}
\label{eq:globalVelocity}
B &:=  \eta_{\trLine} B^{\trLine}
+ \eta_{\barI_{1,2}} B^{\barI_{1,2}}
+ \eta_{\barI_{2,3}} B^{\barI_{2,3}}
+ \eta_{\barI_{3,1}} B^{\barI_{3,1}}
\end{align}
throughout~$\Rd[3]{\times}[0,T]$.
\hfill$\diamondsuit$
\end{construction}

A crucial ingredient for the proof of the estimates~\eqref{TransportEquationXi} and~\eqref{LengthConservation}
are the following bounds on the advective derivatives of the partition of unity
from Lemma~\ref{lemma:partitionOfUnity}.

\begin{lemma}
\label{lemma:advectionCutOff}
Let the assumptions and notation of Construction~\ref{globalVelocity} be in place.
In particular, $(\eta_{\trLine},(\eta_{\barI_{i,j}})_{i,j\in\{1,2,3\},i\neq j})$
denotes the partition of unity from the proof of Lemma~\ref{lemma:partitionOfUnity}. 
Then $B\in C^0_tC^1_x(\mathbb{R}^3{\times}[0,T]\setminus\trLine)$ with corresponding estimate
\begin{align}
\label{eq:regEstimateGlobalVel}
|B| + |\nabla B| \leq C \quad\text{in } \Rd[3]{\times}[0,T]\setminus\trLine.
\end{align}
Moreover, the velocity field~$B$ gives rise to an improved estimate
on the advective derivative of the bulk cutoff in form of
\begin{align}
\label{eq:advectionCutoffBulk}
|\partial_t\eta_{\mathrm{bulk}} + (B\cdot\nabla)\eta_{\mathrm{bulk}}|
&\leq C(\dist^2(\cdot,\mathcal{I})\wedge 1)
\quad\text{in } \Rd[3]\times [0,T],
\end{align}
and similarly for all pairwise distinct phases~$i,j,k\in\{1,2,3\}$
\begin{align}
\label{eq:advectionCutoffInterface}
|\partial_t\eta_{\barI_{k,i}} + (B\cdot\nabla)\eta_{\barI_{k,i}}|
&\leq C(\dist^2(\cdot,\barI_{i,j})\wedge 1)
\quad\text{in } \Rd[3]\times [0,T].
\end{align}
The constant~$C > 0$ in the 
estimates~\emph{\eqref{eq:regEstimateGlobalVel}--\eqref{eq:advectionCutoffInterface}}
depends only on the data of the smoothly evolving regular double 
bubble~$(\bar\Omega_1,\bar\Omega_2,\bar\Omega_3)$ on~$[0,T]$.
\end{lemma}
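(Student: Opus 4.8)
The plan is to prove Lemma~\ref{lemma:advectionCutOff} in three stages, mirroring the structure of the proof of Lemma~\ref{lemma:partitionOfUnity}. First I would establish the regularity estimate~\eqref{eq:regEstimateGlobalVel}. This follows directly from the definition~\eqref{eq:globalVelocity}, the regularity estimate~\eqref{eq:regEstimateCutoffs} for the cutoff functions, and the corresponding local bounds~\eqref{eq:regVelocityInterface} resp.~\eqref{eq:regVelocityTripleLine} for $B^{\barI_{i,j}}$ and $B^{\trLine}$; continuity of $B$ across the boundaries of the interface/interpolation wedges is inherited from the continuity of the cutoffs $\eta_{\barI_{i,j}}$, $\eta_{\trLine}$ already verified in Step~5 of the proof of Lemma~\ref{lemma:partitionOfUnity}, together with the compatibility relation~\eqref{eq:compInterfaceTripleLineVel} stating that $B^{\barI_{i,j}}=B^{\trLine}$ throughout $W_{\barI_{i,j}}\cap\mathcal{N}_{\hat r/2}(\trLine)$.

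For the advective estimate on the bulk cutoff~\eqref{eq:advectionCutoffBulk}, I would localize exactly as in Step~6 of the proof of Lemma~\ref{lemma:partitionOfUnity}: away from $B_{\hat r}(\trLine(t))$ one uses the representation~\eqref{eq:bulkCutoffAwayTripleLine}, $\eta_{\mathrm{bulk}}=1-\zeta_{\barI_{i,j}}$, so that $\partial_t\eta_{\mathrm{bulk}}+(B\cdot\nabla)\eta_{\mathrm{bulk}}=-(\partial_t+B\cdot\nabla)\zeta_{\barI_{i,j}}$, and since $B=\widetilde B^{\barI_{i,j}}$ is (up to the interface localization of the tangential component) the velocity transporting $\barI_{i,j}$, the advective derivative of $s_{i,j}$ along $B$ vanishes on $\barI_{i,j}$ to first order, giving a gain: the profile $\zeta$ from~\eqref{QuadraticInterfaceCutOff} has $\zeta'(0)=0$, so $(\partial_t+B\cdot\nabla)\zeta_{\barI_{i,j}}=\zeta'(s_{i,j}/\delta\bar r)\,\frac{1}{\delta\bar r}(\partial_t+B\cdot\nabla)s_{i,j}=O(\dist^2(\cdot,\barI_{i,j}))$, using $|\zeta'(r)|\le C|r|$ and the fact that $(\partial_t+B\cdot\nabla)s_{i,j}$ is itself $O(\dist(\cdot,\barI_{i,j}))$ by~\eqref{eq:evolutionSignedDistance}. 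On interface wedges $W_{\barI_{i,j}}\cap B_{\hat r}(\trLine(t))$ the same representation~\eqref{eq:bulkCutoffInterfaceWedge} applies, post-processed by~\eqref{eq:compDistances3} to convert $\dist(\cdot,\barI_{i,j})$ to $\dist(\cdot,\mathcal{I})$. On interpolation wedges $W_{\bar\Omega_i}\cap B_{\hat r}(\trLine(t))$ one uses~\eqref{eq:bulkCutoffInterpolWedge}, i.e.\ $\eta_{\mathrm{bulk}}=\lambda_{\bar\Omega_i}^{\barI_{i,j}}(1-\zeta_{\barI_{i,j}})+(1-\lambda_{\bar\Omega_i}^{\barI_{i,j}})(1-\zeta_{\barI_{k,i}})$; the advective derivatives of $1-\zeta_{\barI_{i,j}}$ and $1-\zeta_{\barI_{k,i}}$ are each $O(\dist^2(\cdot,\mathcal{I}))$ as above (using~\eqref{eq:compDistances}), while the term where the derivative hits $\lambda_{\bar\Omega_i}^{\barI_{i,j}}$ carries the factor $\zeta_{\barI_{i,j}}-\zeta_{\barI_{k,i}}$, which vanishes to first order at $\trLine$ with bounded second spatial derivative; combined with the improved advective estimate~\eqref{eq:advDerivInterpolFunction} for $\lambda_{\bar\Omega_i}^{\barI_{i,j}}$ (and the fact that $B$ agrees with $\vec V_{\trLine}(P_{\trLine}(\cdot))$ up to $O(\dist(\cdot,\trLine))$ on the interpolation wedge, by~\eqref{eq:compatibilityTripleLineVelocity} and~\eqref{eq:extensionPropertyAux4}) this term is also $O(\dist^2(\cdot,\trLine))=O(\dist^2(\cdot,\mathcal{I}))$ via~\eqref{eq:compDistances}.

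The estimate~\eqref{eq:advectionCutoffInterface} for the interface cutoffs follows from the analogous bookkeeping: outside $B_{\hat r}(\trLine(t))$ the bound is trivial by the choice~\eqref{eq:defLocScale} of $\bar r$ (the support of $\eta_{\barI_{k,i}}$ is disjoint from a neighborhood of $\barI_{i,j}$); inside, one uses the definitions~\eqref{eq:defInterfaceCutoffInterfaceWedge}--\eqref{eq:defInterfaceCutoffInterpolWedge2}, notes $\eta_{\barI_{k,i}}\le C(1-\zeta_{\trLine})$, and computes $(\partial_t+B\cdot\nabla)\eta_{\barI_{k,i}}$ term by term. The factor $1-\zeta_{\trLine}$ vanishes to second order at $\trLine$, so any derivative hitting an interpolation function $\lambda$ (which blows up like $\dist^{-1}(\cdot,\trLine)$ by~\eqref{eq:regEstimateInterpolFunction}) still leaves an overall $O(\dist(\cdot,\trLine))$; the advective derivative hitting $\zeta_{\trLine}$ gains a factor $\dist(\cdot,\trLine)$ from $\zeta'(0)=0$ plus the fact that $B$ transports $\trLine$ along $\vec V_{\trLine}$ (so $(\partial_t+B\cdot\nabla)\dist(\cdot,\trLine)=O(\dist(\cdot,\trLine))$ modulo the tangential-velocity mismatch, which is immaterial since we only need one order); the advective derivative hitting $\zeta_{\barI_{k,i}}$ is $O(\dist^2(\cdot,\barI_{k,i}))$ as in the bulk case. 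Summing and converting distances via~\eqref{eq:compDistances} resp.~\eqref{eq:compDistances2} gives the claimed bound $O(\dist^2(\cdot,\barI_{i,j}))$. I expect the main obstacle to be the interpolation-wedge term in~\eqref{eq:advectionCutoffBulk}: there one genuinely needs the \emph{improved} advective bound~\eqref{eq:advDerivInterpolFunction} on $\lambda_{\bar\Omega_i}^{\barI_{i,j}}$ together with the cancellation structure of $\zeta_{\barI_{i,j}}-\zeta_{\barI_{k,i}}$, and care must be taken that the velocity $B$ appearing in the advective derivative is close enough to $\vec V_{\trLine}(P_{\trLine}(\cdot,t),t)$ — which is exactly the content of the compatibility estimate~\eqref{eq:compEstimate3} from Proposition~\ref{prop:compatibilityEstimatesLocalConstructions} — so that Lemma~\ref{lemma:existenceInterpolationFunctions}~iii) can be applied with only an $O(\dist(\cdot,\trLine))$ correction.
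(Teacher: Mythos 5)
The regularity claim~\eqref{eq:regEstimateGlobalVel} and the argument for the bulk cutoff~\eqref{eq:advectionCutoffBulk} are essentially correct and follow the same route as the paper's proof (chain rule through $\zeta$, exploiting that $B$ is close to the local interface velocity, and the cancellation structure of $\zeta_{\barI_{i,j}}-\zeta_{\barI_{k,i}}$ combined with~\eqref{eq:advDerivInterpolFunction} on interpolation wedges). You do conflate the global $B$ with the local velocity in a few places (e.g.\ ``$B=\widetilde B^{\barI_{i,j}}$'' away from the triple line, and invoking~\eqref{eq:evolutionSignedDistance} directly for $(\partial_t+B\cdot\nabla)s_{i,j}$ without noting that this equation holds for the local $B^{\barI_{i,j}}$ and one must still control $B-B^{\barI_{i,j}}$), but these are easily repaired and do not affect the order of the errors.

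There is, however, a genuine gap in your treatment of~\eqref{eq:advectionCutoffInterface} on interpolation wedges. You write that a derivative hitting the interpolation function~$\lambda$, with its $\dist^{-1}(\cdot,\trLine)$ blow-up from~\eqref{eq:regEstimateInterpolFunction}, combined with the factor $1-\zeta_{\trLine}=O(\dist^2(\cdot,\trLine))$, ``still leaves an overall $O(\dist(\cdot,\trLine))$.'' But after post-processing with~\eqref{eq:compDistances}, $O(\dist(\cdot,\trLine))$ only becomes $O(\dist(\cdot,\barI_{i,j}))$, which is one power short of the quadratic bound $O(\dist^2(\cdot,\barI_{i,j}))$ that~\eqref{eq:advectionCutoffInterface} requires. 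To close the gap you must use the \emph{improved advective} bound $|(\partial_t+B\cdot\nabla)\lambda^{\barI_{i,j}}_{\bar\Omega_i}|\le C$ (i.e.~\eqref{eq:advectionInterpolFunctionByGlobalVel} in the paper), which follows from~\eqref{eq:advDerivInterpolFunction}, the extension property~\eqref{eq:extensionPropertyAux4} that $B=\vec V_{\trLine}$ on $\trLine$, and the Lipschitz bounds~\eqref{eq:regEstimateGlobalVel},~\eqref{eq:regEstimateInterpolFunction}; then $O(1)\cdot O(\dist^2(\cdot,\trLine))$ gives the required quadratic error. You correctly invoke exactly this mechanism for the bulk cutoff estimate, so this is likely an oversight rather than a conceptual misunderstanding, but as stated the interface-cutoff estimate does not go through.
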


\begin{proof}
The proof is decomposed into three steps.

\textit{Step 1: Regularity estimates.}
The asserted qualitative regularity of the velocity field~$B$ 
together with the associated estimate~\eqref{eq:regEstimateGlobalVel}
follow from its definition~\eqref{eq:globalVelocity},
the regularity~\eqref{eq:regEstimateCutoffs} of the cutoff functions,
as well as the regularity of the local building blocks~\eqref{eq:localVelocityInterface} 
and~\eqref{eq:localVelocityTripleLine} in form of
\begin{align}
\label{eq:regEstimateLocalVelocitiesAux}
\big|(B^{\trLine},B^{\barI_{i,j}})\big| 
+ \big|\nabla (B^{\trLine},B^{\barI_{i,j}})\big| \leq C
\quad\text{in } \Rd[3]{\times}[0,T],
\end{align}
which is a consequence of~\eqref{eq:regVelocityInterface}
and~\eqref{eq:regVelocityTripleLine}.

\textit{Step 2: Proof of~\eqref{eq:advectionCutoffBulk}.}
It holds~$\eta_{\mathrm{bulk}}(\cdot,t)\equiv 1$ outside of the space-time 
domain~$B_{\hat{r}}(\trLine(t))\cup\bigcup_{i,j\in\{1,2,3\},i\neq j}
\Psi_{i,j}(\barI_{i,j}(t){\times}\{t\}{\times}[-\bar r,\bar r])$
for all~$t\in [0,T]$ by construction. Hence, for a proof of the
estimate~\eqref{eq:advectionCutoffBulk},
we may restrict our attention to $\bigcup_{i,j\in\{1,2,3\},i\neq j}
\Psi_{i,j}(\barI_{i,j}(t){\times}\{t\}{\times}[-\bar r,\bar r])\setminus B_{\hat{r}}(\trLine(t))$
and~$B_{\hat{r}}(\trLine(t))$ for all~$t\in [0,T]$.
By the choice~\eqref{eq:defLocScale} of the localization scale~$\bar r$, 
one may even argue separately on $\Psi_{i,j}(\barI_{i,j}(t){\times}\{t\}{\times}[-\bar r,\bar r])
\setminus B_{\hat{r}}(\trLine(t))$ for each pair of distinct
phases~$i,j\in\{1,2,3\}$ and all~$t\in [0,T]$.

\textit{Substep 2.1: Proof of~\eqref{eq:advectionCutoffBulk}
on~$\Psi_{i,j}(\barI_{i,j}(t){\times}\{t\}{\times}[-\bar r,\bar r])
\setminus B_{\hat{r}}(\trLine(t))$.} 
It follows from the representation~\eqref{eq:bulkCutoffAwayTripleLine}
and the definition~\eqref{eq:globalVelocity}
that $B=\eta_{\barI_{i,j}}B^{\barI_{i,j}}$ and
\begin{align}
\label{eq:auxAdvection1}
|\partial_t\eta_{\mathrm{bulk}} + (B\cdot\nabla)\eta_{\mathrm{bulk}}|
\leq \big|\partial_t\zeta_{\barI_{i,j}} + (B^{\bar{I}_{i,j}}\cdot\nabla)\zeta_{\barI_{i,j}}\big|
+ \eta_{\mathrm{bulk}}\big|(B^{\bar{I}_{i,j}}\cdot\nabla)\zeta_{\barI_{i,j}}\big|
\end{align}
throughout~$\Psi_{i,j}(\barI_{i,j}(t){\times}\{t\}{\times}[-\bar r,\bar r])
\setminus B_{\hat{r}}(\trLine(t))$ for all~$t\in [0,T]$.

Recall that the signed distance~$s_{i,j}$ satisfies
\begin{align}
\label{eq:evolSignedDistanceAux1000}
\partial_t s_{i,j}+(B^{\barI_{i,j}}\cdot\nabla)s_{i,j}=0
\quad\text{in } \mathrm{im}(\Psi_{i,j})
\end{align}
as a consequence of the choice of the local velocity~$B^{\barI_{i,j}}$, cf.\ Construction~\ref{globalVelocity},
Construction~\ref{gradientFlowCalibrationInterface} and~\eqref{eq:evolutionSignedDistance}. 
Hence, we infer from the definition~\eqref{eq:interfaceCutoff} 
and an application of the chain rule that
\begin{align}
\label{eq:evolCutoffProfile}
\partial_t\zeta_{\barI_{i,j}} + (B^{\bar{I}_{i,j}}\cdot\nabla)\zeta_{\barI_{i,j}} = 0
\quad\text{in } \mathrm{im}(\Psi_{i,j}).
\end{align}
For an estimate of the second right hand side term of~\eqref{eq:auxAdvection1},
we simply make use of the upper bound for the bulk cutoff~\eqref{eq:coercivityBulkCutoff}
as well as the regularity estimates~\eqref{eq:regEstimateLocalVelocitiesAux} and~\eqref{eq:regEstimateInterfaceCutoff}
of~$B^{\barI_{i,j}}$ and~$\zeta_{\barI_{i,j}}$, respectively.

\textit{Substep 2.2: Proof of~\eqref{eq:advectionCutoffBulk}
on~$B_{\hat{r}}(\trLine(t))\cap W_{\barI_{i,j}}(t)$.}
Throughout the interface wedge~$W_{\barI_{i,j}}(t)\cap B_{\hat r}(\trLine(t))$, it holds $B=\eta_{\trLine}B^{\trLine}
+ \eta_{\barI_{i,j}}B^{\barI_{i,j}}$ thanks to the representation~\eqref{eq:bulkCutoffInterfaceWedge}
and the definition~\eqref{eq:globalVelocity}. We may then estimate,
making use again of~\eqref{eq:bulkCutoffInterfaceWedge},
\begin{align}
\label{eq:auxAdvection2}
|\partial_t\eta_{\mathrm{bulk}} + (B\cdot\nabla)\eta_{\mathrm{bulk}}|
&\leq
\big|\partial_t\zeta_{\barI_{i,j}} + (B^{\bar{I}_{i,j}}\cdot\nabla)\zeta_{\barI_{i,j}}\big|
\\&~~~\nonumber
+ \eta_{\mathrm{bulk}}\big|(B^{\bar{I}_{i,j}}\cdot\nabla)\zeta_{\barI_{i,j}}\big|
+ \eta_{\trLine}\big|B^{\trLine} - B^{\bar{I}_{i,j}}\big| |\nabla\zeta_{\barI_{i,j}}|
\end{align}
on~$W_{\barI_{i,j}}(t)\cap B_{\hat r}(\trLine(t))$ for all~$t\in [0,T]$.
Thanks to~\eqref{eq:interfaceWedge}, the identity~\eqref{eq:evolCutoffProfile}
is still applicable on an interface wedge. In particular, the first two right hand side
terms of~\eqref{eq:auxAdvection2} can be estimated along the same lines as in
\textit{Substep~2.1}. The third right hand side term is of required
order due to the compatibility estimate~\eqref{eq:compEstimate3}, the bound~\eqref{eq:compDistances3},
and the regularity estimate~\eqref{eq:regEstimateInterfaceCutoff}.

\textit{Substep 2.3: Proof of~\eqref{eq:advectionCutoffBulk}
on~$B_{\hat{r}}(\trLine(t))\cap W_{\bar\Omega_i}(t)$.}
Throughout~$W_{\bar\Omega_{i}}(t)\cap B_{\hat r}(\trLine(t))$,
we may represent, as a consequence of the identity~\eqref{eq:bulkCutoffInterpolWedge}, 
the global velocity defined by~\eqref{eq:globalVelocity}
in form of $B=\eta_{\trLine}B^{\trLine}+ \eta_{\barI_{i,j}}B^{\barI_{i,j}}+\eta_{\barI_{k,i}}B^{\barI_{k,i}}$.
Plugging in~\eqref{eq:bulkCutoffInterpolWedge} and adding zero twice then entails
\begin{align}
\nonumber
&|\partial_t\eta_{\mathrm{bulk}} {+} (B\cdot\nabla)\eta_{\mathrm{bulk}}|
\\&\leq\label{eq:auxAdvection3}
\big|\partial_t\lambda^{\barI_{i,j}}_{\bar\Omega_i} {+} (B\cdot\nabla)\lambda^{\barI_{i,j}}_{\bar\Omega_i}\big|
|\zeta_{\barI_{i,j}}{-}\zeta_{\barI_{k,i}}|
\\&~~~\nonumber
+\lambda^{\barI_{i,j}}_{\bar\Omega_i} 
\big|\partial_t\zeta_{\barI_{i,j}} {+} (B^{\barI_{i,j}}\cdot\nabla)\zeta_{\barI_{i,j}}\big|
+(1{-}\lambda^{\barI_{i,j}}_{\bar\Omega_i} )
\big|\partial_t\zeta_{\barI_{i,k}} {+} (B^{\barI_{k,i}}\cdot\nabla)\zeta_{\barI_{k,i}}\big|
\\&~~~\nonumber
+\lambda^{\barI_{i,j}}_{\bar\Omega_i} \eta_{\trLine}\big|B^{\barI_{i,j}}{-}B^{\trLine}\big||\nabla\zeta_{\barI_{i,j}}|
+(1{-}\lambda^{\barI_{i,j}}_{\bar\Omega_i} )\eta_{\trLine}\big|B^{\barI_{k,i}}{-}B^{\trLine}\big||\nabla\zeta_{\barI_{k,i}}|
\\&~~~\nonumber
+\lambda^{\barI_{i,j}}_{\bar\Omega_i} \eta_{\barI_{k,i}}\big|B^{\barI_{i,j}}{-}B^{\barI_{k,i}}\big||\nabla\zeta_{\barI_{i,j}}|
+(1{-}\lambda^{\barI_{i,j}}_{\bar\Omega_i} )
\eta_{\barI_{i,j}}\big|B^{\barI_{k,i}}{-}B^{\barI_{i,j}}\big||\nabla\zeta_{\barI_{k,i}}|
\\&~~~\nonumber
+\lambda^{\barI_{i,j}}_{\bar\Omega_i} \eta_{\mathrm{bulk}}\big|(B^{\bar{I}_{i,j}}\cdot\nabla)\zeta_{\barI_{i,j}}\big|
+(1{-}\lambda^{\barI_{i,j}}_{\bar\Omega_i} )\eta_{\mathrm{bulk}}\big|(B^{\bar{I}_{k,i}}\cdot\nabla)\zeta_{\barI_{k,i}}\big|.
\end{align}
The last eight right hand side terms of~\eqref{eq:auxAdvection3} can be estimated by means of the 
same ingredients as in the previous two substeps, relying in the process also 
on~\eqref{eq:interpolWedgeHalfspace} and~\eqref{eq:compDistances}. 
Hence, we focus only on the first right hand side term of~\eqref{eq:auxAdvection3}.
Since the difference~$\zeta_{\barI_{i,j}}-\zeta_{\barI_{k,i}}$ vanishes to first order at the triple line and has a bounded
second-order spatial derivative within interpolation wedges, we have the bound
\begin{align}
\label{eq:aux1000}
|\zeta_{\barI_{i,j}}-\zeta_{\barI_{k,i}}| \leq C\dist^2(\cdot,\trLine)
\end{align}
on~$W_{\bar\Omega_i}(t)\cap B_{\hat r}(\trLine(t))$ for all~$t\in [0,T]$.
Since the advective derivative of the interpolation parameter is bounded within
interpolation wedges in form of~\eqref{eq:advDerivInterpolFunction}, we
may add zero and exploit the property~\eqref{eq:extensionPropertyVelocity}
as well as the regularity estimates~\eqref{eq:regEstimateGlobalVel}
and~\eqref{eq:regEstimateInterpolFunction} to obtain
\begin{align}
\label{eq:advectionInterpolFunctionByGlobalVel}
\big|\partial_t\lambda^{\barI_{i,j}}_{\bar\Omega_i} 
{+} (B\cdot\nabla)\lambda^{\barI_{i,j}}_{\bar\Omega_i}\big| \leq C
\end{align}
throughout~$W_{\bar\Omega_i}(t)\cap B_{\hat r}(\trLine(t))$ for all~$t\in [0,T]$.
Post-processing~\eqref{eq:aux1000} by means of~\eqref{eq:compDistances}
thus entails~\eqref{eq:advectionCutoffBulk} on $W_{\bar\Omega_i}(t)\cap B_{\hat r}(\trLine(t))$ 
for all~$t\in [0,T]$.

\textit{Step 3: Proof of~\eqref{eq:advectionCutoffInterface}.}
Fix~$i,j,k\in\{1,2,3\}$ such that~$\{i,j,k\}=\{1,2,3\}$.
Due to the localization properties~\eqref{eq:supportCutOffInterfaces}--\eqref{eq:supportCutOffTwoInterfaces},
the choice~\eqref{eq:defLocScale} of the localization scale~$\bar r$,
and the regularity estimates~\eqref{eq:regEstimateCutoffs} and~\eqref{eq:regEstimateGlobalVel},
the estimate~\eqref{eq:advectionCutoffInterface} is satisfied for trivial reasons
outside of $B_{\hat r}(\trLine(t))\cap(W_{\bar\Omega_k}(t)\cup W_{\bar\Omega_i}(t)\cup W_{\barI_{k,i}}(t))$
for all~$t\in[0,T]$.

\textit{Substep 3.1: Proof of~\eqref{eq:advectionCutoffInterface} 
on $B_{\hat r}(\trLine(t))\cap W_{\barI_{k,i}}(t)$.}
Based on the representation~\eqref{eq:bulkCutoffInterfaceWedge}
as well as the definition~\eqref{eq:defInterfaceCutoffInterfaceWedge},
it holds $\eta_{\barI_{k,i}}=(1{-}\zeta_{\trLine})(1{-}\eta_{\mathrm{bulk}})$
on $B_{\hat r}(\trLine(t))\cap W_{\barI_{k,i}}(t)$ for all~$t\in [0,T]$.
By an application of the product rule and the already established
estimate~\eqref{eq:advectionCutoffBulk} for the advective derivative
of the bulk cutoff we thus infer
\begin{align*}
\big|\partial_t\eta_{\barI_{k,i}} + (B\cdot\nabla)\eta_{\barI_{k,i}}\big|
\leq |\partial_t\zeta_{\trLine} + (B\cdot\nabla)\zeta_{\trLine}| 
+ C(\dist^2(\cdot,\barI_{i,j})\wedge 1)
\end{align*}
on $B_{\hat r}(\trLine(t))\cap W_{\barI_{k,i}}(t)$ for all~$t\in [0,T]$.
Expressing $\dist(x,\trLine(t))=|x{-}P_{\trLine}(x,t)|$
for all~$x\in B_{\hat r}(\trLine(t))$ and all~$t\in [0,T]$,
as well as recalling the relations~\eqref{eq:timeEvolutionProjContactLine}
and~\eqref{eq:extensionPropertyVelocity}, we may compute
\begin{align}
\nonumber
\partial_t \dist(x,\trLine(t))
&= - \frac{x{-}P_{\trLine}(x,t)}{|x{-}P_{\trLine}(x,t)|} \cdot B(P_{\trLine}(x,t),t)
\\& \label{eq:transportByGlobalVelDistanceTripleLine}
= - \big(B(P_{\trLine}(x,t),t)\cdot\nabla\big)\dist(x,\trLine(t))
\end{align}
for all~$x\in B_{\hat r}(\trLine(t))\setminus\trLine(t)$ and all~$t\in [0,T]$.
It is now a consequence of the chain rule and 
the regularity estimates~\eqref{eq:regEstimateGlobalVel} resp.\ \eqref{eq:regEstimateTripleLineCutoff} that
\begin{align}
\label{eq:transportByGlobalVelTripleLineCutoff}
|\partial_t\zeta_{\trLine} + (B\cdot\nabla)\zeta_{\trLine}| 
\leq C(\dist^2(\cdot,\trLine)\wedge 1)
\end{align}
throughout $B_{\hat r}(\trLine(t))\setminus\trLine(t)$ for all~$t\in [0,T]$.
Post-processing the previous display by means of~\eqref{eq:compDistances2}
then yields~\eqref{eq:advectionCutoffInterface} 
on $B_{\hat r}(\trLine(t))\cap W_{\barI_{k,i}}(t)$ for all~$t\in [0,T]$.

\textit{Substep 3.2: Proof of~\eqref{eq:advectionCutoffInterface} 
on $B_{\hat r}(\trLine(t))\cap W_{\bar\Omega_{i}}(t)$.}
Recall~\eqref{eq:defInterfaceCutoffInterpolWedge1}--\eqref{eq:defInterfaceCutoffInterpolWedge2}, i.e.,
$\eta_{\barI_{k,i}}=\lambda_{\bar\Omega_i}^{\barI_{k,i}}(1{-}\zeta_{\trLine})\zeta_{\barI_{k,i}}$
on $B_{\hat r}(\trLine(t))\cap W_{\bar\Omega_{i}}(t)$ for all~$t\in [0,T]$.
It then directly follows from the product rule, 
the trivial estimate $1{-}\zeta_{\trLine} \leq C(\dist^2(\cdot,\trLine)\wedge 1)$,
the estimate~\eqref{eq:advectionInterpolFunctionByGlobalVel} on the advective derivative
of the interpolation function~$\lambda_{\bar\Omega_i}^{\barI_{k,i}}=1{-}\lambda_{\bar\Omega_i}^{\barI_{i,j}}$,
the regularity estimates~\eqref{eq:regEstimateInterfaceCutoff} and~\eqref{eq:regEstimateGlobalVel},
the estimate~\eqref{eq:transportByGlobalVelTripleLineCutoff}, and finally the bound~\eqref{eq:compDistances} 
that~\eqref{eq:advectionCutoffInterface} holds true
on $B_{\hat r}(\trLine(t))\cap W_{\bar\Omega_{i}}(t)$ for all~$t\in [0,T]$.

This concludes the proof of Lemma~\ref{lemma:advectionCutOff}
since the argument on the other relevant interpolation wedge
proceeds analogously.
\end{proof}

\subsection{Approximate transport equations and motion by mean curvature}
We establish the validity of the estimates~\eqref{TransportEquationXi}--\eqref{Dissip}
in terms of the global extensions~$(\xi_{i,j})_{i,j\in\{1,2,3\},i\neq j}$ of the unit normal vector fields
from Construction~\ref{globalXi} and the global extension~$B$ of the velocity field
from Construction~\ref{globalVelocity}.

\begin{lemma}
\label{lemma:evolutionEquationsGradientFlowCalibration}
Let the assumptions and notation from Construction~\ref{globalXi}
and Construction~\ref{globalVelocity} be in place. There exists 
a constant $C>0$, which depends only on the data of the smoothly evolving regular double 
bubble~$(\bar\Omega_1,\bar\Omega_2,\bar\Omega_3)$ on~$[0,T]$,
such that for all $i,j\in\{1,2,3\}$ with $i\neq j$ it holds throughout~$\mathbb{R}^3{\times} [0,T]$
\begin{align}
\label{eq:auxEvol1}
|\partial_t\xi_{i,j}+(B\cdot\nabla)\xi_{i,j}+(\nabla B)^\mathsf{T}\xi_{i,j}|
&\leq C(\dist(\cdot,\barI_{i,j}) \wedge 1),
\\
\label{eq:auxEvol2}
|B\cdot\xi_{i,j}+\nabla\cdot\xi_{i,j}|
&\leq C(\dist(\cdot,\barI_{i,j}) \wedge 1),
\\
\label{eq:auxEvol3}
|\xi_{i,j}\cdot(\partial_t\xi_{i,j}+(B\cdot\nabla)\xi_{i,j})|
&\leq C(\dist^2(\cdot,\barI_{i,j}) \wedge 1).
\end{align}
\end{lemma}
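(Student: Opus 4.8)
The plan is to decompose each of the three estimates according to the localization given by the partition of unity $(\eta_{\trLine},(\eta_{\barI_{k,l}})_{k\neq l})$ from Lemma~\ref{lemma:partitionOfUnity}, and then to reduce, in each region, to the corresponding local estimate already established: Lemma~\ref{lemma:gradientFlowCalibrationInterface} near the interfaces and Proposition~\ref{prop:gradientFlowCalibrationTripleLine} near the triple line, with the compatibility estimates of Proposition~\ref{prop:compatibilityEstimatesLocalConstructions} bridging the overlap. First I would note that outside of $B_{\hat r}(\trLine(t))$ the vector field $\xi_{i,j}$ coincides either with a single local interface building block $\xi^{\barI_{k,l}}_{i,j}$ (where either $(k,l)\in\{(i,j),(j,i)\}$, in which case the local estimates apply directly, or $(k,l)\notin\{(i,j),(j,i)\}$, in which case $\dist(\cdot,\barI_{i,j})$ is bounded below and the estimates are trivial by the regularity bounds~\eqref{eq:regEstimateGlobalExtensionXi}, \eqref{eq:regEstimateGlobalVel}) or with a $\zeta$-weighted combination thereof localized by $\eta_{\barI_{i,j}}$; in the latter case one differentiates the product~\eqref{eq:globalXi}, uses~\eqref{eq:evolCutoffProfile} together with the choice of tangential component of $B^{\barI_{i,j}}$, and absorbs the remaining terms into $\dist(\cdot,\barI_{i,j})$ using~\eqref{eq:compDistances}--\eqref{eq:compDistances3}.

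The substantive part is the region $B_{\hat r}(\trLine(t))$, where $\xi_{i,j}$ is a convex-type combination $\eta_{\trLine}\xi^{\trLine}_{i,j}+\sum_{k,l}\eta_{\barI_{k,l}}\xi^{\barI_{k,l}}_{i,j}$ of the triple-line and interface building blocks. Applying the Leibniz rule to $\partial_t\xi_{i,j}+(B\cdot\nabla)\xi_{i,j}+(\nabla B)^\mathsf{T}\xi_{i,j}$, the terms in which the derivatives hit the building blocks are controlled by the local evolution equations~\eqref{eq:evolEquXiInterface}--\eqref{eq:divConstraintXiInterface} and~\eqref{eq:timeEvolutionXiTripleLine}--\eqref{eq:motionByMeanCurvatureTripleLine}, once one replaces the individual local velocities $B^{\barI_{k,l}}$, $B^{\trLine}$ by the global $B$ at the cost of an error $|B-B^{\bullet}||\nabla(\text{building block})|$, which is $O(\dist(\cdot,\barI_{i,j}))$ by the compatibility estimates~\eqref{eq:compEstimate3}--\eqref{eq:compEstimate4} (for the $(\nabla B)^\mathsf{T}$-term one must use~\eqref{eq:compEstimate4}, i.e.\ that the mismatch in $\nabla B$ contracted with the normal-like vector field is small, exactly as in the treatment of the last line of~\eqref{eq:auxRightHandSide2}). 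The terms in which the derivatives hit the cutoffs $\eta_{\trLine},\eta_{\barI_{k,l}}$ are the crux: here one uses that the building blocks entering a given interpolation wedge differ by $O(\dist^2(\cdot,\trLine))$ — this is precisely the compatibility estimate~\eqref{eq:compEstimateInterpolWedgeXi} together with the interface-vs-triple-line compatibility~\eqref{eq:compEstimate1}--\eqref{eq:compEstimate2} — so that after inserting the partition-of-unity relation $\sum\eta=1$ (which kills the leading-order constant), the cutoff derivatives multiply a quantity vanishing to the right order; the advective cancellations~\eqref{eq:advectionCutoffInterface} from Lemma~\ref{lemma:advectionCutOff} provide exactly the needed improvement of $\partial_t\eta+(B\cdot\nabla)\eta$ over $\nabla\eta$ alone. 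A post-processing via~\eqref{eq:compDistances}--\eqref{eq:compDistances3} then converts all errors to $\dist(\cdot,\barI_{i,j})$.

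For~\eqref{eq:auxEvol2} the argument is the same but one step simpler, since $B\cdot\xi_{i,j}+\nabla\cdot\xi_{i,j}$ is a scalar: decompose by the partition of unity, use~\eqref{eq:divConstraintXiInterface}/\eqref{eq:motionByMeanCurvatureTripleLine} for the building-block terms, replace local by global velocities using~\eqref{eq:compEstimate1}/\eqref{eq:compEstimate3}, and handle the $\nabla\eta\cdot(\text{difference of building blocks})$ terms via~\eqref{eq:compEstimateInterpolWedgeXi} and~\eqref{eq:compEstimate2}. For~\eqref{eq:auxEvol3} one contracts~\eqref{eq:auxEvol1}-type expressions with $\xi_{i,j}$ and uses that $\xi_{i,j}\cdot(\nabla B)^\mathsf{T}\xi_{i,j}=\frac12(B\cdot\nabla)|\xi_{i,j}|^2$ is not quite what appears, so instead one writes $\xi_{i,j}\cdot(\partial_t+B\cdot\nabla)\xi_{i,j}=\frac12(\partial_t+B\cdot\nabla)|\xi_{i,j}|^2$ and invokes the length estimate~\eqref{eq:evolLengthXiCalibrationTripleLine} on triple-line wedges, the exact identity~\eqref{eq:evolEquLengthXiInterface} on interface regions (where $|\xi_{i,j}|\equiv1$), and on the overlap the representation~\eqref{eq:repLengthInterpolatedXiTripleLine} of $|\xi_{i,j}|^2$ together with~\eqref{eq:compEstimateInterpolWedgeXi}, \eqref{eq:advectionInterpolFunctionByGlobalVel} and the global regularity bounds — again finishing with~\eqref{eq:compDistances}. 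The main obstacle is the bookkeeping in the triple-line region: one must carefully track, in each of the three interpolation wedges and three interface wedges, exactly which building blocks are active and exactly which compatibility estimate (first-order~\eqref{eq:compEstimate1}/\eqref{eq:compEstimate3}/\eqref{eq:compEstimate4} for interface-vs-triple-line, second-order~\eqref{eq:compEstimateInterpolWedgeXi}/\eqref{eq:compEstimate2} for the $\nabla\eta$-hitting terms) yields the claimed power of $\dist(\cdot,\barI_{i,j})$ after the comparability post-processing; nothing here is deep, but the argument is essentially a verification that all the preparatory estimates were set up to exactly the right orders.
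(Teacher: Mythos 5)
Your overall blueprint matches the paper's: decompose via the partition of unity, reduce each term to the local estimates of Lemma~\ref{lemma:gradientFlowCalibrationInterface} and Proposition~\ref{prop:gradientFlowCalibrationTripleLine}, and absorb the mismatch via Proposition~\ref{prop:compatibilityEstimatesLocalConstructions} and the cutoff advection estimates. For~\eqref{eq:auxEvol1} and~\eqref{eq:auxEvol2} your plan is essentially the one the paper carries out (the paper does insert an intermediate layer of ``global-vs-local'' compatibility bounds, comparing $\xi_{i,j}$ and $B$ to their building blocks on the supports of the cutoffs, but this is organizational and your direct accounting amounts to the same thing; also, the paper uses~\eqref{eq:advectionCutoffBulk} here rather than~\eqref{eq:advectionCutoffInterface}, though the logical role is the same).

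Where your proposal goes off the rails is~\eqref{eq:auxEvol3}. First, the parenthetical identity you write, $\xi_{i,j}\cdot(\nabla B)^\mathsf{T}\xi_{i,j}=\tfrac12(B\cdot\nabla)|\xi_{i,j}|^2$, is simply false; the correct identity (which you then also state) is $\xi_{i,j}\cdot(B\cdot\nabla)\xi_{i,j}=\tfrac12(B\cdot\nabla)|\xi_{i,j}|^2$. Second, and more seriously, the three ingredients you invoke — the exact relation~\eqref{eq:evolEquLengthXiInterface}, the estimate~\eqref{eq:evolLengthXiCalibrationTripleLine}, and the formula~\eqref{eq:repLengthInterpolatedXiTripleLine} — are all statements about the \emph{local} building blocks $\xi^{\barI_{i,j}}_{i,j}$ and $\xi^{\trLine}_{i,j}$, not about the global $\xi_{i,j}$ of Construction~\ref{globalXi}. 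In particular $|\xi_{i,j}|\not\equiv 1$ on interface regions: away from $\trLine$ one has $\xi_{i,j}=\eta_{\barI_{i,j}}\xi^{\barI_{i,j}}_{i,j}$, so $|\xi_{i,j}|=\eta_{\barI_{i,j}}<1$ off the interface, and~\eqref{eq:repLengthInterpolatedXiTripleLine} is a formula for the local triple-line construction on its interpolation wedges, not for the cutoff-weighted combination $\eta_{\trLine}\xi^{\trLine}_{i,j}+\sum_{k\neq l}\eta_{\barI_{k,l}}\xi^{\barI_{k,l}}_{i,j}$. Expanding $|\xi_{i,j}|^2=\sum_{\alpha,\beta}\eta_\alpha\eta_\beta\,\xi^\alpha_{i,j}\cdot\xi^\beta_{i,j}$ produces cross terms $\alpha\neq\beta$ that your plan does not control. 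The paper handles these by establishing quadratic compatibility bounds between the global and local fields — the analogues of~\eqref{eq:compEstimate2} and of $\xi^{\barI_{i,j}}_{i,j}\cdot((B-B^{\barI_{i,j}})\cdot\nabla)\xi^{\barI_{i,j}}_{i,j}=O(\dist^2)$ on the relevant supports — and then observing that the surviving cross contribution is $-\eta_{\trLine}\eta_{\barI_{i,j}}(\xi^{\barI_{i,j}}_{i,j}-\xi^{\trLine}_{i,j})(\nabla B)^\mathsf{T}(\xi^{\barI_{i,j}}_{i,j}-\xi^{\trLine}_{i,j})$, quadratic in~\eqref{eq:compEstimate1} and hence $O(\dist^2(\cdot,\barI_{i,j}))$. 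Without that cross-term analysis (and without~\eqref{eq:advectionCutoffInterface} to treat $\partial_t\eta_{\barI_{j,k}},\partial_t\eta_{\barI_{k,i}}$ at the required order), the bound~\eqref{eq:auxEvol3} does not follow from the ingredients you list.
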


\begin{proof}
The main point of the proof is the reduction to the corresponding
assertions on the level of the local constructions~$(\xi_{i,j}^{\barI_{i,j}},B^{\barI_{i,j}})$
at two-phase interfaces (see Lemma~\ref{lemma:gradientFlowCalibrationInterface})
and the local construction~$(\xi^{\trLine},B^{\trLine})$ at a triple line
(see Proposition~\ref{prop:gradientFlowCalibrationTripleLine}).
The reduction argument is facilitated by an interplay of 
the estimates~\eqref{eq:coercivityInterfaceCutoff}--\eqref{eq:coercivityTimeDerivBulkCutoff}
resp.\ \eqref{eq:advectionCutoffBulk} and~\eqref{eq:advectionCutoffInterface}
with sufficient compatibility of the local and global constructions.
We list and prove the required compatibility estimates in a first step before starting 
with the proof of the bounds~\eqref{eq:auxEvol1}--\eqref{eq:auxEvol3}.

\textit{Step 1: Compatibility estimates.} We claim that for
all~$i,j\in\{1,2,3\}$ with~$i\neq j$ it holds on~$\Rd[3]\times[0,T]$
\begin{align}
\label{eq:globalComp1}
\mathds{1}_{\supp\eta_{\barI_{i,j}}}\big|\xi_{i,j}{-}\xi_{i,j}^{\barI_{i,j}}\big|
+ \mathds{1}_{\supp\eta_{\trLine}}\big|\xi_{i,j}{-}\xi_{i,j}^{\trLine}\big|
&\leq C(\dist(\cdot,\barI_{i,j})\wedge 1),
\\
\label{eq:globalComp2}
\mathds{1}_{\supp\eta_{\barI_{i,j}}}\big|B{-}B^{\barI_{i,j}}\big|
+ \mathds{1}_{\supp\eta_{\trLine}}\big|B{-}B^{\trLine}\big|
&\leq C(\dist(\cdot,\barI_{i,j})\wedge 1),
\\
\label{eq:globalComp3}
\mathds{1}_{\supp\eta_{\barI_{i,j}}}\big|(\nabla B{-}\nabla B^{\barI_{i,j}})^\mathsf{T}\xi^{\barI_{i,j}}_{i,j}\big|
+ \mathds{1}_{\supp\eta_{\trLine}}\big|(\nabla B&{-}\nabla B^{\trLine})^\mathsf{T}\xi^{\trLine}_{i,j}\big|
\\\nonumber
&\leq C(\dist(\cdot,\barI_{i,j})\wedge 1),
\\
\label{eq:globalComp4}
\mathds{1}_{\supp\eta_{\barI_{i,j}}}
\big|\big(\xi_{i,j}{-}\xi_{i,j}^{\barI_{i,j}}\big)\cdot\xi_{i,j}^{\barI_{i,j}}\big|
+ \mathds{1}_{\supp\eta_{\trLine}}
\big|\big(\xi_{i,j}&{-}\xi_{i,j}^{\trLine}\big)\cdot\xi_{i,j}^{\trLine}\big|
\\\nonumber
&\leq C(\dist^2(\cdot,\barI_{i,j})\wedge 1),
\\
\label{eq:globalComp5}
\mathds{1}_{\supp\eta_{\barI_{i,j}}}
\big|\xi_{i,j}^{\barI_{i,j}}\cdot\big((B{-}B^{\barI_{i,j}})\cdot\nabla\big)\xi^{\barI_{i,j}}_{i,j}\big|
+
\mathds{1}_{\supp\eta_{\trLine}}
&\big|\xi_{i,j}^{\trLine}\cdot\big((B{-}B^{\trLine})\cdot\nabla\big)\xi^{\trLine}_{i,j}\big|
\\\nonumber
&\leq C(\dist^2(\cdot,\barI_{i,j})\wedge 1).
\end{align}
For a proof of these compatibility estimates, we only focus on the respective
first left hand side terms. The proof for the second left hand side terms
follows along the same lines switching the roles of~$\barI_{i,j}$ and~$\trLine$
in the process.

Inserting the definition~\eqref{eq:globalXi}
and exploiting the estimate~\eqref{eq:coercivityInterfaceCutoff}
yields $\xi_{i,j}{-}\xi^{\barI_{i,j}}_{i,j}=\eta_{\trLine}(\xi^{\trLine}_{i,j}{-}\xi^{\barI_{i,j}}_{i,j})
-\eta_{\mathrm{bulk}}\xi^{\barI_{i,j}}_{i,j} +O(\dist^2(\cdot,\barI_{i,j})\wedge 1)$
on~$\supp\eta_{\barI_{i,j}}$.
Hence, we obtain the asserted bound~\eqref{eq:globalComp1} 
thanks to the estimates~\eqref{eq:compEstimate1} and~\eqref{eq:coercivityBulkCutoff}.

Next, the definition~\eqref{eq:globalVelocity}
together with the estimates~\eqref{eq:compEstimate3}, \eqref{eq:coercivityInterfaceCutoff},
\eqref{eq:coercivityBulkCutoff} and~\eqref{eq:regEstimateLocalVelocitiesAux} implies
$B{-}B^{\barI_{i,j}}=\eta_{\trLine}(B^{\trLine}{-}B^{\barI_{i,j}})
{-}\eta_{\mathrm{bulk}}B^{\barI_{i,j}}{+}O(\dist(\cdot,\barI_{i,j})\wedge 1)
= O(\dist(\cdot,\barI_{i,j})\wedge 1)$ on~$\supp\eta_{\barI_{i,j}}$ as required.

Moreover, it holds on~$\supp\eta_{\barI_{i,j}}$ as a consequence
of the definition~\eqref{eq:globalVelocity}, the product rule, 
the already established compatibility estimate~\eqref{eq:globalComp2},
as well as the estimates~\eqref{eq:coercivityInterfaceCutoff}, \eqref{eq:coercivityGradientInterfaceCutoff}
and~\eqref{eq:regEstimateLocalVelocitiesAux} that
\begin{align*}
(\nabla B{-}\nabla B^{\barI_{i,j}})^\mathsf{T}\xi^{\barI_{i,j}}_{i,j}
&= \eta_{\trLine}(\nabla B^{\trLine}{-}\nabla B^{\barI_{i,j}})^\mathsf{T}\xi^{\barI_{i,j}}_{i,j}
-\eta_{\mathrm{bulk}}(\nabla B^{\barI_{i,j}})^\mathsf{T}\xi^{\barI_{i,j}}_{i,j}
\\&~~~
+(B^{\trLine}\cdot\xi^{\barI_{i,j}}_{i,j})\nabla\eta_{\trLine}
+(B^{\barI_{i,j}}\cdot\xi_{i,j}^{\barI_{i,j}})\nabla\eta_{\barI_{i,j}}
+ O(\dist(\cdot,\barI_{i,j})\wedge 1)
\\&
=\eta_{\trLine}(\nabla B^{\trLine}{-}\nabla B^{\barI_{i,j}})^\mathsf{T}\xi^{\barI_{i,j}}_{i,j}
\\&~~~
- (B\cdot\xi_{i,j}^{\barI_{i,j}})\nabla\eta_{\mathrm{bulk}}
-\eta_{\mathrm{bulk}}(\nabla B^{\barI_{i,j}})^\mathsf{T}\xi^{\barI_{i,j}}_{i,j}
+ O(\dist(\cdot,\barI_{i,j})\wedge 1).
\end{align*}
The previous display in turn implies~\eqref{eq:globalComp3}
in view of the bounds~\eqref{eq:compEstimate4}, \eqref{eq:coercivityBulkCutoff},
\eqref{eq:coercivityGradientBulkCutoff}, \eqref{eq:regEstimateLocalVelocitiesAux}
and~\eqref{eq:regEstimateGlobalVel}.

By the argument for~\eqref{eq:globalComp1}
we also have $(\xi_{i,j}{-}\xi^{\barI_{i,j}}_{i,j})\cdot\xi_{i,j}^{\barI_{i,j}}
=\eta_{\trLine}(\xi^{\trLine}_{i,j}{-}\xi^{\barI_{i,j}}_{i,j})\cdot\xi_{i,j}^{\barI_{i,j}}
-\eta_{\mathrm{bulk}}|\xi^{\barI_{i,j}}_{i,j}|^2 +O(\dist^2(\cdot,\barI_{i,j})\wedge 1)$
on~$\supp\eta_{\barI_{i,j}}$. Hence, we deduce from~\eqref{eq:compEstimate2}
and~\eqref{eq:coercivityBulkCutoff} that~\eqref{eq:globalComp4} holds true.

Finally, based on the definition~\eqref{eq:globalVelocity}
and the estimates~\eqref{eq:coercivityInterfaceCutoff},
\eqref{eq:auxRegEstimatesLocalXi} and~\eqref{eq:regEstimateLocalVelocitiesAux}, 
we may bound on~$\supp\eta_{\barI_{i,j}}$
\begin{align*}
&\xi_{i,j}^{\barI_{i,j}}\cdot\big((B{-}B^{\barI_{i,j}})\cdot\nabla\big)\xi^{\barI_{i,j}}_{i,j}
\\&
= \eta_{\trLine}\big(\xi_{i,j}^{\barI_{i,j}}{-}\xi^{\trLine}_{i,j}\big)\cdot
\big((B^{\trLine}{-}B^{\barI_{i,j}})\cdot\nabla\big)\xi^{\barI_{i,j}}_{i,j}
+ \eta_{\trLine}\xi_{i,j}^{\trLine}\cdot
\big((B^{\trLine}{-}B^{\barI_{i,j}})\cdot\nabla\big)\xi^{\barI_{i,j}}_{i,j}
\\&~~~
- \eta_{\mathrm{bulk}}\xi_{i,j}^{\barI_{i,j}}\cdot(B^{\barI_{i,j}}\cdot\nabla)\xi^{\barI_{i,j}}_{i,j}
+ O(\dist^2(\cdot,\barI_{i,j})\wedge 1),
\end{align*}
so that~\eqref{eq:compEstimate1}, \eqref{eq:compEstimate3}, \eqref{eq:coercivityBulkCutoff},
\eqref{eq:auxRegEstimatesLocalXi} and~\eqref{eq:regEstimateLocalVelocitiesAux}
entail the desired estimate~\eqref{eq:globalComp5}.

\textit{Step 2: Proof of~\eqref{eq:auxEvol1}.}
For the sake of brevity, from now on we refrain from explicitly
spelling out the application of the regularity estimates~\eqref{eq:regEstimateGlobalExtensionXi},
\eqref{eq:auxRegEstimatesLocalXi}, \eqref{eq:regEstimateGlobalVel}
or~\eqref{eq:regEstimateLocalVelocitiesAux}, and thus solely
concentrate on the error contributions in terms of
the distance to the interface~$\barI_{i,j}$.

We start estimating based on the definition~\eqref{eq:globalXi}, the product rule,
as well as the bounds~\eqref{eq:coercivityInterfaceCutoff}
and~\eqref{eq:coercivityTimeDerivativeInterfaceCutoff}
\begin{align*}
\partial_t\xi_{i,j} =
\eta_{\trLine}\partial_t\xi_{i,j}^{\trLine}
+ \eta_{\barI_{i,j}}\partial_t\xi_{i,j}^{\barI_{i,j}}
+ \xi_{i,j}^{\trLine}\partial_t\eta_{\trLine}
+ \xi_{i,j}^{\barI_{i,j}}\partial_t\eta_{\barI_{i,j}}
+ O(\dist(\cdot,\barI_{i,j})\wedge 1).
\end{align*}
As a consequence of the compatibility estimate~\eqref{eq:globalComp1}
and the bounds~\eqref{eq:coercivityTimeDerivativeInterfaceCutoff},
we may add zero twice and obtain
\begin{align*}
\xi_{i,j}^{\trLine}\partial_t\eta_{\trLine}
+ \xi_{i,j}^{\barI_{i,j}}\partial_t\eta_{\barI_{i,j}}
&= \xi_{i,j}(\partial_t\eta_{\trLine}{+}\partial_t\eta_{\barI_{i,j}})
+ O(\dist(\cdot,\barI_{i,j})\wedge 1)
\\&
= -\xi_{i,j}\partial_t\eta_{\mathrm{bulk}} + O(\dist(\cdot,\barI_{i,j})\wedge 1).
\end{align*}
The previous two displays combine to
\begin{align}
\label{eq:auxEvol1Aux1}
\partial_t\xi_{i,j} =
\eta_{\trLine}\partial_t\xi_{i,j}^{\trLine}
+ \eta_{\barI_{i,j}}\partial_t\xi_{i,j}^{\barI_{i,j}}
-\xi_{i,j}\partial_t\eta_{\mathrm{bulk}}
+ O(\dist(\cdot,\barI_{i,j})\wedge 1).
\end{align}
Replacing the differential operator~$\partial_t$ by~$(B\cdot\nabla)$
in the previous argument entails
\begin{align*}
(B\cdot\nabla)\xi_{i,j} &=
\eta_{\trLine}(B\cdot\nabla)\xi_{i,j}^{\trLine}
+ \eta_{\barI_{i,j}}(B\cdot\nabla)\xi_{i,j}^{\barI_{i,j}}
\\&~~~
-\xi_{i,j}(B\cdot\nabla)\eta_{\mathrm{bulk}}
+ O(\dist(\cdot,\barI_{i,j})\wedge 1).
\end{align*}
Making use of the compatibility estimate~\eqref{eq:globalComp2}
updates the previous display to
\begin{align}
\label{eq:auxEvol1Aux2}
(B\cdot\nabla)\xi_{i,j} &=
\eta_{\trLine}(B^{\trLine}\cdot\nabla)\xi_{i,j}^{\trLine}
+ \eta_{\barI_{i,j}}(B^{\barI_{i,j}}\cdot\nabla)\xi_{i,j}^{\barI_{i,j}}
\\&~~~\nonumber
-\xi_{i,j}(B\cdot\nabla)\eta_{\mathrm{bulk}}
+ O(\dist(\cdot,\barI_{i,j})\wedge 1).
\end{align}
Inserting the definition~\eqref{eq:globalXi}, recalling the estimate~\eqref{eq:coercivityInterfaceCutoff},
and adding zero based on the compatibility estimate~\eqref{eq:globalComp3}
moreover allows to estimate
\begin{align}
\nonumber
(\nabla B)^\mathsf{T}\xi_{i,j} &= 
\eta_{\trLine}(\nabla B)^\mathsf{T}\xi_{i,j}^{\trLine}
+ \eta_{\barI_{i,j}}(\nabla B)^\mathsf{T}\xi_{i,j}^{\barI_{i,j}}
+ O(\dist(\cdot,\barI_{i,j})\wedge 1)
\\&\label{eq:auxEvol1Aux3}
= \eta_{\trLine}(\nabla B^{\trLine})^\mathsf{T}\xi_{i,j}^{\trLine}
+ \eta_{\barI_{i,j}}(\nabla B^{\barI_{i,j}})^\mathsf{T}\xi_{i,j}^{\barI_{i,j}}
+ O(\dist(\cdot,\barI_{i,j})\wedge 1).
\end{align}
The desired estimate~\eqref{eq:auxEvol1} thus follows from~\eqref{eq:auxEvol1Aux1}--\eqref{eq:auxEvol1Aux3},
the estimate~\eqref{eq:advectionCutoffBulk} of the advective derivative of the bulk cutoff,
as well as the local versions~\eqref{eq:evolEquXiInterface} 
and \eqref{eq:timeEvolutionXiTripleLine} of~\eqref{eq:auxEvol1}, respectively.

\textit{Step 3: Proof of~\eqref{eq:auxEvol2}.}
We compute as a consequence of the definition~\eqref{eq:globalXi},
the estimate~\eqref{eq:coercivityInterfaceCutoff}, and the 
compatibility estimate~\eqref{eq:globalComp2}
\begin{align}
\nonumber
B\cdot\xi_{i,j} &= \eta_{\trLine}B\cdot\xi_{i,j}^{\trLine}
+ \eta_{\barI_{i,j}}B\cdot\xi_{i,j}^{\barI_{i,j}}
+ O(\dist(\cdot,\barI_{i,j})\wedge 1)
\\&\label{eq:auxEvol2Aux1}
= \eta_{\trLine}B^{\trLine}\cdot\xi_{i,j}^{\trLine}
+ \eta_{\barI_{i,j}}B^{\barI_{i,j}}\cdot\xi_{i,j}^{\barI_{i,j}}
+ O(\dist(\cdot,\barI_{i,j})\wedge 1).
\end{align}
We also directly estimate by means of  the definition~\eqref{eq:globalXi},
the estimate~\eqref{eq:coercivityGradientInterfaceCutoff},
as well as the compatibility estimate~\eqref{eq:globalComp1}
\begin{align}
\nonumber
\nabla\cdot\xi_{i,j} &= \eta_{\trLine}\nabla\cdot\xi_{i,j}^{\trLine}
{+} \eta_{\barI_{i,j}}\nabla\cdot\xi^{\barI_{i,j}}_{i,j}
{+} (\xi^{\trLine}_{i,j}\cdot\nabla)\eta_{\trLine}
{+} (\xi^{\barI_{i,j}}_{i,j}\cdot\nabla)\eta_{\barI_{i,j}}
{+} O(\dist(\cdot,\barI_{i,j})\wedge 1)
\\&\label{eq:auxEvol2Aux2}
= \eta_{\trLine}\nabla\cdot\xi_{i,j}^{\trLine}
+ \eta_{\barI_{i,j}}\nabla\cdot\xi^{\barI_{i,j}}_{i,j}
- (\xi_{i,j}\cdot\nabla)\eta_{\mathrm{bulk}}
+ O(\dist(\cdot,\barI_{i,j})\wedge 1).
\end{align}
Hence, the estimate~\eqref{eq:auxEvol2} follows by combining~\eqref{eq:auxEvol2Aux1}--\eqref{eq:auxEvol2Aux2},
the estimate~\eqref{eq:coercivityGradientBulkCutoff} for the bulk cutoff, and the local versions
of~\eqref{eq:auxEvol2} given by~\eqref{eq:divConstraintXiInterface} 
and~\eqref{eq:motionByMeanCurvatureTripleLine}, respectively.

\textit{Step 4: Proof of~\eqref{eq:auxEvol3}.} Plugging in the definition~\eqref{eq:globalXi},
recalling the estimate~\eqref{eq:coercivityInterfaceCutoff}, and denoting
by~$k\in\{1,2,3\}\setminus\{i,j\}$ the remaining phase yields
\begin{align*}
\xi_{i,j}\cdot\partial_t\xi_{i,j} &= \eta_{\trLine}\xi^{\trLine}_{i,j}\cdot\partial_t\xi_{i,j}
+ \eta_{\barI_{i,j}}\xi^{\barI_{i,j}}_{i,j}\cdot\partial_t\xi_{i,j} 
+ O(\dist^2(\cdot,\barI_{i,j})\wedge 1)
\\&
= \eta_{\trLine}^2\xi^{\trLine}_{i,j}\cdot\partial_t\xi^{\trLine}_{i,j}
+ \eta_{\barI_{i,j}}^2\xi^{\barI_{i,j}}_{i,j}\cdot\partial_t\xi^{\barI_{i,j}}_{i,j} 
\\&~~~
+ \eta_{\trLine}\eta_{\barI_{i,j}}\xi^{\trLine}_{i,j}\cdot\partial_t\xi^{\barI_{i,j}}_{i,j}
+ \eta_{\trLine}\eta_{\barI_{i,j}}\xi^{\barI_{i,j}}_{i,j}\cdot\partial_t\xi^{\trLine}_{i,j} 
\\&~~~
+ \eta_{\trLine}\xi^{\trLine}_{i,j}\cdot
\big(\xi^{\trLine}_{i,j}\partial_t\eta_{\trLine}
     + \xi^{\barI_{i,j}}_{i,j}\partial_t\eta_{\barI_{i,j}}
		 + \xi^{\barI_{j,k}}_{j,k}\partial_t\eta_{\barI_{j,k}}
		 + \xi^{\barI_{k,i}}_{k,i}\partial_t\eta_{\barI_{k,i}}\big)
\\&~~~
+ \eta_{\barI_{i,j}}\xi^{\barI_{i,j}}_{i,j}\cdot
\big(\xi^{\trLine}_{i,j}\partial_t\eta_{\trLine}
     + \xi^{\barI_{i,j}}_{i,j}\partial_t\eta_{\barI_{i,j}}
		 + \xi^{\barI_{j,k}}_{j,k}\partial_t\eta_{\barI_{j,k}}
		 + \xi^{\barI_{k,i}}_{k,i}\partial_t\eta_{\barI_{k,i}}\big)
\\&~~~
+ O(\dist^2(\cdot,\barI_{i,j})\wedge 1).
\end{align*}
The compatibility estimates~\eqref{eq:globalComp1} and~\eqref{eq:globalComp4}
in combination with the bounds~\eqref{eq:coercivityInterfaceCutoff},
and~\eqref{eq:coercivityBulkCutoff}
provide an upgrade of the previous display in form of
\begin{align}
\label{eq:auxEvol3Aux1}
\xi_{i,j}\cdot\partial_t\xi_{i,j} &=
\eta_{\trLine}^2\xi^{\trLine}_{i,j}\cdot\partial_t\xi^{\trLine}_{i,j}
+ \eta_{\barI_{i,j}}^2\xi^{\barI_{i,j}}_{i,j}\cdot\partial_t\xi^{\barI_{i,j}}_{i,j} 
\\&~~~\nonumber
+ \eta_{\trLine}\eta_{\barI_{i,j}}\xi^{\trLine}_{i,j}\cdot\partial_t\xi^{\barI_{i,j}}_{i,j}
+ \eta_{\trLine}\eta_{\barI_{i,j}}\xi^{\barI_{i,j}}_{i,j}\cdot\partial_t\xi^{\trLine}_{i,j} 
\\&~~~\nonumber
+ \eta_{\trLine}(\xi^{\trLine}_{i,j}\cdot\xi_{i,j})\partial_t(\eta_{\trLine}{+}\eta_{\barI_{i,j}})
\\&~~~\nonumber
+ \eta_{\barI_{i,j}}(\xi^{\barI_{i,j}}_{i,j}\cdot\xi_{i,j})\partial_t(\eta_{\trLine}{+}\eta_{\barI_{i,j}})
\\&~~~\nonumber
+ \eta_{\trLine}\xi^{\trLine}_{i,j}\cdot
\big(\xi^{\barI_{j,k}}_{j,k}\partial_t\eta_{\barI_{j,k}}
		 + \xi^{\barI_{k,i}}_{k,i}\partial_t\eta_{\barI_{k,i}}\big)
\\&~~~\nonumber
+ \eta_{\barI_{i,j}}\xi^{\barI_{i,j}}_{i,j}\cdot
\big(\xi^{\barI_{j,k}}_{j,k}\partial_t\eta_{\barI_{j,k}}
		 + \xi^{\barI_{k,i}}_{k,i}\partial_t\eta_{\barI_{k,i}}\big)
\\&~~~\nonumber
+ O(\dist^2(\cdot,\barI_{i,j})\wedge 1).
\end{align}
Substituting the differential operator~$(B\cdot\nabla)$ for~$\partial_t$
in the previous argument, making use of the compatibility estimates~\eqref{eq:globalComp5}, 
\eqref{eq:globalComp1} and~\eqref{eq:globalComp2}, and exploiting twice the estimate~\eqref{eq:advectionCutoffInterface}
then shows that
\begin{align*}
&\xi_{i,j}\cdot(\partial_t {+} B\cdot\nabla)\xi_{i,j}
\\
&=
 \eta_{\trLine}^2\xi^{\trLine}_{i,j}\cdot(\partial_t {+} B^{\trLine}\cdot\nabla)\xi^{\trLine}_{i,j}
+ \eta_{\barI_{i,j}}^2\xi^{\barI_{i,j}}_{i,j}
\cdot(\partial_t {+} B^{\barI_{i,j}}\cdot\nabla)\xi^{\barI_{i,j}}_{i,j} 
\\&~~~
+ \eta_{\trLine}\eta_{\barI_{i,j}}\xi^{\trLine}_{i,j}
\cdot(\partial_t {+}B^{\barI_{i,j}}\cdot\nabla)\xi^{\barI_{i,j}}_{i,j}
+ \eta_{\trLine}\eta_{\barI_{i,j}}\xi^{\barI_{i,j}}_{i,j}
\cdot(\partial_t {+}B^{\trLine}\cdot\nabla)\xi^{\trLine}_{i,j} 
\\&~~~
- \eta_{\trLine}(\xi^{\trLine}_{i,j}\cdot\xi_{i,j})(\partial_t{+}B\cdot\nabla)\eta_{\mathrm{bulk}}
- \eta_{\barI_{i,j}}(\xi^{\barI_{i,j}}_{i,j}\cdot\xi_{i,j})(\partial_t{+}B\cdot\nabla)\eta_{\mathrm{bulk}}
\\&~~~
+ O(\dist^2(\cdot,\barI_{i,j})\wedge 1).
\end{align*}
Hence, employing the local versions~\eqref{eq:evolEquLengthXiInterface} 
and~\eqref{eq:timeEvolutionLengthXiTripleLine} of~\eqref{eq:auxEvol3}
and making use of the estimate~\eqref{eq:advectionCutoffBulk}
for the bulk cutoff shows that
\begin{align}
\nonumber
&\xi_{i,j}\cdot(\partial_t {+} B\cdot\nabla)\xi_{i,j}
\\\label{eq:auxEvol3Aux2} 
&= \eta_{\trLine}\eta_{\barI_{i,j}}\xi^{\trLine}_{i,j}
\cdot(\partial_t {+}B^{\barI_{i,j}}\cdot\nabla)\xi^{\barI_{i,j}}_{i,j}
+ \eta_{\trLine}\eta_{\barI_{i,j}}\xi^{\barI_{i,j}}_{i,j}
\cdot(\partial_t {+}B^{\trLine}\cdot\nabla)\xi^{\trLine}_{i,j} 
\\&~~~\nonumber
+ O(\dist^2(\cdot,\barI_{i,j})\wedge 1).
\end{align}
Adding zero, making use 
of the local evolution equations~\eqref{eq:evolEquXiInterface} resp.\ \eqref{eq:evolEquLengthXiInterface}, 
and exploiting the compatibility estimates~\eqref{eq:globalComp1} and~\eqref{eq:globalComp3}
further implies that
\begin{align*}
&\eta_{\trLine}\eta_{\barI_{i,j}}\xi^{\trLine}_{i,j}\cdot
\big(\partial_t\xi^{\barI_{i,j}}_{i,j}{+}(B^{\barI_{i,j}}\cdot\nabla)\xi^{\barI_{i,j}}_{i,j}\big)
\\&
= \eta_{\trLine}\eta_{\barI_{i,j}}\xi^{\trLine}_{i,j}\cdot
\big(\partial_t\xi^{\barI_{i,j}}_{i,j}{+}(B^{\barI_{i,j}}\cdot\nabla)\xi^{\barI_{i,j}}_{i,j}
{+}(\nabla B^{\barI_{i,j}})^\mathsf{T}\xi^{\barI_{i,j}}_{i,j}\big)
- \eta_{\trLine}\eta_{\barI_{i,j}}\xi^{\trLine}_{i,j}\cdot
(\nabla B^{\barI_{i,j}})^\mathsf{T}\xi^{\barI_{i,j}}_{i,j}
\\&
=  -\eta_{\trLine}\eta_{\barI_{i,j}}\big(\xi^{\barI_{i,j}}_{i,j}{-}\xi^{\trLine}_{i,j}\big)
(\nabla B)^\mathsf{T}\xi^{\barI_{i,j}}_{i,j}
+ O(\dist^2(\cdot,\barI_{i,j})\wedge 1).
\end{align*}
Switching the roles of~$\trLine$ and~$\barI_{i,j}$ in the argument leading
to the previous display, relying in the process on
the local evolution equations~\eqref{eq:timeEvolutionXiTripleLine}
resp.\ \eqref{eq:timeEvolutionLengthXiTripleLine},
we then in summary obtain together with~\eqref{eq:globalComp1}
\begin{align}
\nonumber
&\eta_{\trLine}\eta_{\barI_{i,j}}\xi^{\trLine}_{i,j}\cdot
\big(\partial_t\xi^{\barI_{i,j}}_{i,j}{+}(B^{\barI_{i,j}}\cdot\nabla)\xi^{\barI_{i,j}}_{i,j}\big)
+ \eta_{\trLine}\eta_{\barI_{i,j}}\xi^{\barI_{i,j}}_{i,j}\cdot
\big(\partial_t\xi^{\trLine}_{i,j}{+}(B^{\trLine}\cdot\nabla)\xi^{\trLine}_{i,j}\big)
\\&\nonumber
= -\eta_{\trLine}\eta_{\barI_{i,j}}\big(\xi^{\barI_{i,j}}_{i,j}{-}\xi^{\trLine}_{i,j}\big)
(\nabla B)^\mathsf{T}\big(\xi^{\barI_{i,j}}_{i,j}{-}\xi^{\trLine}_{i,j}\big)
+ O(\dist^2(\cdot,\barI_{i,j})\wedge 1)
\\&\label{eq:auxEvol3Aux3}
= O(\dist^2(\cdot,\barI_{i,j})\wedge 1).
\end{align}
The combination of the estimates~\eqref{eq:auxEvol3Aux2} and~\eqref{eq:auxEvol3Aux3}
thus entails the bound~\eqref{eq:auxEvol3}.
\end{proof}

\subsection{Existence of a gradient-flow calibration: Proof of Theorem~\ref{prop:existenceGradientFlowCalibration}}
This is only a matter of collecting already established facts.
More precisely, the required regularity for~$((\xi_{i,j})_{i,j\in\{1,2,3\},i\neq j},B)$
is part of Lemma~\ref{lemma:calibrationProperty} and Lemma~\ref{lemma:advectionCutOff}, respectively.
The calibration resp.\ extension property~\eqref{Calibrations} as well as the coercivity estimate~\eqref{LengthControlXi}
for the extensions of the unit normal vector fields follow from Lemma~\ref{lemma:calibrationProperty}.
The estimates~\eqref{TransportEquationXi}--\eqref{Dissip} are finally the content of
Lemma~\ref{lemma:evolutionEquationsGradientFlowCalibration}. \qed

\section{Existence of transported weights: Proof of Proposition~\ref{prop:existenceWeights}}
\label{sec:existenceWeights}
\begin{proof}[Proof of Proposition~\ref{prop:existenceWeights}]
The proof proceeds in several steps.

\textit{Step 1: Construction of an auxiliary family of transported weights.}
We first fix a smooth truncation of the identity.
More precisely, let~$\vartheta\colon\Rd[]\to\Rd[]$ be a 
smooth and non-decreasing map such that $\vartheta(r)=r$ for $|r|\leq\frac{1}{2}$, 
$\vartheta(r)=1$ for $r\geq 1$ and $\vartheta(r)= -1$ for $r\leq -1$.
Let~$\hat r\in (0,1]$ be the localization scale of Proposition~\ref{prop:gradientFlowCalibrationTripleLine},
let $\bar r\in(0,1]$ be the localization scale defined by~\eqref{eq:defLocScale},
and let finally $\delta\in (0,1]$ be the constant from \textit{Step~2} of the proof 
of Lemma~\ref{lemma:partitionOfUnity} (cf.\ the defining property~\eqref{eq:choiceDelta1}
for all~$i,j\in\{1,2,3\}$, $i\neq j$).
We then define building blocks
\begin{align}
\label{eq:interfaceWeight}
\vartheta_{i,j} &:= \vartheta\Big(\frac{s_{i,j}}{\delta\bar r}\Big) 
&&\text{in } \mathrm{im}(\Psi_{i,j}),
\\
\label{eq:extWeight}
\vartheta_{\mathrm{ext}} &:= \vartheta\Big(\frac{\dist(\cdot,\trLine)}{\hat r}\Big) 
&&\text{in } \Rd[3] \times [0,T].
\end{align}
Note that by definition~\eqref{eq:defLocScale} of the localization
scale~$\bar r$, we have for all phases $i\in\{1,2,3\}$ a covering of~$\partial\bar\Omega_i$ in form of
\begin{align}
\label{eq:decompNbhdGrainBoundary}
\partial\bar\Omega_i \subset B_{\hat r}(\trLine(t)) \cup
\bigcup_{j\in\{1,2,3\},j\neq i} \mathrm{im}_{\bar r}(\Psi_{i,j})(t) \setminus B_{\hat r}(\trLine(t))
=: \mathcal{N}^{\partial\bar\Omega_i}_{\hat r,\bar r}(t),
\end{align}
for all~$t\in [0,T]$, and where we abbreviated
\begin{align*}
\mathrm{im}_{\bar r}(\Psi_{i,j})(t) := \Psi_{i,j}(\barI_{i,j}(t){\times}\{t\}{\times}[-\bar r,\bar r]),
\quad t\in [0,T].
\end{align*}
Note that this also implies a disjoint covering of~$\Rd[3]$ by means of
\begin{align}
\label{eq:decompSpaceNbhdGrainBoundary}
\Rd[3] &= \mathcal{N}^{\partial\bar\Omega_i}_{\hat r,\bar r}(t)
\cup \big(\bar\Omega_i(t)\setminus \mathcal{N}^{\partial\bar\Omega_i}_{\hat r,\bar r}(t)\big)
\cup \big((\Rd[3]\setminus\bar\Omega_i(t))\setminus \mathcal{N}^{\partial\bar\Omega_i}_{\hat r,\bar r}(t)\big)
\end{align}
for all~$t\in [0,T]$.

For each phase~$i\in\{1,2,3\}$, denote by~$j,k\in\{1,2,3\}\setminus\{i\}$
the remaining two phases. We then define,
based on the building blocks~\eqref{eq:interfaceWeight} and~\eqref{eq:extWeight},
an auxiliary weight~$\hat\vartheta_i\colon \Rd[3]{\times}[0,T]\to[-1,1]$
by means of
\begin{align}
\label{eq:defWeight1}
\hat\vartheta_i(\cdot,t) &:= \vartheta_{i,\ell}(\cdot,t)
&&\text{in } \mathrm{im}_{\bar r}(\Psi_{i,j})(t)\setminus B_{\hat r}(\trLine(t)),\,\ell\neq i,
\\
\label{eq:defWeight2}
\hat\vartheta_i(\cdot,t) &:= \vartheta_{i,\ell}(\cdot,t) 
&&\text{in } \overline{W_{\barI_{i,j}}(t)} \cap B_{\hat r}(\trLine(t)),\,\ell\neq i,
\\
\label{eq:defWeight3}
\hat\vartheta_i(\cdot,t) &:= \lambda^{\barI_{i,j}}_{\bar\Omega_i}(\cdot,t)\vartheta_{i,j}(\cdot,t)
\\&~~~~\nonumber
+ \lambda^{\barI_{k,i}}_{\bar\Omega_i}(\cdot,t)\vartheta_{i,k}(\cdot,t)
&&\text{in } W_{\bar\Omega_i}(t) \cap B_{\hat r}(\trLine(t)),
\\
\label{eq:defWeight4}
\hat\vartheta_i(\cdot,t) &:= \vartheta_{\mathrm{ext}}(\cdot,t)
&&\text{in } \overline{W_{\barI_{j,k}}(t)} \cap B_{\hat r}(\trLine(t)),
\\
\label{eq:defWeight5}
\hat\vartheta_i(\cdot,t) &:= \lambda^{\barI_{i,j}}_{\bar\Omega_j}(\cdot,t)\vartheta_{i,j}(\cdot,t)
\\&~~~~\nonumber
+ \lambda^{\barI_{j,k}}_{\bar\Omega_j}(\cdot,t) \vartheta_{\mathrm{ext}}(\cdot,t)
&&\text{in } W_{\bar\Omega_j}(t) \cap B_{\hat r}(\trLine(t)),
\\
\label{eq:defWeight5b}
\hat\vartheta_i(\cdot,t) &:= \lambda^{\barI_{k,i}}_{\bar\Omega_k}(\cdot,t)\vartheta_{i,k}(\cdot,t)
\\&~~~~\nonumber
+ \lambda^{\barI_{j,k}}_{\bar\Omega_k}(\cdot,t) \vartheta_{\mathrm{ext}}(\cdot,t)
&&\text{in } W_{\bar\Omega_k}(t) \cap B_{\hat r}(\trLine(t)),
\\
\label{eq:defWeight6}
\hat\vartheta_i(\cdot,t) &:= -1
&&\text{in } \bar\Omega_i(t)\setminus \mathcal{N}^{\partial\bar\Omega_i}_{\hat r,\bar r}(t),
\\
\label{eq:defWeight7}
\hat\vartheta_i(\cdot,t) &:= 1
&&\text{else}
\end{align}
for all~$t\in [0,T]$. For the construction and properties of the interpolation functions,
we refer to Lemma~\ref{lemma:partitionOfUnity}.
Note that~$\hat\vartheta_i$ is well-defined in view of~\eqref{eq:decompNbhdGrainBoundary}, 
\eqref{eq:decompSpaceNbhdGrainBoundary} and~\eqref{eq:decompTripleLine}. Moreover,
due to the defining property~\eqref{eq:choiceDelta1} of
the constant~$\delta\in (0,1]$, we infer that~$\hat\vartheta_i$
is continuous throughout~$\Rd[3]{\times}[0,T]$.

\textit{Step 2: Properties of the auxiliary family of transported weights.}
In this step, we verify that the auxiliary family~$\hat\vartheta=(\hat\vartheta_i)_{i\in\{1,2,3\}}$
satisfies all the requirements of Definition~\ref{def:weights} with the (obvious)
exception that~$\hat\vartheta_i\in L^1(\Rd[3]{\times}[0,T])$.
The $W^{1,\infty}$-regularity on $\Rd[3]{\times}[0,T]$ as well
as the required conditions from item~\textit{ii)} of Definition~\ref{def:weights}
are immediate from the definitions~\eqref{eq:defWeight1}--\eqref{eq:defWeight7}.
Hence, we focus in the following on the deduction of the advection estimate~\eqref{eq:advectionWeights}.

\textit{Substep 2.1: Preliminary estimates.}
We first claim that for all~$i,j\in\{1,2,3\}$ with~$i\neq j$ and all~$t\in [0,T]$ it holds
\begin{align}
\label{eq:advectionInterfaceWeight}
|\partial_t\vartheta_{i,j}{+}(B\cdot\nabla)\vartheta_{i,j}|(\cdot,t) 
&\leq C\dist(\cdot,\partial\bar\Omega_i(t))
\quad\text{in } \mathrm{im}_{\bar r}(\Psi_{i,j})(t) \setminus B_{\hat r}(\trLine(t)),
\\
\label{eq:advectionInterfaceWeight2}
|\partial_t\vartheta_{i,j}{+}(B\cdot\nabla)\vartheta_{i,j}|(\cdot,t) 
&\leq C\dist(\cdot,\partial\bar\Omega_i(t))
\\&\nonumber~
\text{in } B_{\hat r}(\trLine(t)) \cap \big(W_{\barI_{i,j}}(t)\cup W_{\bar\Omega_i}(t)\cup W_{\bar\Omega_j}(t)\big),
\\
\label{eq:advectionExtWeight}
|\partial_t\vartheta_{\mathrm{ext}}{+}(B\cdot\nabla)\vartheta_{\mathrm{ext}}|(\cdot,t)
&\leq C\dist(\cdot,\trLine(t))
\quad\text{in } B_{\hat r}(\trLine(t))\setminus\trLine(t).
\end{align}

We start with a proof of~\eqref{eq:advectionInterfaceWeight}.
It follows from the representation~\eqref{eq:bulkCutoffAwayTripleLine}
and the definition~\eqref{eq:globalVelocity}
that $B=\eta_{\barI_{i,j}}B^{\barI_{i,j}}$ 
in~$\mathrm{im}_{\bar r}(\Psi_{i,j})(t) \setminus B_{\hat r}(\trLine(t))$
for all~$t\in [0,T]$. 
We may then estimate by the chain rule,
the definition~\eqref{eq:interfaceWeight}, the identity \eqref{eq:evolSignedDistanceAux1000}, 
the representation~\eqref{eq:bulkCutoffAwayTripleLine}, 
as well as the estimate~\eqref{eq:coercivityBulkCutoff}
\begin{align*}
|\partial_t\vartheta_{i,j}{+}(B\cdot\nabla)\vartheta_{i,j}|
\leq \eta_{\mathrm{bulk}}|(B^{\barI_{i,j}}\cdot\nabla)\vartheta_{i,j}|
\leq C\dist(\cdot,\bar\Omega_i)
\end{align*}
throughout~$\mathrm{im}_{\bar r}(\Psi_{i,j})(t) \setminus B_{\hat r}(\trLine(t))$
for all~$t\in [0,T]$. 

We next prove~\eqref{eq:advectionInterfaceWeight2}.
Throughout the interface wedge~$W_{\barI_{i,j}}(t)\cap B_{\hat r}(\trLine(t))$, it holds $B=\eta_{\trLine}B^{\trLine}
+ \eta_{\barI_{i,j}}B^{\barI_{i,j}}$ thanks to the representation~\eqref{eq:bulkCutoffInterfaceWedge}
and the definition~\eqref{eq:globalVelocity}. Employing~\eqref{eq:bulkCutoffInterfaceWedge} once more,
we then estimate making also use of the chain rule,
the definition~\eqref{eq:interfaceWeight} and the identity~\eqref{eq:evolSignedDistanceAux1000}
\begin{align*}
|\partial_t\vartheta_{i,j}{+}(B\cdot\nabla)\vartheta_{i,j}|
\leq \eta_{\mathrm{bulk}}|(B^{\barI_{i,j}}\cdot\nabla)\vartheta_{i,j}|
+ \eta_{\trLine}\big|\big((B^{\trLine}{-}B^{\barI_{i,j}})\cdot\nabla\big)\vartheta_{i,j}\big|
\end{align*}
on~$W_{\barI_{i,j}}(t)\cap B_{\hat r}(\trLine(t))$ for all~$t\in [0,T]$.
Post-processing the previous display by means of~\eqref{eq:coercivityBulkCutoff}, \eqref{eq:compEstimate3}
and~\eqref{eq:compDistances3} thus yields~\eqref{eq:advectionInterfaceWeight2}
on~$W_{\barI_{i,j}}(t)\cap B_{\hat r}(\trLine(t))$, $t\in [0,T]$.

Throughout~$W_{\bar\Omega_{i}}(t)\cap B_{\hat r}(\trLine(t))$,
we may write, as a consequence of the representation~\eqref{eq:bulkCutoffInterpolWedge}, 
the global velocity defined by~\eqref{eq:globalVelocity}
in form of $B=\eta_{\trLine}B^{\trLine}+ \eta_{\barI_{i,j}}B^{\barI_{i,j}}+\eta_{\barI_{k,i}}B^{\barI_{k,i}}$.
Hence, based on the same ingredients as in the case of interface wedges we may estimate
\begin{align*}
&|\partial_t\vartheta_{i,j}{+}(B\cdot\nabla)\vartheta_{i,j}|
\\
&\leq \eta_{\mathrm{bulk}}|(B^{\barI_{i,j}}\cdot\nabla)\vartheta_{i,j}|
+ \eta_{\trLine}\big|\big((B^{\trLine}{-}B^{\barI_{i,j}})\cdot\nabla\big)\vartheta_{i,j}\big|
+ \eta_{\barI_{k,i}}\big|\big((B^{\barI_{k,i}}{-}B^{\barI_{i,j}})\cdot\nabla\big)\vartheta_{i,j}\big|
\end{align*}
on~$W_{\bar\Omega_{i}}(t)\cap B_{\hat r}(\trLine(t))$ for all~$t\in [0,T]$.
The previous display in turn upgrades to the desired estimate~\eqref{eq:advectionInterfaceWeight2}
thanks to~\eqref{eq:coercivityBulkCutoff}, \eqref{eq:compEstimate3} and~\eqref{eq:compDistances}.

Finally, the estimate~\eqref{eq:advectionExtWeight} is a direct consequence
of the chain rule, the definition~\eqref{eq:extWeight}, the identity~\eqref{eq:transportByGlobalVelDistanceTripleLine} 
and the regularity estimate~\eqref{eq:regEstimateGlobalVel}. 

\textit{Substep 2.2: Proof of~\eqref{eq:advectionWeights} in terms of~$(\hat\vartheta_i)_{i\in\{1,2,3\}}$.}
We first observe that as a consequence of the definitions~\eqref{eq:defWeight1}--\eqref{eq:defWeight7},
there exists~$C\geq 1$ such that
\begin{align}
\label{eq:coercivityAuxWeight}
\frac{1}{C}|\hat\vartheta_i| \leq
\dist(\cdot,\partial\bar\Omega_i)\leq C|\hat\vartheta_i|
\quad\text{in } \Rd[3]{\times}[0,T].
\end{align}
Modulo this post-processing, the claim~\eqref{eq:advectionWeights} 
in terms of~$\hat\vartheta_i$ is then directly implied 
for the regions~\eqref{eq:defWeight1}, \eqref{eq:defWeight2} and~\eqref{eq:defWeight4}
by the estimates~\eqref{eq:advectionInterfaceWeight}--\eqref{eq:advectionExtWeight}
and~\eqref{eq:compDistances2}. Furthermore,
the only additional ingredients needed in the interpolation
regions~\eqref{eq:defWeight3}, \eqref{eq:defWeight5} and~\eqref{eq:defWeight5b}
are given by the estimate~\eqref{eq:advectionInterpolFunctionByGlobalVel} for the interpolation functions
as well as the bound~\eqref{eq:compDistances}.
Since there is nothing to prove for the regions~\eqref{eq:defWeight6} and~\eqref{eq:defWeight7},
this in turn concludes the proof of~\eqref{eq:advectionWeights} in terms of~$(\hat\vartheta_i)_{i\in\{1,2,3\}}$.

\textit{Step 3: Enforcing integrability of the weights.}
We slightly modify the construction from the previous step to
take care of the integrability issue. To this end, we first choose
a smooth and concave function $\kappa\colon [0,\infty)\to [0,1]$
such that $\kappa(0)=0$ as well as $\kappa(r)=1$ for $r\geq 1$. 
Which we think of as an upper concave approximation
of the map $r\mapsto r\wedge 1$ on the interval $[0,\infty)$. 
Choose a sufficiently large radius~$R>0$ such that
\begin{align}
\label{eq:cutoffScale}
\bigcup_{t\in [0,T]} \bigcup_{i,j\in\{1,2,3\},i\neq j}
B_{\hat r}(\bar I_{i,j}(t))\times\{t\}
\subset\subset B_R(0).
\end{align}
We then define a weight
$\eta_R\in W^{1,\infty}(\Rd[3])\cap W^{1,1}(\Rd[3])$
by means of
\begin{align}
\label{eq:intWeight}
\eta_R(x) := \kappa(\exp(R{-}|x|)),\quad x\in\Rd[3],
\end{align}
with its spatial gradient being bounded in form of
\begin{align}
\label{eq:gradientBoundIntWeight}
|\nabla\eta_R| \leq C|\eta_R| \quad\text{in } \Rd[3].
\end{align}
With all of these ingredients in place, we may finally define $\vartheta_i:=\eta_R\hat\vartheta_i$
for all phases $i\in\{1,2,3\}$. Note that~$\vartheta_i\in W^{1,1}(\Rd[3]{\times}[0,T];[-1,1])$
as desired. Moreover, the weights~$\vartheta_i$ directly inherit all the other 
required properties of Definition~\ref{def:weights} from
the auxiliary weights~$\hat\vartheta_i$ of the previous step, as can be seen from the definitions.
\end{proof}

\section*{Acknowledgments}
This project has received funding from the European Research Council 
(ERC) under the European Union's Horizon 2020 research and innovation 
programme (grant agreement No 948819)
\begin{tabular}{@{}c@{}}\includegraphics[width=8ex]{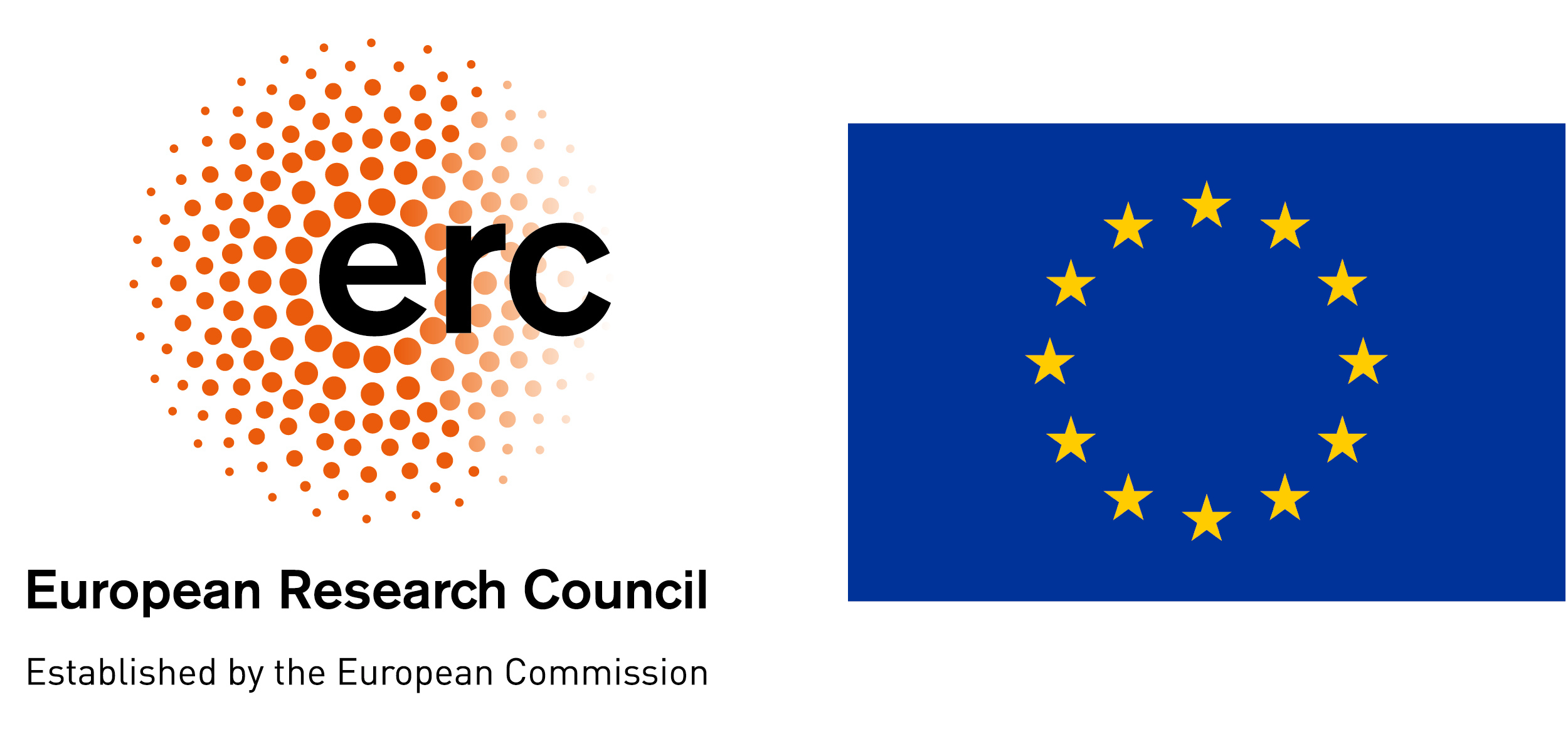}\end{tabular}, and 
from the Deutsche Forschungsgemeinschaft (DFG, German Research Foundation) 
under Germany's Excellence Strategy -- EXC-2047/1 -- 390685813.

\bibliographystyle{abbrv}
\bibliography{stability_double_bubbles}

\begin{thebibliography}{10}

\bibitem{Ambrosio2000}
L.~Ambrosio and N.~Dancer.
\newblock {\em Calculus of Variations and Partial Differential Equations:
  Topics on Geometrical Evolution Problems and Degree Theory}.
\newblock Springer, 2000.

\bibitem{BaldiHausMantegazza}
P.~Baldi, E.~Haus, and C.~Mantegazza.
\newblock Existence of a lens-shaped cluster of surfaces self-shrinking by mean
  curvature.
\newblock {\em Math. Ann.}, 375(3-4):1857--1881, 2019.

\bibitem{DepnerGarckeKohsaka}
D.~Depner, H.~Garcke, and Y.~Kohsaka.
\newblock Mean curvature flow with triple junctions in higher space dimensions.
\newblock {\em Arch. Ration. Mech. Anal.}, 211(1):301--334, 2014.

\bibitem{FischerHenselLauxSimon2D}
J.~Fischer, S.~Hensel, T.~Laux, and T.~M. Simon.
\newblock The local structure of the energy landscape in multiphase mean
  curvature flow: Weak-strong uniqueness and stability of evolutions.
\newblock {\em Preprint (v2)}, 2020.
\newblock arXiv:2003.05478v2.

\bibitem{Freire2010}
A.~Freire.
\newblock Mean curvature motion of triple junctions of graphs in two
  dimensions.
\newblock {\em Comm. Partial Differential Equations}, 35(2):302--327, 2010.

\bibitem{GarckeGoesswein}
H.~Garcke and M.~G\"{o}{\ss}wein.
\newblock On the surface diffusion flow with triple junctions in higher space
  dimensions.
\newblock {\em Geom. Flows}, 5(1):1--39, 2020.

\bibitem{TonegawaKim}
L.~Kim and Y.~Tonegawa.
\newblock On the mean curvature flow of grain boundaries.
\newblock {\em Ann. Inst. Fourier (Grenoble)}, 67(1):43--142, 2017.

\bibitem{LauxOtto}
T.~Laux and F.~Otto.
\newblock Convergence of the thresholding scheme for multi-phase mean-curvature
  flow.
\newblock {\em Calc. Var. Partial Differential Equations}, 55(5):Art. 129, 74,
  2016.

\bibitem{LauxOttoBrakke}
T.~Laux and F.~Otto.
\newblock Brakke's inequality for the thresholding scheme.
\newblock {\em Calc. Var. Partial Differential Equations}, 59(1):Art. 39, 26,
  2020.

\bibitem{Lawlor-Morgan}
G.~Lawlor and F.~Morgan.
\newblock Paired calibrations applied to soap films, immiscible fluids, and
  surfaces or networks minimizing other norms.
\newblock {\em Pac. J. Appl. Math.}, 166(1):55--83, 1994.

\bibitem{stenf}
J.~Lira, R.~Mazzeo, A.~Pluda, and M.~Saez.
\newblock Short-time existence for the network flow.
\newblock {\em Preprint}, 2020.
\newblock arXiv:2003.05478.

\bibitem{SchulzeWhite}
F.~Schulze and B.~White.
\newblock A local regularity theorem for mean curvature flow with triple edges.
\newblock {\em J. Reine Angew. Math.}, 758:281--305, 2020.

\bibitem{Taylor1976}
J.~E. Taylor.
\newblock The structure of singularities in soap-bubble-like and soap-film-like
  minimal surfaces.
\newblock {\em Ann. Math.}, 103(3):489--539, 1976.

\end{thebibliography}

\end{document}